\newtheorem{theorem}{Theorem}
\newtheorem*{theorem*}{Theorem}
\newtheorem{proposition}{Proposition}
\newtheorem*{proposition*}{Proposition}
\newtheorem{corollary}[proposition]{Corollary}
\newtheorem*{corollary*}{Corollary}
\newtheorem{lemma}[proposition]{Lemma}
\newtheorem*{lemma*}{Lemma}
\newtheorem{remark}[proposition]{Remark}
\newtheorem*{rmk*}{Remark}
\newtheorem*{claim*}{Claim}
\theoremstyle{definition}
\newtheorem{definition}[proposition]{Definition}
\newtheorem*{definition*}{Definition}
\numberwithin{equation}{section}
\numberwithin{theorem}{section}
\numberwithin{proposition}{section}
\numberwithin{figure}{section}
\newcommand{\E}{\mathbb E}
\newcommand{\N}{\mathbb{N}}
\newcommand{\R}{\mathbb{R}}
\newcommand{\Z}{\mathbb{Z}}
\newcommand{\M}{\mathbb{M}}
\newcommand{\C}{\mathcal{C}}
\newcommand{\K}{\mathcal{K}}
\newcommand{\Q}{\mathcal{Q}}
\newcommand{\T}{\mathcal{T}}
\newcommand{\Ell}{\mathscr L}
\newcommand{\Nc}{\mathcal{N}}
\NewDocumentCommand{\Hom}{mmg}{\ensuremath{\text{Hom}_{\IfNoValueTF{#3}{}{#3}}(#1,#2)}}
\NewDocumentCommand{\Tor}{mmmg}{\ensuremath{\text{Tor}_{#3}^{\IfNoValueTF{#4}{}{#4}}(#1,#2)}}
\NewDocumentCommand{\Ext}{mmmg}{\ensuremath{\text{Ext}^{#3}_{\IfNoValueTF{#4}{}{#4}}(#1,#2)}}
\title{Rigorous Derivation of the Wave Kinetic Equation for $\beta$-FPUT System}
\author[Boyang Wu]{Boyang Wu}
\address{Department of Mathematics, University of Michigan, Ann Arbor, MI 48109}
\email{boyangwu@umich.edu}
\pgfplotsset{compat=1.18}				
\begin{document}
\begin{abstract}
Wave kinetic theory has been suggested as a way to understand the longtime statistical behavior of the Fermi--Pasta--Ulam--Tsingou (FPUT) system, with the aim of determining the thermalization time scale. The latter has been a major problem since the model was introduced in the 1950s. In this thesis we establish the wave kinetic equation for a reduced evolution equation obtained from the $\beta$--FPUT system by removing the non-resonant terms. We work in the kinetic limit $N\to \infty$ and $\beta\to 0$ under the scaling laws $\beta=N^{-\gamma}$ with $0<\gamma<1$. The result holds up to the sub-kinetic time scale $T=N^{-\epsilon}\min\bigl(N,N^{5\gamma/4}\bigr)=N^{-\epsilon}T_{\mathrm{kin}}^{5/8}$ for $\epsilon\ll 1$, where $T_{\mathrm{kin}}$ represents the kinetic (thermalization) timescale. The novelties of this work include the treatment of non-polynomial dispersion relations, and the introduction of a robust phase renormalization argument to cancel dangerous divergent interactions. 
\end{abstract}
\maketitle
\tableofcontents

% section 1 - Introduction
\section{Introduction}
\label{section-kinetictheory}
\subsection{The $\beta$-FPUT model} \label{sec-wkt}
In this thesis we derive the \emph{wave kinetic equation} (WKE) for $\beta$-Fermi-Pasta-Ulam-Tsingou ($\beta$-FPUT) system and at the kinetic time scale, for the partial range of scaling laws between the large box and weak nonlinearity limits. This is the first step aiming at providing rigorous mathematical foundation for the Wave Turbulence (WT) theory on FPUT system. The Fermi-Pasta-Ulam-Tsingou system was first analyzed in 1955 \cite{EJS} to study the thermal equipartition of crystals. The $\beta$-FPUT model has $N$ identical masses connected by nonlinear springs with the elastic force $F=-\kappa\Delta q+\beta\Delta q^3$, where $\kappa,\beta\neq 0$ are spring elastic constants. And the equation of motion for a $\beta$-FPUT chain can be expressed as:
\begin{align}
m\Ddot{q}_j&=\kappa\left[(q_{j+1}-q_j)-(q_j-q_{j-1})\right]+\beta\left[(q_{j+1}-q_j)^3-(q_j-q_{j-1})^3\right], \label{eq-1.1}
\end{align}
where $j=0,1,...,N-1$ and $q_j(t)$ represents the displacement from the equilibrium position of the particle $j$ with periodic boundary conditions: $q_0=q_N$ (see e.g. \cite{EJS} and \cite[equation (6)]{bustamante2019exact}). Then we can get the Hamiltonian:
\begin{align}
H=\sum_{j=0}^{N-1}\left(\frac{p_j^2}{2m}+\frac{\kappa}{2}(q_{j+1}-q_j)^2+\frac{\beta}{4}(q_{j+1}-q_j)^4\right),
\end{align}
where $p_j(t)$ represents the momentum of particle $j$. 
\subsubsection{Discrete Fourier transform}
Denote $\Z_N:=N^{-1}\Z$. Next, we perform a Discrete Fourier transform and its inverse to the $\beta$-FPUT system as follows:
\begin{align}
Q_k&=\frac{1}{N}\sum_{j=0}^{N-1}q_je^{-i2\pi kj},\mbox{ }q_j=\sum_{k=0}^{1-1/N}Q_ke^{i2\pi jk};\label{eq-1.3}\\
P_k&=\frac{1}{N}\sum_{j=0}^{N-1}p_je^{-i2\pi kj},\mbox{ }p_j=\sum_{k=0}^{1-1/N}P_ke^{i2\pi jk},\label{eq-1.4}
\end{align}
where $Q_k$, $P_k$ are Fourier transforms of the displacement and momentum respectively for $k \in \Z_N\cap[0,1)$. From the relation:
\[\dot{q_j}=\frac{\partial{H}}{\partial{p_j}}=\frac{p_j}{m},\] we should get:
\begin{align}
P_k=m\dot{Q}_k. \label{eq-1.5}
\end{align}
And we should also note that:
\begin{align}
Q_{k}=Q_{1-k}^*,\mbox{ }P_{k}=P_{1-k}^*, \mbox{ for }k\in \Z_N\cap[0,1).
\end{align}
Plugging (\ref{eq-1.3}) and (\ref{eq-1.4}) into the equation of motion (\ref{eq-1.1}), for $j=0,1,...,N-1$, we get:
\begin{align}
m\left(\Ddot{Q}_{k_1}+\omega_{k_1}^2{Q}_{k_1}\right)=\beta\sum_{k_2+k_3+k_4=k_1}\hat{T}_{-1,2,3,4}{Q}_{k_2}{Q}_{k_3}{Q}_{k_4},\label{eq-1.7}
\end{align}
where the sums $\sum_{k_2+k_3+k_4=k_1}$ are taking over $\{k_2,k_3,k_4:k_2+k_3+k_4=k_1 \mbox{ mod } 1\}$, and
\begin{align*}
&\omega_{k}=2\sqrt{\frac{\kappa}{m}}\left|\sin(\pi k)\right|,\\
&\hat{T}_{-1,2,3,4}=16\sin(\pi (k_2+k_3+k_4))\sin(\pi k_2)\sin(\pi k_3)\sin(\pi k_4).
\end{align*}
\subsubsection{Normal modes}
We then introduce the normal modes $a_k(t)$ as:
\begin{align}
a_k=\frac{\sqrt{2}}{2}\left[(m\omega_k)^{\frac{1}{2}}Q_k+i(m\omega_k)^{-\frac{1}{2}}P_k\right],  \mbox{ for }k\in \Z_N\cap(0,1)
\end{align}
Then the evolution equation for $a_{k_1}(t)$ becomes:
\begin{align}
i\dot{a}_{k_1}=\omega_{k_1}{a}_{k_1}+\frac{\beta}{3}\sum_{k_2+k_3+k_4=k_1}\tilde{T}_{-1,2,3,4}(a_{k_2}+a_{1-{k_2}}^*)(a_{k_3}+a_{1-{k_3}}^*)(a_{k_4}+a_{1-{k_4}}^*),
\end{align}
where 
\begin{align} 
    \tilde{T}_{-1,2,3,4}&=\frac{-3\hat{T}_{-1,2,3,4}}{4m^2\sqrt{\omega_{k_1}\omega_{k_2}\omega_{k_3}\omega_{k_4}}}= -\frac{3}{4\kappa^2}\iota(k_2+k_3+k_4)\iota(k_2)\iota(k_3)\iota(k_4) \prod_{i=1}^4 \sqrt{\omega_{k_i}}.
\end{align}
where we define $\iota(x):=\text{sgn} \sin (\pi x)$ a $2$-periodic function on $\mathbb{R}$. Rewrite $k_2'=1-{k_2},k_3'=1-{k_3}$, and $k_4'=1-{k_4}$, we get:
\begin{align}
i\dot{a}_{k_1}=&\omega_{k_1}{a}_{k_1}+\frac{\beta}{3}\sum_{k_2,k_3,k_4}\bigg[\tilde{T}_{-1,2,3,4}a_{k_2}a_{k_3}a_{k_4}\delta(k_1-k_2-k_3-k_4)\notag\\
&+3\tilde{T}_{-1,2,-3,4}a_{k_2}a_{k_3}^*a_{k_4}\delta(k_1-k_2+k_3-k_4)+3\tilde{T}_{1,2,3,-4}a_{k_2}^*a_{k_3}^*a_{k_4}\delta(k_1+k_2+k_3-k_4)\notag\\
&+\tilde{T}_{1,2,3,4}a_{k_2}^*a_{k_3}^*a_{k_4}^*\delta(k_1+k_2+k_3+k_4)\bigg],\label{eq-1.25}
\end{align}
where 
\begin{align}
    \tilde{T}_{-1,2,3,4}&=-\frac{3}{4\kappa^2}\iota(k_2+k_3+k_4)\iota(k_2)\iota(k_3)\iota(k_4) \prod_{i=1}^4 \sqrt{\omega_{k_i}}\\
    \tilde{T}_{-1,2,-3,4}&=\frac{3}{4\kappa^2} \iota(k_2-k_3+k_4)\iota(k_2)\iota(k_3)\iota(k_4) \prod_{i=1}^4 \sqrt{\omega_{k_i}},\\
    \tilde{T}_{1,2,3,-4}&=\frac{3}{4\kappa^2}\iota(k_2+k_3-k_4)\iota(k_2)\iota(k_3)\iota(k_4) \prod_{i=1}^4 \sqrt{\omega_{k_i}},\\
    \tilde{T}_{1,2,3,4}&=-\frac{3}{4\kappa^2}\iota(k_2+k_3+k_4)\iota(k_2)\iota(k_3)\iota(k_4) \prod_{i=1}^4 \sqrt{\omega_{k_i}}.
\end{align}
\subsubsection{The reduced evolution equation} \label{change-of-variable}
Rewriting $b(t,k)=\sqrt{N}a(t,k)$ for $k \in \Z_N\cap(0,1)$, and removing the non-resonant part in (\ref{eq-1.25}), we arrive at the following reduced evolution equation for $b_k$:
\begin{align}
i\dot{b_k}(t) = \omega_kb_k(t)+\frac{\beta}{N}\sum_{k_1-k_2+k_3=k}T_{k,1,2,3}b_{k_1}(t)b^*_{k_2}(t)b_{k_3}(t), \tag{R-FPUT} \label{FPU}
\end{align}
where $k, k_i \in \Z_N\cap(0,1)$, and we define:
\begin{align*}
T_{k,1,2,3}:=\frac{3}{4\kappa^2} \iota(k_1-k_2+k_3)\iota(k_1)\iota(k_2) \iota(k_3) \sqrt{\omega_k\omega_{k_1}\omega_{k_2}\omega_{k_3}}.
\end{align*}
The Hamiltonian for \eqref{FPU} in normal variables $b_k$ becomes:
\begin{align}
H = \sum_{k\in \Z_N\cap(0,1)}\omega_k|b_k|^2+\frac{\beta}{2N}\sum_{k_1-k_2+k_3-k_4=0}T_{1,2,3,4}b_1^*b_2b_3^*b_4.\label{Ham}
\end{align}
We will prove the WKE for \eqref{FPU} instead of the full original evolution equation \eqref{eq-1.25} in this thesis and leave the complete derivation for future work.
\subsubsection{The initial data}\label{sec.initialdata}
To apply the Wave Turbulence formalism and formally derive the kinetic equation for \eqref{FPU}, we need to define well-prepared initial conditions for $b_k$:
\begin{align}
b_k(0) = \sqrt{n_{\text {in}}(k)}\eta_k(\varrho), \tag{DAT} \label{DAT}
\end{align}
where $n_{\text {in}}\geq 0$ represents the deterministic amplitude of an $O(1)$ smooth function (or a function with sufficient regularity) defined on standard torus of size $1$, namely $\mathbb{T}=[0,1]$ with periodic boundary conditions. $\eta_k(\varrho)$'s are i.i.d. mean-zero complex random variables with unit variance, which are assumed to be either standard complex Gaussian or uniform distributed on the unit circle. $\varrho$ labels an individual random sample on which the variables $\eta_k$ are evaluated. For the reduced evolution equation \eqref{FPU}, if we make a phase transition $b_k \rightarrow b_ke^{i\theta}$, the initial distribution of $b_k$ does not change, so we can get:
\begin{align}
    \langle b_kb_{k'}\rangle = \langle b_ke^{i\theta}b_{k'}e^{i\theta}\rangle = e^{i2\theta}\langle b_kb_{k'}\rangle
\end{align}
for any $\theta \in \mathbb{R}$, which means $\langle b_kb_{k'}\rangle \equiv 0$ for any $k,k'\in \Z_N\cap (0,1)$ and we only need to consider $\langle |b_k|^2\rangle$. This is different from the original equation that we need to consider both  $\langle |a_k|^2\rangle$ and $\langle a_ka_{1-k}\rangle$ for $k \in \Z_N\cap (0,1)$.

Our goal is to rigorously derive the wave kinetic equation (WKE) for the dynamic of the wave spectrum $n(t,k)=\mathbb{E}|b_k(t)|^2$, such that $n(0,k)=n_{\text {in}}(k)$.
\subsubsection{The wave kinetic equation}
The wave kinetic equation is given by:
\begin{equation}\label{WKE} \tag{WKE}
\begin{cases}
\partial_t n(t,k) = \K(n(t))\left(k\right), \\
n(0,k) = n_{\mathrm{in}}(k),
\end{cases}
\end{equation}
where $n_{\text {in }}$ is as in Section \ref{sec.initialdata}, and the nonlinearity collision operator $\mathcal{K}$ is given by
\begin{equation}\label{KIN}\tag{KIN}
\K(\phi)(\xi) = \int_{\begin{subarray}{b}(\xi_1, \xi_2, \xi_3 ) \in \mathbb{T}^{3} \\ \xi_1 - \xi_2 + \xi_3 = \xi\end{subarray}} \phi_\xi\phi_{\xi_1} \phi_{\xi_2} \phi_{\xi_3} \left(\frac{1}{\phi_\xi} - \frac{1}{\phi_{\xi_1}} + \frac{1}{\phi_{\xi_2}} - \frac{1}{\phi_{\xi_3}} \right) \boldsymbol{\delta}(\omega_1 - \omega_2 + \omega_3 - \omega)d\xi_1 d\xi_2 d\xi_3,
\end{equation}
where we denote, for $i = 1,2,3$,
\begin{align*}
&\phi = \phi(\xi,t), \hspace{.5cm} \phi_i = \phi(\xi_i,t), \\
&\omega = 2\sin\left(\pi \xi\right), \hspace{.5cm} \omega_i = 2\sin\left(\pi \xi_i\right).
\end{align*}
Here and below $\boldsymbol{\delta}$ denotes the Dirac delta. Given any Schwartz initial data $n_{\text {in}}(k)$, the equation (WKE) has a unique local solution $n=n(k,t)$ on some short time interval depending on $n_{\text {in}}$.
\subsection{Statement of the main result} The main result is stated as follows.
\begin{theorem} \label{main}
Fix $\gamma \in(0,1)$. Fix a smooth function $n_{\mathrm {in }}\in C^{\infty}(\mathbb{T}\rightarrow [0,\infty))$, $A \geq 40$, and $\epsilon \ll 1$. Consider the equation \eqref{FPU} with random initial data \eqref{DAT}, and assume $\beta=N^{-\gamma}$ so that $T_{\text{kin}}=\frac{N^{2 \gamma}}{4\pi}$. Fix $T=N^{-\epsilon}\min(N,N^{\frac{5}{4}\gamma})$.
Then, with probability $\geq 1 - N^{-A}$, for ${N^{0+} \leq t \leq T}$,
\[\mathbb E\left|b(k,t)\right|^2 = n_{\mathrm{in}}(k)  + \frac{t}{T_{\mathrm{kin}}} \mathcal K(n_{\mathrm{in}})\left(k\right) + o_{\ell_k^\infty}\left(\frac{t}{T_{\mathrm{kin}}}\right).\]
\end{theorem}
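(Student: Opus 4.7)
\medskip

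The proof follows the diagrammatic random-averaging approach that has become standard in rigorous wave turbulence since Deng--Hani and the subsequent literature. The first step is to pass to the interaction picture by writing $b_k(t)=e^{-i\omega_k t}c_k(t)$, which turns \eqref{FPU} into the oscillatory equation
\[
i\dot c_k \;=\; \frac{\beta}{N}\sum_{k_1-k_2+k_3=k} T_{k,1,2,3}\,e^{i\Omega\, t}\,c_{k_1}c_{k_2}^*c_{k_3}, \qquad \Omega=\omega_{k_1}-\omega_{k_2}+\omega_{k_3}-\omega_k.
\]
Iterating Duhamel produces a formal series $c_k(t)=\sum_{n\geq 0}(\beta/N)^n \mathcal{J}_n(t,k)$ whose $n$-th term is a multilinear integral indexed by a rooted ternary tree with $n$ interaction vertices; the leaves carry independent copies of $\sqrt{n_{\mathrm{in}}(k)}\,\eta_k(\varrho)$. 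Expanding $|c_k|^2$ as a double series and applying Wick's rule (or its analogue for the uniform-on-circle law) converts $\mathbb{E}|b_k|^2$ into a sum of deterministic oscillatory integrals indexed by pairs of decorated trees, the \emph{couples} of Deng--Hani, each consisting of an iterated time integral and a lattice sum in the internal frequencies.

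Next, the plan is to isolate, among all couples of order up to a truncation $M\sim|\log N|^C$, a distinguished \emph{skeleton} family whose aggregate contribution equals $(t/T_{\mathrm{kin}})\mathcal{K}(n_{\mathrm{in}})(k)+o(t/T_{\mathrm{kin}})$ uniformly in $k$, and to prove that every other couple is suppressed by at least a factor $N^{-\epsilon}$. The skeleton computation itself is an application of the stationary-phase identity $t^{-1}\int_0^t e^{i\Omega s}\,ds\to \pi\boldsymbol{\delta}(\Omega)$ combined with the explicit form of $T_{k,1,2,3}$: the Riemann sum produced by the constraint $k_1-k_2+k_3=k$ restricted to the resonant set $\Omega=0$ converges to the continuum integral defining $\mathcal{K}$, and the prefactor $t/T_{\mathrm{kin}}$ emerges from the $\beta^2$ scaling together with the $b=\sqrt{N}a$ rescaling. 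Before this identification can be closed, however, one has to handle the degenerate contractions $(k_1,k_3)=(k,k_2)$ and $(k_3,k_2)=(k,k_1)$, which produce \emph{diagonal} self-interactions of lower formal order. Following the abstract, I would introduce the phase renormalization $d_k(t)=e^{i\Lambda_k(t)}c_k(t)$ with a diagonal random phase of Hartree type (formally $\Lambda_k(t)\sim (\beta/N)\int_0^t\sum_{k'}T_{k,k',k',k}|c_{k'}(s)|^2\,ds$), chosen so that the dangerous self-interactions are absorbed into an effective dispersion for $d_k$, and verify that the leading kinetic term is insensitive to this renormalization while all higher-order corrections retain their $N^{-\epsilon}$ gain; ensuring that this renormalization is \emph{robust}, in the sense of being compatible with the entire tree expansion rather than only the lowest orders, is where the novelty lies.

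The main technical obstacle is the non-polynomial dispersion $\omega_k=2|\sin(\pi k)|$. In the analogous derivations for cubic NLS on $\mathbb{T}^d$ the phase $\Omega$ is a polynomial in the $k_i$, so estimates on the number of lattice points with $|\Omega|<\delta$ reduce to standard number theory on quadric varieties, and this is precisely what powers the $N^{-\epsilon}$ suppression of non-skeleton couples. Here $\Omega$ is only a trigonometric polynomial, with two exceptional points $k\in\{0,1/2\}$ where $\omega$ fails to be smooth or has vanishing derivative, so I would replace the number-theoretic counts by quantitative equidistribution and van der Corput type estimates, treating dyadic neighbourhoods of the exceptional points separately and exploiting real-analyticity with non-degenerate derivatives in the interior. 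The final step is to close a bootstrap on the Dyson-series remainder $\sum_{n>M}$ by a contraction argument in a profile norm adapted to $c_k$, using that each interaction vertex has $O(1)$ operator cost because $T_{k,1,2,3}$ is uniformly bounded; the precise time range $T=N^{-\epsilon}\min(N,N^{5\gamma/4})=N^{-\epsilon}T_{\mathrm{kin}}^{5/8}$ is then dictated by the worst surviving irregular couples, which improve only by a factor $N^{\gamma/4}$ past the trivial threshold. I expect the hardest step to be the simultaneous management of the phase renormalization and the trigonometric counting through all orders of the tree expansion: the two are intertwined, since the renormalization depends on the random profile at time $t$ while the counting estimates must hold uniformly across every decorated couple with explicit constants, and verifying their compatibility at every order is the principal combinatorial challenge.
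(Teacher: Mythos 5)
Your outline captures the broad architecture — ternary-tree Duhamel iteration, couples via a Wick/Isserlis pairing rule, a stationary-phase computation on a distinguished family of couples, trigonometric counting estimates replacing the quadratic-form lattice counting, and a contraction-mapping closure of the remainder with the $T\sim T_{\mathrm{kin}}^{5/8}$ ceiling coming from the missing high-codimension countings in $d=1$ — and you correctly locate the central difficulty in the degenerate self-interactions $(k_1,k_3)=(k,k_2)$. But your proposed renormalization would not close. You set $\Lambda_k(t)\sim (\beta/N)\int_0^t\sum_{k'}T_{k,k',k',k}|c_{k'}(s)|^2\,ds$, which is a \emph{random} functional of the full solution. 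Once the phase $e^{i\Lambda_k(t)}$ is random, iterating Duhamel for $d_k=e^{i\Lambda_k}c_k$ and taking expectations no longer produces a clean pairing of leaves: the phase factors carry their own products of Gaussians, entangled with the tree sum of $|c_{k'}(s)|^2$, and this destroys the couple/molecule bookkeeping on which every subsequent estimate relies. This is exactly the obstruction the paper singles out when explaining why the formal Hartree-type renormalizations of the physics literature cannot be executed at a rigorous level.

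The fix used in the paper — and the genuinely new idea you are missing — is to make the frequency shift \emph{deterministic}: one defines $\widetilde\omega_k$ implicitly through the ODE
\[
\partial_s\widetilde\omega_k(s)=\frac{\beta T}{N}\Bigl(\sum_{k_1}T_{k,1,1,k}\,\mathbb{E}\bigl|c^{\leq n}_{k_1}(s)\bigr|^2+\sum_{k_3}T_{k,k,3,3}\,\mathbb{E}\bigl|c^{\leq n}_{k_3}(s)\bigr|^2\Bigr),
\]
i.e.\ one subtracts only the \emph{expectation} of the divergent diagonal, computed from the truncated tree sum $c^{\leq n}$. Two consequences follow that you would need and do not have. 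First, since only the expected part is cancelled, the fluctuations $|c_{k_1}|^2-\mathbb{E}|c_{k_1}|^2$ remain in the hierarchy and must be carried through the diagrammatics; this is handled by introducing enhanced trees/couples with designated degenerate branching nodes and restricted admissible pairings (Definitions \ref{def:enhanced-trees} and \ref{def:enhanced-couple}), plus a pre-processing step in the molecule algorithm that deletes degenerate atoms without losing counting power. Second, the ODE is a priori an infinite coupled system indexed by $k$; it becomes tractable only because the FPUT kernel factorizes as $T_{k,l,l,k}=\frac{3}{4\kappa^2}\omega_k\omega_l$, so $\widetilde\omega_k(s)=\omega_k A(s)$ for a single scalar $A(s)$, and the nonlinear phase can then be absorbed into the oscillatory time integrals via the new integration-by-parts estimate of Proposition \ref{integral_est}, which replaces the naive $\langle T\Omega\rangle^{-1}$ bound. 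Without the deterministic replacement, the scalar reduction, and the enhanced-couple bookkeeping of the surviving degeneracies, the diagram expansion does not survive the renormalization.
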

\subsection{Background literature}Tracing back to 1923, Fermi targeted to prove the quasi-ergodicity of general Hamiltonian systems in \cite{Fermi1,Fermi2}. Here the quasi-ergodicity refers to the idea that a system trajectory will be eventually dense in the phase space. Around 1954, Fermi, Pasta, Ulam, and Tsingou (FPUT) started to use a numerical approach to verify that a system with arbitrarily small nonlinearity would reach thermalization.
\subsubsection{FPUT recurrence}
Fermi's expectation was to see the redistribution of the energy among the Fourier modes, due to the presence of the cubic and quartic nonlinearity in the Hamiltonian, at the thermalization. Surprisingly, the unexpected outcomes largely deviated from their original conjecture: the system presented a quasi-periodic behavior with energy almost returning to the initial condition after some energy exchange. The first quasi-period was at about 29,000 computation cycles, and the energy of the first Fourier mode returns to more than $97\%$ of its initial energy \cite{EJS}. This recurrent phenomenon, which was then called the “FPUT recurrence”, indicated the failure of arguments that any nonlinearity is sufficient to guarantee ergodicity. A lot of attempts \cite{schneider2000counter,zabusky1965interaction,israiljev1965statistical, zaslavskiui1972stochastic,gallavotti2007fermi,rink2006proof,henrici2008results,FPU,ponno2011two,ponno2005fermi,bocchieri1970anharmonic,carati1999specific,carati2004definition,berchialla2004localization,bambusi2006metastability,carati2007averaging,bambusi2015some} has been made to explore the FPUT recurrence and thermalization.
\subsubsection{Wave Turbulence theory}
The WT theory was first introduced in 1929 by Peierls to study the phonons kinetics in anharmonic crystals \cite{peierls1929}. It is not until the 1960s, that the WT theory, focusing on the wave kinetic equation and spectrum, was reinvestigated in the plasma physics \cite{vedenov1967theory} and water waves \cite{WaterWaves62,WaterWaves63}. The main theoretical consequences during that period were the Kolmogorov-Zakharov spectra solutions to the kinetic equation \cite{zakharov2012kolmogorov}, discovered by Zakharov in 1965, and the further inspired theoretical developments were summarized by Zakharov \emph{et al.} in \cite{zakharov1965weak}. 

The wave kinetic equation occupies in wave turbulence the same position that Boltzmann’s equation holds in dilute--gas theory: it furnishes an effective, statistical description of the second--moment dynamics of weakly nonlinear dispersive waves \cite{TextbookWT}.  Formally the WKE is obtained in a \emph{kinetic limit} where the domain size grows unboundedly $N\to\infty$, while the interaction strength tends to zero, $\beta\to0$.  A precise \emph{scaling law}---typically $\beta=N^{-\gamma}$ with $\gamma>0$---specifies how the two parameters are tied together.

In the recent twenty years, the rigorous justification of the WT theory via the kinetic equation has been done in dimensions \(d\ge2\). The first rigorous step towards wave‐kinetic theory for the cubic nonlinear Schr\"odinger (NLS) equation was the derivation of the \emph{Continuous Resonant} (CR) equation—describing the large–box limit of the deterministic dynamics—via Birkhoff normal‐form reductions by Faou \emph{et al.} \cite{2DNLS} and independently by Buckmaster \emph{et al.} \cite{BGHSCR,BGHSdyn} in 2016.  Buckmaster \emph{et al.} then incorporated random initial data and employed Feynman diagram expansions to validate the WKE up to times \(t< T_{\mathrm{kin}}^{1/2}\) \cite{BGHSonset}. The rigorous convergence of Feynman diagrams for wave systems was first proved by Spohn in 1977 \cite{Spohn08} and later extended to global‐in‐time results by Erdős–Yau \cite{ErdosYau} and Erdős–Salmhofer–Yau \cite{ErdosSalmhoferYau}. Shortly thereafter, Deng and Hani \cite{2019} and Collot and Germain \cite{CG1,CG2} extended this result—still in \(d\ge2\)—to times \(t\lesssim T_{\mathrm{kin}}^{1-\epsilon}\) under scaling laws \(0<\gamma\le1\) and \(\gamma=1\), respectively, thus achieving the analog of Lanford’s short‐time theorem for the NLS WKE.  In follow‐up works Deng and Hani pushed their analysis to the full kinetic time \(T_{\mathrm{kin}}\) \cite{WKE,WKE2023} and ultimately to arbitrarily long times \cite{WKElong}.  Remarkably, the combinatorial and renormalisation methods they introduced were then adapted to give a parallel long‐time derivation of the Boltzmann equation for particle systems in \cite{Boltzmannlong}. In 2024, Vassilev \cite{ODW} has derived the wave kinetic equations for the Majda-McLaughlin-Tabak (MMT) model at $d=1$ of size $L$ with the dispersion relation of $\omega(k) = |k|^\sigma$, and $0 < \sigma \leq 2$ and $\sigma \neq 1$ up to timescales $T \sim L^{-\epsilon} T_{\mathrm{kin}}^{5/8}$.
\subsubsection{The Wave Turbulence approach to the FPUT model}
The early applications of WT theory to the $\alpha$-FPUT model, done by Lvov \emph{et al.} in 2002 \cite{biello2002stages,kramer2002application}, analyzed the time scales of the energy spectrum approaching the equilibrium. In 2015, it was found that the equipartition of energy of $\alpha$-FPUT model for $N=16,32,64$ resulted from the six-wave interactions by Onorato \emph{et al.} \cite{FPU}, which can also be applied to $\beta$-FPUT model \cite{pistone2018universal}. Bustamante \emph{et al.} continued to analyze the exact resonances for the FPUT system in 2018 \cite{bustamante2019exact}. In the same year, Lvov and Onorato find a different time scale $O(\beta^{-1})$ at equilibrium for the $\beta$-FPUT model using the Chirikov overlap criterion, when the nonlinearity $\beta$ increases to the level of $0.01$ \cite{lvov2018double}. They further tested the $\beta$-FPUT model under a low-temperature regime \cite{dematteis2020coexistence}. Their recent work in 2022, continues to check numerically the anomalous scaling of the energy conductivity for the $\beta$-FPUT model \cite{de2022anomalous}. They used a previous result by Lukkarinen and Spohn \cite{AET}, in which the authors derived the solutions to the collision integral of the $\beta$-FPUT model and reduce it to a single integral in the limiting sense. More studies have been performed on the FPUT model: Pezzi \emph{et al.} \cite{pezzi2025multi} identified the dominance of four-wave processes in the diatomic $\alpha$-FPUT chain and Ganapa \cite{ganapa2023quasiperiodicity} showed divergence of the canonical transformation under stronger nonlinearity or larger system size in $\alpha$-FPUT model.
\subsection{Overview of the proof}
Here, we outline the main strategy and methodology used to establish the main theorem. The proof is based on a diagrammatic expansion of the solution $c_k(t)$ as follows:
\begin{equation*}
    c_k = c_k^{(0)} + c_k^{(1)} + c_k^{(2)} + \ldots \ldots+ c_k^{(n)} + R^{(n+1)}_k=c^{\leq n}_k +R^{(n+1)}_k,
    \end{equation*}
where $n$ is chosen to be a sufficiently large threshold, $c^{(j)}$'s represent the $j$-th iterates, and $R^{(n+1)}$ denotes the remainder term.

\begin{enumerate}
    \item It has long been understood (see \cite{TextbookWT}) that the interactions corresponding to $k_1=k$ or $k_3=k$ in \eqref{FPU} lead to a phase divergence in the iterates, due to the largeness of the term $2\sum_{k_1}T_{k,1,1,k} |c_{k_1}(s)|^2$. This requires a phase renormalization argument to delete those interactions as much as possible. While there have been several formal attempts at this renormalization \cite{TextbookWT,SW,chibbaro20184}, such formal attempts cannot be easily executed at a rigorous level since they introduce random terms to the phase factor, which spoils the pairing structure that is needed in the remainder of the proof. To remedy this, we introduce a deterministic phase renormalization in which the renormalized phase function satisfies the following ODE system:
    
    \begin{equation}\label{eq-freqshift} 
    \partial_s\widetilde \omega_k(s)=\frac{2\beta T}{N}\sum_{k_1}T_{k,1,1,k} \mathbb E|c^{\leq n}_{k_1}(s)|^2.
    \end{equation}

    Here, $c^{\leq n}_{k_1}$ represents the sum of ternary tree expansion terms up to order $n$, which depend themselves on $\widetilde \omega_k$. The shift itself is \emph{deterministic} and defined implicitly as the solution of the above ODE. This allows for the divergent factor $2\sum_{k_1}T_{k,1,1,k} |c_{k_1}(s)|^2$ to be canceled as much as possible. 

    After this thesis work was finalized, the recent preprint of Deng-Ionescu-Pusateri \cite{deng2025wave} appeared on arXiv, in which a similar phase renormalization technique is also employed in the context of the long-time existence question for the two-dimensional gravity water waves system on large tori. 

    \item We define the \textit{ternary trees} in Section \ref{sec-tt} to perform the Feynman diagram expansion so that $c^{(j)}$ is tracked by the sum of all ternary trees of order $j$. We keep track of the nonlinear wave interactions by the \textit{decorations} of the wavenumbers in the ternary trees. Unlike \cite{WKE,WKE2023,ODW}, since we do not remove the entire nonlinear frequency shifts, we need to deal with the \textit{degenerate nodes} (branching nodes with decorations $\{k,k_2\}=\{k_1,k_3\}$) which are still left in the ternary trees and construct an enhanced tree structure to fix specific children having the same decorations. To further analyze the correlations of the iterates, we define the \textit{enhanced couples} of enhanced ternary trees without complete pairing of the leaves originated from the children of the degenerate node with the same decoration in Section \ref{sec-couple1}.
    \item To reduce the Feynman diagram analysis to counting problems on couples, we express the couples into the form of \textit{molecules} in Section \ref{sec-mol}. We adapt the cancellation process in Section \ref{irregular} and the algorithm in Section \ref{subsec-ass} from \cite{ODW} to incorporate the molecules involved with degenerate atoms. Repeating the definitions in \cite{WKE,ODW}, we introduce the \textit{irregular chains} which leads to large bad counting estimates. We could remove the irregular chains in \textit{congruent couples} via \textit{splicing} operations due to cancellations after we sum up those congruent couples. After the splicing operation, we need to remove the degenerate atoms in the molecule via a preprocessing step which will not result in any loss before the algorithm starts. With verification of the same structure of molecules in Proposition \ref{prop-op-bound}:
    \begin{equation} \label{eq-rough-bound}
    \text{\# two-vector countings} \leq 3 \times (\text{\# three-vector countings} -1),
    \end{equation}
    as in \cite{ODW}, we could use the algorithm to prove for bounds on couples. The algorithm we adapt from \cite{ODW} can only reach  $T < \beta^{-\frac{5}{4}}$ instead of the kinetic time scale $T_{\mathrm{kin}}$ because the five-vector counting used in \cite{WKE,WKE2023} are not possible for $d=1$ as shown in \cite[Proposition 3.7]{ODW}.
    \item Using the integral estimate Proposition \ref{integral_est_1} in Section \ref{sec-int} and the counting estimates we prove in Section \ref{section-counting}, we are able to prove the Proposition \ref{bound-couple}. The main technical difficulties in the proof arise when we deal with unconserved nonlinear frequency shifts in the time integrals and the sinusoidal dispersion relation in the counting estimates. The more general version of the integral estimate Proposition \ref{integral_est} can be adapted and applied to more general models with unconserved nonlinear frequency shifts.
    \item Following the same approach as in \cite{WKE,WKE2023,ODW}, we bound the remainder terms using a contraction mapping argument in sections \ref{section-remainder} \& \ref{section-main} by defining the \textit{flower trees} and \textit{flower couples}, and applying the same preprocessing and counting process as in the proof for bounds on couples to bound and invert the linear operator $1 - \Ell$, for $\mathscr L$ defined in a suitable space. We express $(1 - \mathscr L)^{-1}$ as:
    \[
    (1 - \mathscr{L})^{-1} 
    \;=\; (1 - \mathscr L^{n_h})^{-1}\,\bigl(1 + \mathscr{L} + \cdots + \mathscr{L}^{n_h-1}\bigr),
    \]
    with some large order $n_h$ to bound the powers of $\Ell$ by Proposition \ref{prop-remainder} and $(1 - \mathscr L^{n_h})^{-1}$ via Neumann series.
    \item We prove the main theorem by isolating the negligible higher iterates and dealing with iterates $0,1,$ and $2$. We need to prove the exact resonances to be lower order terms and quasi-resonances to be the main contribution terms. After separating the lower order terms in the time integrals, we use the fact for smooth functions $f$,
    \begin{align*}
    \left|t\int\left| \frac{\sin(t x/2)}{t x/2}\right|^2 f(x) d x - 2\pi f(0)\right| \lesssim C(f) t^{-\frac{1}{2}},
    \end{align*}
    where $C(f)$ depends on the $L^\infty$ norms of $f$ and $f'$ to derive the wave kinetic equation for the $\beta$-FPUT model.
\end{enumerate}
\subsection{Future horizons} We rigorously derived the WKE for the reduced evolution equation \eqref{FPU} instead of the full evolution equation \eqref{eq-1.25}. For the future directions:
\begin{enumerate}
    \item By relying on a standard normal forms argument, we hope to deal with the full FPUT system (including the non-resonant interactions). 
    \item The general framework developed in this thesis, namely the nonlinear frequency shift, is robust, and allows for a similar derivation of the WKE to other models with various dispersion  relations and unconserved frequency shifts, e.g. discrete nonlinear Klein–Gordon model and other anharmonic lattices.
\end{enumerate}
\noindent \textit{Acknowledgements.} The author would like to thank his advisor Zaher Hani and academic sibling Katja Dimitrova Vassilev for the numerous insightful conversations and discussions through the preparation of this thesis. This research was supported by the Simons Collaboration Grant on Wave Turbulence and NSF grant DMS-1936640. 

% section 2 - Results
\section{Preparations and Main Tools}
\label{section-preparations}
\label{sec-prep}
The purpose of this section is to introduce the Feynman diagrams infrastructure and the choice of our nonlinear frequency shift. 
\subsection{Ternary tree expansion} \label{sec-tt}
\begin{definition} (Trees) \label{def:Trees}
\begin{enumerate}[label=(\roman*)]
    \item A \textit{ternary tree} $\T$ is a rooted tree where each branching node has precisely three children.  Let $\mathcal{T}$ be a ternary tree, $\mathcal{N} = \mathcal{T} \backslash \mathcal{L}$ denote the set of branching nodes, and $n$ be their number, $\mathcal{L}$ denote the set of leaves, and $l = 2n+1$ be their number. The \emph{scale} of a ternary tree $\mathcal{T}$ is defined as $|\mathcal{T}|=n$, i.e. the number of branching nodes.
    \item Denote the \textit{root} node of a ternary tree $\mathcal{T}$ as $\mathfrak{r}$ and the children subtrees of $\mathcal{T}$ as $\mathcal{T}_1,\mathcal{T}_2$ and $\mathcal{T}_3$ from left to right. Let $\mathcal{T}=\bullet$ denote a \emph{trivial} tree with scale 0, that is, with only the root node $\mathfrak{r}$.
    \item We assign to each ternary tree $\mathcal{T}$ a sign $+\text{ or }-$. For any node $\mathfrak{n}\in \mathcal{N}$, let its children be $\mathfrak{n}_1,\mathfrak{n}_2,\mathfrak{n}_3$ from left to right. We assign each node the sign $\zeta_{\mathfrak{n}}\in\{\pm1\}$ such that: $\zeta_{\mathfrak{r}}=\text{sign}(\mathcal{T})$, and $\zeta_{\mathfrak{n}}=\zeta_{\mathfrak{n}_1}=-\zeta_{\mathfrak{n}_2}=\zeta_{\mathfrak{n}_3}$.
\end{enumerate}
\end{definition}
\begin{figure}[ht]
\centering
\begin{tikzpicture}[scale=1.2,
  level distance=1.5cm,
  level 1/.style={sibling distance=4cm},
  level 2/.style={sibling distance=1.5cm},
  level 3/.style={sibling distance=0.8cm}]
  % node styles
  \tikzset{
    solid node/.style={circle,draw,fill=black,inner sep=1.5pt},
    hollow node/.style={circle,draw,inner sep=1.5pt}
  }

  % root
  \node[solid node,label=above:{\footnotesize $(\mathfrak r,+)$}] (root) {}
    % first branching
    child {
      node[solid node,label=left:{\footnotesize $(\mathfrak n_1,+)$}] {}
      child { node[solid node,label=below:{\footnotesize $(\mathfrak m_1,+)$}] {} }
      child { node[solid node,label=below:{\footnotesize $(\mathfrak m_2,-)$}] {} }
      child { node[solid node,label=below:{\footnotesize $(\mathfrak m_3,+)$}] {} }
    }
    % second branching
    child {
      node[solid node,label=right:{\footnotesize $(\mathfrak n_2,-)$}] {}
      child { node[solid node,label=below:{\footnotesize $(\mathfrak l_1,-)$}] {} }
      child { node[solid node,label=below:{\footnotesize $(\mathfrak l_2,+)$}] {} }
      child { node[solid node,label=below:{\footnotesize $(\mathfrak l_3,-)$}] {} }
    }
    % third branching
    child {
      node[solid node,label=right:{\footnotesize $(\mathfrak n_3,+)$}] {}
      child { node[solid node,label=below:{\footnotesize $(\mathfrak p_1,+)$}] {} }
      child { node[solid node,label=below:{\footnotesize $(\mathfrak p_2,-)$}] {} }
      child { node[solid node,label=below:{\footnotesize $(\mathfrak p_3,+)$}] {} }
    };
\end{tikzpicture}
\caption{A ternary tree of scale 4 with root $(\mathfrak r,+)$ and three branching nodes $\mathfrak n_i$, each with three children $\mathfrak m_j$, $\mathfrak l_j$, or $\mathfrak p_j$ labeled by their signs.}
\label{fig:ternary-tree}
\end{figure}
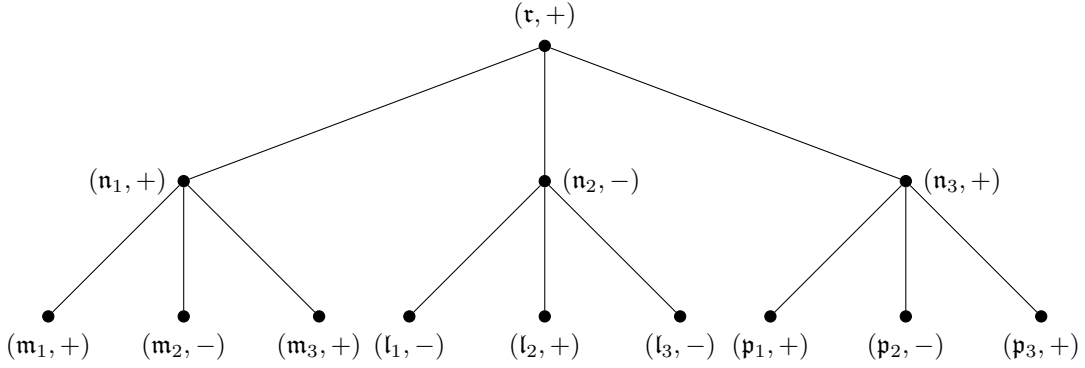

For each ternary tree $\mathcal{T}$, we define $b_k^{\mathcal{T}}(t)$'s by the following equation inductively:
\begin{align}
b_k^{\bullet}(t)&=\sqrt{n_{\text{in}}(k)}\eta_{k}(\varrho)\\
i\dot{b_k^{\mathcal{T}}}(t) &= \frac{\beta}{N}\sum_{k_1-k_2+k_3=k}T_{k,1,2,3}b_{k_1}^{\mathcal{T}_1}(t){b_{k_2}^{\mathcal{T}_2}}^*(t)b_{k_3}^{\mathcal{T}_3}(t)e^{i\Omega_{k} t},
\end{align}
where we define: 
\begin{align*}
\Omega_{k,1,2,3}&=\Omega_{k}:=\omega_{k}-\omega_{k_1}+\omega_{k_2}-\omega_{k_3},
\end{align*}
And we use ${\mathcal{T}_2}^*$ to denote the transformation: $\text{sign}(\mathcal{T}_2^*)=-\text{sign}(\mathcal{T}_2)$, i.e. ${\mathcal{T}_2^*}$ to have opposite sign to ${\mathcal{T}_2}$. Then by induction, we can show that $b_k^{\mathcal{T}}(t)$ has the expression:
\begin{align}
b_k^{\mathcal{T}}(t)=\left(\frac{\beta}{N}\right)^{|\mathcal{T}|}\sum_{\mathcal{D}}\prod_{\mathfrak{n}\in \mathcal{N}}(-i\zeta_{\mathfrak{n}}T_{\mathfrak{n},\mathfrak{l},\mathfrak{n}_2,\mathfrak{n}_3})\mathcal{A}_{\mathcal{T}}(t,\Omega[\mathcal{N}])\prod_{\mathfrak{l}\in \mathcal{L}}\sqrt{n_{\text{in}}(k_{\mathfrak{l}})}\eta_{k_{\mathfrak{l}}}^{\zeta_{\mathfrak{l}}}(\varrho),
\end{align}
where the coefficients $\mathcal{A}_{\mathcal{T}}(t,\Omega[\mathcal{N}])$'s are defined inductively by:
\begin{align*}
\mathcal{A}_{\bullet}(t,\Omega[\mathcal{N}])&=1,\\
\mathcal{A}_{\mathcal{T}}(t,\Omega[\mathcal{N}])&=\int_0^te^{\zeta i\Omega_{\mathfrak{r}}t'}\prod_{j=1}^3\mathcal{A}_{\mathcal{T}_j}(t',\Omega[\mathcal{N}_j])dt'.
\end{align*}
Removing the linear term by substituting $b_k'(t):=e^{i\omega_kTt}b_k(Tt)$, where $0 \le t \le 1$, we can get the evolution equation for $b_k'(t)$:
\begin{align}
i\dot{b_k'}(t) = \frac{\beta T}{N}\sum_{k_1-k_2+k_3=k}T_{k,1,2,3}b_{k_1}'(t)b'^*_{k_2}(t)b_{k_3}'(t)e^{i\Omega_k Tt}
\end{align}
We can also express $b_k'(t)$ using the ternary tree expansion by:
\begin{align}
b_k'(t) = \sum_{|\mathcal{T}^+|\le n}b_k^{\mathcal{T}^+}(t)+\mathcal{R}^{(n+1)}_{k},\label{eq-1}
\end{align}
where the sum is taken over all trees with $+$ sign and scales less than or equal to $n$, and $\mathcal{R}^{(n+1)}_{k}$ denotes the remainder term.
\subsection{Nonlinear frequency shift} \label{sec-fs}
Suppose $c_k(t)$ is obtained from $b_k'(t)$ by removing an undetermined nonlinear frequency  $\widetilde{\omega}_k(t)$  satisfying  $\widetilde{\omega}_k(0)=0$:
\begin{align}\label{phase change}
c_k(t) = b_k'(t)e^{i\widetilde{\omega}_k(t)},
\end{align}
We define the $Z$ norm for $\textbf{c}(t) = (c_k(t))_{k \in \Z_N\cap (0,1)}$ as:
\begin{equation}
\|\textbf{c}\|_Z^2 = \sup_{0 \leq t \leq 1} \frac{1}{N} \sum_{k \in \Z_N\cap (0,1)} |c_k(t)|^2.
\end{equation}
Then we can derive the evolution equation for $c_k(t)$ by differentiating (1.8) and separate the degenerate sums, i.e. $k \in \{k_1,k_3\}$, as follows:
\begin{align}
i\dot{c_k}(t) =& \left[i\dot{b_k'}(t)-\dot{\widetilde{\omega}}_k(t)b_k'(t)\right]e^{i\widetilde{\omega}_k(t)}\notag\\
=& i\dot{b_k'}(t)e^{i\widetilde{\omega}_k(t)}-\dot{\widetilde{\omega}}_k(t)c_k(t)\notag\\
=&\frac{\beta T}{N}\left[\sum_{\substack{k_1-k_2+k_3=k\\k_1,k_3 \neq k_2}}T_{k,1,2,3}b_{k_1}'(t)b'^*_{k_2}(t)b_{k_3}'(t)e^{i\left(\Omega_{k} Tt+\widetilde{\omega}_k(t)\right)}\right.\notag\\
&+\left.\left(\sum_{k_1}T_{k,1,1,k}|b_{k_1}'(t)|^2+\sum_{k_3}T_{k,k,3,3}|b_{k_3}'(t)|^2-\frac{N}{\beta T}\dot{\widetilde{\omega}}_k(t)\right)c_k\vphantom{\sum_{\substack{k_1-k_2+k_3=k\\k_1,k_3 \neq k_2}}}\right]\notag\\
=&\frac{\beta T}{N}\left[\sum_{\substack{k_1-k_2+k_3=k\\k_1,k_3 \neq k_2}}T_{k,1,2,3}c_{k_1}(t)c^*_{k_2}(t)c_{k_3}(t)e^{i\left(\Omega_{k} Tt+\widetilde{\omega}_k(t)-\widetilde{\omega}_{k_1}(t)+\widetilde{\omega}_{k_2}(t)-\widetilde{\omega}_{k_3}(t)\right)}\right.\notag\\
&+\left.\left(\sum_{k_1}T_{k,1,1,k}|c_{k_1}(t)|^2+\sum_{k_3}T_{k,k,3,3}|c_{k_3}(t)|^2-\frac{N}{\beta T}\dot{\widetilde{\omega}}_k(t)\right)c_k\vphantom{\sum_{\substack{k_1-k_2+k_3=k\\k_1,k_3 \neq k_2}}}\right] \label{eq-2}
\end{align}
Integrating both sides of (\ref{eq-2}), we can get:
\begin{align}
c_k(t) =&c_k(0)-i\frac{\beta T}{N}\int_0^t\left[\sum_{\substack{k_1-k_2+k_3=k\\k_1,k_3 \neq k_2}}T_{k,1,2,3}c_{k_1}(s)c^*_{k_2}(s)c_{k_3}(s)e^{i\left(\Omega_{k} Ts+\widetilde{\omega}_k(s)-\widetilde{\omega}_{k_1}(s)+\widetilde{\omega}_{k_2}(s)-\widetilde{\omega}_{k_3}(s)\right)}\right.\notag\\
&+\left.\left(\sum_{k_1}T_{k,1,1,k}|c_{k_1}(s)|^2+\sum_{k_3}T_{k,k,3,3}|c_{k_3}(s)|^2-\frac{N}{\beta T}\dot{\widetilde{\omega}}_k(s)\right)c_k\vphantom{\sum_{\substack{k_1-k_2+k_3=k\\k_1,k_3 \neq k_2}}}\right]ds \label{eqn.c_k res factor}\\
=&c_k(0)+F(c)\label{eq-3}
\end{align}
\subsection{The choice of frequency shift} \label{sec-choicefs}
At this point, we have not chosen what $\widetilde \omega_k$ is in equation \eqref{phase change}. We would like to choose it to cancel as much as possible of the divergent factor $\sum_{k_1}T_{k,1,1,k} |c_{k_1}(s)|^2+\sum_{k_3}T_{k,k,3,3}|c_{k_3}(s)|^2$ in parentheses appearing in \eqref{eqn.c_k res factor}. At the same time, we would like $\widetilde \omega_k$ to be deterministic (otherwise it becomes difficult to control the moments of the solution) and to have an explicit expansion in terms of the initial data. For this, we will seek a combined expansion of $(c_k(t), \widetilde \omega_k)$ as follows: Similar to (\ref{eq-1}), we want a tree expression for $c_k(t)$ as follows:
\begin{align}
c_k(t) = c^{\leq n}_k(t)+\mathcal{R}^{(n+1)}_{k}=\sum_{|\mathcal{T}^+|\le n}c_k^{\mathcal{T}^+}(t)+\mathcal{R}^{(n+1)}_{k},\label{eq-4}
\end{align}
where $c_k^{\mathcal{T}}(t)$ is defined inductively:
\begin{align}
c_k^{\bullet}(t)&=b_k^{\bullet}(t)=\sqrt{n_{\text{in}}(k)}\eta_{k}(\varrho)=c_k(0),\label{eq-11}\\
c_k^{\mathcal{T}}(t) &= -i\frac{\beta T}{N}\int_0^t\left[\sum_{\substack{k_1-k_2+k_3=k\\k_1,k_3 \neq k_2}}T_{k,1,2,3}c_{k_1}^{\mathcal{T}_1}(s){c_{k_2}^{\mathcal{T}_2}}^*(s)c_{k_3}^{\mathcal{T}_3}(s)e^{i\left(\Omega_{k} Ts+\widetilde \Omega_k(s)\right)}\right.\notag\\
&+\sum_{k_1}T_{k,1,1,k}\left(c_{k_1}^{\mathcal{T}_1}(s){c_{k_1}^{\mathcal{T}_2}}^*(s)-\mathbb{E}\left[c_{k_1}^{\mathcal{T}_1}(s){c_{k_1}^{\mathcal{T}_2}}^*(s)\right]\right)c_k^{\mathcal{T}_3}(s)\notag\\
&+\left.c_k^{\mathcal{T}_1}(s)\sum_{k_3}T_{k,k,3,3}\left({c_{k_3}^{\mathcal{T}_2}}^*(s)c_{k_3}^{\mathcal{T}_3}(s)-\mathbb{E}\left[{c_{k_3}^{\mathcal{T}_2}}^*(s)c_{k_3}^{\mathcal{T}_3}(s)\right]\right)\vphantom{\sum_{\substack{k_1-k_2+k_3=k\\k_1,k_3 \neq k_2}}}\right]ds.\label{eq-12}
\end{align}
Here, $\mathcal{T}$ are ternary trees with subtrees $\mathcal{T}_1,\mathcal{T}_2$ and $\mathcal{T}_3$. The factor $\widetilde \Omega_k(s)$ is given by 
\begin{equation}\label{eq.def of tilde Omega}
\widetilde \Omega_k(s):=\widetilde \omega_k(s)-\widetilde{\omega}_{k_1}(s)+\widetilde{\omega}_{k_2}(s)-\widetilde{\omega}_{k_3}(s)
\end{equation}
and $\widetilde \omega_k$ is chosen such that
\begin{equation}\label{eq.def on tilde omega}
\partial_s\widetilde \omega_k(s)=\frac{\beta T}{N}\left(\sum_{k_1}T_{k,1,1,k} \mathbb E|c^{\leq n}_{k_1}(s)|^2+\sum_{k_3}T_{k,k,3,3}\mathbb E|c^{\leq n}_{k_3}(s)|^2\right).
\end{equation}

Note that this is an ODE system for $(\widetilde \omega_k)_k$. However, in this particular case, and due to the definition of $T_{k,l,l,k}= \frac{3}{4\kappa^2}\omega_{k}\omega_{l}$, this ODE system reduces to a scalar ODE since 
\begin{align}
\partial_s{\widetilde{\omega}}_k(s) &= \frac{\beta T}{N}\left(\sum_{|\mathcal{T}_1|,|\mathcal{T}_2|\le n}\sum_{k_1}T_{k,1,1,k}\mathbb{E}\left[c_{k_1}^{\mathcal{T}_1}(s){c_{k_1}^{\mathcal{T}_2}}^*(s)\right]+\sum_{|\mathcal{T}_2|,|\mathcal{T}_3|\le n}\sum_{k_3}T_{k,k,3,3}\mathbb{E}\left[{c_{k_3}^{\mathcal{T}_2}}^*(s)c_{k_3}^{\mathcal{T}_3}(s)\right]\right)\notag\\
&=\frac{2\beta T}{N}\sum_{|\mathcal{T}_1|,|\mathcal{T}_2|\le n}\sum_{l}T_{k,l,l,k}\mathbb{E}\left[c_{l}^{\mathcal{T}_1}(s){c_{l}^{\mathcal{T}_2}}^*(s)\right]\label{eq-8}\\
&=\omega_k\left[\frac{3\beta T}{2\kappa^2N}\sum_{|\mathcal{T}_1|,|\mathcal{T}_2|\le n}\sum_{l}\omega_l\mathbb{E}\left[c_{l}^{\mathcal{T}_1}(s){c_{l}^{\mathcal{T}_2}}^*(s)\right]\right]
\end{align}

Therefore, writing
\begin{equation}\label{eq.def of A_k}
\widetilde \omega_k(s)=\omega_k A(s), \textit{ }A(0)=0,
\end{equation}
we obtain that 
\begin{align}
\widetilde \Omega_k&=\Omega_k A(s), \\
\dot{A}(s) &= G(s,A(s)) = \frac{3\beta T}{2\kappa^2N}\sum_{|\mathcal{T}_1|,|\mathcal{T}_2|\le n}\sum_{l}\omega_l\mathbb{E}\left[c_{l}^{\mathcal{T}_1}(s){c_{l}^{\mathcal{T}_2}}^*(s)\right]. \label{A(s)}
\end{align}

It is clear from \eqref{eq-12} and \eqref{eq.def of A_k} that $|c_l^{\T}|$ and hence the function $G(s,A(s))$ are bounded for almost every $\varrho$, and that $G(s, A(s))$ is smooth in its variables, which guarantees the global existence and uniqueness of $A(s)$ for almost all $\varrho$.
\subsection{The remainder}
It remains to write down the equation for $\mathcal R_k^{(n+1)}$, which will justify our choices for $c_k^{\mathcal T}$ and $\widetilde \omega_k$ above. Equating (\ref{eq-3}) and (\ref{eq-4}), we have:
\begin{align*}
c^{\leq n}_k(t)+\mathcal{R}^{(n+1)}_{k} = c_k(0)+F(c^{\leq n}+\mathcal{R}^{(n+1)})
\end{align*}
Then we can get the expression for the remainder $\mathcal{R}^{(n+1)}_{k}$:
\begin{align*}
\mathcal{R}^{(n+1)}_{k} = \left[c_k(0)+F(c^{\leq n})-c^{\leq n}_k(t)\right]+\left[F(c^{\leq n}+\mathcal{R}^{(n+1)})-F(c^{\leq n})\right]= I + II
\end{align*}
Calculating the first term:
\begin{align}
I=&c_k(0)+F(c^{\leq n})-c^{\leq n}_k(t)\notag\\
=& -i\frac{\beta T}{N}\int_0^t\left\{\left[\sum_{|\mathcal{T}_1|,|\mathcal{T}_2|,|\mathcal{T}_3|\le n}\sum_{\substack{k_1-k_2+k_3=k\\k_1,k_3 \neq k_2}}T_{k,1,2,3}c_{k_1}^{\mathcal{T}_1}(s){c_{k_2}^{\mathcal{T}_2}}^*(s)c_{k_3}^{\mathcal{T}_3}(s)e^{i\left(\Omega_{k} Ts+\widetilde{\Omega}_k(s)\right)}\right.\right.\notag\\
&+\left(\sum_{|\mathcal{T}_1|,|\mathcal{T}_2|\le n}\sum_{k_1}T_{k,1,1,k}c_{k_1}^{\mathcal{T}_1}(s){c_{k_1}^{\mathcal{T}_2}}^*(s)-\frac{N}{2\beta T}\dot{\widetilde{\omega}}_k(s)\right)\sum_{|\mathcal{T}_3|\le n}c_k^{\mathcal{T}_3}(s)\notag\\
&+\left.\sum_{|\mathcal{T}_1|\le n}c_k^{\mathcal{T}_1}(s)\left(\sum_{|\mathcal{T}_2|,|\mathcal{T}_3|\le n}\sum_{k_3}T_{k,k,3,3}{c_{k_3}^{\mathcal{T}_2}}^*(s)c_{k_3}^{\mathcal{T}_3}(s)-\frac{N}{2\beta T}\dot{\widetilde{\omega}}_k(s)\right)\vphantom{\sum_{\substack{k_1-k_2+k_3=k\\k_1,k_3 \neq k_2}}}\right]\label{eq-5}\\
&-\sum_{|\mathcal{T}_1|+|\mathcal{T}_2|+|\mathcal{T}_3|\le n-1}\left[\sum_{\substack{k_1-k_2+k_3=k\\k_1,k_3 \neq k_2}}T_{k,1,2,3}c_{k_1}^{\mathcal{T}_1}(s){c_{k_2}^{\mathcal{T}_2}}^*(s)c_{k_3}^{\mathcal{T}_3}(s)e^{i\left(\Omega_{k} Ts+\widetilde \Omega_k(s)\right)}\right.\notag\\
&+\sum_{k_1}T_{k,1,1,k}\left(c_{k_1}^{\mathcal{T}_1}(s){c_{k_1}^{\mathcal{T}_2}}^*(s)-\mathbb{E}\left[c_{k_1}^{\mathcal{T}_1}(s){c_{k_1}^{\mathcal{T}_2}}^*(s)\right]\right)c_k^{\mathcal{T}_3}(s)\notag\\
&+\left.\left.c_k^{\mathcal{T}_1}(s)\sum_{k_3}T_{k,k,3,3}\left({c_{k_3}^{\mathcal{T}_2}}^*(s)c_{k_3}^{\mathcal{T}_3}(s)-\mathbb{E}\left[{c_{k_3}^{\mathcal{T}_2}}^*(s)c_{k_3}^{\mathcal{T}_3}(s)\right]\right)\vphantom{\sum_{\substack{k_1-k_2+k_3=k\\k_1,k_3 \neq k_2}}}\right]\right\}ds. \label{eq-6}
\end{align}
This expression justifies our choice of $\partial_s\widetilde \omega_k$ as in \eqref{eq.def on tilde omega} and \eqref{eq-8}, which leads to a large of amount of cancellations in the expressions \eqref{eq-5} and \eqref{eq-6} above. In fact, by \eqref{eq-8}, we have that 
\begin{align}
I=&c_k(0)+F(c^{\leq n})-c^{\leq n}_k(t)\notag\\
=&-i\frac{\beta T}{N}\int_0^t\sum_{\substack{|\mathcal{T}_1|,|\mathcal{T}_2|,|\mathcal{T}_3|\le n\\|\mathcal{T}_1|+|\mathcal{T}_2|+|\mathcal{T}_3|\ge n}}\left[\sum_{\substack{k_1-k_2+k_3=k\\k_1,k_3 \neq k_2}}T_{k,1,2,3}c_{k_1}^{\mathcal{T}_1}(s){c_{k_2}^{\mathcal{T}_2}}^*(s)c_{k_3}^{\mathcal{T}_3}(s)e^{i\left(\Omega_{k} Ts+\widetilde{\Omega}_k(s)\right)}\right.\notag\\
&+\sum_{k_1}T_{k,1,1,k}\left(c_{k_1}^{\mathcal{T}_1}(s){c_{k_1}^{\mathcal{T}_2}}^*(s)-\mathbb{E}\left[c_{k_1}^{\mathcal{T}_1}(s){c_{k_1}^{\mathcal{T}_2}}^*(s)\right]\right)c_k^{\mathcal{T}_3}(s)\notag\\
&+\left.c_k^{\mathcal{T}_1}(s)\sum_{k_3}T_{k,k,3,3}\left({c_{k_3}^{\mathcal{T}_2}}^*(s)c_{k_3}^{\mathcal{T}_3}(s)-\mathbb{E}\left[{c_{k_3}^{\mathcal{T}_2}}^*(s)c_{k_3}^{\mathcal{T}_3}(s)\right]\right)\vphantom{\sum_{\substack{k_1-k_2+k_3=k\\k_1,k_3 \neq k_2}}}\right]ds. 
\end{align}
Introducing the notation: $\textbf{u}=(u_{k}(t))_k,\text{ }\textbf{v}=(v_{k}(t))_k,\text{ and }\textbf{w}=(w_{k}(t))_k$, we define:
\begin{align}
&\mathcal{W}(\textbf{u},\textbf{v},\textbf{w})_k(t)
:=-i\frac{\beta T}{N}\int_0^t\sum_{\substack{k_1-k_2+k_3=k\\k_1,k_3 \neq k_2}}T_{k,1,2,3}u_{k_1}(s)v_{k_2}^*(s)w_{k_3}(s)e^{i\left(\Omega_{k} Ts+\widetilde{\Omega}_k(s)\right)}ds.
\end{align}
Then, we define:
\begin{align}
\mathscr{R}_{k} := I=\sum_{\substack{|\mathcal{T}_1|,|\mathcal{T}_2|,|\mathcal{T}_3|\le n\\|\mathcal{T}_1|+|\mathcal{T}_2|+|\mathcal{T}_3|\ge n}}\mathcal{W}(c^{\mathcal{T}_1},c^{\mathcal{T}_2},c^{\mathcal{T}_3})_k(t)+ \mathscr{R}_D^{\mathcal{T}_1,\mathcal{T}_2,\mathcal{T}_3}\label{eq-9}
\end{align}
where 
\begin{align}
    \mathscr{R}_D^{\mathcal{T}_1,\mathcal{T}_2,\mathcal{T}_3} = -i\frac{\beta T}{N}\int_0^t \sum_{k_1}T_{k,1,1,k}\left(c_{k_1}^{\mathcal{T}_1}(s){c_{k_1}^{\mathcal{T}_2}}^*(s)-\mathbb{E}\left[c_{k_1}^{\mathcal{T}_1}(s){c_{k_1}^{\mathcal{T}_2}}^*(s)\right]\right)c_k^{\mathcal{T}_3}(s)\notag\\
    +c_k^{\mathcal{T}_1}(s)\sum_{k_3}T_{k,k,3,3}\left({c_{k_3}^{\mathcal{T}_2}}^*(s)c_{k_3}^{\mathcal{T}_3}(s)-\mathbb{E}\left[{c_{k_3}^{\mathcal{T}_2}}^*(s)c_{k_3}^{\mathcal{T}_3}(s)\right]\right)ds
\end{align}
Similarly, the second term can be expressed as:
\begin{align}
&II=F(c^{\leq n}+\mathcal{R}^{(n+1)})-F(c^{\leq n})\notag\\
&=-i\frac{\beta T}{N}\int_0^t\sum_{\substack{k_1-k_2+k_3=k\\k_1,k_3 \neq k_2}}T_{k,1,2,3}\left(c^{\leq n}_{k_1}(s)+\mathcal{R}^{(n+1)}_{k_1}\right)\left(c^{\leq n *}_{k_2}(s)+\mathcal{R}^{(n+1)}_{k_2}\right)\notag\\
&\qquad\qquad\qquad\qquad\qquad\qquad\qquad\qquad\qquad \times\left(c^{\leq n}_{k_3}(s)+\mathcal{R}^{(n+1)}_{k_3}\right)e^{i\left(\Omega_{k} Ts+\widetilde \Omega_k(s)\right)}\notag\\
&+2\sum_{k_1}T_{k,1,1,k}\left(\left|c^{\leq n}_{k_1}(s)+\mathcal{R}^{(n+1)}_{k_1}\right|^2 - \E \left|c^{\leq n}_{k_1}(s) \right|^2\right)\left(c^{\leq n}_{k}(s)+\mathcal{R}^{(n+1)}_{k}\right)\notag\\
&-\sum_{\substack{k_1-k_2+k_3=k\\k_1,k_3 \neq k_2}}T_{k,1,2,3}c^{\leq n}_{k_1}(s)c^{\leq n *}_{k_2}(s)c^{\leq n}_{k_3}(s)e^{i\left(\Omega_{k} Ts+\widetilde \Omega_k(s)\right)}\notag\\
&-2\sum_{k_1}T_{k,1,1,k}\left(|c^{\leq n}_{k_1}(s)|^2-\E |c^{\leq n}_{k_1}(s)|^2\right)c^{\leq n}_k \; ds
\end{align}
We further define:
\begin{align}
\mathscr{L}_k(\mathcal{R}^{(n+1)})&:=\sum_{|\mathcal{T}_1|,|\mathcal{T}_2| \le n}^{\text{cyc}}\mathcal{W}(c^{\mathcal{T}_1},c^{\mathcal{T}_2},\mathcal{R}^{(n+1)})_k(t)+(\mathscr{L}_D)_k \label{eq-linearop}\\
\mathscr{Q}_k(\mathcal{R}^{(n+1)},\mathcal{R}^{(n+1)})&:=\sum_{|\mathcal{T}_1| \le n}^{\text{cyc}}\mathcal{W}(c^{\mathcal{T}_1},\mathcal{R}^{(n+1)},\mathcal{R}^{(n+1)})_k(t)+(\mathscr{Q}_D)_k\\
\mathscr{C}_k(\mathcal{R}^{(n+1)},\mathcal{R}^{(n+1)},\mathcal{R}^{(n+1)}) &:=\mathcal{W}(\mathcal{R}^{(n+1)},\mathcal{R}^{(n+1)},\mathcal{R}^{(n+1)})_k(t) +(\mathscr{C}_D)_k
\end{align} \label{eq-multilinear}
where $\Ell, \mathscr Q, $ and $\mathscr C$ are linear, bilinear, and trilinear operators respectively, and
\begin{align}
    (\mathscr{L}_D)_k &:= -i\frac{\beta T}{N}\int_0^t 2\sum_{k_1}T_{k,1,1,k}\left[\left(\left|c^{\leq n}_{k_1}(s)\right|^2 - \E \left|c^{\leq n}_{k_1}(s) \right|^2\right)\mathcal{R}^{(n+1)}_{k}\right.\notag\\
    &{\hspace{3.5cm}}\left.+\left(c^{\leq n*}_{k_1}(s)\mathcal{R}^{(n+1)}_{k_1}+c^{\leq n}_{k_1}(s)\mathcal{R}^{(n+1)*}_{k_1}\right)c^{\leq n}_{k}(s)\right]ds \label{eq-LD}\\
    (\mathscr{Q}_D)_k &:= -i\frac{\beta T}{N}\int_0^t 2\sum_{k_1}T_{k,1,1,k}\left[\left(c^{\leq n*}_{k_1}(s)\mathcal{R}^{(n+1)}_{k_1}+c^{\leq n}_{k_1}(s)\mathcal{R}^{(n+1)*}_{k_1}\right)\mathcal{R}^{(n+1)}_{k}\right.\notag\\
    &{\hspace{3.5cm}}\left.
+\left|\mathcal{R}^{(n+1)}_{k_1}\right|^2c^{\leq n}_{k}(s) \right]ds\\
    (\mathscr{C}_D)_k &:= -i\frac{\beta T}{N}\int_0^t 2\sum_{k_1}T_{k,1,1,k}\left|\mathcal{R}^{(n+1)}_{k_1}\right|^2\mathcal{R}^{(n+1)}_{k} ds
\end{align}
Then by calculation we can verify that:
\begin{align}
    II=\mathscr{L}_k(\mathcal{R}^{(n+1)})+\mathscr{Q}_k(\mathcal{R}^{(n+1)},\mathcal{R}^{(n+1)})+\mathscr{C}_k(\mathcal{R}^{(n+1)},\mathcal{R}^{(n+1)},\mathcal{R}^{(n+1)}).\label{eq-10}
\end{align}
Therefore (\ref{eq-9}) and (\ref{eq-10}) give that 
\begin{align}
\mathcal{R}^{(n+1)}_{k}&=\mathscr{R}_k+\mathscr{L}_k(\mathcal{R}^{(n+1)})+\mathscr{Q}_k(\mathcal{R}^{(n+1)},\mathcal{R}^{(n+1)})+\mathscr{C}_k(\mathcal{R}^{(n+1)},\mathcal{R}^{(n+1)},\mathcal{R}^{(n+1)}),
\end{align}
which is equivalent to:
\begin{align}
    \mathcal{R} = (1 - \Ell)^{-1}(\mathscr R + \mathscr Q(\mathcal{R},\mathcal{R}) + \mathscr C(\mathcal{R},\mathcal{R},\mathcal{R})), \label{eq-op}
\end{align}
provided that $1 - \mathscr L$ is invertible in a suitable space.
\subsection{Couples and Diagrammetic expansions}\label{sec-couple1}
\begin{definition}[Enhanced Trees]
\label{def:enhanced-trees}
An \emph{enhanced tree} is a ternary tree $\mathcal{T}$ (as in Definition~\ref{def:Trees}) equipped with additional structure specified by:
\begin{enumerate}[label=(\roman*)]
\item A designated set $\mathcal{N}_{D}$ of \emph{degenerate branching nodes} within $\mathcal{T}$.
\item For each degenerate node $\mathfrak{n} \in \mathcal{N}_{D}$ with children $\{\mathfrak{n}_1,\mathfrak{n}_2,\mathfrak{n}_3\}$, we specify a choice of a distinguished child $\mathfrak{n}_{c}^*$ taken from $\{\mathfrak{n}_1,\mathfrak{n}_3\}$  (children with the same sign), and label the remaining child of those two as $\mathfrak{n}_{c}$.
\end{enumerate}
\end{definition}

\begin{definition}[Decorations of Enhanced Trees] \label{def:decorations-enhanced-trees}
A decoration $\mathscr D$ of an enhanced tree $\T$ is a set of vectors $\{(k_{\mathfrak n})_{\mathfrak n \in \T}\}$ such that $k_\mathfrak{n}\in \Z_N\cap (0,1)$. Further, if $\mathfrak n$ is a branching node, 
\begin{equation*}
\zeta_{\mathfrak n} k_{\mathfrak n} = \zeta_{\mathfrak n_1} k_{\mathfrak n_1} + \zeta_{\mathfrak n_2} k_{\mathfrak n_2} +\zeta_{\mathfrak n_3} k_{\mathfrak n_3},
\end{equation*}
where $\mathfrak n_1, \mathfrak n_2, \mathfrak n_3$ are the three children of $\mathfrak n$ labeled from left to right. We say $\mathscr D$ is a $k$-decoration if $k_{\mathfrak r} = k$. In addition, whenever $\mathfrak{n}\in \mathcal{N}_{D}$ is a \emph{degenerate} branching node with children $\{\mathfrak{n}_c^*,\mathfrak{n}_2,\mathfrak{n}_c\}$, we impose the \emph{enhanced} constraints:
\[
k_{\mathfrak{n}_2} = k_{\mathfrak{n}_c^*}
\quad\text{and}\quad
k_{\mathfrak{n}} = k_{\mathfrak{n}_c},
\]
following the notation in Definition~\ref{def:enhanced-trees}.
We also define 
\begin{equation}
\epsilon_{\mathscr D} := \prod_{\mathfrak n \in \mathcal N} \epsilon_{k_{\mathfrak n_1}k_{\mathfrak n_2}k_{\mathfrak n_3}}. 
\end{equation}
where 
\begin{equation} \label{eq-eps}
\epsilon_{k_{\mathfrak{n}_1}k_{\mathfrak{n}_2}k_{\mathfrak{n}_3}} := \begin{cases}
-T_{k_\mathfrak{n},k_{\mathfrak{n}_1},k_{\mathfrak{n}_2},k_{\mathfrak{n}_3}} & \text{if } k_{\mathfrak{n}_1}=k_{\mathfrak{n}_2}=k_{\mathfrak{n}_3}; \\
+T_{k_\mathfrak{n},k_{\mathfrak{n}_1},k_{\mathfrak{n}_2},k_{\mathfrak{n}_3}} & \text{otherwise}.
\end{cases}
\end{equation}
\end{definition}
\begin{figure}[ht]
\centering
\begin{tikzpicture}[scale=1.2,
  level distance=1.5cm,
  level 1/.style={sibling distance=4cm},
  level 2/.style={sibling distance=1.5cm},
  level 3/.style={sibling distance=0.8cm}]
  % node styles
  \tikzset{
    solid node/.style={circle,draw,fill=black,inner sep=1.5pt},
    deckdiamond/.style={
      shape=diamond, draw=black, fill=white, minimum size=6pt, inner sep=0pt
    }
  }

  % root
  \node[solid node,label=above:{\footnotesize $(\mathfrak r,+)$}] (root) {}
    % first branching: circle
    child {
      node[solid node,label=left:{\footnotesize $(\mathfrak n_1,+)$}] {}
      child { node[solid node,label=below:{\footnotesize $(\mathfrak m_1,+)$}] {} }
      child { node[solid node,label=below:{\footnotesize $(\mathfrak m_2,-)$}] {} }
      child { node[solid node,label=below:{\footnotesize $(\mathfrak m_3,+)$}] {} }
    }
    % second branching: deck-style diamond
    child {
      node[deckdiamond,label=right:{\footnotesize $(\mathfrak n_2,-)$}] {}
      child { node[solid node,label=below:{\footnotesize $( \mathfrak{l}_c^*,-)$}] {} }
      child { node[solid node,label=below:{\footnotesize $(\mathfrak l_2,+)$}] {} }
      child { node[solid node,label=below:{\footnotesize $( \mathfrak{l}_c,-)$}] {} }
    }
    % third branching: circle
    child {
      node[solid node,label=right:{\footnotesize $(\mathfrak n_3,+)$}] {}
      child { node[solid node,label=below:{\footnotesize $(\mathfrak p_1,+)$}] {} }
      child { node[solid node,label=below:{\footnotesize $(\mathfrak p_2,-)$}] {} }
      child { node[solid node,label=below:{\footnotesize $(\mathfrak p_3,+)$}] {} }
    };
\end{tikzpicture}
\caption{An enhanced ternary tree in which the branching node $(\mathfrak n_{2},-)$ is a degenerate node, such that $k_{\mathfrak{l}_2} = k_{\mathfrak{l}_c^*}
\text{ and }
k_{\mathfrak{n}_2} = k_{\mathfrak{l}_c}$.}
\label{fig:ternary-tree-diamond}
\end{figure}
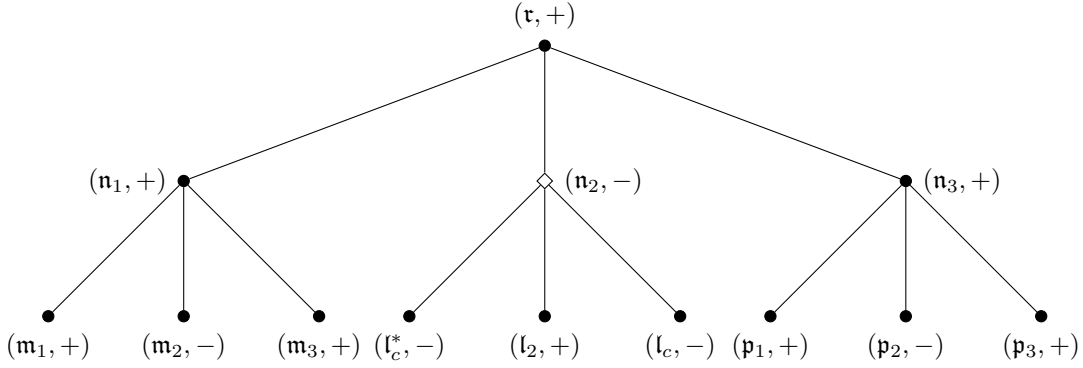
\begin{definition}{(Couples)} \label{def-couples}
A \emph{couple} $\Q$ consists of two trees $\T^+$ and $\T^-$, each labeled with opposite signs, together with a partition $\mathscr{P}$ of the set of leaves $\mathcal{L}^+ \cup \mathcal{L}^-$ into $(n+1)$ disjoint two-element subsets, where $n = n(\T^+) + n(\T^-)$ is called the \emph{order} of the couple. The partition $\mathscr{P}$ must satisfy the condition $\zeta_{\mathfrak{l}} = -\zeta_{\mathfrak{l}'}$ for every pair $\{\mathfrak{l}, \mathfrak{l}'\} \in \mathscr{P}$. For a  couple $\Q = \{\T^+, \T^-, \mathscr P\}$, we denote by $\mathcal{N} = \mathcal{N}^+ \cup \mathcal{N}^-$ the branching nodes of both trees, and by $\mathcal{L} = \mathcal{L}^+ \cup \mathcal{L}^-$ their leaves. We define
\[
\zeta(\Q) \;=\; \prod_{\mathfrak{n} \in \mathcal{N}} \bigl(-i\,\zeta_{\mathfrak{n}}\bigr).
\]
The \emph{trivial couple} is given by two trivial trees whose roots are paired.

A \emph{decoration} $\mathscr{E}$ of a couple $\Q$ is obtained by decorating each tree $\T^+$ and $\T^-$ according to $\mathscr{D}^+$ and $\mathscr{D}^-$, subject to the further requirement that $k_{\mathfrak{l}} = k_{\mathfrak{l}'}$ whenever $\{\mathfrak{l}, \mathfrak{l}'\} \in \mathscr{P}$. We define
\[
\epsilon_{\mathscr{E}} \;=\; \epsilon_{\mathscr{D}^+}\,\epsilon_{\mathscr{D}^-}.
\]
A decoration $\mathscr{E}$ is called a \emph{$k$-decoration} if $k_{\mathfrak{r}^+} = k_{\mathfrak{r}^-} = k$.
\end{definition}
\begin{definition} (Enhanced Couples) 
\label{def:enhanced-couple}
    A couple $\Q$, it is called an \textit{enhanced couple} if its partition $\mathscr P$ further satisfies that the leaves $\mathcal{L}(\mathfrak{n}_2, \mathfrak{n}_c^*)$ originated from $\{\mathfrak{n}_2, \mathfrak{n}_c^*\}$, children of any degenerate branching node $\mathfrak{n}\in \mathcal{N}_{D}$, are not completely paired together, i.e. there exist leaves $\mathfrak{l} \in \mathcal{L}(\mathfrak{n}_2, \mathfrak{n}_c^*)$ and $\mathfrak{l}' \in \mathcal{L} \setminus \mathcal{L}(\mathfrak{n}_2, \mathfrak{n}_c^*)$ such that $\{\mathfrak{l}, \mathfrak{l}'\} \in \mathscr{P}$.
\end{definition}

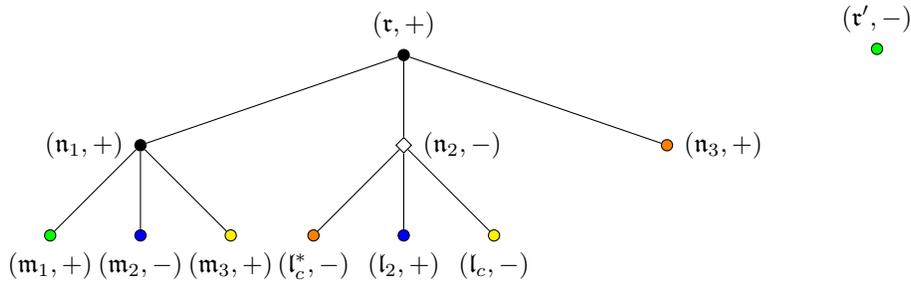
\begin{figure}[ht]
\centering
\begin{tikzpicture}[scale=1,
  baseline=(root.base),
  level distance=1.2cm,
  level 1/.style={sibling distance=3.5cm},
  level 2/.style={sibling distance=1.2cm},
  level 3/.style={sibling distance=0.5cm}]
  % node styles
  \tikzset{
    solid node/.style={circle,draw,fill=black,inner sep=1.5pt},
    deckdiamond/.style={
      shape=diamond, draw=black, fill=white, minimum size=6pt, inner sep=0pt
    }
  }

  % root
  \node[solid node,label=above:{\footnotesize $(\mathfrak r,+)$}] (root) {}
    % first branching: circle
    child {
      node[solid node,label=left:{\footnotesize $(\mathfrak n_1,+)$}] {}
      child { node[solid node, fill=green, label=below:{\footnotesize $(\mathfrak m_1,+)$}] {} }
      child { node[solid node, fill=blue,label=below:{\footnotesize $(\mathfrak m_2,-)$}] {} }
      child { node[solid node, fill=yellow,label=below:{\footnotesize $(\mathfrak m_3,+)$}] {} }
    }
    % second branching: deck-style diamond
    child {
      node[deckdiamond,label=right:{\footnotesize $(\mathfrak n_2,-)$}] {}
      child { node[solid node, fill=orange,label=below:{\footnotesize $( \mathfrak{l}_c^*,-)$}] {} }
      child { node[solid node, fill=blue,label=below:{\footnotesize $(\mathfrak l_2,+)$}] {} }
      child { node[solid node, fill=yellow,label=below:{\footnotesize $( \mathfrak{l}_c,-)$}] {} }
    }
    % third branching: circle
    child {
      node[solid node, fill=orange,label=right:{\footnotesize $(\mathfrak n_3,+)$}] {}
    };
\end{tikzpicture}
\hspace{0.5cm}
\begin{tikzpicture}[scale=1,
  level distance=1.2cm,
  level 1/.style={sibling distance=3.5cm},
  level 2/.style={sibling distance=1.2cm},
  level 3/.style={sibling distance=0.5cm}]
  % node styles
  \tikzset{
    solid node/.style={circle,draw,fill=black,inner sep=1.5pt},
    deckdiamond/.style={
      shape=diamond, draw=black, fill=white, minimum size=6pt, inner sep=0pt
    }
  }

  % root
  \node[solid node, fill=green,label=above:{\footnotesize $(\mathfrak r',-)$}] (root) {};
\end{tikzpicture}
\caption{An enhanced couple with leaves paired in the same color. Note that the leaves $\mathfrak{l}_c^*$ and $\mathfrak l_2$ cannot be paired together by Definition \ref{def:enhanced-couple}.}
\label{fig:ternary-tree-diamond-2}
\end{figure}
Then we can prove the following proposition:
\begin{proposition} \label{prop-treeexp}
For each enhanced ternary tree $\mathcal{T}$, we have the formula:
\begin{align}
c_k^{\mathcal{T}}(t)=\left(\frac{\beta T}{N}\right)^{|\mathcal{T}|}\sum_{\mathcal{D}}\prod_{\mathfrak{n}\in \mathcal{N}}(-i\zeta_{\mathfrak{n}}T_{k_\mathfrak{n},k_{\mathfrak{n}_1},k_{\mathfrak{n}_2},k_{\mathfrak{n}_3}})\mathcal{A}_{\mathcal{T}}(t,\Omega[\mathcal{N}], \widetilde{\Omega}[\Nc])\prod_{\mathfrak{l}\in \mathcal{L}}\sqrt{n_{\text{in}}(k_{\mathfrak{l}})}\mathcal{B}_{\mathcal{T}}(\eta_{\mathcal{T}}(\varrho))\label{eq-13}
\end{align}
where $\mathcal{D}$ denotes the decoration $(k_\mathfrak{n}: \mathfrak{n}\in \mathcal{T})$ such that $\mathfrak{r}=k$. $\mathcal{A}_{\mathcal{T}}(t,\Omega[\mathcal{N}], \widetilde{\Omega}[\Nc])$ and $\mathcal{B}_{\mathcal{T}}(\eta_{\mathcal{T}}(\varrho))$ are defined inductively as:
\begin{align}
\mathcal{A}_{\bullet}(t,\Omega[\mathcal{N}], \widetilde{\Omega}[\Nc])&=1,\\
\mathcal{A}_{\mathcal{T}}(t,\Omega[\mathcal{N}], \widetilde{\Omega}[\Nc])&=\int_0^te^{\zeta_{\mathfrak{r}} i(\Omega_{\mathfrak{r}}Tt'+\widetilde{\Omega}_{\mathfrak{r}}(t'))}\prod_{j=1}^3\mathcal{A}_{\mathcal{T}_j}(t',\Omega[\mathcal{N}_j], \widetilde{\Omega}[\Nc_j])dt',\\
\mathcal{B}_{\bullet}(\eta_{\bullet}(\varrho))&=\eta^{\zeta_{\mathfrak{r}}}_{\mathfrak{r}}(\varrho)\\
\mathcal{B}_{\mathcal{T}}(\eta_{\mathcal{T}}(\varrho))&=\prod_{j=1}^3\mathcal{B}_{\mathcal{T}_j}(\eta_{\mathcal{T}_j}(\varrho))-\mathbbm{1}_{\mathfrak{r}\in \mathcal{N}_D}\mathcal{B}_{\mathcal{T}_{c}}(\eta_{\mathcal{T}_{c}}(\varrho))\notag\\
&\times\mathbb{E}\left(\mathcal{B}_{\mathcal{T}_{c^*}}(\eta_{\mathcal{T}_{c^*}}(\varrho))\mathcal{B}_{\mathcal{T}_2}(\eta_{\mathcal{T}_2}(\varrho))\right).
\end{align}
where $\mathcal{T}_{c}$, $\mathcal{T}_{c^*}$, and $\mathcal{T}_2$ are trees with root node $\mathfrak{n}_c$, $\mathfrak{n}_c^*$, $\mathfrak{n}_2$ respectively, which are children of degenerate node $\mathfrak{r}\in \mathcal{N}_D$.
\end{proposition}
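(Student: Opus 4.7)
The plan is to proceed by strong induction on the scale $|\mathcal{T}|$. The base case $|\mathcal{T}|=0$ is immediate: $\mathcal{T}=\bullet$ has no branching nodes, a unique leaf $\mathfrak{r}$, and the only decoration is $k_\mathfrak{r}=k$, so all three products on the RHS collapse, $\mathcal{A}_\bullet=1$, and $\mathcal{B}_\bullet=\eta_k^{\zeta_\mathfrak{r}}(\varrho)$, recovering $c_k^{\bullet}(t)=\sqrt{n_{\text{in}}(k)}\,\eta_k(\varrho)$ from \eqref{eq-11}. For the inductive step, fix an enhanced tree $\mathcal{T}$ with enhanced subtrees $\mathcal{T}_1,\mathcal{T}_2,\mathcal{T}_3$ inheriting their enhancements from $\mathcal{T}$, assume the formula for each $\mathcal{T}_i$, and split into two cases according to whether the root $\mathfrak{r}$ lies in $\mathcal{N}_D$.

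In \textbf{Case A} ($\mathfrak{r}\notin\mathcal{N}_D$), the only relevant contribution to $c_k^{\mathcal{T}}$ from \eqref{eq-12} is the non-degenerate sum $\sum_{k_1-k_2+k_3=k,\,k_1,k_3\neq k_2}T_{k,1,2,3}\,c_{k_1}^{\mathcal{T}_1}\,c_{k_2}^{\mathcal{T}_2 *}\,c_{k_3}^{\mathcal{T}_3}\,e^{i(\Omega_k Ts+\widetilde{\Omega}_k(s))}$ integrated in $s$. Substituting the inductive expressions for $c_{k_i}^{\mathcal{T}_i}$, the three factors $(\beta T/N)^{|\mathcal{T}_i|}$ combine with the outer $\beta T/N$ to give $(\beta T/N)^{|\mathcal{T}|}$ since $|\mathcal{T}|=1+\sum_j|\mathcal{T}_j|$; the nested decoration sums $\sum_{\mathcal{D}_i}$ together with the root momentum constraint $k_1-k_2+k_3=k$ (equivalently $\zeta_\mathfrak{r}k_\mathfrak{r}=\sum_j\zeta_{\mathfrak{n}_j}k_{\mathfrak{n}_j}$) collapse into a single decoration sum $\sum_\mathcal{D}$ over $\mathcal{T}$; the three $T$-products absorb the additional root factor $-i\zeta_\mathfrak{r}T_{k,k_1,k_2,k_3}$; the three $\mathcal{A}_{\mathcal{T}_i}$'s combine with the outer $\int_0^t ds$ and the phase $e^{i(\Omega_k Ts+\widetilde{\Omega}_k(s))}$ to reproduce $\mathcal{A}_\mathcal{T}$ via its recursive definition; and $\prod_j\mathcal{B}_{\mathcal{T}_j}=\mathcal{B}_\mathcal{T}$ with no subtraction, consistent with $\mathfrak{r}\notin\mathcal{N}_D$.

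In \textbf{Case B} ($\mathfrak{r}\in\mathcal{N}_D$), the enhanced constraints $k_{\mathfrak{n}_2}=k_{\mathfrak{n}_c^*}$ and $k_\mathfrak{n}=k_{\mathfrak{n}_c}$ force exactly the degenerate pattern picked out by one of the two degenerate terms in \eqref{eq-12}; the choice of $\mathfrak{n}_c^*\in\{\mathfrak{n}_1,\mathfrak{n}_3\}$ selects which one. Without loss of generality take $\mathfrak{n}_c^*=\mathfrak{n}_1$, so $\mathcal{T}_{c^*}=\mathcal{T}_1$ and $\mathcal{T}_c=\mathcal{T}_3$, and the only contribution is $-i\frac{\beta T}{N}\int_0^t\sum_{k_1}T_{k,1,1,k}\bigl(c_{k_1}^{\mathcal{T}_1}c_{k_1}^{\mathcal{T}_2 *}-\mathbb{E}\bigl[c_{k_1}^{\mathcal{T}_1}c_{k_1}^{\mathcal{T}_2 *}\bigr]\bigr)c_k^{\mathcal{T}_3}\,ds$. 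Since $k_{\mathfrak{n}_2}=k_{\mathfrak{n}_1}$ and $k_\mathfrak{r}=k_{\mathfrak{n}_3}$, a direct computation yields $\Omega_\mathfrak{r}=\widetilde{\Omega}_\mathfrak{r}=0$, so the trivial phase factor is consistent with the absence of an oscillatory exponential in the degenerate term and with the recursive $\mathcal{A}_\mathcal{T}$ formula. Applying the inductive hypothesis and distributing, the product part produces $\prod_j\mathcal{B}_{\mathcal{T}_j}$ while the subtracted expectation contributes $-\mathcal{B}_{\mathcal{T}_3}\mathbb{E}[\mathcal{B}_{\mathcal{T}_1}\mathcal{B}_{\mathcal{T}_2}]=-\mathcal{B}_{\mathcal{T}_c}\mathbb{E}[\mathcal{B}_{\mathcal{T}_{c^*}}\mathcal{B}_{\mathcal{T}_2}]$, precisely matching the recursive definition of $\mathcal{B}_\mathcal{T}$ at a degenerate root.

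The main bookkeeping difficulty will be handling signs and complex conjugation consistently across subtrees of opposite signs---verifying that $c^{\mathcal{T}_2 *}$ produces leaf factors $\eta^{\zeta_\mathfrak{l}}$ with the correct exponents $\zeta_\mathfrak{l}$ propagated from $\zeta_{\mathfrak{n}_2}=-\zeta_\mathfrak{r}$, and that the $\mathcal{A}_\mathcal{T}$-recursion accommodates the factor $\zeta_\mathfrak{r}$ in its exponential. A secondary technical point is to verify that the argument for $\mathfrak{n}_c^*=\mathfrak{n}_3$ (governed by the third term in \eqref{eq-12}) is verbatim symmetric via the swap $\mathcal{T}_1\leftrightarrow\mathcal{T}_3$, so that the two enhancement choices at a degenerate node cover the two degenerate integrals in \eqref{eq-12} without overlap.
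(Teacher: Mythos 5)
Your proof is correct and proceeds in essentially the same way as the paper's: induction on the scale $|\mathcal{T}|$, with the inductive step handled by substituting the child expansions into \eqref{eq-12} and matching the recursive definitions of $\mathcal{A}_\mathcal{T}$ and $\mathcal{B}_\mathcal{T}$. The paper writes the two cases as a single formula with indicators $\mathbbm{1}_{\mathfrak{r}\notin\mathcal{N}_D}$ and $\mathbbm{1}_{\mathfrak{r}\in\mathcal{N}_D}$ rather than the explicit case split you use, but the content is identical; your observation that the enhanced constraints at a degenerate root force $\Omega_{\mathfrak{r}}=\widetilde{\Omega}_{\mathfrak{r}}=0$, so that the trivial phase factor in the degenerate term of \eqref{eq-12} is consistent with the $\mathcal{A}_\mathcal{T}$-recursion, is a detail left implicit in the paper and is a worthwhile clarification.
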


\begin{proof}
We now prove (\ref{eq-13}) inductively. From (\ref{eq-11}-\ref{eq-12}), (\ref{eq-13}) is true for $|\mathcal{T}|=0,1$. Suppose (\ref{eq-13}) is true for $|\mathcal{T}|\le n-1$. Then for any ternary tree $\mathcal{T}$ with scale $n$, its children subtrees $\mathcal{T}_1,\mathcal{T}_2$, and $\mathcal{T}_3$ with sign $(+,-,+)$ have scales of at most $(n-1)$. In this setting $\mathcal{T}_2^*=\mathcal{T}_2$ and $c_{k_2}^{{\mathcal{T}_2}^*}(t)=c_{k_2}^{\mathcal{T}_2}(t)$. Then for $i \in \{1,2,3\}$, we have:
\begin{align}
c_{k_i}^{\mathcal{T}_i}(t)=\left(\frac{\beta T}{N}\right)^{|\mathcal{T}_i|}\sum_{\mathcal{D}_i}\prod_{\mathfrak{n}\in \mathcal{N}_i}(-i\zeta_{\mathfrak{n}}T_{k_\mathfrak{n},k_{\mathfrak{n}_1},k_{\mathfrak{n}_2},k_{\mathfrak{n}_3}})\mathcal{A}_{\mathcal{T}_i}(t,\Omega[\mathcal{N}_i], \widetilde{\Omega}[\Nc_i])\prod_{\mathfrak{l}\in \mathcal{L}_i}\sqrt{n_{\text{in}}(k_{\mathfrak{l}})}\mathcal{B}_{\mathcal{T}_i}(\eta_{\mathcal{T}_i}(\varrho)),\label{eq-14}
\end{align}
where $\mathcal{N}=\cup_i\mathcal{N}_i\cup\{\mathfrak{r}\}$, $\mathcal{L}=\cup_i\mathcal{L}_i$. We also have $\mathcal{N}_D\cup\{\mathfrak{r}\}=\cup_i\mathcal{N}^{(i)}_D\cup\{\mathfrak{r}\}$, where $\mathcal{N}^{(i)}_D$ is the set of degenerate branching nodes for $\mathcal{T}_i$.
 Then plugging in (\ref{eq-14}) to (\ref{eq-12}), we can get:
\begin{align}
c_{k}^{\mathcal{T}}(t)=&-i\zeta_{\mathfrak{r}}\left(\frac{\beta T}{N}\right)^{1+\sum_{i=1}^3|\mathcal{T}_i|}\int_0^t\left\{\mathbbm{1}_{\mathfrak{r}\notin \mathcal{N}_D}\sum_{\substack{k_1-k_2+k_3=k\\k_1,k_3 \neq k_2}}T_{k,1,2,3}\prod_{i=1}^3\left[\sum_{\mathcal{D}_i}\prod_{\mathfrak{n}\in \mathcal{N}_i}(-i\zeta_{\mathfrak{n}}T_{k_\mathfrak{n},k_{\mathfrak{n}_1},k_{\mathfrak{n}_2},k_{\mathfrak{n}_3}})\right.\right.\notag\\
&\times \left.\mathcal{A}_{\mathcal{T}_i}(t,\Omega[\mathcal{N}_i], \widetilde{\Omega}[\Nc_i])\prod_{\mathfrak{l}\in \mathcal{L}_i}\sqrt{n_{\text{in}}(k_{\mathfrak{l}})}\mathcal{B}_{\mathcal{T}_i}(\eta_{\mathcal{T}_i}(\varrho))\right]e^{i\left(\Omega_{k} Ts+\widetilde \Omega_k(s)\right)}\notag\\
&+\mathbbm{1}_{\mathfrak{r}\in \mathcal{N}_D}\sum_{k_2}T_{k,k,2,2}\prod_{i=1}^3\left[\sum_{\mathcal{D}_i}\prod_{\mathfrak{n}\in \mathcal{N}_i}(-i\zeta_{\mathfrak{n}}T_{k_\mathfrak{n},k_{\mathfrak{n}_1},k_{\mathfrak{n}_2},k_{\mathfrak{n}_3}})\mathcal{A}_{\mathcal{T}_i}(t,\Omega[\mathcal{N}_i], \widetilde{\Omega}[\Nc_i])\prod_{\mathfrak{l}\in \mathcal{L}_i}\sqrt{n_{\text{in}}(k_{\mathfrak{l}})}\right]\notag\\
&\times \left.\left[\prod_{j=1}^3\mathcal{B}_{\mathcal{T}_j}(\eta_{\mathcal{T}_j}(\varrho))-\mathcal{B}_{\mathcal{T}_{c}}(\eta_{\mathcal{T}_{c}}(\varrho))\mathbb{E}\left(\mathcal{B}_{\mathcal{T}_{c^*}}(\eta_{\mathcal{T}_{c^*}}(\varrho))\mathcal{B}_{\mathcal{T}_2}(\eta_{\mathcal{T}_2}(\varrho))\right)\right]\vphantom{\sum_{\substack{k_1-k_2+k_3=k\\k_1,k_3 \neq k_2}}}\right\}ds \label{eq-15}
\end{align}
After simplification, we find (\ref{eq-15}) recovers (\ref{eq-13}).
\end{proof}
\begin{definition}
    For any ternary tree $\mathcal{T}$ and decoration $(k_\mathfrak{n}: \mathfrak{n}\in \mathcal{T})$, we fix $d_{\mathfrak{n}}\in \{0,1\}$ for each $\mathfrak{n}\in \mathcal{N}$, and we define $q_{\mathfrak{n}}$ for each $\mathfrak{n}\in \mathcal{T}$ inductively by:
\begin{align}
    q_{\mathfrak{n}} &= 0 \text{ if } \mathfrak{n}\in \mathcal{L}\text{; } q_{\mathfrak{n}} = d_{\mathfrak{l}}q_{\mathfrak{l}}- d_{\mathfrak{n}_2}q_{\mathfrak{n}_2}+d_{\mathfrak{n}_3}q_{\mathfrak{n}_3}+\Omega_{\mathfrak{n}} \text{ if }\mathfrak{n}\in \mathcal{N}.
\end{align}
\end{definition}
Also, we include here the needed estimates and expansion of $A(t)$:
\begin{align}
\dot{A}(t) &= \frac{3\beta T}{2\kappa^2N}\sum_{|\mathcal{T}_1|,|\mathcal{T}_2|\le n}\sum_{l}\omega_l\mathbb{E}\left[c_{l}^{\mathcal{T}_1}(t){c_{l}^{\mathcal{T}_2}}^*(t)\right]\notag\\
&=\frac{3\beta T}{2\kappa^2N}\left[\sum_{\substack{|\mathcal{T}_1|=0\\|\mathcal{T}_2|=0}}\sum_{l}\omega_l\mathbb{E}\left[c_{l}^{\mathcal{T}_1}(t){c_{l}^{\mathcal{T}_2}}^*(t)\right]+\sum_{|\mathcal{T}_1|+|\mathcal{T}_2| \geq 1}\sum_{l}\omega_l\mathbb{E}\left[c_{l}^{\mathcal{T}_1}(t){c_{l}^{\mathcal{T}_2}}^*(t)\right]\right]\notag\\
&=\frac{3\beta T}{2\kappa^2N}\left[\sum_{l}\omega_ln_{\text{in}}(l)+\sum_{|\mathcal{T}_1|+|\mathcal{T}_2| \geq 1}\sum_{l}\omega_l\mathbb{E}\left[c_{l}^{\mathcal{T}_1}(t){c_{l}^{\mathcal{T}_2}}^*(t)\right]\right]\\
&=:C_0\beta T+\dot{A}_{\geq}(t).
\end{align}
Then we can express $A(t)$ as: $A(t)=C_0\beta Tt+{A}_{\geq}(t)$.
\begin{definition} \label{couple expression}
For an enhanced couple $\Q$, define
\begin{align}\label{eq-KQ}
(\K_\Q)(t,s,k)  &:= \left( \frac{\beta T}{N}\right)^n \,\zeta(\Q)\,
\sum_{\mathscr E} \epsilon_{\mathscr E}
\int_{\mathcal{E}}\prod_{\mathfrak{n} \in \mathcal{N}}e^{\zeta_{\mathfrak{n}}i(\Omega_{\mathfrak{n}}Tt_{\mathfrak{n}}+\widetilde{\Omega}_{\mathfrak{n}}(t_{\mathfrak{n}}))} d t_{\mathfrak{n}}
\;\prod_{\mathfrak{l} \in \mathcal{L}}^{+} n_{\mathrm{in}}\bigl(k_{\mathfrak{l}}\bigr)\notag\\
&\hspace{6cm}\times \E \left[\mathcal{B}_{\mathcal{T}^+}(\eta_{\mathcal{T}^+}(\varrho))\mathcal{B}^*_{\mathcal{T}^-}(\eta_{\mathcal{T}^-}(\varrho))\right],
\end{align}
where the sum is taken over all $k$-decorations $\mathscr{E}$ of $\Q$, the product 
$\prod_{\mathfrak{l} \in \mathcal{L}}^{+}$ is taken over leaves with $+$ signs, and
\begin{align}\label{eq-E}
\mathcal{E} &= \Bigl\{\,t[\mathcal{N}] : 0 < t_{\mathfrak{n}'} < t_{\mathfrak{n}}
  \text{ whenever $\mathfrak{n}'$ is a child of $\mathfrak{n}$}; \\
&\qquad\qquad\quad t_{\mathfrak{n}} < t \text{ for } \mathfrak{n} \in \mathcal{N}^{+},
  \text{ and } t_{\mathfrak{n}} < s \text{ for } \mathfrak{n} \in \mathcal{N}^{-}
\Bigr\}, \nonumber
\end{align}
where $t[\mathcal{N}]$ denotes the time vector $(t_\mathfrak{n})_{\mathfrak{n} \in \mathcal{N}}$.
\end{definition}
\begin{lemma}{(Complex Isserlis' Theorem)}
Given $k_j \in \Z_N \cap (0,1)$ and $\{\zeta_j\}$ for $1 \leq j \leq n$, then 
\begin{equation} \label{eq-isserlis}
\E \left[\prod_{j = 1}^n \eta_{k_j}^{\zeta_j}(\varrho) \right] = \sum_{\mathscr P} \prod_{\{j, j'\} \in \mathscr P} \boldsymbol{1}_{k_j = k_{j'}}.
\end{equation}
where $\eta_{k}^{+}=\eta_{k}$ and $\eta_{k}^{-}=\overline{\eta_{k}}$, and the summation is taken over all partitions $\mathscr P$ of $1, 2, \ldots, n$ into two-element subsets $\{j, j'\}$ such that $\zeta_j = -\zeta_{j'}$.
\end{lemma}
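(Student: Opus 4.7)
The plan is to prove this as a complex Wick/Isserlis identity: reduce via independence to single-site moments, evaluate each single-site moment using rotational symmetry, and then match against the combinatorial pair-partition sum on the right-hand side of \eqref{eq-isserlis}.

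First I would group the indices $\{1,\dots,n\}$ according to the value of $k_j$. Setting $I_\xi = \{j : k_j = \xi\}$ and using the independence of $\{\eta_\xi\}$ across distinct $\xi$, the expectation factorizes as
$$\mathbb{E}\Bigl[\prod_{j=1}^n \eta_{k_j}^{\zeta_j}\Bigr] \;=\; \prod_{\xi} \mathbb{E}\bigl[\eta_\xi^{a_\xi}\bar\eta_\xi^{\,b_\xi}\bigr],$$
where $a_\xi, b_\xi$ count the number of $+$ and $-$ signs in $\{\zeta_j : j\in I_\xi\}$. A sign-respecting pair partition $\mathscr P$ contributes $1$ to the right-hand side of \eqref{eq-isserlis} only when every pair is contained in some single $I_\xi$, so the right-hand side likewise factorizes as $\prod_\xi M_\xi$, where $M_\xi$ is the number of perfect matchings of $I_\xi$ into opposite-sign pairs. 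It therefore suffices to establish the single-site identity $\mathbb{E}[\eta_\xi^a\bar\eta_\xi^{\,b}] = M(a,b)$ for a single $\xi$.

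Second, for the single-site moment I would invoke the rotational symmetry shared by both distributions: since $e^{i\theta}\eta_\xi \stackrel{d}{=} \eta_\xi$ in either case, the identity $\mathbb{E}[\eta_\xi^a\bar\eta_\xi^{\,b}] = e^{i(a-b)\theta}\mathbb{E}[\eta_\xi^a\bar\eta_\xi^{\,b}]$ forces vanishing unless $a=b$. When $a=b=m$, the polar decomposition $\eta_\xi = R\,e^{i\Theta}$ with $\Theta \sim \mathrm{Unif}[0,2\pi)$ independent of $R = |\eta_\xi|$ gives $\mathbb{E}[\eta_\xi^m\bar\eta_\xi^{\,m}] = \mathbb{E}[R^{2m}]$, which equals $m!$ in the Gaussian case (since $R^2$ is standard Exponential) and $1$ in the uniform-phase case. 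On the combinatorial side, $M(m,m) = m!$ counts bijections between the $+$-indices and $-$-indices of $I_\xi$, which matches the Gaussian moment on the nose; for the uniform-phase case the identity is applied only in the regime $|I_\xi|\le 2$ that is forced by the enhanced couple structure in the subsequent Feynman diagram analysis, where both moments equal $1$.

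The main point to verify — and essentially the only non-routine step — is the matching between the sign constraint $\zeta_j = -\zeta_{j'}$ built into the pair partitions $\mathscr P$ and the rotational-symmetry obstruction that kills unbalanced moments site by site. Once this correspondence is observed, the proof reduces to independence plus one elementary moment computation, and no genuine obstacle remains.
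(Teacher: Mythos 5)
Your proposal follows essentially the same path as the paper's proof: group the indices by the underlying wavenumber, use independence to factorize the expectation into single-site moments, and then match the single-site moment $\mathbb{E}[\eta^a\bar\eta^b]$ against the number of opposite-sign perfect matchings. Where you differ is that the paper simply asserts the single-site moment formula $\delta_{a,b}\,a!$, while you derive it from rotational invariance (to kill unbalanced moments) plus the polar decomposition $\eta = Re^{i\Theta}$ with $\mathbb{E}[R^{2m}]=m!$; this is a more transparent and self-contained derivation of the same thing.

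Your parenthetical observation about the uniform-phase case is a genuinely sharper point than what the paper records. If $\eta_\xi$ is uniform on the unit circle, then $\mathbb{E}[\eta_\xi^m\bar\eta_\xi^m] = 1$, whereas the pair-partition count $M(m,m)=m!$; so the identity \eqref{eq-isserlis} actually \emph{fails} for the uniform-phase choice of initial data whenever some wavenumber appears with multiplicity $\geq 2$ in each sign. The paper's proof writes the moment as $(p_1^+)!\cdots(p_r^+)!$, which is the Gaussian formula, and does not flag the discrepancy. You correctly note that in the diagrammatic applications only multiplicity-$\leq 2$ sites arise, where both moments coincide, so the usage is unaffected — but the lemma as stated should really carry a hypothesis (Gaussian data, or bounded multiplicity) to be unconditionally true. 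This is a legitimate refinement, not a gap in your argument.
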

\begin{proof}
Here, we repeat the proof in \cite{WKE}. Let $\{k^{(1)},\dots,k^{(r)}\}$ be the collection of distinct vectors in
$\{k_j : 1 \le j \le n\}$.  
For every $1\le i\le r$ and $\zeta\in\{\pm\}$ define
\[
  p_i^\zeta
  \;=\;
  \bigl|\{\,j : k_j = k^{(i)},\; \zeta_j = \zeta\,\}\bigr|.
\]
With this notation we can rewrite the joint moment as  
\begin{equation*}
\E \left[\prod_{j = 1}^n \eta_{k_j}^{\zeta_j}(\varrho) \right] = \E \left[ \prod_{i = 1}^r \eta_{k^{i}}^{p_i^+}\overline{\eta_{k^{i}}} ^{p_i^-} \right] = \begin{cases}
(p_1^+)! \ldots (p_r^+)! & \textrm{if } p_i^+ = p_i^- \textrm{ for all $i$,} \\
0 & \mathrm{otherwise}. 
\end{cases}
\end{equation*}
For fixed $\mathscr P$ in \eqref{eq-isserlis} the product is 0 or 1. If there is $i$ such that $p_i^+ \neq p_i^-$, there is no partition $\mathscr P$. Otherwise, the product is 1 if and only if each pair $\{j,j'\} \in \mathscr P$ satisfies $\zeta_j = -\zeta_{j'}$, and  $k_j = k_{j'} = k^{(i)}$ for some $i$. So, the number of such $\mathscr P$ is $(p_1^+)! \ldots (p_r^+)!$. 
\end{proof}
Note that by Isserlis' Theorem, we have:
\begin{equation}
\E \left(c^{\T^+}_k(t)\overline{c^{\T^-}_k(s)}\right) = \sum_{\mathscr P} (\K_{\Q})(t,s,k). 
\end{equation}
\subsection{Molecules}\label{sec-mol}
\begin{definition}[Molecules]\label{def-molecules}
A \textit{molecule} $\mathbb{M}$ is a directed graph whose vertices are called \textit{atoms} and edges are called \textit{bonds}. Multiple bonds between the same pair of atoms and self-loops are allowed. Each atom has out-degree at most 2 and in-degree at most 2. We write $v \in \mathbb{M}$ for an atom $v$ in $\mathbb{M}$, and $\ell \in \mathbb{M}$ for a bond $\ell$ in $\mathbb{M}$. If $v$ is an endpoint of $\ell$, we write $\ell \sim v$. For each such pair $(v,\ell)$, define
\[
\zeta_{v,\ell} = \begin{cases}
1 & \text{if $\ell$ is outgoing from $v$,}\\
-1 & \text{if $\ell$ is incoming to $v$.}
\end{cases}
\]
We further require that $\mathbb{M}$ does not have any connected components consisting only of atoms each having degree 4 (where the degree is considered in the undirected sense).

For a molecule $\mathbb{M}$, define the quantity
\[
\chi := E - V + F,
\]
where $E$ is the number of bonds, $V$ is the number of atoms, and $F$ is the number of connected components.

An \textit{atomic group} in $\mathbb{M}$ is a subset of atoms together with all bonds connecting those atoms. A single bond $\ell$ is called a \textit{bridge} if removing it increases the number of connected components by one.
\end{definition}

\begin{definition}[Molecules of couples]\label{couple-molecule}
Let $\Q$ be a nontrivial couple. We define the corresponding \textit{molecule} $\mathbb{M} = \mathbb{M}(\Q)$ as follows. The atoms of $\mathbb{M}$ correspond to the branching nodes $\mathfrak{n} \in \mathcal{N}$ of $\Q$. For any two branching nodes $\mathfrak{n}_1, \mathfrak{n}_2$, we draw a bond between the corresponding atoms $v_1, v_2$ if:
\begin{enumerate}
\item One of $\mathfrak{n}_1, \mathfrak{n}_2$ is a parent of the other (Parent-Child pair), or
\item A child of $\mathfrak{n}_1$ is paired with a child of $\mathfrak{n}_2$ as leaves (Leaf Pair).
\end{enumerate}

We may form a \textit{labeled molecule} by labeling each bond. For a Parent-Child (PC) bond, we label the bond as ``PC'' and place a ``P'' at the atom corresponding to the parent branching node and a ``C'' at the atom corresponding to the child branching node. For a Leaf Pair (LP) bond, we label the bond as ``LP''.

The direction of each bond is determined as follows:
\begin{enumerate}[label=(\roman*)]
\item For an LP bond, the bond is directed away from the atom whose corresponding child in the couple has sign ``$-$'' and towards the atom whose corresponding child has sign ``$+$''.
\item For a PC bond, if the child atom corresponds to a branching node with sign ``$-$'', then the direction goes from the ``P'' atom to the ``C'' atom. If the child atom corresponds to a branching node with sign ``$+$'', the direction is reversed.
\end{enumerate}

For any atom $v \in \mathbb{M}(\Q)$, let $\mathfrak{n} = \mathfrak{n}(v)$ be its corresponding branching node in $\Q$. For any bond $\ell \sim v$, define $\mathfrak{m} = \mathfrak{m}(v,\ell)$ by:
\begin{enumerate}[label=(\roman*)]
\item If $\ell$ is PC and $v$ is labeled $C$, then $\mathfrak{m} = \mathfrak{n}$.
\item If $\ell$ is PC and $v$ is labeled $P$, then $\mathfrak{m}$ is the branching node corresponding to the other endpoint of $\ell$.
\item If $\ell$ is LP, then $\mathfrak{m}$ is the leaf (from the leaf pair defining $\ell$) that is a child of $\mathfrak{n}$.
\end{enumerate}
\end{definition}
\begin{proposition}[Correspondence between Molecules and Couples]
\label{prop:molecules-couples}
We summarize the relationship between nontrivial couples and molecules as follows:
\begin{enumerate}
\item For any nontrivial couple $\mathcal{Q}$ of order $n$, the construction in Definition~\ref{couple-molecule} yields a connected \emph{molecule} $\mathbb{M}(\mathcal{Q})$.  This molecule $\mathbb{M}$ has exactly $n$ atoms, $2n-1$ bonds, and either two atoms of degree $3$, or a single atom of degree $2$, with all remaining atoms having degree $4$.
\item Conversely, given any molecule $\mathbb{M}$ with $n$ atoms (as in Definition~\ref{def-molecules}), there are at most $C^n$ different couples $\mathcal{Q}$ (if any) for which $\mathbb{M}(\mathcal{Q})$ is exactly $\mathbb{M}$.
\end{enumerate}
\end{proposition}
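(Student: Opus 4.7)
The plan is to treat the two parts separately: (1) by a direct combinatorial bookkeeping of half-bonds at each atom (together with a signed-leaf identity for connectedness), and (2) by a local reconstruction argument that bounds the number of couples over any fixed molecule atom-by-atom.

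For (1), my first step will be to observe that Definition~\ref{couple-molecule} puts atoms in bijection with branching nodes, giving $|\mathcal{N}^+|+|\mathcal{N}^-|=n$ atoms. I will then count bonds by tallying ``half-bonds'' at each atom. Each branching node supplies three child-slots, and a non-root branching node also supplies one parent-slot. Every child-slot matches either the parent-slot of an adjacent branching node (producing a PC bond) or the child-slot of another branching node whose leaf-child is its partner under $\mathscr{P}$ (producing an LP bond). A slot fails to match only when one of $\T^\pm$ is trivial and the trivial root pairs with a leaf $\mathfrak{l}\in\T^+$; then the child-slot at the parent $\mathfrak{p}$ of $\mathfrak{l}$ remains dangling. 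A short count shows the total number of matched half-bonds is $4n-2$ in every case, yielding $2n-1$ bonds. The degree information then follows: when both $\T^\pm$ are nontrivial, the two roots have degree $3$ and all other atoms have degree $4$; in the trivial-tree case I will split on whether $\mathfrak{p}\neq\mathfrak{r}^+$ (producing two atoms of degree $3$) or $\mathfrak{p}=\mathfrak{r}^+$ (producing a single atom of degree $2$, because the two remaining children of $\mathfrak{r}^+$ share a sign and must pair to one another, making a self-loop of weight $2$).

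For connectedness, I will first note that inside each nontrivial tree the PC bonds exactly recover the parent-child relation on $\mathcal{N}^\pm$, which is connected. To bridge $\T^+$ and $\T^-$ I will use the signed-leaf identity
\[
\#\{+\text{ leaves of }\T\}-\#\{-\text{ leaves of }\T\}=\zeta_{\mathfrak{r}},
\]
proved by a one-line induction on scale from the sign rule in Definition~\ref{def:Trees}(iii). Since the roots of $\T^\pm$ carry opposite signs, a partition $\mathscr{P}$ consisting only of same-tree pairs would force zero signed-leaf-count in each tree, a contradiction; hence at least one LP bond spans the two trees and the molecule is connected. In the trivial-tree case connectedness is immediate from the PC structure of the single nontrivial tree.

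For (2), I will run a bounded local reconstruction. Fix a molecule $\mathbb{M}$ with $n$ atoms. To determine a couple $\Q$ with $\mathbb{M}(\Q)=\mathbb{M}$, I must specify, at each atom $v$: (a) which incident bond (if any) plays the role of parent-slot, at most $\deg(v)+1\le 5$ choices; (b) an ordering of the remaining up-to-three slots as $1$st/$2$nd/$3$rd child, at most $3!$ choices; (c) the tree membership and sign of $v$; (d) if $v$ is degenerate, the distinguished same-sign child $\mathfrak{n}_c^*$. Each local choice is bounded uniformly in $n$, so the total number of records is at most $C_0^n$ for some absolute constant $C_0$, with only an $O(n)$ overhead for the possible trivial tree and the pairing of its root. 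Inconsistent records do not yield a valid couple and contribute zero, so the number of couples over $\mathbb{M}$ is at most $C^n$ as claimed. The step I expect to be most delicate is the case analysis in (1), especially the trivial-tree subcase $\mathfrak{p}=\mathfrak{r}^+$, where the generic degree-$3$/degree-$4$ pattern collapses to a single degree-$2$ atom via a self-loop; one must verify that the two remaining same-sign children of $\mathfrak{r}^+$ are forced to pair to one another so that the self-loop actually arises. Once this case is settled, the rest of the bookkeeping is routine.
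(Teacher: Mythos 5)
Your bond count, degree case analysis, and the local reconstruction for part (2) are all structurally sound, and your connectedness argument via the signed-leaf identity $\#\{+\text{ leaves}\}-\#\{-\text{ leaves}\}=\zeta_{\mathfrak r}$ is a genuinely different route from the paper's. The paper argues by a parity and degree-sum count: each connected component has an even degree sum bounded by $4n_i$, the global total is $4n-2$, so a disconnection would force one component to consist entirely of degree-$4$ atoms, contradicting the fact that every component contains one of the two tree roots (each of degree $<4$). Your signed-leaf identity is more direct and explicit, since it exhibits an actual cross-tree LP bond joining the two PC-connected pieces rather than deriving a contradiction from a degree tally; it is a perfectly good, arguably cleaner, alternative.

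However, the step you flagged as delicate is in fact wrong as stated. In the subcase $\mathfrak p=\mathfrak r^+$, the leaf $\mathfrak l$ paired with the trivial root must carry sign $\zeta_{\mathfrak r^+}$ (opposite to the trivial root's sign $-\zeta_{\mathfrak r^+}$), so $\mathfrak l$ is the first or third child of $\mathfrak r^+$; the two remaining children of $\mathfrak r^+$ therefore carry \emph{opposite} signs, not the same sign, and for $n>1$ they (or the leaves of their subtrees) are free to pair with leaves deeper in the tree, so no self-loop need appear. The degree-$2$ conclusion is nonetheless correct, and it does not rely on a self-loop at all: the child-slot of $\mathfrak r^+$ corresponding to $\mathfrak l$ carries no bond (its partner is the trivial root, which is not a child of any branching node), the other two child-slots each carry exactly one bond endpoint, and $\mathfrak r^+$ has no parent-slot; hence $\deg(\mathfrak r^+)=2$ regardless of how those two bonds are routed. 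Replace the forced-self-loop argument with this dangling-slot observation and the case analysis closes cleanly.
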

\begin{proof}
We repeat the relevant parts of the proofs of Proposition 9.2, 9.4, and 9.6 of \cite{WKE} here.
\noindent{(a)}\;
Let $\Q$ be a molecule of order $n\ge 1$ and write $\M=\M(\Q)$ for its associated molecular graph.  
Because each branching node of $\Q$ corresponds to exactly one atom of $\M$, the graph $\M$ contains $n$ atoms.  
Likewise, every non-root branching node gives one bond and every paired leaf contributes one further bond, for a total of  
\[
2n-1=(n-2)+(n+1)
\]
bonds in $\M$.  

A priori, $\M$ might decompose into two connected components; each component must contain at least one atom associated with a branching {\it root} whose degree is strictly smaller than~4.  Suppose a component contains $n_1$ atoms.  Since the sum of degrees in any finite graph is even and each atom has degree at most~4, that sum is bounded by $4n_1$.  For the whole graph $\M$ the degree sum equals $4n-2$.  If $\M$ were disconnected, one component would necessarily have all of its atoms of degree~4, contradicting the preceding observation.  Hence $\M$ is connected.

\noindent{(b)}\;
A mere labeling of the atoms of $\M$ does not yet specify the couple $\Q$; one also has to record the order of the corresponding nodes.  Observe that every bond $\ell\sim v$ of $\M$ corresponds to a unique node of $\Q$.  We encode that correspondence by assigning to the pair $(v,\ell)$ a \emph{code} that tells where the node sits inside its atom:  
\[
0 \text{ for the parent},\quad 1,2,3 \text{ for the left, middle, right child}.
\]
For a fixed molecular graph $\M$ there are at most $C^{n}$ distinct ways to carry out this coding, with $C$ a universal constant.

To see that such a coding reconstructs $\Q$ uniquely, note first that atoms of $\M$ bijectively represent branching nodes of $\Q$.  If two atoms $v_1,v_2\in\M$ are joined by a bond~$\ell$ and the codes of $(v_1,\ell)$ and $(v_2,\ell)$ are $0$ and $j$, then $v_2$ is the $j$-th child of $v_1$ in the couple.  Conversely, if their codes are $j$ and $k$ with $j,k\in\{1,2,3\}$, the $j$-th child of the node represented by $v_1$ is paired with the $k$-th child of the node represented by $v_2$.  Hence the entire set of codes fixes all parent–child relations and all leaf pairings, and therefore determines the couple $\Q$ uniquely.
\end{proof}
\begin{definition}[Decorations of Molecules]
\label{def:decorations-molecules}
Let $\mathbb{M}$ be a molecule. For each atom (vertex) $v \in \mathbb{M}$, we fix $k_v \in \Z_N\cap (0,1)$, with the condition $k_v = 0$ if $v$ has degree 4, and $\alpha_v \in \mathbb{R}$.

A \emph{$(k_v,\alpha_v)$-decoration} of $\mathbb{M}$ assigns an integer $k_\ell \in \Z_N\cap (0,1)$ to each bond $\ell \in \mathbb{M}$, in such a way that for each atom $v$,
\[
\sum_{\ell \sim v} \zeta_{v,\ell} \,k_\ell \;=\; k_v,
\quad\text{and}\quad
\bigl|\Gamma_v - \alpha_v\bigr|\;<\;T^{-1},
\]
where
\[
\Gamma_v = \sum_{\ell: \ell\sim v} \zeta_{v,\ell} \, \omega(k_\ell).
\]
Here, the summation $\sum_{\ell \sim v}$ runs over all bonds $\ell$ such that $\ell\sim v$, and $\zeta_{v,\ell}$ is defined to be $+1$ whenever $\ell$ is outgoing from $v$, and $-1$ otherwise. $\omega(\cdot)$ is the dispersion relation taking values in $\mathbb{R}$. 

\medskip

\noindent\textbf{$k$-Decorations.}
Suppose $\mathbb{M}$ arises from a nontrivial couple $\mathcal{Q}$, and let $k\in\Z_N\cap (0,1)$. A \emph{$k$-decoration} of $\mathbb{M}$ is a special $(k_v,\alpha_v)$-decoration where
\[
k_v \;=\;
\begin{cases}
0 & \text{if $v$ has degree 2 or 4,}\\
+k & \text{if $v$ has out-degree 2 and in-degree 1,}\\
-k & \text{if $v$ has out-degree 1 and in-degree 2.}
\end{cases}
\]
Given any $k$-decoration of $\mathcal Q$ in the sense of Definition \ref{def-couples}, define a $k$-decoration of $\mathbb M(\mathcal Q)$ such that $k_{\ell} = k_{\mathfrak m(v,\ell)}$ for an endpoint $v$ of $\ell$. $k_\ell$ is well-defined and independent of the choice of $v$, and gives a one-to-one correspondence between $k$-decorations of $\mathcal Q$ and $k$-decorations of $\mathbb M(\mathcal Q)$. Moreover, for such decorations we have 
\begin{equation}
\Gamma_v = \begin{cases}
0 & \text{if } v \text{ has degree 2,} \\
-\zeta_{\mathfrak n(v)}\Omega_{\mathfrak n(v)} & \text{if } v \text{ has degree 4,} \\
-\zeta_{\mathfrak n(v)}\Omega_{\mathfrak n(v)} + \omega(k) & \text{if } v \text{ has out-degree 2 and in-degree 1,} \\
-\zeta_{\mathfrak n(v)}\Omega_{\mathfrak n(v)} - \omega(k) & \text{if } v \text{ has out-degree 1 and in-degree 2.} \\
\end{cases}
\end{equation}
\end{definition}
\begin{figure}
\centering

% ------------------------------------------------------------------
% first sub-figure
\begin{tikzpicture}[
        scale = .8, baseline=(top.base),
        level distance=1.4cm,
        level 1/.style={sibling distance=2.1cm},
        level 2/.style={sibling distance=0.8cm},
        hollow node/.style  ={circle,draw,inner sep=1.6},
        diamond node/.style ={diamond,draw,inner sep=1.6},
        solid node/.style   ={circle,draw,fill=black,inner sep=1.6},
        red node/.style     ={circle,draw,fill=red,inner sep=1.6},
        blue node/.style    ={circle,draw,fill=blue,inner sep=1.6},
        purple node/.style  ={circle,draw,fill=purple,inner sep=1.6},
        orange node/.style  ={circle,draw,fill=orange,inner sep=1.6},
        yellow node/.style  ={circle,draw,fill=yellow,inner sep=1.6},
        green  node/.style  ={circle,draw,fill=green,inner sep=1.6}]
  %-------------------------------------------------------------
  \node[hollow node, label = left: {\tiny $k$}] (top) {1}
    child{node[hollow node, label = left: {\tiny $m$}]{2}
      child{node[green node,  label = below: {\tiny $k$}]{}}
      child{node[blue node,   label = below: {\tiny $p$}]{}}
      child{node[yellow node, label = below: {\tiny $l$}]{}}
    }
    child{node[diamond node, label = left:{\tiny $l$}]{3}
      child{node[orange node, label = below: {\tiny $p$}]{}}
      child{node[blue node,   label = below: {\tiny $p$}]{}}
      child{node[yellow node, label = below: {\tiny $l$}]{}}
    }
    child{node[orange node,  label = below: {\tiny $p$}]{}};
\end{tikzpicture}
\hspace{0.5cm}
% ------------------------------------------------------------------
% second sub-figure
\begin{tikzpicture}[
        scale = 1, baseline=(top.base),
        level distance=1.4cm,
        level 1/.style={sibling distance=1.3cm},
        level 2/.style={sibling distance=1.0cm},
        hollow node/.style  ={circle,draw,inner sep=1.6},
        diamond node/.style ={diamond,draw,inner sep=1.6},
        green  node/.style  ={circle,draw,fill=green,inner sep=1.6}]
  %-------------------------------------------------------------
  \node[green node, label = left: {\tiny $k$}] (top) {};
\end{tikzpicture}
\hspace{1.2cm}
% ------------------------------------------------------------------
% third sub-figure
\begin{tikzpicture}[
        scale=0.8, baseline=(top.base), >=stealth,
        open node/.style   ={circle,draw,inner sep=1.8},
        diamond node/.style={diamond,draw,inner sep=1.5},
        empty node/.style  ={inner sep=.5,outer sep=0}]
  %-------------------------------------------------------------
  \def\L{4}
  \def\shift{3pt}
  \coordinate (A) at (0,0);           
  \coordinate (B) at (\L,0);          
  \coordinate (C) at ($(A)+(60:\L)$); % node 1

  \node[open node]    (N2) at (A) {2};
  \node[diamond node] (N3) at (B) {3};
  \node[open node]    (top) at (C) {1}; % <-- anchor called ‘top’

  % straight arrows (your code)
  \draw[->,thick,transform canvas={yshift=\shift}]
      (N3) -- node[empty node,pos=.4,fill=white] {$l$} (N2);
  \draw[->,thick,transform canvas={yshift=-\shift}]
      (N2) -- node[empty node,pos=.4,fill=white] {$p$} (N3);

  \draw[->,thick,transform canvas={xshift=\shift, yshift=2.5pt}]
      (N3) -- node[empty node,pos=.4,fill=white] {$p$} (top);
  \draw[->,thick,transform canvas={xshift=-\shift, yshift=-2.5pt}]
      (top) -- node[empty node,pos=.4,fill=white] {$l$} (N3);

  \draw[->,thick]
      (N2) -- node[empty node,midway,fill=white] {$m$} (top);
\end{tikzpicture}
\caption{An example of an enhanced couple and its corresponding enhanced molecule.}
\end{figure}
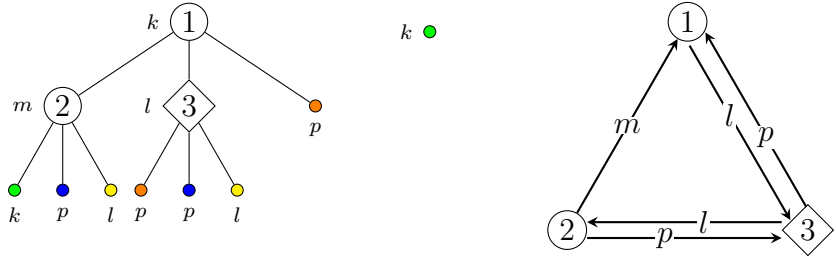
\begin{definition}[Enhanced Molecule of an Enhanced couple]
\label{def:enhanced-molecule}
Let $\Q$ be an enhanced couple of order $n$. We construct its \emph{enhanced molecule} $\mathbb{M}(\Q)$ in the same manner that one constructs a molecule from a standard couple (cf.\ Definition~\ref{couple-molecule}), but incorporating the following \emph{enhanced} configuration:
\begin{enumerate}[label=(\roman*)]
    \item For any degenerate branching node $\mathfrak{n}\in\mathcal{N}_D$, there exists a corresponding atom $v$ in the enhanced molecule $\mathbb{M}(\Q)$.
    \item The atoms $v_c^*,v_2$, and $v_c$ correspond precisely to the children $\{\mathfrak{n}_c^*,\mathfrak{n}_2, \mathfrak{n}_c\}$ of the degenerate branching node $\mathfrak{n}$ or their corresponding paired branching nodes.
    \item Moreover, removing a single degenerate atom $v$ together with all bonds  $\ell$ such that $\ell\sim v$, then adding a new (PC) bond linking $v_c$ (if it exists in the molecule) to $v_p$ (if $v$ has a (PC) bond $\ell_p$ connecting it to its parent $v_p$) the connectivity of $\mathbb{M}(\Q)$ remains unchanged, i.e. $\Delta F = 0$.
\end{enumerate}
\end{definition}
\begin{definition}[Degenerate Atoms]
\label{def:degenerate-atoms}
Consider an enhanced molecule $\mathbb{M}$ equipped with a $(k_v,\alpha_v)$-decoration as above. We say that an atom $v \in \mathbb{M}$ is \emph{degenerate} if there exist two \emph{degenerate} bonds $\ell_1, \ell_2 \sim v$, pointing in opposite directions at $v$, such that $k_{\ell_1}=k_{\ell_2}$. Furthermore, $v$ is \emph{fully degenerate} if \emph{all} bonds $\ell\sim v$ share the same decoration $k_\ell$. In particular, for each degenerate branching node in the couple $\Q$, there corresponds a degenerate atom in the molecule $\mathbb{M}(\Q)$.
\end{definition}

% section 3 - Main estimates and discussion
\section{Main Estimates}
\label{section-mainest}
The purpose of this section is to provide the statements of the main estimates including the integral estimates and the Feynman diagram estimates.
\subsection{The integral estimates} \label{sec-int}
\begin{proposition}\label{integral_est}
    For any ternary tree $\mathcal{T}$, fix functions $f_\mathfrak{n}:[0,T]\to\mathbb{C}$ for each node $\mathfrak{n} \in \mathcal{T}$:
    \begin{align}
        \left|f_{\mathfrak{n}}(Ts)\right| &\leq D_{\mathfrak{n}}, \label{eq-50001}\\
        \left|\frac{d}{ds} f_{\mathfrak{n}}(Ts)\right| &=\left|T\dot f_{\mathfrak{n}}(Ts)\right| \leq D_{\mathfrak{n}}, \label{eq-50002}
    \end{align}
for some constants $D_{\mathfrak{n}}$. Suppose $\mathcal{T}$ has subtree children $\mathcal{T}_j$, $j=1,2,3$ of the root $\mathfrak{r}$, and the function $ \mathcal{A}^f_{\mathcal{T}}(s,\Omega[\mathcal{N}], \widetilde{\Omega}[\Nc])$ with $0 \leq s \leq 1$, is defined as:
\begin{align}
    \mathcal{A}^f_{\bullet}(s,\Omega[\mathcal{N}], \widetilde{\Omega}[\Nc])&=1,\label{eq-4002}\\
    \mathcal{A}^f_{\mathcal{T}}(s,\Omega[\mathcal{N}], \widetilde{\Omega}[\Nc])&=\int_0^se^{\zeta_{\mathfrak{r}} i(\Omega_{\mathfrak{r}}Ts'+\widetilde{\Omega}_{\mathfrak{r}}(s'))}f_{\mathfrak{r}}(Ts')\prod_{j=1}^3\mathcal{A}^f_{\mathcal{T}_j}(s',\Omega[\mathcal{N}_j], \widetilde{\Omega}[\Nc_j])ds'\\
    &=\int_{\mathcal{E}}\prod_{\mathfrak{n}\in \mathcal{N}}e^{\zeta_{\mathfrak{n}} i(\Omega_{\mathfrak{n}}Ts_{\mathfrak{n}}+\widetilde{\Omega}_{\mathfrak{n}}(s_{\mathfrak{n}}))}f_{\mathfrak{n}}(Ts_{\mathfrak{n}})ds_{\mathfrak{n}}
\end{align}
where $\mathcal{E} =\left \{s[\mathcal{N}]:0<s_{\mathfrak{n}'}<s_{\mathfrak{n}}<s\text{ whenever }{\mathfrak{n}'}\in \mathcal{N}\text{ is a child of }{\mathfrak{n}}\in \mathcal{N}\right\}$.
For any node $\mathfrak{n} \in \mathcal{N}$, $\Omega_{\mathfrak{n}}$'s are constants and for some constant $C_0$:
\begin{align}
     \widetilde{\Omega}_{\mathfrak{n}}(s) &:= \Omega_{\mathfrak{n}}\left[C_0\beta Ts+{A}_{\geq}(s)\right], \\
     \left|\dot{A}_{\geq}(s)\right| & \lesssim  \beta T,\\
     \left|\ddot{A}_{\geq}(s)\right| & \lesssim \beta T^{\frac{9}{5}}.
\end{align}
Then for any node $\mathfrak{n}\in \mathcal{N}$ there exist some constants $C_{\mathfrak{n}}$ such that $C_{\mathfrak{n}} \leq C_d D_{\mathfrak{n}}$ for some constant $C_d$, and
\begin{align}
    \left|\mathcal{A}^f_{\mathcal{T}}(s,\Omega[\mathcal{N}], \widetilde{\Omega}[\Nc])\right|&\leq \sum_{(d_{\mathfrak{n}}:\mathfrak{n}\in \mathcal{N})}\prod_{\mathfrak{n}\in \mathcal{N}}C_{\mathfrak{n}}{\langle Tq_{\mathfrak{n}}\rangle}^{-1}. \label{eq-400001}
\end{align}
assuming $T < \beta^{-\frac{5}{4}}$.
\end{proposition}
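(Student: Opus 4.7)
The plan is to proceed by induction on the scale $n = |\mathcal T|$. The base case $n=0$ is immediate since $\mathcal A^f_\bullet \equiv 1$ and the product over $\mathcal N$ in \eqref{eq-400001} is empty. For the inductive step, rewrite
\[
\mathcal A^f_{\mathcal T}(s,\Omega[\mathcal N], \widetilde \Omega[\mathcal N])
= \int_0^s e^{\zeta_{\mathfrak r} i \phi_{\mathfrak r}(s')}\, f_{\mathfrak r}(Ts')\, \prod_{j=1}^3 \mathcal A^f_{\mathcal T_j}(s',\Omega[\mathcal N_j], \widetilde \Omega[\mathcal N_j])\, ds',
\]
where $\phi_{\mathfrak r}(s') := \Omega_{\mathfrak r} T s' + \widetilde \Omega_{\mathfrak r}(s')$. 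The outer sum $\sum_{(d_{\mathfrak n})}$ in the target bound corresponds to making, at each branching node, a binary choice whether to integrate by parts against the oscillatory factor or not. It is therefore enough to prove the estimate for each fixed choice $(d_{\mathfrak n})_{\mathfrak n\in \mathcal N}$ and sum.

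If $d_{\mathfrak r} = 0$, one uses the trivial bound $|\mathcal A^f_{\mathcal T}(s)| \le D_{\mathfrak r} \sup_{s' \le s} \prod_j |\mathcal A^f_{\mathcal T_j}(s')|$, applies the inductive hypothesis to each child, and observes that $\langle T q_{\mathfrak r}\rangle^{-1} \le 1$ is automatic. If $d_{\mathfrak r} = 1$, one integrates by parts in $s'$ against $e^{\zeta_{\mathfrak r} i \phi_{\mathfrak r}(s')}$. The phase derivative
\[
\phi'_{\mathfrak r}(s') = \Omega_{\mathfrak r} T + \Omega_{\mathfrak r}\bigl(C_0 \beta T + \dot A_{\geq}(s')\bigr)
\]
equals $\Omega_{\mathfrak r} T$ up to a multiplicative factor $1+O(\beta)$ thanks to $|\dot A_{\geq}|\lesssim \beta T$ combined with $T<\beta^{-5/4}$, so one IBP produces the target gain $|\Omega_{\mathfrak r} T|^{-1}$. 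The IBP yields four contributions: a boundary term at $s'=s$ handled directly by the inductive hypothesis; a term where $\partial_{s'}$ hits $f_{\mathfrak r}(Ts')$, controlled by \eqref{eq-50002}; a term where $\partial_{s'}$ hits $(\phi'_{\mathfrak r})^{-1}$, producing $-\ddot \phi_{\mathfrak r}/(\phi'_{\mathfrak r})^2 = -\Omega_{\mathfrak r}\ddot A_{\geq}/(\phi'_{\mathfrak r})^2$; and the crucial family of terms where $\partial_{s'}$ hits one of the child integrals $\mathcal A^f_{\mathcal T_j}$. For this last kind, the fundamental theorem of calculus applied to the definition of $\mathcal A^f_{\mathcal T_j}$ extracts the inner phase $e^{\zeta_{\mathfrak r_j} i \phi_{\mathfrak r_j}(s')}$ and fuses it with $e^{\zeta_{\mathfrak r} i \phi_{\mathfrak r}(s')}$, effectively replacing $\mathcal T_j$ by the three subtrees of $\mathfrak r_j$ and combining frequencies. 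This fusion is precisely what is encoded by the recursion $q_{\mathfrak n} = d_{\mathfrak n_1} q_{\mathfrak n_1} - d_{\mathfrak n_2} q_{\mathfrak n_2} + d_{\mathfrak n_3} q_{\mathfrak n_3} + \Omega_{\mathfrak n}$: labels $d_{\mathfrak n_j}=1$ correspond to children absorbed into the parent's effective frequency, so that the denominator extracted after IBP at $\mathfrak r$ is exactly $T q_{\mathfrak r}$ rather than $T \Omega_{\mathfrak r}$.

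The main obstacle is ensuring that the non-stationary phase perturbation $\widetilde \Omega_{\mathfrak n}$ does not accumulate destructively through the IBP cascade: each IBP introduces a multiplicative $1+O(\beta)$ factor from $\phi'_{\mathfrak n}/(\Omega_{\mathfrak n}T)$ and an additive contribution from $\ddot A_{\geq}/(\phi'_{\mathfrak n})^2$, and if either of these grew with the depth of the tree the gain $\langle T q_{\mathfrak n}\rangle^{-1}$ would be lost. The precise role of the hypothesis $T < \beta^{-5/4}$ is to make both perturbations small, using $|\dot A_{\geq}|\lesssim \beta T$ and $|\ddot A_{\geq}|\lesssim \beta T^{9/5}$, so that the multiplicative distortions at the $n$ IBP levels combine into a single universal constant $C_d$ absorbed into the estimate $C_{\mathfrak n} \le C_d D_{\mathfrak n}$. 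After handling both cases $d_{\mathfrak r} \in \{0,1\}$, the child-level estimates supplied by the inductive hypothesis, re-indexed along the frequency-fusion recursion above, close the induction and produce \eqref{eq-400001}.
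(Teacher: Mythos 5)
Your analytic machinery is the right one — induction plus integration by parts against the oscillatory phase, with the phase-derivative perturbations $\dot{A}_{\geq}$ and $\ddot{A}_{\geq}$ kept under control precisely because $T<\beta^{-5/4}$ — and you correctly identify that the recursion $q_{\mathfrak{n}} = d_{\mathfrak{n}_1}q_{\mathfrak{n}_1}-d_{\mathfrak{n}_2}q_{\mathfrak{n}_2}+d_{\mathfrak{n}_3}q_{\mathfrak{n}_3}+\Omega_{\mathfrak{n}}$ tracks a frequency-fusion process. However, the place where you perform the integration by parts creates a genuine structural gap. You integrate by parts at the root $\mathfrak{r}$, and when $\partial_{s'}$ lands on a child $\mathcal{A}^f_{\mathcal{T}_j}$ the fundamental theorem of calculus, as you observe, ``effectively replac[es] $\mathcal{T}_j$ by the three subtrees of $\mathfrak{r}_j$''; but this leaves the root with \emph{five} subtrees, so the resulting integrand is no longer of the form $\mathcal{A}^f_{\mathcal{T}''}$ for a ternary tree $\mathcal{T}''$. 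Your inductive hypothesis is stated only for ternary trees and hence does not apply, and the induction cannot close without first establishing a substantially more general statement for trees with nodes of larger out-degree, which you do not do and whose right-hand side would also have to be reformulated.

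The paper avoids this by running the induction \emph{bottom-up}: it integrates by parts at a deepest-level branching node $\mathfrak{l}$ whose three children are all leaves (so $q_{\mathfrak{l}}=\Omega_{\mathfrak{l}}$ and the factor produced by the IBP is exactly $\langle T q_{\mathfrak{l}}\rangle^{-1}$). That IBP produces three terms $I, II, III$; the boundary term $I$ (corresponding to $d_{\mathfrak{l}}=1$) fuses the phase of $\mathfrak{l}$ into its parent $\mathfrak{p}$, and all three terms are manifestly of the form $\mathcal{A}^f_{\mathcal{T}'}$ for the ternary tree $\mathcal{T}'$ of scale $n-1$, with a modified function at $\mathfrak{p}$ that still satisfies (\ref{eq-50001})--(\ref{eq-50002}). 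That order is what keeps the recursion well-posed on the class of ternary trees. A secondary issue is your case split on $d_{\mathfrak{r}}$: in the recursion, $d_{\mathfrak{n}}$ governs whether $\mathfrak{n}$'s effective frequency is absorbed into $\mathfrak{n}$'s \emph{parent} (i.e.\ whether the boundary term of the IBP at $\mathfrak{n}$ is taken), so $d_{\mathfrak{r}}$ has no bearing on $q_{\mathfrak{r}}$ and does not encode ``whether to integrate by parts at the root.'' The freedom that can make $\langle Tq_{\mathfrak{r}}\rangle^{-1}$ different from $\langle T\Omega_{\mathfrak{r}}\rangle^{-1}$ lives in the $d$-values of $\mathfrak{r}$'s descendants, which in a top-down scheme are not yet visible at the moment you integrate by parts at the root, so the single IBP there cannot extract the correct gain $\langle T q_{\mathfrak{r}}\rangle^{-1}$.
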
 
\begin{proof}
We know that $\left|\dot{A}_{\geq}({s_{\mathfrak{n}}})\right| \lesssim \beta T$, which gives that:
\begin{align}
    \left|\frac{T}{T+C_0\beta T+\dot{A}_{\geq}({s_{\mathfrak{n}}})}\right| \lesssim 1. \label{eq-4007}
\end{align}
Applying integration by parts to scale $|\mathcal{T}|=1$ case:
\begin{align}
    &\left|\int_0^{s}e^{\zeta_{\mathfrak{r}} i(\Omega_{\mathfrak{r}}Ts'+\widetilde{\Omega}_{\mathfrak{r}}(Ts'))}f_{\mathfrak{r}}(Ts')ds'\right|\notag\\
    =&\;\frac{1}{iT\Omega_{\mathfrak{r}}}\left|\frac{e^{i\left(T\Omega_{\mathfrak{r}} s+\widetilde{\Omega}_{\mathfrak{r}}(s)\right)}Tf_{\mathfrak{r}}(Ts)}{T+C_0\beta T+\dot{A}_{\geq}(s)}-\frac{f_{\mathfrak{r}}(0)}{1+C_0\beta}+\int_0^{s}\frac{e^{i\left(T\Omega_{\mathfrak{r}} s'+\widetilde{\Omega}_{\mathfrak{r}}(s')\right)}T\ddot{A}_{\geq}(s')f_{\mathfrak{r}}(Ts')}{(T+C_0\beta T+\dot{A}_{\geq}(s'))^{2}}ds'\right.\notag\\
    &\left.+\int_0^{s}\frac{e^{i\left(T\Omega_{\mathfrak{r}} s'+\widetilde{\Omega}_{\mathfrak{r}}(s')\right)}T^2\dot{f}_{\mathfrak{r}}(Ts')}{T+C_0\beta T+\dot{A}_{\geq}(s')}ds'\right|\lesssim \frac{1}{\langle T\Omega_{\mathfrak{r}}\rangle}D_{\mathfrak{r}}\left(3+\beta T^{\frac{4}{5}}\right).\label{eq-4003}  
\end{align}
From (\ref{eq-400002}) and (\ref{eq-4003}), we know (\ref{eq-400001}) is true for $|\mathcal{T}|=0,1$ since $\beta T^{\frac{4}{5}}< 1$. Suppose (\ref{eq-400001}) is true for $|\mathcal{T}|\le n-1$. Any ternary tree $\mathcal{T}$ with scale $n$ can be obtained from a scale $(n-1)$ tree $\mathcal{T}'\subseteq \mathcal{T}$ by attaching a scale $1$ tree $\mathcal{T}_1 \subseteq \mathcal{T}$ to one of its lowest level leaves $\mathfrak{l}$. This leaf $\mathfrak{l}$ should be chosen so that its siblings have at most scale $1$. Let $\mathfrak{p}$ denote the parent of the lowest level leaf $\mathfrak{l}$ we chose, $\mathcal{T}_\mathfrak{p}  \subseteq \mathcal{T}$ denote the subtree rooted at $\mathfrak{p}$, and $\mathcal{N}^-_\mathfrak{p}=\mathcal{N}\setminus \mathcal{T}_\mathfrak{p}$ denote the branching nodes of $\mathcal{T}$ excluding the descendants of $\mathfrak{p}$ and itself.  We define:
\begin{align}
    \mathcal{A}(s[\mathcal{N}^-_\mathfrak{p}]):= \prod_{\mathfrak{n}\in \mathcal{N}^-_\mathfrak{p}}e^{\zeta_{\mathfrak{n}} i(\Omega_{\mathfrak{n}}Ts_{\mathfrak{n}}+\widetilde{\Omega}_{\mathfrak{n}}(s_{\mathfrak{n}}))}f_{\mathfrak{n}}(Ts_{\mathfrak{n}})ds_{\mathfrak{n}},
\end{align}
where $s[\mathcal{N}^-_\mathfrak{p}]$ denotes the time vector $(s_\mathfrak{n})_{\mathfrak{n} \in \mathcal{N}^-_\mathfrak{p}}$. Without loss of generality, we can assume $\zeta_{\mathfrak{p}}=\zeta_{\mathfrak{l}}=+1$. Then $\mathcal{A}^f_{\mathcal{T}'}(s,\Omega[\mathcal{N}'], \widetilde{\Omega}[\Nc'])$ and $\mathcal{A}^f_{\mathcal{T}}(s,\Omega[\mathcal{N}], \widetilde{\Omega}[\Nc])$ can also be expressed as:
\begin{align}
    \mathcal{A}^f_{\mathcal{T}'}(s,\Omega[\mathcal{N}'], \widetilde{\Omega}[\Nc'])&=\int_{\mathcal{E}_\mathfrak{p}} \mathcal{A}(s[\mathcal{N}^-_\mathfrak{p}])e^{i\left(T\Omega_{\mathfrak{p}}{s_{\mathfrak{p}}}+\widetilde{\Omega}_{\mathfrak{p}}({s_{\mathfrak{p}}})\right)}f_{\mathfrak{p}}(T{s_{\mathfrak{p}}})\prod_{j=2}^3\mathcal{A}^f_{\mathcal{T}_j}({s_{\mathfrak{p}}},\Omega[\mathcal{N}_j], \widetilde{\Omega}[\Nc_j])d{s_{\mathfrak{p}}},\\
    \mathcal{A}^f_{\mathcal{T}}(s,\Omega[\mathcal{N}], \widetilde{\Omega}[\Nc])&=\int_{\mathcal{E}_\mathfrak{p}} \mathcal{A}(s[\mathcal{N}^-_\mathfrak{p}])e^{i\left(T\Omega_{\mathfrak{p}}{s_{\mathfrak{p}}}+\widetilde{\Omega}_{\mathfrak{p}}({s_{\mathfrak{p}}})\right)}f_{\mathfrak{p}}(T{s_{\mathfrak{p}}})\notag\\
    &\times \int_0^{s_{\mathfrak{p}}}e^{i\left(T\Omega_{\mathfrak{l}} {s_{\mathfrak{l}}}+\widetilde{\Omega}_{\mathfrak{l}}({s_{\mathfrak{l}}})\right)}f_{\mathfrak{l}}(T{s_{\mathfrak{l}}})d{s_{\mathfrak{l}}}\prod_{j=2}^3\mathcal{A}^f_{\mathcal{T}_j}({s_{\mathfrak{p}}},\Omega[\mathcal{N}_j], \widetilde{\Omega}[\Nc_j])d{s_{\mathfrak{p}}}
\end{align}
where $\mathcal{E}_\mathfrak{p} = \left\{s[\mathcal{N}^-_\mathfrak{p}]:0<s_{\mathfrak{n}'}<s_{\mathfrak{n}}<s\text{ whenever }{\mathfrak{n}'}\in \mathcal{N}^-_\mathfrak{p}\text{ is a child of }{\mathfrak{n}}\in \mathcal{N}^-_\mathfrak{p}\right\}$. Then by the hypotheses there exist constants $C_{\mathfrak{n}}$'s associated with nodes $\mathfrak{n} \in \mathcal{N}'$ such that:
\begin{align}
    \left|\mathcal{A}^f_{\mathcal{T}'}(s,\Omega[\mathcal{N}'], \widetilde{\Omega}[\Nc'])\right| &\leq \sum_{(d_{\mathfrak{n}}:\mathfrak{n}\in \mathcal{N}')}\prod_{\mathfrak{n}\in \mathcal{N}'}C_{\mathfrak{n}}{\langle Tq_{\mathfrak{n}}\rangle}^{-1}, \label{eq-4004}
\end{align}
provided that:
\begin{align}
    \left|f_{\mathfrak{p}}(Ts)\right| &\leq D_{\mathfrak{p}}, \label{eq-50007}\\
    \left|\frac{d}{ds} f_{\mathfrak{p}}(Ts)\right| &=\left|T\dot f_{\mathfrak{p}}(Ts)\right| \leq D_{\mathfrak{p}}. \label{eq-50008}
\end{align}
for constant $D_{\mathfrak{p}}$. Now assume:
\begin{align}
    \left|f_{\mathfrak{l}}(Ts)\right| &\leq D_{\mathfrak{l}}, \label{eq-5009}\\
    \left|\frac{d}{ds} f_{\mathfrak{l}}(Ts)\right| &=\left|T\dot f_{\mathfrak{l}}(Ts)\right| \leq D_{\mathfrak{l}}, \label{eq-5010}
\end{align}
for constant $D_{\mathfrak{l}}$ assuming that $T < \beta^{-\frac{5}{4}}$. Then both $f_{\mathfrak{p}} \text{ and } f_{\mathfrak{l}}$ satisfy properties (\ref{eq-50001}-\ref{eq-50002}).
We calculate $\mathcal{A}^f_{\mathcal{T}}(Ts,\Omega[\mathcal{N}], \widetilde{\Omega}[\Nc])$ using integration by parts from (\ref{eq-4003}):
\begin{align}
    \mathcal{A}^f_{\mathcal{T}}(Ts,\Omega[\mathcal{N}], \widetilde{\Omega}[\Nc])=&\frac{1}{iT\Omega_{\mathfrak{l}}}\int_{\mathcal{E}_\mathfrak{p}} \mathcal{A}(s[\mathcal{N}^-_\mathfrak{p}])e^{i\left(T\Omega_{\mathfrak{p}}{s_{\mathfrak{p}}}+\widetilde{\Omega}_{\mathfrak{p}}({s_{\mathfrak{p}}})\right)}f_{\mathfrak{p}}(T{s_{\mathfrak{p}}})\left[\frac{e^{i\left(T\Omega_{\mathfrak{l}} {s_{\mathfrak{p}}}+\widetilde{\Omega}_{\mathfrak{l}}({s_{\mathfrak{p}}})\right)}Tf_{\mathfrak{l}}(T{s_{\mathfrak{p}}})}{T+C_0\beta T+\dot{A}_{\geq}({s_{\mathfrak{p}}})}\right.\notag\\
    &\left.-\frac{f_{\mathfrak{p}}(0)}{1+C_0\beta}
    +\int_0^{{s_{\mathfrak{p}}}}\frac{e^{i\left(T\Omega_{\mathfrak{l}} s_{\mathfrak{l}}+\widetilde{\Omega}_{\mathfrak{l}}(s_{\mathfrak{l}})\right)}T\ddot{A}_{\geq}(s_{\mathfrak{l}})f_{\mathfrak{l}}(T{s_{\mathfrak{l}}})}{(T+C_0\beta T+\dot{A}_{\geq}(s_{\mathfrak{l}}))^{2}}ds_{\mathfrak{l}}\right.\notag\\
    &\left.+\int_0^{{s_{\mathfrak{p}}}}\frac{e^{i\left(T\Omega_{\mathfrak{l}} s_{\mathfrak{l}}+\widetilde{\Omega}_{\mathfrak{l}}(s_{\mathfrak{l}})\right)}T^2\dot{f}_{\mathfrak{l}}(Ts_{\mathfrak{l}})}{T+C_0\beta T+\dot{A}_{\geq}(s_{\mathfrak{l}})}ds_{\mathfrak{l}}\right]\notag\\
    &\times\prod_{j=2}^3\mathcal{A}^f_{\mathcal{T}_j}({s_{\mathfrak{p}}},\Omega[\mathcal{N}_j], \widetilde{\Omega}[\Nc_j])d{s_{\mathfrak{p}}}\\
    =&I + II + III,
\end{align}
where
\begin{align}
    I :=& \frac{1}{iT\Omega_{\mathfrak{l}}}\int_{\mathcal{E}_\mathfrak{p}} \mathcal{A}(s[\mathcal{N}^-_\mathfrak{p}])e^{i\left(T(\Omega_{\mathfrak{p}}+\Omega_{\mathfrak{l}}){s_{\mathfrak{p}}}+(\widetilde{\Omega}_{\mathfrak{p}}+\widetilde{\Omega}_{\mathfrak{l}})({s_{\mathfrak{p}}})\right)} \frac{Tf_{\mathfrak{p}}(T{s_{\mathfrak{p}}})f_{\mathfrak{l}}(T{s_{\mathfrak{p}}})}{T+C_0\beta T+\dot{A}_{\geq}({s_{\mathfrak{p}}})}\notag\\
    &\times \prod_{j=2}^3\mathcal{A}^f_{\mathcal{T}_j}({s_{\mathfrak{p}}},\Omega[\mathcal{N}_j], \widetilde{\Omega}[\Nc_j])d{s_{\mathfrak{p}}},\\
    II :=& -\frac{f_{\mathfrak{p}}(0)}{iT\Omega_{\mathfrak{l}}(1+C_0\beta)}\mathcal{A}^f_{\mathcal{T}'}(s,\Omega[\mathcal{N}'], \widetilde{\Omega}[\Nc']),\\
    III :=& \frac{1}{iT\Omega_{\mathfrak{l}}}\int_{\mathcal{E}_\mathfrak{p}} \mathcal{A}(s[\mathcal{N}^-_\mathfrak{p}])e^{i\left(T\Omega_{\mathfrak{p}}{s_{\mathfrak{p}}}+\widetilde{\Omega}_{\mathfrak{p}}({s_{\mathfrak{p}}})\right)}f_{\mathfrak{p}}(T{s_{\mathfrak{p}}})\prod_{j=2}^3\mathcal{A}^f_{\mathcal{T}_j}({s_{\mathfrak{p}}},\Omega[\mathcal{N}_j], \widetilde{\Omega}[\Nc_j])
    \notag\\
    &\times\int_0^{{s_{\mathfrak{p}}}}e^{i\left(T\Omega_{\mathfrak{l}} s_{\mathfrak{l}}+\widetilde{\Omega}_{\mathfrak{l}}(s_{\mathfrak{l}})\right)}T\left[\frac{\ddot{A}_{\geq}(s_{\mathfrak{l}})f_{\mathfrak{l}}(T{s_{\mathfrak{l}}})}{(T+C_0\beta T+\dot{A}_{\geq}(s_{\mathfrak{l}}))^{2}}+\frac{T\dot{f}_{\mathfrak{l}}(Ts_{\mathfrak{l}})}{T+C_0\beta T+\dot{A}_{\geq}(s_{\mathfrak{l}})}\right]ds_{\mathfrak{l}}d{s_{\mathfrak{p}}}.
\end{align}
Note that for $I$, we define:
\begin{align}
    f^{I}_{\mathfrak{p},\mathfrak{l}}(T{s_{\mathfrak{p}}}) := \frac{Tf_{\mathfrak{p}}(T{s_{\mathfrak{p}}})f_{\mathfrak{l}}(T{s_{\mathfrak{p}}})}{T+C_0\beta T+\dot{A}_{\geq}({s_{\mathfrak{p}}})},
\end{align}
which gives:
\begin{align}
    \frac{d}{ds_{\mathfrak{p}}}f^{I}_{\mathfrak{p},\mathfrak{l}}(T{s_{\mathfrak{p}}}) = \frac{T^2\left(\dot{f}_{\mathfrak{p}}(T{s_{\mathfrak{p}}})f_{\mathfrak{l}}(T{s_{\mathfrak{p}}})+{f}_{\mathfrak{p}}(T{s_{\mathfrak{p}}})\dot{f}_{\mathfrak{l}}(T{s_{\mathfrak{p}}})\right)}{T+C_0\beta T+\dot{A}_{\geq}({s_{\mathfrak{p}}})}-\frac{Tf_{\mathfrak{p}}(T{s_{\mathfrak{p}}})f_{\mathfrak{l}}(T{s_{\mathfrak{p}}})\ddot{A}_{\geq}({s_{\mathfrak{p}}})}{(T+C_0\beta T+\dot{A}_{\geq}({s_{\mathfrak{p}}}))^2}.
\end{align}
Then from (\ref{eq-50001}-\ref{eq-50002}), (\ref{eq-4007}), we can derive:
\begin{align}
    \left|f^{I}_{\mathfrak{p},\mathfrak{l}}(T{s_{\mathfrak{p}}})\right| \leq D_{\mathfrak{p}}D_{\mathfrak{l}}\text{, }
    \left|\frac{d}{ds_{\mathfrak{p}}}f^{I}_{\mathfrak{p},\mathfrak{l}}(T{s_{\mathfrak{p}}})\right| \lesssim (2+\beta T^{\frac{4}{5}})D_{\mathfrak{p}}D_{\mathfrak{l}}, \label{eq-50003}
\end{align}
which means that the function $f^{I}_{\mathfrak{p},\mathfrak{l}}(T{s_{\mathfrak{p}}})$ also satisfies properties (\ref{eq-50001}-\ref{eq-50002}) and $I$ can be expressed by:
\begin{align}
    I = \frac{1}{iT\Omega_{\mathfrak{l}}}\int_{\mathcal{E}_\mathfrak{p}} \mathcal{A}(s[\mathcal{N}^-_\mathfrak{p}])e^{i\left(T(\Omega_{\mathfrak{p}}+\Omega_{\mathfrak{l}}){s_{\mathfrak{p}}}+(\widetilde{\Omega}_{\mathfrak{p}}+\widetilde{\Omega}_{\mathfrak{l}})({s_{\mathfrak{p}}})\right)}f^{I}_{\mathfrak{p},\mathfrak{l}}(T{s_{\mathfrak{p}}})\prod_{j=2}^3\mathcal{A}^f_{\mathcal{T}_j}({s_{\mathfrak{p}}},\Omega[\mathcal{N}_j], \widetilde{\Omega}[\Nc_j])d{s_{\mathfrak{p}}},
\end{align}
which can be derived in the form of $\mathcal{A}^f$ from a scale $(n-1)$ tree multiplied by a factor $\frac{1}{iT\Omega_{\mathfrak{l}}}$. Then from (\ref{eq-50001}), (\ref{eq-4004}), and (\ref{eq-50003}) we know there exist constants $C_{\mathfrak{n}}$'s such that:
\begin{align}
    |I| \leq \frac{1}{\langle T\Omega_{\mathfrak{l}}\rangle}\sum_{(d_{\mathfrak{n}}:\mathfrak{n}\in \mathcal{N}')}\prod_{\mathfrak{n}\in \mathcal{N}'}C_{\mathfrak{n}}{\langle Tq_{\mathfrak{n}}\rangle}^{-1} \leq \sum_{(d_{\mathfrak{n}}:\mathfrak{n}\in \mathcal{N})}\prod_{\mathfrak{n}\in \mathcal{N}}C_{\mathfrak{n}}{\langle Tq_{\mathfrak{n}}\rangle}^{-1} \label{eq-4005}
\end{align}
for $\rho \leq 1$. Here, $\langle Tq_{\mathfrak{n}}\rangle=\sqrt{1+(Tq_{\mathfrak{n}})^2}$ denotes the Japanese bracket. Similarly, we can bound $II$ by the hypothesis (\ref{eq-4004}) on $\mathcal{A}^f_{\mathcal{T}'}(s,\Omega[\mathcal{N}'], \widetilde{\Omega}[\Nc'])$ and (\ref{eq-50007}) on $f_{\mathfrak{p}}$ respectively:
\begin{align}
    |II| \leq \frac{1}{\langle T\Omega_{\mathfrak{l}}\rangle}\sum_{(d_{\mathfrak{n}}:\mathfrak{n}\in \mathcal{N}')}\prod_{\mathfrak{n}\in \mathcal{N}'}C_{\mathfrak{n}}{\langle Tq_{\mathfrak{n}}\rangle}^{-1}\leq \sum_{(d_{\mathfrak{n}}:\mathfrak{n}\in \mathcal{N})}\prod_{\mathfrak{n}\in \mathcal{N}}C_{\mathfrak{n}}{\langle Tq_{\mathfrak{n}}\rangle}^{-1} \label{eq-40010}
\end{align}
For $III$, we similarly define:
\begin{align}
    g_{\mathfrak{l}}(T{s_{\mathfrak{l}}}) &:= T\left[\frac{\ddot{A}_{\geq}(s_{\mathfrak{l}})f_{\mathfrak{l}}(T{s_{\mathfrak{l}}})}{(T+C_0\beta T+\dot{A}_{\geq}(s_{\mathfrak{l}}))^{2}}+\frac{T\dot{f}_{\mathfrak{l}}(Ts_{\mathfrak{l}})}{T+C_0\beta T+\dot{A}_{\geq}(s_{\mathfrak{l}})}\right],\\
    f^{III}_{\mathfrak{p},\mathfrak{l}}(T{s_{\mathfrak{p}}}) &:= f_{\mathfrak{p}}(T{s_{\mathfrak{p}}})\int_0^{{s_{\mathfrak{p}}}}e^{i\left(T\Omega_{\mathfrak{l}} s_{\mathfrak{l}}+\widetilde{\Omega}_{\mathfrak{l}}(s_{\mathfrak{l}})\right)}g_{\mathfrak{l}}(T{s_{\mathfrak{l}}})ds_{\mathfrak{l}}.
\end{align}
We can easily derive from (\ref{eq-50001}-\ref{eq-50002}), and (\ref{eq-4007}):
\begin{align}
    \left|g_{\mathfrak{l}}(T{s_{\mathfrak{l}}})\right| \lesssim (1+\beta T^{\frac{4}{5}})D_{\mathfrak{l}}.
\end{align}
Then we can get:
\begin{align}
    \left|f^{III}_{\mathfrak{p},\mathfrak{l}}(T{s_{\mathfrak{p}}})\right|&=\left|f_{\mathfrak{p}}(T{s_{\mathfrak{p}}})\int_0^{{s_{\mathfrak{p}}}}e^{i\left(T\Omega_{\mathfrak{l}} s_{\mathfrak{l}}+\widetilde{\Omega}_{\mathfrak{l}}(s_{\mathfrak{l}})\right)}g_{\mathfrak{l}}(T{s_{\mathfrak{l}}})ds_{\mathfrak{l}}\right|\notag\\
    &\leq \left|f_{\mathfrak{p}}(T{s_{\mathfrak{p}}})|\cdot|g_{\mathfrak{l}}(T{s_{\mathfrak{p}}})\right| \leq (1+\beta T^{\frac{4}{5}})D_{\mathfrak{p}}D_{\mathfrak{l}},\\
    \left|\frac{d}{ds_{\mathfrak{p}}}f^{III}_{\mathfrak{p},\mathfrak{l}}(T{s_{\mathfrak{p}}})\right|&=\left|\frac{d}{ds_{\mathfrak{p}}} \left[f_{\mathfrak{p}}(T{s_{\mathfrak{p}}})\int_0^{{s_{\mathfrak{p}}}}e^{i\left(T\Omega_{\mathfrak{l}} s_{\mathfrak{l}}+\widetilde{\Omega}_{\mathfrak{l}}(s_{\mathfrak{l}})\right)}g_{\mathfrak{l}}(T{s_{\mathfrak{l}}})ds_{\mathfrak{l}}\right]\right| \notag\\
    &\leq \left(\left|\frac{d}{ds_{\mathfrak{p}}}f_{\mathfrak{p}}(T{s_{\mathfrak{p}}})\right|+|f_{\mathfrak{p}}(T{s_{\mathfrak{p}}})|\right)\cdot|g_{\mathfrak{l}}(T{s_{\mathfrak{p}}})|\notag\\
    &\lesssim 2(1+\beta T^{\frac{4}{5}})D_{\mathfrak{p}}D_{\mathfrak{l}}
\end{align}
The function $f^{III}_{\mathfrak{p},\mathfrak{l}}(T{s_{\mathfrak{p}}})$ also satisfies the property (\ref{eq-50001}-\ref{eq-50002}), which means $III$ can also be derived in the form of $\mathcal{A}^f$ from a scale $(n-1)$ tree multiplied by a factor $\frac{1}{iT\Omega_{\mathfrak{l}}}$. Then there exist constants $C_{\mathfrak{n}}$'s such that:
\begin{align}
    |III| &=\left|\frac{1}{iT\Omega_{\mathfrak{l}}}\int_{\mathcal{E}_\mathfrak{p}} \mathcal{A}(s[\mathcal{N}^-_\mathfrak{p}])e^{i\left(T\Omega_{\mathfrak{p}}{s_{\mathfrak{p}}}+\widetilde{\Omega}_{\mathfrak{p}}({s_{\mathfrak{p}}})\right)}f^{III}_{\mathfrak{p},\mathfrak{l}}(T{s_{\mathfrak{p}}})\prod_{j=2}^3\mathcal{A}^f_{\mathcal{T}_j}({s_{\mathfrak{p}}},\Omega[\mathcal{N}_j], \widetilde{\Omega}[\Nc_j])
    d{s_{\mathfrak{p}}}\right|\notag\\
    &\leq  \frac{1}{\langle T\Omega_{\mathfrak{l}}\rangle}\sum_{(d_{\mathfrak{n}}:\mathfrak{n}\in \mathcal{N}')}\prod_{\mathfrak{n}\in \mathcal{N}'}C_{\mathfrak{n}}{\langle Tq_{\mathfrak{n}}\rangle}^{-1}\leq \sum_{(d_{\mathfrak{n}}:\mathfrak{n}\in \mathcal{N})}\prod_{\mathfrak{n}\in \mathcal{N}}C_{\mathfrak{n}}{\langle Tq_{\mathfrak{n}}\rangle}^{-1}. \label{eq-4006}
\end{align}
Then combining (\ref{eq-4005}), (\ref{eq-40010}), and (\ref{eq-4006}), we get:
\begin{align*}
    \left|\mathcal{A}^f_{\mathcal{T}}(s,\Omega[\mathcal{N}], \widetilde{\Omega}[\Nc])\right| \leq \sum_{(d_{\mathfrak{n}}:\mathfrak{n}\in \mathcal{N})}\prod_{\mathfrak{n}\in \mathcal{N}}C_{\mathfrak{n}}{\langle Tq_{\mathfrak{n}}\rangle}^{-1}.
\end{align*}
\end{proof}
\begin{corollary}\label{integral_est_1}
    For any ternary tree $\mathcal{T}$ with subtree children $\mathcal{T}_j$, $j=1,2,3$ of the root $\mathfrak{r}$, let the function $ \mathcal{A}_{\mathcal{T}}(s,\Omega[\mathcal{N}], \widetilde{\Omega}[\Nc])$ with $0 \leq s \leq 1$, be defined as:
\begin{align}
    \mathcal{A}_{\bullet}(s,\Omega[\mathcal{N}], \widetilde{\Omega}[\Nc])&=1,\label{eq-400002}\\
    \mathcal{A}_{\mathcal{T}}(s,\Omega[\mathcal{N}], \widetilde{\Omega}[\Nc])&=\int_0^se^{\zeta_{\mathfrak{r}} i(\Omega_{\mathfrak{r}}Ts'+\widetilde{\Omega}_{\mathfrak{r}}(s'))}\prod_{j=1}^3\mathcal{A}_{\mathcal{T}_j}(s',\Omega[\mathcal{N}_j], \widetilde{\Omega}[\Nc_j])ds'
\end{align}
where for any node $\mathfrak{n} \in \mathcal{N}$, $\Omega_{\mathfrak{n}}$'s are constants and for some constant $C_0$:
\begin{align}
     \widetilde{\Omega}_{\mathfrak{n}}(s) &= \Omega_{\mathfrak{n}}\left[(C_0\beta Ts+{A}_{\geq}(s)\right], \\
     \left|\dot{A}_{\geq}(s)\right| & \lesssim  \beta T,\\
     \left|\ddot{A}_{\geq}(s)\right| & \lesssim \beta T^{\frac{9}{5}}.
\end{align}
Then for each node $\mathfrak{n}\in \mathcal{N}$ there exists a constant $C_{\mathfrak{n}}$, such that $ \mathcal{A}_{\mathcal{T}}(s,\Omega[\mathcal{N}], \widetilde{\Omega}[\Nc])$ satisfies:
\begin{align}
    \left| \mathcal{A}_{\mathcal{T}}(s,\Omega[\mathcal{N}], \widetilde{\Omega}[\Nc])\right|&\leq \sum_{(d_{\mathfrak{n}}:\mathfrak{n}\in \mathcal{N})}\prod_{\mathfrak{n}\in \mathcal{N}}C_{\mathfrak{n}}{\langle Tq_{\mathfrak{n}}\rangle}^{-1} \label{eq-40001}
\end{align}
assuming $T < \beta^{-\frac{5}{4}}$.
\end{corollary}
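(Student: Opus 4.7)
The plan is to deduce Corollary \ref{integral_est_1} as a direct specialization of Proposition \ref{integral_est}, since the two statements differ only by an extra factor $f_{\mathfrak{r}}(Ts')$ appearing inside the recursive integrand of the Proposition.

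Specifically, I would choose $f_{\mathfrak{n}}(\cdot) \equiv 1$ for every node $\mathfrak{n} \in \mathcal{T}$. Under this choice, the recursion (\ref{eq-4002}) defining $\mathcal{A}^f_{\mathcal{T}}$ reduces term-for-term to the recursion (\ref{eq-400002}) defining $\mathcal{A}_{\mathcal{T}}$, so $\mathcal{A}^f_{\mathcal{T}}(s,\Omega[\mathcal{N}],\widetilde{\Omega}[\mathcal{N}]) = \mathcal{A}_{\mathcal{T}}(s,\Omega[\mathcal{N}],\widetilde{\Omega}[\mathcal{N}])$. The pointwise hypotheses (\ref{eq-50001}) and (\ref{eq-50002}) of the Proposition are then trivially satisfied with $D_{\mathfrak{n}} = 1$: the first bound reads $|1| \leq 1$ and the second is $|T \cdot 0| \leq 1$. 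The remaining structural assumptions, namely $|\dot{A}_{\geq}(s)| \lesssim \beta T$, $|\ddot{A}_{\geq}(s)| \lesssim \beta T^{9/5}$, and the sub-kinetic constraint $T < \beta^{-5/4}$, are carried over verbatim from the hypotheses of the Corollary.

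With these checks in place, applying the conclusion (\ref{eq-400001}) of Proposition \ref{integral_est} yields exactly (\ref{eq-40001}), and the constants obey $C_{\mathfrak{n}} \leq C_d D_{\mathfrak{n}} = C_d$. I do not anticipate any substantive obstacle: all of the hard work, namely the integration-by-parts producing the $\langle T q_{\mathfrak{n}} \rangle^{-1}$ decay and the careful bookkeeping of the extra factors arising from differentiating the nonlinear phase $\widetilde{\Omega}_{\mathfrak{n}}(s) = \Omega_{\mathfrak{n}}[C_0 \beta T s + A_{\geq}(s)]$, has already been carried out in the Proposition. The Corollary is simply the streamlined, $f$-free restatement that will be convenient to invoke when bounding the couple expressions $\K_{\Q}$ in the subsequent Feynman diagram estimates.
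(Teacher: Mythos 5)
Your proposal is correct and matches the paper's own proof exactly: both take $f_{\mathfrak{n}} \equiv 1$ for all nodes, observe that $\mathcal{A}_{\mathcal{T}} = \mathcal{A}^{f\equiv 1}_{\mathcal{T}}$, check that hypotheses (\ref{eq-50001})--(\ref{eq-50002}) hold trivially with $D_{\mathfrak{n}}=1$, and invoke (\ref{eq-400001}) to conclude.
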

\begin{proof}
    We know that:
\begin{align}
     \mathcal{A}_{\mathcal{T}}(s,\Omega[\mathcal{N}], \widetilde{\Omega}[\Nc]) = \mathcal{A}^{f \equiv1}_{\mathcal{T}}(s,\Omega[\mathcal{N}], \widetilde{\Omega}[\Nc]),
\end{align}
and (\ref{eq-400001}) automatically yields (\ref{eq-40001}) by taking $f_{\mathfrak{n}} \equiv 1$ for any node $\mathfrak{n} \in \mathcal{T}$.
\end{proof}
\subsection{Bounds on Feynman diagrams}
We fix a sufficiently large constant $L\ll N$.
\begin{proposition}(Bounds on Couples)\label{bound-couple}
    For each $1 \leq n_{\Q} \leq L^3$, $0 \leq t \leq  1$, and enhanced trees $\mathcal{T}_{1}, \mathcal{T}_{2}$, the following should hold:
\begin{align}
     \left|\sum_{|\mathcal{T}_{1}|+|\mathcal{T}_{2}|=n_{\Q}}\mathbb{E}\left[c_{k}^{\mathcal{T}_{1}}(t){c_{k}^{\mathcal{T}_{2}}}^*(t)\right]\right|&\leq C (\log N \log T)^{n_{\Q}} T^{-\frac{3}{5}}\left(\beta T^{4/5}\right)^{n_{\Q}},\label{E1}\\
     \left|\sum_{|\mathcal{T}_{1}|+|\mathcal{T}_{2}|=n_{\Q}}\mathbb{E}\left[\dot{c}_{k}^{\mathcal{T}_{1}}(t){c_{k}^{\mathcal{T}_{2}}}^*(t)\right]\right|&\leq C (\log N \log T)^{n_{\Q}-1}T^{\frac{2}{5}}\left(\beta T^{4/5}\right)^{n_{\Q}},\label{E2}
\end{align}
for some constant $C$.
\end{proposition}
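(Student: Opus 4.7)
The plan is to expand $\mathbb{E}[c_k^{\T_1}(t)\overline{c_k^{\T_2}(t)}]$ using Proposition~\ref{prop-treeexp} and the complex Isserlis' Theorem (\ref{eq-isserlis}), which rewrites the expectation as a sum over $k$-decorated enhanced couples $\Q = \{\T_1,\T_2,\mathscr{P}\}$ of the quantities $\K_\Q(t,t,k)$ in Definition~\ref{couple expression}. Each $\K_\Q$ carries the natural prefactor $(\beta T/N)^{n_\Q}$ from the $n_\Q$ branching nodes and a time integral $\int_{\mathcal{E}} \prod_\mathfrak{n} e^{\zeta_\mathfrak{n} i(\Omega_\mathfrak{n} T t_\mathfrak{n} + \widetilde\Omega_\mathfrak{n}(t_\mathfrak{n}))}\,dt_\mathfrak{n}$. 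Apply Corollary~\ref{integral_est_1} to bound this integral by $\sum_{(d_\mathfrak{n})} \prod_\mathfrak{n} C_\mathfrak{n} \langle T q_\mathfrak{n} \rangle^{-1}$, where the choice $d_\mathfrak{n}\in\{0,1\}$ controls whether the phase at node $\mathfrak{n}$ is "resonant" or "non-resonant" after integration by parts (the sum over $d_\mathfrak{n}$ contributes a factor $2^{n_\Q}$, absorbed into the logarithms). The remaining task is to bound the decoration sum $\sum_{\mathscr{E}} |\epsilon_\mathscr{E}| \prod_\mathfrak{n} \langle T q_\mathfrak{n} \rangle^{-1}$, which via the correspondence in Proposition~\ref{prop:molecules-couples} becomes a counting problem on the molecule $\mathbb{M}(\Q)$ equipped with a $(k_v, \alpha_v)$-decoration and the localization $|\Gamma_v - \alpha_v| < T^{-1}$.

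Next, address the two sources of bad counting. First, irregular chains (long subchains of consecutively paired degenerate configurations) produce losses that exceed the target, so group the couples into \emph{congruent classes} and perform the splicing operation as in Section~\ref{irregular} and~\cite{ODW}: summing over congruent couples, the main contributions cancel, leaving splicings of shorter chains which admit the favorable estimates. Second, handle degenerate atoms in the enhanced molecule by the preprocessing step: by Definition~\ref{def:enhanced-molecule}(iii), removing a degenerate atom along with its bonds and reattaching a PC bond from its child to its parent preserves connectivity and $\chi$, so each degenerate atom can be eliminated without any cost before the algorithm is executed. After these two reductions, every remaining molecule satisfies the structural inequality (\ref{eq-rough-bound}) that bounds two-vector countings by $3$ times (three-vector countings $-1$).

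Now run the counting algorithm adapted from~\cite{ODW} on the preprocessed molecule. Each atom is reduced by one of a short list of elementary moves (two-vector, three-vector, or boundary) whose counting gains are evaluated via the estimates in Section~\ref{section-counting}, which are the analogues for the sinusoidal dispersion $\omega_k = 2|\sin(\pi k)|$ of the Vassilev counting lemmas. Combined with the prefactor $(\beta T/N)^{n_\Q}$ and the $N^{n_\Q+1}$ from summing over free decorations (the $-1$ in the exponent produces the boundary loss), the balance gives $(\beta T^{4/5})^{n_\Q} \cdot T^{-3/5}$ precisely when the constraint $T < \beta^{-5/4}$ is saturated, with the $(\log N \log T)^{n_\Q}$ arising from the $\sum_{(d_\mathfrak{n})}$ choices and the $\langle T q_\mathfrak{n}\rangle^{-1}$ summations over $\alpha_v$ levels. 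For the estimate \eqref{E2} on $\dot{c}_k^{\T_1} \overline{c_k^{\T_2}}$, differentiating removes the outermost time integration at the root of $\T_1$, which eliminates one factor of $\langle T\Omega_\mathfrak{r}\rangle^{-1}$ and one of the $\log$ factors; the two-vector bookkeeping then gains a factor $T$, yielding $T^{2/5} = T \cdot T^{-3/5}$ and $(\log N \log T)^{n_\Q - 1}$.

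The main obstacle is establishing the structural inequality (\ref{eq-rough-bound}) for the \emph{enhanced} molecules after preprocessing, since the removal of degenerate atoms modifies both the atom count and the bond count, and the sinusoidal dispersion forces us to work with three-vector countings rather than the five-vector countings available in $d\geq 2$. A secondary difficulty is that $\widetilde{\Omega}_\mathfrak{n}(s)$ is non-constant in time: the integration-by-parts gains in Proposition~\ref{integral_est} depend on the control $|\dot A_{\geq}| \lesssim \beta T$ and $|\ddot A_{\geq}| \lesssim \beta T^{9/5}$, which must be verified self-consistently. I expect this to be handled by a bootstrap closing simultaneously with \eqref{E1}--\eqref{E2}: the estimates \eqref{E1} feed into the definition of $\widetilde\omega_k$ via \eqref{A(s)}, producing the claimed bounds on $\dot A_\geq$ and $\ddot A_\geq$, which then justify the application of Corollary~\ref{integral_est_1} used to prove \eqref{E1} in the first place.
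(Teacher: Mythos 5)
Your overall architecture — bootstrap, couple expansion via Isserlis, integral estimate, splicing to neutralize irregular chains, removal of degenerate atoms, then the \cite{ODW} counting algorithm — matches the paper's route. But two specific steps are misordered or misstated in a way that would break the argument.

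First, you apply Corollary~\ref{integral_est_1} to the oscillatory integral \emph{before} grouping into congruent classes and splicing. This destroys the very cancellation the splicing step is designed to harvest. Splicing is not a purely combinatorial preprocessing of the molecule: Lemma~\ref{lem-splice} and Corollary~\ref{cor-M} express $\K_{\textgoth{Q}_{\mathcal M}}$ (a sum of $\K_{\Q'}$ over congruent couples $\Q'$) as an integral against the small kernel $P_{q}$, whose size $(\beta T^{1/2})^q$ comes from the difference $n_{\mathrm{in}}(k_j - h) - n_{\mathrm{in}}(k_j)$. Once you replace each integrand by its absolute value via $\prod_\mathfrak{n} C_\mathfrak{n}\langle T q_\mathfrak{n}\rangle^{-1}$, summing over the congruence class can no longer produce that gain. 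The paper therefore fixes $\mathscr C_1$ and $\mathcal M_{\mathscr C_1}$, splices first, and only then estimates the spliced integral \eqref{eq-splice-exp}. Moreover, after splicing the integral carries the factors $P_{q_{\mathfrak n_0}}(\tau_{\mathfrak n_0}, t_{\mathfrak n_0}, k[\mathcal N^{\mathrm{sp}}])$, which depend on $t_{\mathfrak n_0}$ and cannot be pulled outside; this is exactly why the paper needs the full Proposition~\ref{integral_est} with general $f_{\mathfrak{n}}$ (setting $f_{\mathfrak{n}_0} = (\beta T^{1/2})^{-q_{\mathfrak n_0}} P_{q_{\mathfrak n_0}}$ and verifying \eqref{eq-pestimate}--\eqref{eq-dpestimate}), not the $f\equiv 1$ corollary you invoke.

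Second, your treatment of degenerate atoms is incorrect. You assert that removing a degenerate atom ``preserves connectivity and $\chi$, so each degenerate atom can be eliminated without any cost.'' Definition~\ref{def:enhanced-molecule}(iii) only asserts $\Delta F = 0$ (connectivity), and even that is not universal — Operation DEL Step (c) explicitly allows a new component to split off. What the paper actually proves (see the Remark after Operation $(-1)$ and the proof of Proposition~\ref{prop-rigidity}) is that each pair of removed degenerate atoms gives $\Delta \chi = -2$ at a cost $\mathfrak{C}\leq N$, which is a \emph{net gain relative to} a three-vector counting ($\mathfrak C = N^2 T^{-1}\log T$) since $N < N^2T^{-1}\log T$ when $T < N$, and the $N^{m_{-1}/2}$ loss is absorbed via $N^{-m_{-1}/2}T^{m_{-1}/5}\leq 1$. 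It is not costless, and $\chi$ is not preserved; treating it as such would miscount the budget that feeds Proposition~\ref{prop-op-bound} and \eqref{eq-counting}. Your account of \eqref{E2} and of the bootstrap closing through $\dot A_{\geq},\ddot A_{\geq}$ is essentially correct and matches the paper.
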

\begin{proposition}{(Bound on Operator $\Ell$)} \label{prop-remainder}
With probability $\geq 1 - N^{-A}$, the linear operator $\Ell$ defined in (\ref{eq-linearop}) satisfies  
\begin{equation} \label{remainder}
||\Ell^n||_{Z \to Z} \lesssim \left(\beta T^{4/5}\right)^{n/2}N^{50}
\end{equation}
for each $ 0 \leq n \leq L$ and some $A \geq 40$. 
\end{proposition}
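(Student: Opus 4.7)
Since the $Z$ norm is a rescaling of the standard $\ell^2$ norm on $\Z_N\cap(0,1)$ (by a factor $N^{-1/2}$), the $Z\to Z$ operator norm coincides with the $\ell^2\to\ell^2$ operator norm at each fixed $t$. The plan is to control it by the high-moment method: for a large even integer $p$, we have
\[
\|\mathscr L^n\|_{Z\to Z}^{2p} \;\le\; \sup_{0\le t\le 1}\operatorname{Tr}\bigl(((\mathscr L^n(t))^*\mathscr L^n(t))^{p}\bigr),
\]
so estimating $\mathbb E[\operatorname{Tr}(((\mathscr L^n)^*\mathscr L^n)^p)]$ and applying Chebyshev yields a high-probability bound on $\|\mathscr L^n\|_{Z\to Z}$.

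To expand the trace, first iterate \eqref{eq-linearop}: each matrix coefficient $(\mathscr L^n(t))_{k,k'}$ becomes a multilinear expression in the iterates $c^{\mathcal T}$ (with $|\mathcal T|\le n$) and their conjugates, indexed by \emph{flower trees}, namely ternary trees of scale $n$ in which at each branching node corresponding to an application of $\mathscr L$ one child is designated a ``stem'' (propagating the operator argument) while the other two children, together with all additional inputs coming from $\mathscr L_D$, are filled by iterates $c^{\mathcal T}$ or their conjugates. Expanding the trace on the right glues $2p$ such flower trees into a cyclic chain by joining stems to arguments, and taking the expectation pairs the remaining $c^{\mathcal T}$-leaves via Isserlis' theorem. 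This produces a sum over \emph{flower couples} $\mathcal Q$ in the spirit of \cite{WKE,ODW}, each contributing a kernel $\mathcal K_{\mathcal Q}$ with an integral representation analogous to Definition \ref{couple expression}.

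The main work is to bound each $|\mathcal K_{\mathcal Q}|$. For a flower couple $\mathcal Q$, pass to its associated molecule $\mathbb M(\mathcal Q)$ and apply the same four-step procedure used in the proof of Proposition \ref{bound-couple}: splice out irregular chains in each congruent family via the cancellations of Section \ref{irregular}; remove the degenerate atoms in a preprocessing step that exploits the mean-zero subtractions built into \eqref{eq-12}; verify the structural inequality \eqref{eq-rough-bound} of Proposition \ref{prop-op-bound} for the reduced molecule and apply the counting estimates of Section \ref{section-counting}; and finally invoke the integral estimate of Proposition \ref{integral_est} to convert each nested time integral into a product $\prod_{\mathfrak n}\langle Tq_{\mathfrak n}\rangle^{-1}$. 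For a flower couple of order $m$, this yields a contribution of size at most $(C\log N\log T)^{m}(\beta T^{4/5})^{m}$, up to a polynomial-in-$N$ combinatorial factor controlled by the number of stems, which is $2p$.

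The main obstacle will be adapting the congruent-splicing and degenerate-atom preprocessing from the couple setting of Proposition \ref{bound-couple} to the flower setting: the $2p$ stems create unpaired slots that have no analogue in ordinary couples, and one must verify that after their contribution is absorbed into the counting, the molecular inequality \eqref{eq-rough-bound} persists and the integration estimate still delivers one $\beta T^{4/5}$ gain per branching node. Once this is established, summing the per-couple bound over all flower couples of order $\le 2np$ and applying Chebyshev with $p\sim A\log N$ absorbs the $(C\log N)^{Cn}$ factors and the combinatorial counts into the generous $N^{50}$ slack, delivering \eqref{remainder} with probability $\ge 1-N^{-A}$ after a union bound over $n\in[0,L]$.
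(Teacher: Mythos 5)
Your approach (trace/moment method) is genuinely different from what the paper does, and it also has a subtlety you'd need to fix before it runs.

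\textbf{What the paper actually does.} The paper never computes $\operatorname{Tr}((\mathscr L^*\mathscr L)^p)$. Instead, it writes $\mathscr L^{n_h}$ as an integral operator with kernel $(\mathscr L^{n_h})^{m,\zeta}_{k,\ell}(t,s)$ in \emph{both} the space indices $(k,\ell)$ and the time variables $(t,s)$, applies Cauchy--Schwarz in $(s,\zeta,m)$ and then in $\ell$, and finally uses the crude Hilbert--Schmidt-to-$L^\infty$ bound $\sum_{k,\ell}|K_{k\ell}|^2\le N^2\sup_{k,\ell}|K_{k\ell}|^2$. The remaining task is to control the $L^\infty_{t,s,k,\ell}$ norm of the kernel with high probability; this is done by proving a second-moment estimate $\E|(\mathscr L^{n_h})^{m,\zeta}_{k,\ell}(t,s)|^2\lesssim(\beta T^{4/5})^m N^{30}$ from the flower-couple machinery (Proposition \ref{prop-Lm}), boosting it to a high-moment estimate by Gaussian hypercontractivity (Lemma \ref{lem-hypercontractivity}), and then upgrading $L^p$ control in $(t,s)$ to $L^\infty$ by Gagliardo--Nirenberg (and $L^p_{k,\ell}$ to $L^\infty_{k,\ell}$ with an extra $N^{2/p}$ loss). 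This is deliberately lossy (the $N^2$ from Hilbert--Schmidt plus the $N^{30}$ from the kernel estimate are what produce the $N^{50}$ slack), but it is elementary once the couple bound and hypercontractivity are in hand, and it never has to glue $2p$ copies of anything.

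\textbf{Where your proposal diverges, and the gap.} You propose the classical moment method $\|\mathscr L^n\|^{2p}\le\sup_t\operatorname{Tr}\bigl(((\mathscr L^n(t))^*\mathscr L^n(t))^p\bigr)$, expanding the trace as a sum over $2p$-fold cyclic chains of flower trees. The underlying combinatorics (flower couples, splicing of irregular chains, degenerate-atom preprocessing, the counting and integral estimates) is the same as the paper's, so the core analysis would carry over. The gap is in the first step: $\mathscr L^n$ is \emph{not} a fixed-$t$ operator on $\ell^2_k$; it is an integral operator in time as well, $(\mathscr L^n a)_k(t)=\sum_\zeta\sum_\ell\int_0^t(\mathscr L^n)^\zeta_{k\ell}(t,s)\,a_\ell(s)^\zeta\,ds$, and the input norm $\|a\|_Z$ is $L^\infty_s\ell^2_\ell$. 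Writing $\mathscr L^n(t)$ and taking a trace over $\ell^2_k$ as if the time variable had been frozen is not well-defined; you first need to Cauchy--Schwarz in $s$ (or pass to a Schatten norm on the mixed space $L^2_s\ell^2_\ell\to\ell^2_k$) before a spatial trace can be taken. Once you insert that step --- which is essentially the paper's Cauchy--Schwarz-in-$s$ line --- the trace method would give a sharper bound than the paper's Hilbert--Schmidt-plus-hypercontractivity route and would avoid the trivial $N^2$ loss, at the price of much heavier combinatorics (cyclic $2p$-gluings, control of the number of flower couples of order up to $2np$, and verifying that the $\zeta\in\{\pm\}$ decomposition of the kernel, which you did not mention, threads correctly through the gluing). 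Given that $N^{50}$ of slack is already being spent, the paper's cruder reduction is the more economical choice here, but your route is viable once the time-variable subtlety is addressed.
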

The proofs of these two propositions will be given in Sections \ref{section-couples} and \ref{section-remainder} respectively.
\subsection{Plan for the rest of the thesis}
In Section \ref{section-preparations}, we introduced the definitions of enhanced trees, enhanced couples and molecule structures, and we chose and performed the nonlinear frequency shift. In Section 3, we proved the integral estimates Proposition \ref{integral_est} \& \ref{integral_est_1}, and stated the bounds on couples and the operator $\Ell$.

In Section \ref{section-counting}, we prove the three-vector and two-vector counting estimates and also the convergence of iterates used for the main theorem. In Section \ref{section-couples}, we introduce the irregular chains and perform the splicing to remove the small-gap chains in Section \ref{irregular}. We remove the degenerate nodes by performing an additional preprocessing step before the algorithm, and prove that the molecule structure remains the same as in \cite{ODW} in Section \ref{subsec-ass} \& \ref{sec-optype}. We then prove Proposition \ref{bound-couple} in Section \ref{sec-boundoncouple} using a bootstrap argument.

In Section \ref{section-remainder} we introduce the flower trees and flower couples, and prove Proposition \ref{prop-remainder}. In Section \ref{section-main}, we bound the remainder terms using a contraction mapping argument. Finally, we prove the main Theorem \ref{main}, deriving the wave kinetic equation for the reduced evolution equation.

% section 4 - Counting Estimates
\section{Counting Estimates}
\label{section-counting}
In this section, we prove the vector counting estimates and the convergence of iterates. Denote $\Z_N:=N^{-1}\Z$ and $\mathcal{S}$ as the Schwartz space.
\begin{proposition}[Three Vector Counting] \label{vector_counting}
Let $1<T\leq N^{1-\epsilon}$ for $\epsilon \ll 1$. Then uniformly in $k\in \mathbb{Z}_N \cap (-1,2) \setminus \{0,1\}$ and $m\in \mathbb{R}$, the set:
\begin{align}
    S_3 &= \Big\{(x,y,z)\in \mathbb{Z}_N^3 \cap (0,1)^3: x-y+z = k,\notag\\
    &\left|\sin\left(\pi x\right)-\sin\left(\pi y\right)+\sin\left(\pi z\right)-|\sin\left(\pi k\right)|-m\right| \leq T^{-1}\Big\}
\end{align}
satisfies the bound:
\begin{align}
    \sum_{\substack{(x,y,z)}\in S_3}F(x,y) &\lesssim N^2T^{-1}\log T,
\end{align}
where 
\begin{align}
    F(x,y) &= |\sin(\pi x)\sin(\pi y)\sin(\pi (k-x+y))|.
\end{align}
\end{proposition}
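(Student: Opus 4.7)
The plan is to eliminate $z = k - x + y \bmod 1$, so that $S_3$ becomes a $2$D subset of $(x,y) \in \mathbb{Z}_N^2 \cap (0,1)^2$; I will then count lattice points via a $1$D analysis in $y$ for each fixed $x$, combined with a dyadic decomposition in $x$. The key algebraic input is the identity
\begin{equation*}
\sin(\pi x) - \sin(\pi y) + \sin(\pi(k-x+y)) - \sin(\pi k) = -4\,\sin\!\bigl(\tfrac{\pi(x-y)}{2}\bigr)\sin\!\bigl(\tfrac{\pi(y+k)}{2}\bigr)\sin\!\bigl(\tfrac{\pi(x-k)}{2}\bigr),
\end{equation*}
obtained by two applications of sum-to-product. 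Since $|\sin(\pi k)| - \sin(\pi k)$ is independent of $(x,y)$, it is absorbed into the parameter $m$. Applying product-to-sum to $\sin(\pi(x-y)/2)\sin(\pi(y+k)/2)$ separates the $y$-dependence and turns the defining inequality of $S_3$ into
\begin{equation*}
\bigl|\,2\sin\!\bigl(\tfrac{\pi(x-k)}{2}\bigr)\cos\!\bigl(\tfrac{\pi(x-2y-k)}{2}\bigr) - c(x)\,\bigr| \leq T^{-1},
\end{equation*}
with $c(x)$ depending on $x$, $k$, $m$ but not on $y$.

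Next, I decompose dyadically in $|\sin(\pi(x-k)/2)| \sim 2^{-j}$ for $0 \le j \le J := \lceil \log_2 T\rceil$, plus a residual piece $|\sin(\pi(x-k)/2)| \lesssim T^{-1}$ that is treated separately. At scale $j$, the set of admissible $x \in \mathbb{Z}_N \cap (0,1)$ has cardinality $\lesssim N 2^{-j}$. For such $x$ the $y$-constraint reads $|\cos(\pi(x-2y-k)/2) - c'(x)| \lesssim T^{-1} 2^j$; since the argument is linear in $y$ with slope $\pi$, a classical $1$D count gives at most $\lesssim N T^{-1} 2^j$ admissible $y$'s outside a neighborhood of the (at most one) critical value $y_0 \in \{(x-k)/2,\,(x-k+2)/2\}\bmod 1$ where the cosine has vanishing derivative, and at most $\lesssim N\sqrt{T^{-1} 2^j}$ inside that neighborhood. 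Using $F \le 1$, the non-critical part contributes $\lesssim N 2^{-j}\cdot N T^{-1} 2^j = N^2 T^{-1}$ per scale, which summed over $j$ yields the desired $N^2 T^{-1}\log T$.

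The main technical obstacle is the critical subregion, where a naive $\sqrt{T^{-1}2^j}$ count is too large to handle with $F \le 1$. The saving feature is that at $y_0$ one has $\sin(\pi y_0),\,|\sin(\pi z_0)| \sim |\sin(\pi(x-k)/2)| \sim 2^{-j}$ and both vary at most linearly in $y - y_0$; consequently the weight $F(x,y) = |\sin(\pi x)\sin(\pi y)\sin(\pi z)|$ is bounded by $\lesssim \max(2^{-j},\,|y-y_0|)^2$ throughout the critical window. Summing this extra factor over a window of width $\sqrt{T^{-1} 2^j}$ and over $x$ at scale $j$ produces a per-scale contribution of $\lesssim N^2 T^{-1}$ (in fact smaller by a power when $j$ is small), which is absorbed by the main term after summing $j$. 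Finally, the residual regime $|\sin(\pi(x-k)/2)| \lesssim T^{-1}$ contains $O(1+N/T)$ values of $x$; even granting every $y$ admissible and using only $F \le 1$, its total contribution is $\lesssim N^2 T^{-1}$.
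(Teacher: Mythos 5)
Your strategy (eliminate $z$, factor $\Omega$ via the trigonometric identity, dyadically decompose in $|\sin(\pi(x-k)/2)|$ and count lattice points directly) is a genuinely different route from the paper, which uses Poisson summation plus a coarea-formula change of variables $y\mapsto\Omega$; the paper works on the continuous side throughout while you stay on the lattice. Your factorization identity, the non-critical count, and the residual regime $|\sin(\pi(x-k)/2)|\lesssim T^{-1}$ are all correct. However, there is a real gap in the critical-window step.

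In the window $|y-y_0|\le w:=\sqrt{T^{-1}2^{j}}$ your bound is $F\lesssim (2^{-j}+|y-y_0|)^2$. Summing $F$ over the $\lesssim Nw$ lattice points in the window gives $\lesssim N\bigl(2^{-2j}w+w^3\bigr)$, and multiplying by the $\lesssim N2^{-j}$ choices of $x$ at scale $j$ yields a per-scale critical contribution
\[
N^2 2^{-j}\bigl(2^{-2j}w+w^3\bigr)\;=\;N^2\bigl(T^{-1/2}2^{-5j/2}+T^{-3/2}2^{j/2}\bigr).
\]
At $j=0$ this equals $N^2T^{-1/2}$, and the sum over $0\le j\le \log_2 T$ is $\sim N^2T^{-1/2}$ -- a loss of $T^{1/2}/\log T$ relative to the target $N^2T^{-1}\log T$. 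So the claim that the critical window contributes $\lesssim N^2T^{-1}$ per scale, and in particular the parenthetical ``in fact smaller by a power when $j$ is small,'' is incorrect: the contribution is \emph{largest} at $j=0$, where the window has width $\sqrt{T^{-1}}$ and $F\sim 1$ provides no savings.

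The missing ingredient is that the critical window actually has its full width $\sqrt{T^{-1}2^{j}}$ only for $x$ such that $c'(x)$ lies within $O(T^{-1}2^{j})$ of $\pm1$; for generic $x$ at scale $j$ the near-critical piece of the $y$-sublevel set has measure $\lesssim T^{-1}2^{j}$ and is absorbed into your main (non-critical) term. Closing the argument therefore requires a transversality/derivative estimate on $x\mapsto c'(x)$ controlling how often $c'(x)$ is near $\pm1$ -- an estimate your write-up does not supply. This is exactly the issue that the paper's coarea substitution sidesteps: it produces the Jacobian $F/|\partial_y\Omega|$, bounded by $\frac{1}{|\sin\pi\tilde x|}+\frac{1}{|\sin\pi(y-\tilde x)|}$, whose $x$-integral over $E_1$ is $\lesssim\log T$ uniformly in the level-set parameter, absorbing the near-degeneracy automatically rather than requiring a separate count of bad $x$'s.
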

\begin{proof}
Suppose $\Omega(x,y)=\sin\left(\pi x\right)-\sin\left(\pi y\right)+\sin\left(\pi (k-x+y)\right)$, $\varphi(x,y)\in \mathcal{S}(\mathbb{R}^2)$ equal to 1 on $B(0,1)$ and compact support on $B(0,2)$, and $\chi\in \mathcal{S}(\mathbb{R})$ is 1 on the unit ball $B(mT,1)$, and $\hat{\chi}$ is supported on a ball of constant radius $C$. Let $e(x)=\exp(2\pi i x)$. For the calculation below, we replace $F$ with a positive Schwartz function that equals to $F$ on $S_3$, and we still denote it as $F$ without affecting any results. Applying Fourier Transform on $\chi$ and Poisson Summation on $\sum_{x,y \in \Z_N} F(x,y)\varphi(x,y)e\left(\tau\Omega(x,y)\right)$, we get:
    \begin{align}
        &\quad\sum_{\substack{(x,y,z)}\in S_3}F(x,y) \leq \sum_{x,y \in \Z_N} F(x,y)\varphi(x,y)\chi\left(T\Omega(x,y)\right) \notag\\
        &= \sum_{x,y \in \Z_N} F(x,y)\varphi(x,y)\int_{-\infty}^{\infty}\hat{\chi}(\tau)e\left(\tau T\Omega(x,y)\right)d\tau \notag\\
        &=T^{-1} \sum_{x,y \in \Z_N} F(x,y)\varphi(x,y)\int_{-\infty}^{\infty}\hat{\chi}\left(\frac{\tau}{T}\right)e\left(\tau\Omega(x,y)\right)d\tau \notag\\
        &=T^{-1}\int_{-\infty}^{\infty}\hat{\chi}\left(\frac{\tau}{T}\right)\left[\sum_{x,y \in \Z_N} F(x,y)\varphi(x,y)e\left(\tau\Omega(x,y)\right)\right]d\tau \notag\\
        &=T^{-1}\int_{-\infty}^{\infty}\hat{\chi}\left(\frac{\tau}{T}\right)\int_{(x,y)\in \mathbb{R}^2} F\left(\frac{x}{N},\frac{y}{N}\right)\varphi\left(\frac{x}{N},\frac{y}{N}\right)e\left(\tau \Omega\left(\frac{x}{N},\frac{y}{N}\right)\right)dxdyd\tau \notag\\
        &+T^{-1}\int_{-\infty}^{\infty}\hat{\chi}\left(\frac{\tau}{T}\right)\sum_{(c,d)\in\mathbb{Z}^2\setminus\{(0,0)\}}\int_{(x,y)\in \mathbb{R}^2}F\left(\frac{x}{N},\frac{y}{N}\right) \varphi\left(\frac{x}{N},\frac{y}{N}\right)\notag\\
        &\qquad\qquad\qquad\qquad\qquad\qquad\qquad\qquad\quad \times e\left(\tau \Omega\left(\frac{x}{N},\frac{y}{N}\right)-cx-dy\right)dxdyd\tau \notag\\
        &=N^2T^{-1}\int_{-\infty}^{\infty}\hat{\chi}\left(\frac{\tau}{T}\right)\int_{(x,y)\in B(0,2)} F(x,y)\varphi(x,y)e\left(\tau \Omega(x,y)\right)dxdyd\tau \notag\\
        &+N^2T^{-1}\int_{-\infty}^{\infty}\hat{\chi}\left(\frac{\tau}{T}\right)\sum_{(c,d)\in\mathbb{Z}^2\setminus\{(0,0)\}}\int_{(x,y)\in B(0,2)} F(x,y)\varphi(x,y)\notag\\
        &\qquad\qquad\qquad\qquad\qquad\qquad\qquad\qquad\quad \times e\left(\tau \Omega(x,y)-cNx-dNy\right)dxdyd\tau \notag\\
        &=I + II \notag
    \end{align}
    For $I$, we can rewrite $I$ as:
    \begin{align}
        I &= N^2T^{-1}\int_{-\infty}^{\infty}\hat{\chi}\left(\frac{\tau}{T}\right)\int_{(x,y)\in B(0,2)} F(x,y)\varphi(x,y)e\left(\tau \Omega(x,y)\right)dxdyd\tau \notag\\
        &= N^2\int_{(x,y)\in B(0,2)} F(x,y)\varphi(x,y)\chi(T\Omega(x,y))dxdy
    \end{align}
    Note that:
    \begin{align}
        \partial_y\Omega(x,y) = \pi[\cos\left(\pi y\right)-\cos\left(\pi (k-x+y)\right)]
    \end{align}
    which means $\partial_y\Omega(x,y)\neq 0$ unless $(x,y) \in \{(x,y): x - k \in 2\mathbb{Z} \text{ or } k-x+2y \in 2\mathbb{Z}\}$. Let
    \begin{align}
    E &:= \{(x,y): x - k \in 2\mathbb{Z} \text{ or } k-x+2y \in 2\mathbb{Z}\},\\
    E_1 &:= \{(x,y)\in B(0,2):|(x,y)-E|\geq T^{-1}\},\\ 
    E_2 &:= \{(x,y)\in B(0,2):|(x,y)-E|< T^{-1}\}.
    \end{align}
    Here, $|(x,y)-E|$ denotes Euclidean distance from $(x,y)$ to the set $E$. Note that due to (4.5) one can solve the equation $\Omega(x,y)=\Omega_0$ for $y$ given $x$ and $\Omega_0$ on $E_1$. Then we can separate $I$ into two terms:
    \begin{align}
        I &= N^2\int_{(x,y)\in E_1}F(x,y)\varphi(x,y)\chi(T\Omega(x,y))dxdy \notag\\
        &\qquad\qquad\qquad\qquad\qquad+ N^2\int_{(x,y)\in E_2}F(x,y)\varphi(x,y)\chi(T\Omega(x,y))dxdy \\
        &= I.1 + I.2
    \end{align}
    Inside the region $E_1$, for any even integer $p$ we have:
    \begin{align}
        |x-k-p|, |k-x+2y-p| &\gtrsim T^{-1}.
    \end{align}
    Applying substitution $y\mapsto\Omega$ and $dy = d\Omega/\partial_y\Omega$ to the region $E_1$ for $I.1$, we have:
    \begin{align}
        |I.1| &= \left|N^2\int_{\mathbb{R}} \left(\int_{\substack{\Omega=s\\(x,y(s,x))\in E_1}}\varphi(x,y(s,x))\frac{F(x,y)}{\partial_y\Omega(x,y(s,x))}dx\right)\chi(Ts)ds\right| \\
        &\lesssim N^2\int_{\mathbb{R}} \left(\int_{\substack{\Omega=s\\(x,y(s,x))\in E_1}}\varphi(x,y(s,x))\left|\frac{\sin(\pi y)\sin(\pi (k-x+y))}{\cos\left(\pi y\right)-\cos\left(\pi (k-x+y)\right)}\right|dx\right)\chi(Ts)ds\label{eq-4.17}
    \end{align}
    Let $\tilde{x} = \frac{x-k}{2}$, and we can formulate the integrand as:
    \begin{align}
        &\left|\frac{\sin(\pi y)\sin(\pi (k-x+y))}{\cos\left(\pi y\right)-\cos\left(\pi (k-x+y)\right)}\right| = \left|\frac{\sin(\pi y)\sin(\pi (y-2\tilde{x}))}{2\sin(\pi \tilde{x})\sin(\pi(y-\tilde{x}))}\right|\notag\\
        &= \left|\frac{1}{2}\cdot\frac{\sin(\pi y)\sin(\pi (y-2\tilde{x}))}{\sin(\pi \tilde{x})-\sin(\pi(y-\tilde{x}))}\cdot\left(\frac{1}
        {\sin\left(\pi\tilde{x}\right)}-
        \frac{1}
        {\sin\left(\pi \left(y-\tilde{x}\right)\right)}\right)\right| \notag\\
        &= \left|\frac{1}{4}\cdot\frac{\sin(\pi y)\sin(\pi (y-2\tilde{x}))}{\cos(\frac{\pi y}{2})\sin(\pi(\frac{y}{2}-\tilde{x}))}\cdot\left(\frac{1}
        {\sin\left(\pi\tilde{x}\right)}-
        \frac{1}
        {\sin\left(\pi \left(y-\tilde{x}\right)\right)}\right)\right| \notag\\
        &= \left|\sin\left(\frac{\pi y}{2}\right)\cos\left(\pi\left(\frac{y}{2}-\tilde{x}\right)\right)\left(\frac{1}
        {\sin\left(\pi\tilde{x}\right)}-
        \frac{1}
        {\sin\left(\pi \left(y-\tilde{x}\right)\right)}\right)\right| \notag\\
        &\lesssim \left|\frac{1}
        {\sin\left(\pi\tilde{x}\right)}\right|+
        \left|\frac{1}
        {\sin\left(\pi \left(y-\tilde{x}\right)\right)}\right|. \label{eq-4.18}
    \end{align}
    Then we can estimate $|I.1|$ using (\ref{eq-4.18}):
    \begin{align}
        |I.1|&\lesssim 
        N^2\int_{\mathbb{R}} \left(\int_{\substack{\Omega=s\\(x(\tilde{x}),y(s,x))\in E_1}}
        \left|\frac{1}
        {\sin\left(\pi\tilde{x}\right)}\right|+
        \left|\frac{1}
        {\sin\left(\pi \left(y-\tilde{x}\right)\right)}\right|d\tilde{x}\right)\chi(Ts)ds\\
        &\lesssim N^2\int_{\mathbb{R}}\int_{T^{-1}}^{1}\frac{1}{\tilde{x}}\;d\tilde{x}\;\chi(Ts)\;ds = N^2T^{-1}\log T\int_{\mathbb{R}}\chi(t)dt\lesssim N^2T^{-1}\log T.
    \end{align}
    Note that the inner integral can be separated into several parts, if $y(\Omega=s)$ has more than one value, since the map $y\mapsto \Omega$ can have finitely many solutions for given fixed $k$ and $x$ in $E_1$. And inside $E_1$, we have $|\sin\left(\tilde{x}\right)|, |\sin(\pi \left(y-\tilde{x}\right)|\gtrsim T^{-1}$, which lead to counting of $\log T$. 
    On the other hand, for $I.2$, we know that $|E_2|\lesssim T^{-1}$, and then $|I.2| \lesssim N^2T^{-1}$.
    For $II$, the phase function $\Phi(x,y)=\tau \Omega(x,y)-cNx-dNy$ satisfies:
    \begin{align}
        |\triangledown_{(x,y)}\Phi(x,y)|=|\tau\triangledown_{(x,y)}\Omega(x,y)-N(c,d)|\gtrsim N|c|,
    \end{align}
    for $c \neq 0$ and $\tau\leq CT\lesssim N^{1-\epsilon}$. Then we can integrate by parts many times as:
    \begin{align}
        \triangledown_{(x,y)} e\left(\tau \Omega(x,y)-cNx-dNy\right)=\frac{e\left(\tau \Omega(x,y)-cNx-dNy\right)}{2\pi i \triangledown_{(x,y)}\Phi(x,y)},
    \end{align}
    to show that $|II|\ll N^2T^{-1}$. 
    Hence $\sum_{\substack{(x,y,z)}\in S_3}F(x,y) \lesssim N^2T^{-1}\log T$. 
    \end{proof}
    \begin{corollary} {(Convergence of Iterates)}\label{cor-iterates}
    For fixed $k\in \mathbb{Z}_N \cap (-1,2) \setminus \{0,1\}$, let $F \in \mathcal{S}(\R^2)$, $\chi \in \mathcal{S}(\R)$ with $\hat{\chi}$ supported in a ball of constant radius $C>0$. Then for all $\delta>0$ sufficiently small we have 
    \begin{align}
     \sum_{\substack{x, y\in \Z_N \cap (0,1)\\k-x+y \in (0,1)}} F(x, y)\chi(T\Omega(x, y)) =& N^2\int_{\substack{x, y\in (0,1)\\k-x+y \in (0,1)}} F(x, y)\chi(T\Omega(x, y))d x d y+ O(N^{2 - \delta}T^{-1}), \label{eq-iterate-conv}
    \end{align}
    where 
    \begin{align*}
        \Omega(x, y) &= \sin\left(\pi x\right)-\sin\left(\pi y\right)+\sin\left(\pi (k-x+y)\right)-|\sin\left(\pi k\right)|.
    \end{align*}
    \end{corollary}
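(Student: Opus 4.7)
The plan is to adapt the Poisson summation template used in the proof of Proposition \ref{vector_counting} to extract the integral as the main term rather than as an upper bound. The essential difference with Proposition \ref{vector_counting} is that here we must explicitly isolate the $(c,d)=(0,0)$ summand produced by Poisson summation, and track both the degenerate and nondegenerate contributions so that the error is controlled to order $O(N^{2-\delta} T^{-1})$.

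Concretely, I would first introduce a smooth cutoff $\psi(x,y)$ approximating the indicator of the admissible region $R := \{(x,y) \in (0,1)^2 : k - x + y \in (0,1)\}$ at scale $\delta_0 := N^{-\delta'}$ for some $\delta' > \delta$. Replacing $\mathbb{1}_R$ by $\psi$ on both the sum and integral sides in \eqref{eq-iterate-conv} produces boundary errors supported in a $\delta_0$-neighborhood of $\partial R$. Applying a local version of Proposition \ref{vector_counting} in which the bump $\varphi$ is replaced by a bump adapted to this strip, these boundary errors are bounded by $O(\delta_0 \cdot N^2 T^{-1} \log T) = O(N^{2-\delta'} T^{-1} \log T)$, which is $\ll N^{2-\delta} T^{-1}$ for $\delta' > \delta$.

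Next, I would write $\chi(T\Omega) = T^{-1} \int_\R \hat\chi(\tau/T)\, e(\tau\Omega(x,y))\, d\tau$ with $\operatorname{supp}\hat\chi(\tau/T) \subset \{|\tau| \leq CT\}$, and apply Poisson summation on the lattice $\Z_N^2$ to the smoothed integrand $G := F\psi\chi(T\Omega)$, obtaining
\[ \sum_{(x,y)\in\Z_N^2} G(x,y) = N^2 \sum_{(c,d) \in \Z^2} \widehat{G}(cN, dN). \]
The mode $(c,d) = (0,0)$ recovers $N^2 \int F\psi\chi(T\Omega)\, dx\, dy$, which after undoing the cutoff $\psi$ (up to the boundary error above) yields the integral appearing in \eqref{eq-iterate-conv}. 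For $(c,d) \neq (0,0)$, the inner phase $\tau\Omega(x,y) - N(cx+dy)$ has gradient of magnitude $\gtrsim N(|c|+|d|)$ whenever $|\tau| \leq CT \leq CN^{1-\epsilon}$. Repeated integration by parts against this nonstationary phase, exactly in the spirit of the estimate for the term $II$ in the proof of Proposition \ref{vector_counting}, produces a decay of size $(N^{\delta'-1}/|c|)^M$ per $M$ applications, where the $N^{\delta'}$ factor comes from derivatives of the mollifier $\psi$. Summing over $(c,d) \neq 0$ and multiplying by $N^2$ gives a total contribution of $O(N^{2 - M(1-\delta')})$, which for $M$ sufficiently large is negligible compared with $N^{2-\delta} T^{-1}$.

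The main obstacle is the boundary strip estimate, since the constraint $k-x+y \in (0,1)$ is not symmetric between $x$ and $y$, and the strip intersects the degenerate lines $x - k \in 2\Z$ and $k - x + 2y \in 2\Z$ of Proposition \ref{vector_counting}. However, the argument in that proof, which splits the domain into the nondegenerate part $E_1$ (where $|\partial_y\Omega|$ is bounded below by $T^{-1}$) and the degenerate part $E_2$ (of measure $O(T^{-1})$), transfers to the restricted setting with only cosmetic changes: restricting to the strip shrinks the domain of integration so that the same change-of-variable and log-integrability argument produces the extra factor $\delta_0$ needed. Combining the Poisson main term, the IBP-based control of the nonzero modes, and the boundary estimate completes the proof.
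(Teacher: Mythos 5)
Your strategy is the paper's: mollify the sharp indicator, Poisson-sum using the machinery of Proposition~\ref{vector_counting}, read off the $(0,0)$ mode as the integral, and kill the remaining modes by non-stationary phase. The one substantive difference is the transition width of the mollifier, and that difference is where a gap opens. The paper's $\varphi$ transitions over a strip of width $\sim N^{-10\delta}T^{-1}$, so the boundary error is controlled by a crude lattice count: a strip of measure $\lesssim N^{-10\delta}T^{-1}$ contains $\lesssim N^{2-10\delta}T^{-1}$ points of $\Z_N^2$, and bounding $|\chi|\lesssim 1$ is already enough. The price paid is that $\partial^M\varphi \sim (N^{10\delta}T)^M$, so the per-step IBP gain $N^{10\delta}T/(N|c|)$ is $<1$ only when $T < N^{1-10\delta}$, i.e.\ $\delta<\epsilon/10$ --- exactly the regime covered by ``$\delta>0$ sufficiently small.'' Your $\psi$ transitions at the much wider scale $\delta_0 = N^{-\delta'}$, which makes the IBP painless, but the crude count then yields only $N^{2-\delta'}$, which is \emph{not} $O(N^{2-\delta}T^{-1})$ once $T\gg N^{\delta'-\delta}$; to recover the $T^{-1}$ factor you must genuinely exploit the localization from $\chi(T\Omega)$.

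That is the step you have not justified. You assert that a ``local version of Proposition~\ref{vector_counting}'' gives boundary error $O(\delta_0 N^2 T^{-1}\log T)$ because ``restricting to the strip shrinks the domain of integration so that the same change-of-variable and log-integrability argument produces the extra factor $\delta_0$.'' That heuristic does not transfer uniformly over $\partial R$: near $x\in\{0,1\}$ the strip indeed compresses the $\tilde x$-range to length $\sim\delta_0$, but near $y\in\{0,1\}$ or $k-x+y\in\{0,1\}$ the constraint falls on $y$ (hence on the variable $s=\Omega$) rather than on $\tilde x$, and the gain has to come from transversality of the level curves $\{\Omega=m\}$ to $\partial R$, so that the intersection of a width-$T^{-1}$ level tube with the width-$\delta_0$ boundary tube has measure $\sim\delta_0 T^{-1}$. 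Transversality fails at finitely many tangency points --- on $x=0$, for instance, tangency requires $\cos(\pi y)=\cos(\pi(k+y))$, which has solutions $y\equiv -k/2 \pmod 1$ for the admissible $k$ --- and near a first-order tangency the intersection measure is only $\sim\sqrt{\delta_0}\,T^{-1}$, so the boundary error degrades to $O(\sqrt{\delta_0}\,N^2T^{-1}\log T)$ and you must take $\delta'>2\delta$, with implicit constants depending on $k$ through $\sin(\pi k/2)$ and its kin. For fixed $k$ this can be patched and the approach survives, but the argument must actually be supplied; the paper's ultra-narrow cutoff is engineered precisely so that this transversality question never arises.
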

    \begin{proof}
    We choose compactly supported $\varphi\in \mathcal{S}(\mathbb{R}^2)$ to be 1 on $S=\big\{(x,y)\in (0,1)^2: k-x+y \in (0,1)\big\}$ and nonzero on $S_1$ where $|S_1\setminus S|\lesssim N^{-10\delta}T^{-1}$. Then using the result in Proposition \ref{vector_counting}, we get:
    \begin{align}
        &\sum_{\substack{x, y\in \Z_N \cap (0,1)\\k-x+y \in (0,1)}} F(x, y)\chi(T\Omega(x, y))=\sum_{x, y\in \Z_N}F(x, y)\varphi(x,y)\chi(T\Omega(x, y))+O(N^{2 - \delta}T^{-1}) \label{4.2.0}
    \end{align}
    And similarly following the calculation in Proposition \ref{vector_counting}, we can write the first term in \eqref{4.2.0} as:
    \begin{align}
        &\sum_{x, y\in \Z_N}F(x, y)\varphi(x,y)\chi(T\Omega(x, y))=N^2\int_{(x,y)\in \mathbb{R}^2} F(x,y)\varphi(x,y)\chi(T\Omega(x,y))dxdy \notag\\
        &+N^2T^{-1}\int_{-\infty}^{\infty}\hat{\chi}\left(\frac{\tau}{T}\right)\sum_{(c,d)\in\mathbb{Z}^2\setminus\{(0,0)\}}\int_{\mathbb{R}^2} F(x,y)\varphi(x,y)e\left(\tau \Omega(x,y)-cNx-dNy\right)dxdyd\tau. \label{4.2.2} \\
        &= N^2\int_{\substack{x, y\in (0,1)\\k-x+y \in (0,1)}} F(x, y)\chi(T\Omega(x, y))d x d y+ O(N^{2 - \delta}T^{-1}). \notag
    \end{align}
    Again we can use $|S_1\setminus S|\lesssim N^{-10\delta}T^{-1}$ to show that:
    \begin{align*}
        N^2\int_{(x,y)\in \mathbb{R}^2} F(x,y)|\varphi(x,y)-\boldsymbol{1}_S|\chi(T\Omega(x,y))dxdy \lesssim  O(N^{2 - \delta}T^{-1}),
    \end{align*}
    and apply integration by parts sufficiently many times to show that \eqref{4.2.2}$\lesssim O(N^{2 - \delta}T^{-1})$, from which \eqref{eq-iterate-conv} follows.
    \end{proof}
    
    \begin{proposition}[Two Vector Counting] \label{vector_counting-2}
    Let $1<T\leq N^{1-\epsilon}$ for $\epsilon \ll 1$. Then uniformly in $k\in \mathbb{Z}_N \cap (-1,2)$ and $m\in \mathbb{R}$, the sets:
    \begin{align}
        S_2^{\pm} &= \Big\{(x,y)\in \mathbb{Z}_N^2\cap (0,1)^2: x\pm y= k,\notag\\
        &\left|\sin\left(\pi x\right)\pm\sin\left(\pi y\right)-m\right| \leq T^{-1}\Big\}.
    \end{align}
    satisfy the bounds:
    \begin{align}
        \sum_{\substack{(x,y)}\in S_2^{+}}G(x) &\lesssim NT^{-\frac{1}{2}},\\
        \sum_{\substack{(x,y)}\in S_2^{-}}G(x) &\lesssim \left\{ \begin{array}{rcl}
        NT^{-\frac{1}{2}} & \text{if } |k| \gtrsim T^{-\frac{1}{2}},\\
        N & \text{otherwise},
        \end{array}\right.
    \end{align}
    where 
    \begin{align}
        G(x) &= |\sin(\pi x)\sin(\pi (k-x))|.
    \end{align}
    \end{proposition}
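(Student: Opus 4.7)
The plan is to adapt the Poisson--summation strategy of Proposition~\ref{vector_counting} to the simpler one--dimensional setting, since the constraint $x\pm y=k$ makes $y$ a function of $x$. Writing $s:=\sin(\pi k/2)$ and $t(x):=\sin(\pi(x-k/2))$, the product--to--sum identities give
\[
\sin(\pi x)+\sin(\pi(k-x))=\sin(\pi x)-\sin(\pi(x-k))=2s\cos(\pi(x-k/2)),\quad \sin(\pi x)\sin(\pi(k-x))=s^{2}-t^{2}(x),
\]
so in both cases the weighted sum reduces to
\[
\Sigma_{\pm}\;=\;\sum_{x\in\Z_N\cap I_{\pm}}\!\bigl|s^{2}-t^{2}(x)\bigr|\,\chi\!\Bigl(T\bigl(2s\cos(\pi(x-k/2))-m\bigr)\Bigr),
\]
with $\chi\in\mathcal{S}(\R)$ as in Proposition~\ref{vector_counting} and $I_{\pm}\subset(0,1)$ the admissible $x$--interval enforcing $y=\mp(x-k)\in(0,1)$.

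Inserting a smooth cutoff $\varphi$ adapted to $I_{\pm}$ and applying Poisson summation as in Proposition~\ref{vector_counting}, one obtains $\Sigma_{\pm}=N\!\int|s^{2}-t^{2}|\,\varphi\,\chi\,dx + N\cdot(\text{Poisson tail})$; the tail is negligible by repeated integration by parts against $e(-cNx)$ for $c\ne 0$, since the phase derivative $\tau\Omega'(x)-cN$ has modulus $\gtrsim N$ for $|\tau|\le CT\le CN^{1-\epsilon}$ and $|c|\ge 1$. For the main integral I substitute $v=\cos(\pi(x-k/2))$, so that $|s^{2}-t^{2}|=|v^{2}-(1-s^{2})|$ and $dx=\mp\,dv/(\pi\sqrt{1-v^{2}})$. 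For $S_{2}^{+}$ the interval $I_{+}$ contains the critical point $x=k/2$ of $\cos(\pi(x-k/2))$ and the map is two--to--one onto $(\sqrt{1-s^{2}},1)$, giving
\[
\frac{2}{\pi}\!\int_{\sqrt{1-s^{2}}}^{1}\!\frac{v^{2}-(1-s^{2})}{\sqrt{1-v^{2}}}\,\chi(T(2sv-m))\,dv;
\]
for $S_{2}^{-}$ the critical point lies outside $I_{-}$, the map is one--to--one onto $(-\sqrt{1-s^{2}},\sqrt{1-s^{2}})$, and the integrand $((1-s^{2})-v^{2})/\sqrt{1-v^{2}}$ is pointwise $\le 1$.

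For $S_{2}^{+}$ the numerator $v^{2}-(1-s^{2})$ vanishes at the lower endpoint (where the Jacobian is $\sim s$) and is bounded by $s^{2}$ near the Jacobian blow--up at $v=1$; balancing these against the $\chi$--localization of $v$--width $\sim(sT)^{-1}$ yields the uniform bound $\lesssim s^{3/2}T^{-1/2}\le T^{-1/2}$, with the extremal case being the stationary--phase regime $|m-2s|\lesssim T^{-1}$ in which the $x$--level set has width $\sim(sT)^{-1/2}$ and $|s^{2}-t^{2}|\sim s^{2}$. For $S_{2}^{-}$ the $\chi$--localization alone gives $\lesssim(sT)^{-1}$, which is $\le T^{-1/2}$ exactly when $|s|\sim|k|\gtrsim T^{-1/2}$; in the complementary regime $|k|\lesssim T^{-1/2}$ the phase $2s\cos(\pi(x-k/2))$ is nearly constant and one falls back on the trivial count (at most $\sim N$ lattice points with $|G|\le 1$) for the stated $O(N)$ bound. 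I expect the main difficulty to be the uniform bookkeeping for $S_{2}^{+}$ across the sub--regimes of $v_{0}:=m/(2s)$ (bulk of $(\sqrt{1-s^{2}},1)$, near--boundary $v_{0}\to\sqrt{1-s^{2}}$, and stationary $v_{0}\to 1$), verifying that in each the cancellation between the vanishing numerator and the Jacobian singularity produces the $s^{3/2}T^{-1/2}$ rate.
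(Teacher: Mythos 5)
Your high-level reduction — Poisson summation to a main term $N\int G\,\chi(T\Omega_2)\,dx$ plus a tail killed by non-stationary-phase integration by parts — is exactly the paper's; the two proofs diverge only in how the resulting 1D integral is bounded. The paper substitutes $\sigma=\Omega_2(x)$ and uses the algebraic estimate
\[
\left|\frac{\sin(\pi x)\sin(\pi(k-x))}{\sin(\pi k/2)\,\sin(\pi(x-k/2))}\right|
\lesssim \frac{1}{|\sin(\pi k/2)|}+\frac{1}{|\sin(\pi(x-k/2))|}
\]
away from a $T^{-1/2}$-neighborhood of the critical set, yielding $\max\bigl(T^{-1/2},T^{-1}|k|^{-1}\bigr)$, and separately peels off $|k|,\,|k-2|\lesssim T^{-1/2}$ for $S_2^+$ via the trivial interval-length bound $|I_+|\lesssim T^{-1/2}$. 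You instead substitute $v=\cos(\pi(x-k/2))$ — the same change of variable up to the linear rescaling $\Omega_2=2sv$ — keep the integrand $|v^2-(1-s^2)|/\sqrt{1-v^2}$ exactly, and run a local analysis over the $\chi$-window; this is somewhat sharper ($s^{3/2}T^{-1/2}$ for $S_2^+$, $(sT)^{-1}$ for $S_2^-$, both $\le T^{-1/2}$) and explains the $S_2^+/S_2^-$ asymmetry cleanly (the critical point of $\Omega_2$ lies inside $I_+$ but outside $I_-$). Two corrections are needed. First, at $v=\sqrt{1-s^2}$ the Jacobian $dx/dv=1/(\pi\sqrt{1-v^2})$ is $\sim 1/s$, not $\sim s$ (you presumably meant $dv/dx\sim s$). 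Second, the ``balancing'' sentence compresses exactly the bookkeeping you then flag as the main difficulty; it does close, but you must record the dichotomy: for $s\gtrsim T^{-1/3}$ the near-$v=1$ window contributes $s^2\int_0^{(sT)^{-1}}u^{-1/2}\,du\sim s^{3/2}T^{-1/2}$ and dominates the bulk ($\lesssim s\cdot (sT)^{-1}=T^{-1}$) and lower-endpoint contributions, whereas for $s\lesssim T^{-1/3}$ the $\chi$-window exceeds the whole $v$-range (of length $\sim s^2$) and one must evaluate the full integral $\int_{\sqrt{1-s^2}}^{1}(v^2-(1-s^2))/\sqrt{1-v^2}\,dv\sim s^3\le s^{3/2}T^{-1/2}$. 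Without spelling out this split the $S_2^+$ claim is not actually established; the paper avoids it by the trivial $|I_+|$ bound for small $|k|$, $|k-2|$.
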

\begin{proof}
    Similarly as in Proposition \ref{vector_counting}, for $\sum_{\substack{(x,y)}\in S_2^{\pm}}G(x)$, we define: \[\Omega_2(x)=\sin\left(\pi x\right)+\sin\left(\pi (k-x)\right),\] 
    $\chi\in \mathcal{S}(\mathbb{R})$ is 1 on the unit ball $B(mT,1)$, and $\hat{\chi}$ is supported on a ball of radius constant $C$. Let $e(x)=\exp(2\pi i x)$, and $\psi(x)\in \mathcal{S}(\mathbb{R})$ being 1 on $(-1,1)$ and compact support on $[-2,2]$. We replace $G$ with a positive Schwartz function that equals to $G$ on $S_2^{\pm}$. Applying Fourier Transform on $\chi$ and Poisson Summation again on $\sum_{x} G(x)\psi(x)e\left(\tau\Omega_2(x)\right)$, we get:
    \begin{align}
        \sum_{\substack{(x,y)}\in S_2^{\pm}}G(x)  &\leq \sum_{x} G(x)\psi(x)\chi\left(T\Omega_2(x)\right) \notag\\
        &=NT^{-1}\int_{-\infty}^{\infty}\hat{\chi}\left(\frac{\tau}{T}\right)\int_{x\in (-2,2)} G(x)\psi(x)e\left(\tau \Omega_2(x)\right)dx d\tau \notag\\
        &+NT^{-1}\int_{-\infty}^{\infty}\hat{\chi}\left(\frac{\tau}{T}\right)\sum_{c\in\mathbb{Z}\setminus\{0\}}\int_{x\in (-2,2)} G(x)\psi(x)e\left(\tau \Omega_2(x)-cNx\right)dx d\tau \notag\\
        &=III + IV \notag
    \end{align}
    Note that:
    \begin{align}
        \Omega_2'(x) = \pi[\cos\left(\pi x\right)-\cos\left(\pi (k-x)\right)] = 2\pi\sin\left(\frac{\pi k}{2}\right)\sin\left(\pi\left(x-\frac{k}{2}\right)\right)
    \end{align}
    which means $\Omega_2'(x)\neq 0$ unless $x \in \{x: 2x - k \in 2\mathbb{Z}\}$ or $k=0$. If $k=0$, the trivial countings $\sum_{\substack{(x,y)}\in S_2^{+}}G(x)=0$ and $\sum_{\substack{(x,y)}\in S_2^{-}}G(x)\leq N$ are true. If $|k-2|\lesssim T^{-\frac{1}{2}}$ or $|k|\lesssim T^{-\frac{1}{2}}$, then $\sum_{\substack{(x,y)}\in S_2^{+}}G(x)\lesssim NT^{-\frac{1}{2}}$. If $k\neq0$ and $|k-2|\gtrsim T^{-\frac{1}{2}}$, we let
    \begin{align}
    E' &= \{x: 2x - k \in 2\mathbb{Z}\},\\
    E'_1 &= \{x\in (-2,2):|x-E'|\geq T^{-\frac{1}{2}}\},\\ 
    E'_2 &= \{x\in (-2,2):|x-E'|< T^{-\frac{1}{2}}\}.
    \end{align}
    Then similarly we have:
    \begin{align}
        III &= N\int_{x\in E'_1}\psi(x)\chi(T\Omega_2(x))dx + N\int_{x\in E'_2}\psi(x,y)\chi(T\Omega_2(x))dx \\
        &= III.1 + III.2
    \end{align}
    Applying substitution $dx = d\Omega_2/\Omega_2'$ to the region $E'_1$ for $III.1$, we have:
    \begin{align}
        |III.1| &= \left|N\int_{\mathbb{R}} \left(\int_{\substack{\Omega=s\\x\in E'_1}}\psi(x(s))\frac{G(x)}{\Omega_2'(x(s))}dx\right)\chi(Ts)ds\right| \\
        &\lesssim N\int_{x(s)\in E'_1}\psi(x(s))\left|\frac{\sin(\pi x(s))\sin(\pi (k-x(s)))}{\sin\left(\frac{\pi k}{2}\right)\sin\left(\pi\left(x(s)-\frac{k}{2}\right)\right)}\right|\chi(Ts)ds.
    \end{align}
    We estimate the integrand again:
    \begin{align}
        &\left|\frac{\sin(\pi x)\sin(\pi (k-x))}{\sin\left(\frac{\pi k}{2}\right)\sin\left(\pi\left(x-\frac{k}{2}\right)\right)}\right| = \left|\frac{\sin(\pi x)\sin(\pi (k-x))}{\sin\left(\frac{\pi k}{2}\right)-\sin\left(\pi\left(x-\frac{k}{2}\right)\right)}\cdot\left(\frac{1}{\sin\left(\frac{\pi k}{2}\right)}-\frac{1}{\sin\left(\pi\left(x-\frac{k}{2}\right)\right)}\right)\right|\notag\\
        &=\left|\frac{\sin(\pi x)\sin(\pi (k-x))}{2\cos\left(\frac{\pi x}{2}\right)\sin\left(\pi\left(\frac{k-x}{2}\right)\right)}\cdot\left(\frac{1}{\sin\left(\frac{\pi k}{2}\right)}-\frac{1}{\sin\left(\pi\left(x-\frac{k}{2}\right)\right)}\right)\right|\notag\\
        &=\left|2\sin\left(\frac{\pi x}{2}\right)\cos\left(\pi\left(\frac{k-x}{2}\right)\right)\cdot\left(\frac{1}{\sin\left(\frac{\pi k}{2}\right)}-\frac{1}{\sin\left(\pi\left(x-\frac{k}{2}\right)\right)}\right)\right|\notag\\
        % &=\left|\left(\sin\left(\frac{\pi k}{2}\right)+\sin\left(\pi\left(x-\frac{k}{2}\right)\right)\right)\cdot\left(\frac{1}{\sin\left(\frac{\pi k}{2}\right)}-\frac{1}{\sin\left(\pi\left(x-\frac{k}{2}\right)\right)}\right)\right|\notag\\
        % &= \left|\frac{\sin\left(\pi\left(x-\frac{k}{2}\right)\right)}
        % {\sin\left(\frac{\pi k}{2}\right)}\right|+
        % \left|\frac{\sin\left(\frac{\pi k}{2}\right)}
        % {\sin\left(\pi\left(x-\frac{k}{2}\right)\right)}\right|. \\
        &\lesssim \left|\frac{1}
        {\sin\left(\frac{\pi k}{2}\right)}\right|+
        \left|\frac{1}
        {\sin\left(\pi\left(x-\frac{k}{2}\right)\right)}\right|.
    \end{align}
    Then similarly as in Proposition \ref{vector_counting}:
    \begin{align}
        |III.1|&\lesssim 
        N\int_{x(s)\in E'_1}\psi(x(s))\chi(Ts)\left(\left|\frac{1}
        {\sin\left(\frac{\pi k}{2}\right)}\right|+
        \left|\frac{1}
        {\sin\left(\pi\left(x-\frac{k}{2}\right)\right)}\right|\right)ds\notag\\
        &\lesssim \max(NT^{-\frac{1}{2}},NT^{-1}|k|^{-1}).
    \end{align}
    On the other hand, for $III.2$, we know that $|E'_2|\lesssim T^{-\frac{1}{2}}$, and then $|III.2| \lesssim NT^{-\frac{1}{2}}$. For $IV$, the phase function $\Psi(x)=\tau \Omega_2(x)-cNx$ satisfies:
    \begin{align}
        |\Psi'(x)|=|\tau\Omega_2'(x)-Nc|\gtrsim N|c|,
    \end{align}
    for $c \neq 0$ and $\tau\leq CT\lesssim N^{1-\epsilon}$. Then we can integrate by parts many times as:
    \begin{align}
        \frac{d}{dx}e\left(\tau \Omega_2(x)-cNx\right)=\frac{e\left(\tau \Omega_2(x)-cNx\right)}{2\pi i \Psi(x)},
    \end{align}
    to show that $|IV|\ll NT^{-\frac{1}{2}}$. 
    Thus we have: 
    \begin{align}
    \sum_{\substack{(x,y)}\in S_2^{+}}G(x) &\lesssim NT^{-\frac{1}{2}},\\
    \sum_{\substack{(x,y)}\in S_2^{-}}G(x) &\lesssim \left\{ \begin{array}{rcl} NT^{-\frac{1}{2}} & &\text{if } |k| \gtrsim T^{-\frac{1}{2}},\\ NT^{-1}|k|^{-1} & &\text{if } T^{-\frac{1}{2}} \gtrsim |k| \gtrsim T^{-1}, \\N & &\text{otherwise,}
\end{array}\right.
    \end{align}
where we take the trivial counting $N$ for $(x,y)\in S_2^{-}$ and $|k| \lesssim T^{-1}$.
\end{proof}

% section 5 - Molecules and Couples
\section{Molecules and Couples} 
\label{section-couples}
In this section, we introduce the irregular chains, the splicing process to remove the small gap irregular chains, the preprocessing steps we need to add to the algorithm from \cite{ODW}, and finally prove the bounds on couples using a bootstrap argument.
\subsection{Double bonds and chains}\label{chain}
In this and next sections we repeat the definitions of double bonds, chains, unit twist, congruent couples, and splicing from \cite{WKE, ODW}.
\begin{definition}
Let $\Q$ be an enhanced couple such that $\mathbb{M}(\Q)$ has two atoms, $v_1$ and $v_2$, connected by precisely two edges oriented in opposite directions (resp. same direction). Denote by $\mathfrak{n}_j = \mathfrak{n}(v_j)$ for $j=1,2$. Then, up to symmetry, exactly one of the following two scenarios arises by Proposition 5.1 in \cite{ODW}:

\begin{enumerate}[label=(\roman*)]
\item \textbf{Cancellation (CL) Double Bond:} There exists a child $\mathfrak{n}_{12}$ of $\mathfrak{n}_1$ which is paired with a child $\mathfrak{n}_{21}$ of $\mathfrak{n}_2$. 
Moreover, $\mathfrak{n}_2$ is itself a child of $\mathfrak{n}_1$ such that $\mathfrak{n}_{12}$ and $\mathfrak{n}_2$ have opposite signs (resp. same sign). 
All other leaf-children of $\mathfrak{n}_1$ and $\mathfrak{n}_2$ remain unpaired. 
In the corresponding molecular structure, this configuration represents a double bond composed of one LP bond and one PC bond.

\item \textbf{Connectivity (CN) Double Bond:} There are children $\mathfrak{n}_{11}, \mathfrak{n}_{12}$ of $\mathfrak{n}_1$ with opposite signs (resp. same sign), as well as children $\mathfrak{n}_{21}, \mathfrak{n}_{22}$ of $\mathfrak{n}_2$ having opposite signs (resp. same sign). These four children pair off according to their signs, leaving any remaining leaf-children of $\mathfrak{n}_1$ and $\mathfrak{n}_2$ unpaired. In addition, neither $\mathfrak{n}_1$ nor $\mathfrak{n}_2$ is a child of the other in this scenario. In the molecular structure, this corresponds to a double bond in which both bonds are LP.
\end{enumerate}

\end{definition}

\begin{definition}[Chains]\label{def:chain-molecule}
Let $\Q$ be an enhanced couple, and let $\M(\Q)$ be its corresponding molecule. A \emph{chain} in $\M(\Q)$ is a sequence of atoms (nodes) $(v_0, \ldots, v_q)$ such that each consecutive pair $v_i,v_{i+1}$ (for $0 \leq i \leq q-1$) is connected by either a CL or CN double bond. We define the following special types of chains:
\begin{enumerate}[label=(\roman*)]
    \item A \emph{CL chain} is a chain whose double bonds are all CL.
    \item A \emph{negative chain} is a chain whose bonds are oriented with opposite directions. 
    \item An \emph{irregular chain} is a negative chain that is also CL.
    \item A \emph{maximal chain} is a chain for which $v_0$ and $v_q$ do not connect (via CL or CN double bonds) to any atoms outside the chain.
    \item A \emph{hyperchain} is a chain where $v_0$ and $v_q$ are also joined by a single bond.
    \item A \emph{pseudo-hyperchain} is a chain where $v_0$ and $v_q$ each connect (via single bonds) to another common atom $v \notin \{v_0,\dots,v_q\}$.
    \item A \textit{wide ladder} is a collection of chains ${(v_0^{(1)}, \ldots, v_{q^{(1)}}^{(1)}), \ldots, (v_0^{(m)}, \ldots, v_{q^{(m)}}^{(m)})}$, where each chain is referred to as a \textit{rung}, such that or each $1 \leq i \leq m - 1$, there is a bond connecting the vertices $v_0^{(i)}$ and $v_0^{(i+1)}$, as well as a bond connecting the vertices $v_{q^{(i)}}^{(i)}$ and $v_{q^{(i+1)}}^{(i+1)}$.
\end{enumerate}
\end{definition}

\begin{definition}[Irregular Chains and Splicing]\label{def:chain-couple}
A sequence of nodes $(\mathfrak{n}_0, \ldots, \mathfrak{n}_q)$ in the couple $\Q$ is called a \emph{CL chain} if, for each $0 \leq j \leq q-1$, 
\begin{enumerate}[label=(\roman*)]
    \item \(\mathfrak{n}_{j+1}\) is a child of \(\mathfrak{n}_{j}\),
    \item \(\mathfrak{n}_{j}\) has a child \(\mathfrak{m}_{j+1}\) paired with a child \(\mathfrak{p}_{j+1}\) of \(\mathfrak{n}_{j+1}\),
    \item \(\mathfrak n_0^1\) to be the remaining child of \(\mathfrak n_0\),
    \item \(\mathfrak n_0^{2}\) and \(\mathfrak n_0^{3}\) to be the remaining children of \(\mathfrak n_q\).
\end{enumerate}
If, in addition, each pair \((\mathfrak n_i,\mathfrak m_i)\) has opposite sign for \(1 \le i \le q\), then the chain is called \emph{irregular}. By the labeling convention relating couples to molecules, a sequence \((v_0,\dots,v_q)\) in \(\M(\Q)\) is a CL chain (resp. irregular chain) if and only if \((\mathfrak{n}_0,\dots,\mathfrak{n}_q)\) is.

\emph{Splicing} a CL chain $(\mathfrak{n}_0, \ldots, \mathfrak{n}_q)$ at $\mathfrak{n}_1, \ldots, \mathfrak{n}_q$ means removing those nodes and their leaf-children $\mathfrak n_i, \mathfrak m_i, \mathfrak p_i$ for  $(i = 1, \ldots, q)$ from $\Q$ and redefining the children of $\mathfrak{n}_0$ accordingly. The children of $\mathfrak n_0$ become $\mathfrak n_0^1$, $\mathfrak n_0^{2}$, and $\mathfrak n_0^{3}$, with their position determined by sign or by their relative position as children of $\mathfrak n_q$.  The resulting couple is denoted $\Q^{\mathrm{sp}}$, with its corresponding molecule $\mathbb{M}^{\mathrm{sp}}$.
\end{definition}

\begin{definition}[Gaps in Negative Chains]\label{def:gap}
Suppose $\M(\Q)$ has a double bond whose edges point in opposite directions. For a chosen \emph{decoration} of $\M(\Q)$, let $k$ and $\ell$ be the decoration values on these two edges, and define the \emph{gap} $h := k - \ell$. The double bond is called a \emph{large gap (LG) double bond} if $|h| \gtrsim T^{-\frac{1}{2}}$ and a \emph{small gap (SG) double bond} if $|h| \lesssim T^{-\frac{1}{2}}$. 

By \cite[Proposition~8.3]{WKE}, all double bonds in a given chain share the same gap magnitude. Specifically, in an irregular chain $(\mathfrak{n}_0, \ldots, \mathfrak{n}_q)$, the gap can also be tracked via 
\[
   h = k_{\mathfrak{n}_0} - k_{\mathfrak{n}_0^1}
   = k_{\mathfrak{n}_j} - k_{\mathfrak{m}_j}
   \quad(1 \leq j \leq q).
\]
\end{definition}
\subsection{Cancellation of irregular chains}\label{irregular}
\begin{definition}[Twist-Admissibility and Unit Twist]\label{def:twist}
Let $\Q$ be an enhanced couple with node set $\mathcal{N}$.
\begin{enumerate}[label=(\roman*)]
    \item We call a node $\mathfrak{n}_2 \in \mathcal{N}$ \emph{twist-admissible} if it has a parent $\mathfrak{n}_1$ such that the atoms $v_1, v_2$ corresponding to $\mathfrak{n}_1,\mathfrak{n}_2$ in the molecule $\M(\Q)$ are joined by a CL double bond and further that these two bonds have opposite orientations (i.e., the double bond is negative). Denote by $\mathfrak{n}_{12}$ and $\mathfrak{n}_{21}$ the paired children of $\mathfrak{n}_1$ and $\mathfrak{n}_2$, respectively.
    \item A \emph{unit twist} of $\Q$ at a twist-admissible node $\mathfrak{n}_2$ is performed by:
    \begin{enumerate}[label=(\alph*)]
        \item Swapping the node $\mathfrak{n}_{12}$ with $\mathfrak{n}_2$ itself,
        \item Swapping the two children (and subtrees) of $\mathfrak{n}_2$ that are \emph{not} $\mathfrak{n}_{21}$,
        \item Leaving all other parent-child relationships (and their signs) unchanged.
    \end{enumerate}

    \item Suppose $\Q$ is a \emph{decorated} couple, where each node or leaf $\mathfrak{m}$ has a decoration $k_{\mathfrak{m}}$. A unit twist at $\mathfrak{n}_2$ induces a corresponding decoration on the new couple $\Q'$ by:
    \[
       k_{\tilde{\mathfrak{m}}} = k_{\mathfrak{m}}
       \quad \text{for all } \mathfrak{m}
       \text{ except } \mathfrak{n}_2,\mathfrak{n}_{12},\mathfrak{n}_{21},
    \]
    and setting
    \[
       k_{\tilde{\mathfrak{n}}_2} = k_{\mathfrak{n}_{12}} 
       = k_{\mathfrak{n}_{21}}, 
       \quad
       k_{\tilde{\mathfrak{n}}_{12}} = k_{\tilde{\mathfrak{n}}_{21}}
       = k_{\mathfrak{n}_2}.
    \]
    If one splices either $\Q$ or $\Q'$ at $\mathfrak{n}_2$, the resulting couple (and its molecule) agrees in structure and decoration. 
\end{enumerate}
\end{definition}

\begin{definition}[Congruent Couples]\label{def:congruence}
Two enhanced couples $\Q_1$ and $\Q_2$ are \emph{congruent}, written $\Q_1 \sim \Q_2$, if one can obtain $\Q_2$ from $\Q_1$ (and vice versa) by performing any finite sequence of unit twists. If $\Q_1$ and $\Q_2$ are decorated, these decorations correspond one-to-one under the same twists. Since unit twists commute, specifying a set of twist-admissible nodes $\mathcal{M}\subset\mathcal{N}$ determines a family of congruent couples. Denote by
\[
   \textgoth{Q}_{\mathcal{M}}
   \;=\;
   \bigl\{
     \Q' \mid \Q' \sim \Q \ \text{via some subset of twists in} \ \mathcal{M}
   \bigr\}.
\]
If $\mathcal{K_{Q}}(t,s,k)$ denotes a function or weight associated to $\Q$, define
\[
   \K_{\textgoth{Q}_{\mathcal M}}(t,s,k)
   \;=\;
   \sum_{\Q' \in \textgoth{Q}_{\mathcal M}} \mathcal{K}_{\Q'}(t,s,k).
\]
\end{definition}

\begin{lemma}\label{lem-splice}
For an enhanced couple $\mathcal Q$ and SG irregular chain $(\mathfrak n_0, \ldots, \mathfrak n_{q})$ of length $q$, 
\begin{align}
&\K_{\textgoth{Q}_\mathcal M} (t,s,k) \notag\\
&= \left( \frac{\beta T}{N}\right)^{n_{\mathrm{sp}}} \zeta(\mathcal Q^{\mathrm{sp}}) \sum_{\mathscr E^{\mathrm{sp}}} \tilde \epsilon_{\mathscr E^{\mathrm{sp}}} \int _0^1\int_{\mathcal{E}^{\mathrm{sp}}_{\tau}} P_q(\tau, t_{\mathfrak n_0}, k[\mathcal N^{\mathrm{sp}}]) \left( \prod_{\mathfrak n \in \mathcal N^{\mathrm{sp}}}e^{\zeta_{\mathfrak{n}}i(\Omega_{\mathfrak{n}}Tt_{\mathfrak{n}}+\widetilde{\Omega}_{\mathfrak{n}}(t_{\mathfrak{n}}))} d t_{\mathfrak{n}}\right)d \tau  \notag\\
& \hspace{6cm}\times \prod^+_{\mathfrak l \in \mathcal L^{\mathrm{sp}}} n_{\mathrm{in}}(k_{\mathfrak l})\E \left[\mathcal{B}_{\mathcal{T}^+}(\eta_{\mathcal{T}^+}(\varrho))\mathcal{B}^*_{\mathcal{T}^-}(\eta_{\mathcal{T}^-}(\varrho))\right],\label{kqm}
\end{align}
where for some constant $C_0$, $\widetilde{\Omega}_{\mathfrak{n}}$ and $P_q$ satisfies 
\begin{align}
\widetilde{\Omega}_{\mathfrak{n}}(s) &= \Omega_{\mathfrak{n}}\left[(C_0\beta Ts+{A}_{\geq}(s)\right], \\
     \left|\dot{A}_{\geq}(s)\right| & \lesssim  \beta T,\\
\sup_{|k_{\mathfrak n_0} - k_{\mathfrak n^1_0}| \leq T^{-1/2}} \left|\left|P_q(\tau,t_0,  k[\mathcal N^{\mathrm{sp}}])\right|\right|_{\ell^\infty} &\lesssim \left(\beta T^{1/2}\right)^q, \label{eq-pestimate}\\
\sup_{|k_{\mathfrak n_0} - k_{\mathfrak n^1_0}| \leq T^{-1/2}} \left|\left|\frac{d}{dt_0}P_q(\tau,t_0,  k[\mathcal N^{\mathrm{sp}}])\right|\right|_{\ell^\infty} &\lesssim \left(\beta T^{1/2}\right)^q,\label{eq-dpestimate}
\end{align} and 
\begin{equation}\label{eq-sig-setE}
\mathcal E^{\mathrm{sp}}_{\tau} = \mathcal E^{\mathrm{sp}} \cap \{ t_{\mathfrak n_0} \geq t_{\mathfrak n_0^{2}} + \tau, t_{\mathfrak n_0} \geq t_{\mathfrak n_0^{3}} + \tau \}.
\end{equation}
In the above, we let $\mathcal{Q}^{\mathrm{sp}}$ be the couple obtained from $\mathcal{Q}$ by splicing out its irregular chain, and its corresponding decorations $\mathscr E^{\mathrm{sp}}$, domain of integration $\mathcal E^{\mathrm{sp}}$ and its nodes $\mathcal N^{\mathrm{sp}}$ and leaves $\mathcal L^{\mathrm{sp}}$. The notation $k[\mathcal{N}^{\mathrm{sp}}]$ denotes the decorations on the nodes of this spliced couple, and we define $\mathcal{M} = \{\mathfrak{n}_1, \ldots, \mathfrak{n}_q\}$. Furthermore, $\tilde \epsilon_{\mathscr E^{\mathrm{sp}}}$ permits degeneracies at $\mathfrak{n}_0$. 
\end{lemma}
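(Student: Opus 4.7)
The plan is to start from the explicit formula \eqref{eq-KQ} applied to each $\mathcal{Q}' \in \textgoth{Q}_\mathcal{M}$, decomposing $\K_{\textgoth{Q}_\mathcal{M}}(t,s,k) = \sum_{\mathcal{Q}'} \K_{\mathcal{Q}'}(t,s,k)$. By Definition \ref{def:twist}, a unit twist at a chain node $\mathfrak{n}_j$ permutes decorations on the paired leaves attached at $\mathfrak{n}_j$ and rearranges internal subtree labels, but leaves the spliced couple $\mathcal{Q}^{\mathrm{sp}}$, its decoration $\mathscr{E}^{\mathrm{sp}}$, and the associated $\zeta$-signs intact. Consequently, decorations of $\mathcal{Q}'$ for $\mathcal{Q}' \in \textgoth{Q}_\mathcal{M}$ are in bijection with pairs $(\mathscr{E}^{\mathrm{sp}}, \text{chain data})$, and I would regroup terms so that the sum over $\textgoth{Q}_\mathcal{M}$ becomes an inner sum over twist configurations for each fixed $\mathscr{E}^{\mathrm{sp}}$. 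By \cite[Proposition 8.3]{WKE} and Definition \ref{def:gap}, the gap $h = k_{\mathfrak{n}_0} - k_{\mathfrak{n}_0^1}$ is constant along the chain and $|h| \lesssim T^{-1/2}$ in the SG regime.

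Next, I would extract the chain contribution. For each chain node $\mathfrak{n}_j$, the time $t_{\mathfrak{n}_j}$ is integrated against $e^{\zeta_{\mathfrak{n}_j}i(\Omega_{\mathfrak{n}_j}Tt_{\mathfrak{n}_j}+\widetilde{\Omega}_{\mathfrak{n}_j}(t_{\mathfrak{n}_j}))}$ weighted by the coupling $T_{k_{\mathfrak{n}_j},\ldots}$. Because the chain terminates at $\mathfrak{n}_0$ and its remaining children $\mathfrak{n}_0^2,\mathfrak{n}_0^3$ lie outside the chain, the time-ordering constraints in $\mathcal{E}$ impose $t_{\mathfrak{n}_q} > \max(t_{\mathfrak{n}_0^2}, t_{\mathfrak{n}_0^3})$, which I would parametrize by $\tau := t_{\mathfrak{n}_0} - t_{\mathfrak{n}_q} \in (0,1)$. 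After summing over chain-internal decorations and congruent couples (the unit twists produce the crucial cancellations), the $q$-fold chain integral collapses into a single function $P_q(\tau, t_{\mathfrak{n}_0}, k[\mathcal{N}^{\mathrm{sp}}])$ attached to the spliced integrand, giving the form \eqref{kqm} with the restricted domain $\mathcal{E}^{\mathrm{sp}}_\tau$ from \eqref{eq-sig-setE}. The sign factor $\tilde{\epsilon}_{\mathscr{E}^{\mathrm{sp}}}$ arises because the spliced decorations may now hit the ``degenerate'' case at $\mathfrak{n}_0$ that was previously excluded by the chain's presence.

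For the bound \eqref{eq-pestimate}, I would exploit the SG condition jointly with the structure of the irregular chain. For the sinusoidal dispersion $\omega(k) = 2\sin(\pi k)$, Taylor expansion shows that the chain phase $\Omega_{\mathfrak{n}_j}$ is proportional (to leading order) to the gap $h$ times a uniformly bounded quantity, so $T|\Omega_{\mathfrak{n}_j}| \lesssim T \cdot T^{-1/2} = T^{1/2}$. Combined with the coupling prefactor $\beta T / N$ and the summation over the single free decoration at each chain node (which produces a factor $N$ canceling the $N^{-1}$ from the coupling), each chain step contributes $\beta \cdot T^{1/2}$. Iterating over the $q$ chain nodes and taking $\ell^\infty$ in the external decorations yields $\|P_q\|_{\ell^\infty} \lesssim (\beta T^{1/2})^q$. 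The derivative bound \eqref{eq-dpestimate} follows by the same scheme, since differentiating in $t_{\mathfrak{n}_0}$ acts only on the boundary of the outermost chain integral and does not worsen the per-node gain.

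The principal technical obstacle is controlling the nonlinear frequency shift $\widetilde{\Omega}_{\mathfrak{n}_j}(t) = \Omega_{\mathfrak{n}_j}(C_0 \beta T t + A_\geq(t))$ inside the chain integrals, since it entangles the time variables with the oscillatory phase in a way not present in \cite{ODW}. Following the strategy of Proposition \ref{integral_est}, I would use $|\dot{A}_\geq| \lesssim \beta T$ to write $\frac{d}{dt}\bigl(\Omega_{\mathfrak{n}_j}Tt + \widetilde{\Omega}_{\mathfrak{n}_j}(t)\bigr) = \Omega_{\mathfrak{n}_j}(T + C_0\beta T + \dot{A}_\geq(t))$, so that $T/(T + C_0\beta T + \dot{A}_\geq)$ is $O(1)$ under $T < \beta^{-5/4}$, and each integration by parts along the chain succeeds with losses absorbed into $P_q$. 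A secondary subtlety is keeping track of which unit-twist configurations produce the leading-order cancellation versus subleading remainder; since the gap magnitude is constant along the chain, the twists pair up correctly and the remainders fit into the same $(\beta T^{1/2})^q$ bound.
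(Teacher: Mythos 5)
Your overall structure matches the paper's: fix a representative $\Q$, sum over $\textgoth{Q}_\mathcal{M}$, collapse the chain-internal sum/integral into $P_q$, and restrict the remaining integration domain by a new parameter $\tau$. The treatment of the nonlinear frequency shift via the machinery of Proposition~\ref{integral_est} is also correct and is the genuine new wrinkle over \cite{ODW}. But the central step — establishing $\|P_q\|_{\ell^\infty}\lesssim(\beta T^{1/2})^q$ — is wrong as you have argued it.

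You attribute the factor $T^{-1/2}$ per chain node to the phase bound $T|\Omega_{\mathfrak n_j}|\lesssim T\cdot h\lesssim T^{1/2}$. This observation is correct but does not help here: a phase $e^{iT\Omega_j t_j}$ with $T|\Omega_j|\lesssim T^{1/2}$ is \emph{slowly} varying, so the $t_j$-integral over a domain of length $O(1)$ is at best $O(1)$ — integration by parts yields $1/(T|\Omega_j|)\gtrsim T^{-1/2}$, which is a \emph{worse} bound than the trivial one. The smallness of the chain phases is used by the paper only for the derivative estimate \eqref{eq-dpestimate}, where it controls $|\widetilde\Omega_j'(\cdot)|\lesssim h$. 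The per-node factor of $T^{-1/2}$ in \eqref{eq-pestimate} actually comes from the twist cancellation itself: summing over all sign choices $\zeta_{\mathfrak n_j}$ replaces each factor $n_{\mathrm{in}}(m_j)$ (with $m_j\in\{k_j,\ell_j\}=\{k_j,k_j-h\}$) by the finite difference $n_{\mathrm{in}}(k_j-h)-n_{\mathrm{in}}(k_j)$, and then the \emph{smoothness of the initial data $n_{\mathrm{in}}$} together with the SG condition gives
\begin{equation*}
\frac{\beta T}{N}\sum_{k_j\in\Z_N\cap(0,1)}\bigl|n_{\mathrm{in}}(k_j-h)-n_{\mathrm{in}}(k_j)\bigr|\;\lesssim\;\beta T\,|h|\;\lesssim\;\beta T^{1/2}.
\end{equation*}
You say ``the unit twists produce the crucial cancellations'' when defining $P_q$, but you never state what form the cancellation takes, never invoke the regularity of $n_{\mathrm{in}}$, and then substitute an unrelated (and non-working) argument for the bound. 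Without the finite-difference mechanism and the smoothness of $n_{\mathrm{in}}$, the bound \eqref{eq-pestimate} does not close.

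A smaller point: you write that $\mathfrak n_0^2,\mathfrak n_0^3$ are ``remaining children of $\mathfrak n_0$''; by Definition~\ref{def:chain-couple} they are the remaining children of $\mathfrak n_q$ (the deepest chain node), while $\mathfrak n_0^1$ is the remaining child of $\mathfrak n_0$. This does not affect the structure of your argument but should be stated correctly since it determines $\mathcal E^{\mathrm{sp}}_\tau$.
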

\begin{proof}
Without loss of generality, we assume that $\Q$ is the element of $\textgoth{Q}_{\mathcal M}$ where all $\zeta_{\mathfrak n_j}$ $(j = 0, \ldots, q)$ have the same sign. Following the notation in Definition \ref{def:chain-couple}, we set $k_{\mathfrak n_i} = k_i$, $k_{\mathfrak p_i} = k_{\mathfrak m_i} = \ell_i$, and $t_i = t_{\mathfrak n_i} (0 \leq i \leq q)$. In addition, we let $\ell_0=k_{\mathfrak n_0^{1}}, k_{q+1} = k_{\mathfrak n_0^{2}},$ and $\ell_{q+1} = k_{\mathfrak n_0^{3}}$, and $t_{q+1} = \text{max}(t_{\mathfrak n_0^{2}}, t_{\mathfrak n_0^{3}})$. We keep all $k_{\mathfrak n}$ and $t_{\mathfrak n}$ for $\mathfrak n \notin \{\mathfrak n_i, \mathfrak p_i, \mathfrak m_i\} (1 \leq i \leq q)$ fixed so that the small gap is also fixed by $k_0, \ell_0, k_{q+1}, \ell_{q+1}$: $$h:=k_0 - \ell_0 = k_1 - \ell_1 = \ldots = k_q - \ell_q = k_{q+1} - \ell_{q+1}.$$
As before, we define the frequency mismatch as:
    \begin{align*}
    \Omega_j &= \omega_{k_{j+1}} -  \omega_{\ell_{j+1}} +\omega_{\ell_{j}} - \omega_{k_{j}}\\
    &= 2(\sin(\pi k_{j+1}) - \sin(\pi \ell_{j+1}) + \sin(\pi \ell_{j}) - \sin(\pi k_{j}))\\
    &= 4\sin\left(\frac{\pi h}{2}\right)\left[\cos\left(\frac{\pi (k_{j+1}+\ell_{j+1})}{2}\right)-\cos\left(\frac{\pi (k_{j}+\ell_{j})}{2}\right)\right].
    \end{align*}
    This sinusoidal dispersion relation gives: $|\Omega_j|\lesssim h$ for any $j=0,\dots,q$. Summing over the chain, we get:
    \begin{align*}
    \Omega = \sum_{j = 0}^q \Omega_{j} = 2(\sin(\pi k_{q+1}) - \sin(\pi \ell_{q+1}) + \sin(\pi \ell_{0}) - \sin(\pi k_{0})).
    \end{align*}
    For each $\mathcal Q' \in \textgoth{Q}_{\mathcal M}$, the difference in $\mathcal K_{\mathcal Q'}$ compared to $\mathcal K_{\mathcal Q}$ can be identified as:
    \begin{align*}
    & \left( \frac{\beta T}{N}\right)^q\sum\limits_{\substack{k_i, \ell_i \in \Z_N \cap (0,1) \\ k_{i} - \ell_{i}=h \\ i = 1, \ldots, q}}\; \int\limits_{t_{q+1} < t_q < \ldots < t_1 < t_0} \left(\prod_{j = 1}^q i\zeta_{\mathfrak n_j}\right) \left(\prod_{j = 0}^q\epsilon_{k_{j+1}\ell_{j+1}m_{j}}\right)  \nonumber \\
    & \hspace{4.5cm}\times \left( \prod_{j = 0}^q e^{\zeta_{\mathfrak n_j}i( \Omega_jT t_j+ \widetilde{\Omega}_{j}(t_{j}))}\right)\left( \prod_{j = 1}^q n_{\mathrm{in}}(m_j)\right) d t_1 \ldots d t_q,
    \end{align*}
    where $m_0=\ell_0$, and $m_j = \ell_j$ or $m_j = k_j$ for congruent couples with unit twist performed at $\mathfrak n_j$. A unit twist at node $\mathfrak{n}_j$ reverses the sign of $\zeta_{\mathfrak{n}_j}$ but leaves the product $\zeta_{\mathfrak{n}_j}\Omega_j$ unchanged. For congruent couples, the enhanced couple structure will not change which leaves the part $\E \left[\mathcal{B}_{\mathcal{T}^+}(\eta_{\mathcal{T}^+}(\varrho))\mathcal{B}^*_{\mathcal{T}^-}(\eta_{\mathcal{T}^-}(\varrho))\right]$ also unchanged. Hence, when we sum over all couples $\mathcal{Q}' \in \textgoth{Q}_{\mathcal{M}}$, we are effectively summing over every possible choice of $\zeta_{\mathfrak{n}_j}$ for $j=1,\ldots,q$. Note that for our FPUT model, in addition to the linear frequency shift at $t_0$, we also need to extract the nonlinear frequency shift at $t_0$ and the new coefficient $\epsilon_{k_{q+1}\ell_{q+1}\ell_{0}}$ after splicing. The resulting expressions are derived as follows:
    \begin{align*}
    & \hspace{.4cm}\left( \frac{\beta T}{N}\right)^q\sum\limits_{\substack{\substack{k_i \in \Z_N \cap (0,1) \\ i = 1, \ldots, q}}} \hspace{.2cm}\int\limits_{t_{q+1} < t_q < \ldots < t_1 < t_0}\left(\prod_{j = 0}^q\epsilon_{k_{j+1}\ell_{j+1}m_{j}}\right) \left( \prod_{j = 0}^q e^{i( \Omega_jT t_j+ \widetilde{\Omega}_{j}(t_{j}))}\right)\\
    &\hspace{8.5cm}\times \left( \prod_{j = 1}^q n_{\mathrm{in}}(k_j - h)-n_{\mathrm{in}}(k_j)\right) d t_1 \ldots d t_q\\
    &= \epsilon_{k_{q+1}\ell_{q+1}\ell_{0}}\cdot e^{i( \Omega T t_0+ \widetilde{\Omega}(t_{0}))} \sum\limits_{\substack{\substack{k_i \in \Z_N \cap (0,1) \\ i = 1, \ldots, q}}} \hspace{.2cm}\int\limits_{t_{q+1} < t_q < \ldots < t_1 < t_0} \left( \frac{\beta T}{N}\right)^q \left(\prod_{j = 1}^q\epsilon_{k_{j}\ell_{j}}\right) \\
    &\hspace{3cm}\times  \left( \prod_{j = 1}^q e^{i( \Omega_jT(t_j - t_0)+ \widetilde{\Omega}_{j}(t_{j})-\widetilde{\Omega}_{j}(t_{0}))}\right)\left( \prod_{j = 1}^q n_{\mathrm{in}}(k_j - h)-n_{\mathrm{in}}(k_j)\right) d t_1 \ldots d t_q\\
    &= \epsilon_{k_{q+1}\ell_{q+1}\ell_{0}}\cdot e^{i( \Omega T t_0+ \widetilde{\Omega}(t_{0}))} \int\limits_{0 < s_1 < \ldots < s_q < t_0 - t_{q+1}}  \left( \frac{\beta T}{N}\right)^q  \sum\limits_{\substack{\substack{k_i \in \Z_N \cap (0,1) \\ i = 1, \ldots, q}}} \left(\prod_{j = 1}^q\epsilon_{k_{j}\ell_{j}}\right)\\
    &\hspace{3cm}\times \left( \prod_{j = 1}^q e^{i( \Omega_jT(s_j)+ \widetilde{\Omega}_{j}(s_{j}+t_0)-\widetilde{\Omega}_{j}(t_{0}))}\right)\left( \prod_{j = 1}^q n_{\mathrm{in}}(k_j - h)-n_{\mathrm{in}}(k_j)\right) d t_1 \ldots d t_q\\
    & = \epsilon_{k_{q+1}\ell_{q+1}\ell_{0}}\cdot e^{i( \Omega T t_0+ \widetilde{\Omega}(t_{0}))} \int\limits_{\tau \in \left[0, t_0 - t_{q+1}\right]} P_q(\tau, t_0, k[\mathcal N^{\mathrm{sp}}]) d\tau,
    \end{align*}
    where we define:
    \begin{align}
    \epsilon_{k_{j}\ell_{j}}&:=\epsilon_{k_{j}\ell_{j}m_{j-1}}\cdot\sqrt{\frac{\omega_{k_j}\omega_{\ell_j}}{\omega_{k_{j-1}}\omega_{\ell_{j-1}}}},\\
    P_q(\tau, t_0, k[\mathcal N^{\mathrm{sp}}]) &:= \left( \frac{\beta T}{N}\right)^q \sum\limits_{\substack{\substack{k_i \in \Z_N \cap (0,1) \\ i = 1, \ldots, q}}} \; \int\limits_{0 < s_1 < \ldots < s_{q-1} < \tau}   \left(\prod_{j = 1}^q\epsilon_{k_{j}\ell_{j}}\right) \notag \\
    &\times \left( \prod_{j = 1}^q e^{i( \Omega_jT(s_j)+ \widetilde{\Omega}_{j}(s_{j}+t_0)-\widetilde{\Omega}_{j}(t_{0}))}\right)\left( \prod_{j = 1}^q n_{\mathrm{in}}(k_j - h)-n_{\mathrm{in}}(k_j)\right)d s_1 \ldots d s_{q-1}.
    \end{align}
    Using the regularity of $n_{\mathrm{in}}$, we can bound $P_q(\tau,t_0, k[\mathcal N^{\mathrm{sp}}])$. Then for \emph{SG} case we have:
    \begin{align*}
    \left(\frac{\beta T}{N}\right) \sum_{k \in \Z_N \cap (0,1)} \left| n_{\mathrm{in}}(k - h)-n_{\mathrm{in}}(k)\right| &\lesssim \beta T h \lesssim \beta T^{\frac{1}{2}}.
    \end{align*}
    It will then recover (\ref{eq-pestimate}). Compared to the result in \cite[Lemma~5.11]{ODW}, the expression $P_q$ does not only depends on the time difference $\tau$, but also depends on the initial splicing time $t_0$ due to the nonlinear frequency shift, which means that we are not able to isolate the integral of $P_q$ from the multi-integral of frequency shifts. To bound the new integral as shown in \eqref{kqm}, we need to apply the technique we used in Proposition \ref{integral_est} and let $f_{\mathfrak{n}_0}(Tt_0)=P_q(\tau, t_0, k[\mathcal N^{\mathrm{sp}}])/\left(\beta T^{\frac{1}{2}}\right)^q$, which indicates $|f_{\mathfrak{n}_0}(Tt_0)|\lesssim1$. Thus we further need to prove $\left|\frac{d}{dt_0}f_{\mathfrak{n}_0}(Tt_0)\right|\lesssim1$. Similarly as above and applying the fact $|\Omega_j|\lesssim h$ for any $j=0,\dots,q$, then for $\beta T^{\frac{4}{5}} < 1$ we can get \eqref{eq-dpestimate}:
    \begin{align*}
    \left|\frac{d}{dt_0}P_q(\tau,t_0,  k[\mathcal N^{\mathrm{sp}}])\right| &\lesssim \left(\beta T^{\frac{1}{2}}\right)^q \max_{j=1,\dots,q}\left|\widetilde{\Omega}'_{j}(s_{j}+t_0)-\widetilde{\Omega}'_{j}(t_{0}))\right|\lesssim \left(\beta T^{\frac{1}{2}}\right)^qh \lesssim \left(\beta T^{\frac{1}{2}}\right)^q. 
    \end{align*}
    We are then able to write $\K_{\textgoth Q_{\mathcal M}}(t,s,k)$ as: 
    \begin{align*}
    \K_{\textgoth{Q}_\mathcal M} (t,s,k) &= \left( \frac{\beta T}{N}\right)^{n_{\mathrm{sp}}} \zeta(\mathcal Q^{\mathrm{sp}}) \sum_{\mathscr E^{\mathrm{sp}}} \tilde\epsilon_{\mathscr E^{\mathrm{sp}}} \int_{\mathcal{E}^{\mathrm{sp}}} \int\limits_{0 \leq \tau \leq (t_{\mathfrak n_0} - \max(t_{\mathfrak n_0^{2}}, t_{\mathfrak n_0^{3}}))} P_q(\tau,t_{\mathfrak n_0}, k[\mathcal N^{\mathrm{sp}}]) d \tau \\ 
    & \hspace{0.8cm} \times \left( \prod_{\mathfrak n \in \mathcal N^{\mathrm{sp}}}e^{\zeta_{\mathfrak{n}}i(\Omega_{\mathfrak{n}}Tt_{\mathfrak{n}}+\widetilde{\Omega}_{\mathfrak{n}}(t_{\mathfrak{n}}))} d t_{\mathfrak{n}}\right) \prod^+_{\mathfrak l \in \mathcal L^{\mathrm{sp}}} n_{\mathrm{in}}(k_{\mathfrak l})\E \left[\mathcal{B}_{\mathcal{T}^+}(\eta_{\mathcal{T}^+}(\varrho))\mathcal{B}^*_{\mathcal{T}^-}(\eta_{\mathcal{T}^-}(\varrho))\right]\\
    &=\left( \frac{\beta T}{N}\right)^{n_{\mathrm{sp}}} \zeta(\mathcal Q^{\mathrm{sp}}) \sum_{\mathscr E^{\mathrm{sp}}} \tilde \epsilon_{\mathscr E^{\mathrm{sp}}} \int _0^1\int_{\mathcal{E}^{\mathrm{sp}}_{\tau}} P_q(\tau, t_0, k[\mathcal N^{\mathrm{sp}}])   \\
    & \hspace{0.8cm}\times\left( \prod_{\mathfrak n \in \mathcal N^{\mathrm{sp}}}e^{\zeta_{\mathfrak{n}}i(\Omega_{\mathfrak{n}}Tt_{\mathfrak{n}}+\widetilde{\Omega}_{\mathfrak{n}}(t_{\mathfrak{n}}))} d t_{\mathfrak{n}}\right)d \tau \prod^+_{\mathfrak l \in \mathcal L^{\mathrm{sp}}} n_{\mathrm{in}}(k_{\mathfrak l})\E \left[\mathcal{B}_{\mathcal{T}^+}(\eta_{\mathcal{T}^+}(\varrho))\mathcal{B}^*_{\mathcal{T}^-}(\eta_{\mathcal{T}^-}(\varrho))\right].
    \end{align*}
    \end{proof}

We repeat \cite[Definition 5.13]{ODW} as follows:
\begin{definition}\label{def-M}
Consider an enhanced couple $\mathcal Q$ and enhanced molecule $\M(\Q)$. Let $\mathscr C$ be defined as a unique collection of disjoint atomic groups, such that each atomic group in $\mathscr C$ is a negative chain, negative hyperchain, or negative pseudo-hyperchain and any negative chain $\C$ of $\M$ is a subset of precisely one atomic group in $\mathscr C$. 

Suppose also that we have chosen $\mathscr C_1 \subset \mathscr C$ of all SG negative chain-like objects. Consider the set $\mathscr D_{\mathscr C_1}$ defined as a unique collection of disjoint atomic groups, such that each atomic group in $\mathscr D_{\mathscr C_1}$ is a maximal negative wide ladder of $\mathscr C_1$ and each chain $\C \in \mathscr C_1$ is a subset of precisely one atomic group in $\mathscr D_{\mathscr C_1}$. Existence and uniqueness of $\mathscr C$ and $\mathscr D_{\mathscr C_1}$ can be found in \cite[Lemma 5.12]{ODW}.

We define the set $\mathcal M_{\mathscr C_1}$ below. If $\mathcal L \in \mathscr D_{\mathscr C_1}$ consists of a single chain $\C = (v_0, \ldots, v_q) \in \mathscr C_1$:
\begin{enumerate}[label=(\roman*)]
\item If $\C$  is a chain, we include all admissible $\mathfrak n(v_i)$ into $\mathcal M$.
\item If $\C$ is a hyperchain or pseudo-hyperchain with a CN double bond, we include all admissible $\mathfrak n(v_i)$ into $\mathcal M$. If there is no CN double bond, we exclude one admissible $\mathfrak n(v_i)$. 
\end{enumerate}
Otherwise, let $\mathcal{L} = \{\mathcal{C}_1, \ldots, \mathcal{C}_m\}$, where each chain $\mathcal{C}_j$ has length $q^{(j)}$. We perform the following procedure:

\begin{enumerate}
  \item Start with $\mathcal{C}_1$ and determine which of its nodes should be added to $\mathcal{M}$ by applying the previously described procedure, treating it as a hyperchain or pseudo-hyperchain if applicable.
  
  \item For each subsequent chain $\mathcal{C}_{j+1}$ with $j \leq m - 2$, check whether all $q^{(j)}$ nodes of $\mathcal{C}_j$ are contained in $\mathcal{M}$. If so, treat $\mathcal{C}_{j+1}$ as in step (ii) above when determining which nodes to add to $\mathcal{M}$. Otherwise, treat it as in step (i).

  \item For the final chain $\mathcal{C}_m$, if it is a hyperchain or pseudo-hyperchain, or if all $q^{(m-1)}$ nodes of $\mathcal{C}_{m-1}$ are in $\mathcal{M}$, then process $\mathcal{C}_m$ as in step (ii). Otherwise, treat it as in step (i).
\end{enumerate}

Otherwise $\mathcal L = \{\mathcal C_1, \ldots, \mathcal C_m\}$ with chain $j$ having length $q^{(j)}$, perform the following:
\begin{enumerate}
\item Starting with $\mathcal C_1$ determine the nodes to be added to $\mathcal M$ by performing the above, considering it as a hyperchain or pseudo-hyperchain if it is.
\item For $\mathcal C_{j + 1}$ and $j \leq m - 2$, if there are $q^{(j)}$ nodes of $\mathcal C_j$ contained in $\mathcal M$, treat $\mathcal C_{j + 1}$ as to point (2) above in determining which nodes to add to $\mathcal M$. Otherwise, treat it according to (1). 
\item For $\mathcal C_{m}$, if it is a hyperchain or pseudo-hyperchain or there are $q^{(m-1)}$ nodes of $\mathcal C_{m-1}$ in $\mathcal M$, treat $\mathcal C_m$ according to (2). Otherwise, treat it according to (1). 
\end{enumerate}

\end{definition}
\begin{corollary} \label{cor-M}
For an enhanced couple $\Q$, and choice of SG negative chain-like objects $\mathscr C_1$ in $\M(\Q)$, consider the resulting couple $\mathcal Q^{\mathrm{sp}}$ obtained by splicing $\mathcal Q$ at the nodes in $\mathcal M = \mathcal M_{\mathscr C_1}$.
    \begin{align} \label{eq-splice-exp}
    \begin{split}
    \K_{\textgoth{Q}_\mathcal M} (t,s,k) &= \left( \frac{\beta T}{N}\right)^{n_{\mathrm{sp}}} \zeta(\mathcal Q^{\mathrm{sp}}) \sum_{\mathscr E^{\mathrm{sp}}} \tilde \epsilon_{\mathscr E^{\mathrm{sp}}} \int _{\left[ 0,1\right]^{\mathcal N_0^{\mathrm{sp}}}}\int_{\mathcal{E}^{\mathrm{sp}}_{\pmb{\tau}}} \prod_{\mathfrak n_0 \in \mathcal N^{\mathrm{sp}}_0} P_{q_{\mathfrak n_0}}(\tau_{\mathfrak n_0}, t_{\mathfrak n_0}, k[\mathcal N^{\mathrm{sp}}])  \\
    & \hspace{0.8cm}\times \left( \prod_{\mathfrak n \in \mathcal N^{\mathrm{sp}}}e^{\zeta_{\mathfrak{n}}i(\Omega_{\mathfrak{n}}Tt_{\mathfrak{n}}+\widetilde{\Omega}_{\mathfrak{n}}(t_{\mathfrak{n}}))} d t_{\mathfrak{n}}\right) d \pmb{\tau}\prod^+_{\mathfrak l \in \mathcal L^{\mathrm{sp}}} n_{\mathrm{in}}(k_{\mathfrak l})\E \left[\mathcal{B}_{\mathcal{T}^+}(\eta_{\mathcal{T}^+}(\varrho))\mathcal{B}^*_{\mathcal{T}^-}(\eta_{\mathcal{T}^-}(\varrho))\right],
    \end{split}
    \end{align}
    where $\mathcal N_0^{\mathrm{sp}}$ are the nodes at which an irregular chain was spliced out and $P_{q_{\mathfrak n_0}}$ is given in Lemma \ref{lem-splice} and $q_{\mathfrak n_0}$ is the length or irregular chain spliced out below $\mathfrak n_0$. Additionally, $\mathcal{E}^{\mathrm{sp}}_{\pmb{\tau}} = \mathcal E \cap \{t_{\mathfrak n_0} \geq t_{\mathfrak n_0^{2}} + \tau_{\mathfrak n_0}, t_{\mathfrak n_0^{3}} + \tau_{\mathfrak n_0}\}_{\mathfrak n_0 \in \mathcal N_0^{\mathrm{sp}}}$.
\end{corollary}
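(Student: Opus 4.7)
The plan is to deduce Corollary \ref{cor-M} from Lemma \ref{lem-splice} by an induction on the cardinality of the set of chains being spliced, leveraging the disjointness of the atomic groups in $\mathscr{D}_{\mathscr{C}_1}$. The single-chain case is exactly Lemma \ref{lem-splice}, and the inductive step amounts to showing that splicing one chain at a time produces independent $P_{q_{\mathfrak n_0}}$ factors and an independent auxiliary integration variable $\tau_{\mathfrak n_0}$ per splicing site, leading to the product/integral structure in \eqref{eq-splice-exp}.

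First, I would enumerate the chains to be spliced. By Definition \ref{def-M}, $\mathcal{M} = \mathcal{M}_{\mathscr{C}_1}$ is a union of admissible nodes coming from pairwise disjoint atomic groups of $\mathscr{D}_{\mathscr{C}_1}$. Crucially, because the chains (and the wide ladders that organize them) are disjoint in $\M(\Q)$, the sets of nodes $\mathcal{M}^{(i)} \subset \mathcal{M}$ associated to the individual chains $\mathcal{C}_i$ are pairwise disjoint, and a unit twist at a node in one chain does not affect the combinatorial structure of any other chain. Consequently, the congruence class $\textgoth{Q}_{\mathcal{M}}$ factorizes as an iterated union of congruence classes $\textgoth{Q}_{\mathcal{M}^{(i)}}$, and summing over $\textgoth{Q}_{\mathcal{M}}$ can be realized as summing over each $\textgoth{Q}_{\mathcal{M}^{(i)}}$ in turn.

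Next, I would apply Lemma \ref{lem-splice} iteratively. Order the chains $\mathcal{C}_1,\dots,\mathcal{C}_r$ (with lengths $q_i$ and top nodes $\mathfrak{n}_0^{(i)}$) and splice them one at a time. After splicing $\mathcal{C}_1$, Lemma \ref{lem-splice} produces one auxiliary time variable $\tau_{\mathfrak{n}_0^{(1)}} \in [0,1]$, one factor $P_{q_1}\bigl(\tau_{\mathfrak{n}_0^{(1)}}, t_{\mathfrak{n}_0^{(1)}}, k[\mathcal{N}^{\mathrm{sp}}]\bigr)$, the spliced set of exponentials and decorations, and a time-domain constraint
\begin{equation*}
\bigl\{t_{\mathfrak{n}_0^{(1)}} \ge t_{\mathfrak{n}_0^{(1),2}} + \tau_{\mathfrak{n}_0^{(1)}},\ t_{\mathfrak{n}_0^{(1)}} \ge t_{\mathfrak{n}_0^{(1),3}} + \tau_{\mathfrak{n}_0^{(1)}}\bigr\}.
\end{equation*}
Since $\mathcal{C}_2$ is disjoint from $\mathcal{C}_1$ both as a set of nodes and with respect to parent/child relations outside of itself, Lemma \ref{lem-splice} can be applied to $\mathcal{C}_2$ inside the intermediate expression obtained after splicing $\mathcal{C}_1$: its hypotheses (existence of an SG irregular chain with the specified CL-double-bond structure, admissibility of internal nodes, and the uniform bounds \eqref{eq-pestimate}-\eqref{eq-dpestimate} on $P_{q_2}$) are unaffected by the prior splicing. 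The estimates on $P_{q_i}$ only rely on the smoothness of $n_{\mathrm{in}}$ and on $\bigl|\dot A_{\geq}(s)\bigr|\lesssim \beta T$, both of which are global properties and therefore preserved step-by-step. Iterating produces, at the end, the product $\prod_{\mathfrak{n}_0 \in \mathcal{N}_0^{\mathrm{sp}}} P_{q_{\mathfrak{n}_0}}(\tau_{\mathfrak{n}_0}, t_{\mathfrak{n}_0}, k[\mathcal{N}^{\mathrm{sp}}])$, together with independent integrations over the $\tau_{\mathfrak{n}_0}\in[0,1]$, and the accumulated time-domain constraint that assembles exactly into $\mathcal{E}^{\mathrm{sp}}_{\pmb{\tau}}$ as stated.

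The main obstacle I expect is verifying that the pre-factors (signs $\zeta(\mathcal{Q}^{\mathrm{sp}})$, the spliced coefficients $\tilde{\epsilon}_{\mathscr{E}^{\mathrm{sp}}}$ with the corrected $\epsilon_{k_{q+1}\ell_{q+1}\ell_{0}}$ at each splicing site, and the power $(\beta T/N)^{n_{\mathrm{sp}}}$) combine correctly across multiple simultaneous splicings, particularly when two chains in the same wide ladder share rung-connecting bonds; here one must check that the rung-connecting single bonds are preserved by the splicings and that no overcounting occurs. This is precisely where the care taken in Definition \ref{def-M} is essential: by excluding the appropriate admissible node in the presence of a hyperchain/pseudo-hyperchain without a CN double bond, one ensures that no single bond is ever duplicated or destroyed by the splicing, and that the chain-by-chain procedure described above is consistent. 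With this verified, collecting all the factors yields \eqref{eq-splice-exp}.
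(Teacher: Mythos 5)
The paper's own ``proof'' of Corollary \ref{cor-M} is a bare citation to \cite[Proposition~5.14]{ODW}, so there is no in-house argument to compare against; your proposal is a reasonable reconstruction of what that cited proposition does, and the strategy is the right one. The two facts doing the real work are exactly the ones you identify: (i) the atomic groups in $\mathscr{D}_{\mathscr{C}_1}$ are pairwise disjoint, so the sets $\mathcal{M}^{(i)}\subset\mathcal{M}$ of twist-admissible nodes attached to distinct chain-like objects are disjoint, and since unit twists commute the sum over $\textgoth{Q}_{\mathcal M}$ factors as an iterated sum over each $\textgoth{Q}_{\mathcal M^{(i)}}$; and (ii) after splicing $\mathcal{C}_1$, the produced factor $P_{q_1}(\tau_1, t_{\mathfrak n_0^{(1)}}, k[\mathcal N^{\mathrm{sp}}])$ depends only on $\tau_1$, on $t_{\mathfrak n_0^{(1)}}$, and on the surviving decorations near $\mathfrak n_0^{(1)}$ (through the gap $h$), all of which are fixed with respect to the manipulations used to splice $\mathcal{C}_2$, so Lemma \ref{lem-splice}'s argument applies verbatim inside the intermediate expression with $P_{q_1}$ held as a spectator. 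One place your write-up is thinner than it should be is the hyperchain/pseudo-hyperchain and wide-ladder cases: there the excluded admissible node in Definition \ref{def-M} means you are really invoking Lemma \ref{lem-splice} on a \emph{sub}chain (the nodes that actually lie in $\mathcal{M}^{(i)}$), not the full chain, and the rung-connecting bonds need to be tracked through the re-parenting of children that splicing performs. You flag this but resolve it only by asserting that Definition \ref{def-M} ``ensures no overcounting''; a complete write-up should verify that the re-parented endpoints carry the rung bonds correctly. That said, this is a bookkeeping matter handled by the definition, not a logical gap, and the overall inductive/disjointness argument is sound.
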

\begin{proof}
    See \cite[Proposition~5.14]{ODW}.
\end{proof}
\subsection{Pre-processing step in the algorithm}\label{subsec-ass}
We implement the algorithm described in Section 7.2 of \cite{ODW}. This algorithm enables us to bound the number of decorations of a molecule incrementally. Here, we define the number of decorations to be the sum of $\tilde \epsilon_{\mathscr E^{\mathrm{sp}}}$ over all decorations $\mathscr E^{\mathrm{sp}}$. Consider a molecule $\M^{\mathrm{pre}}$ at some stage of the algorithm, which is transformed into a molecule $\M^{\mathrm{post}}$ via a single operation. 
Denote by $\mathfrak{D}^{\mathrm{pre}}$ and $\mathfrak{D}^{\mathrm{post}}$ the corresponding numbers of decorations. 
Let $\mathfrak{C}$ be the counting estimate for this operation. 
Then, we have
\begin{align}
    \mathfrak{D}^{\mathrm{pre}} \le \mathfrak{C}\cdot\mathfrak{D}^{\mathrm{post}}.
\end{align}
To reduce our case with enhanced molecules to the molecules that satisfy modified assumptions in section 7.1 of \cite{ODW}:
\begin{enumerate}
\item The molecule $\M$ has 2 degree 3 atoms, with the rest degree 4. 
\item The molecule $\M$ contains no degenerate atoms (excluding the fully degenerate ones).
\end{enumerate}
% ------------------------------------------------------------------
%  Put these *once* in your preamble if not already present
% ------------------------------------------------------------------
% \usepackage{tikz}
% \usetikzlibrary{arrows.meta,calc,fit,positioning}
% ------------------------------------------------------------------

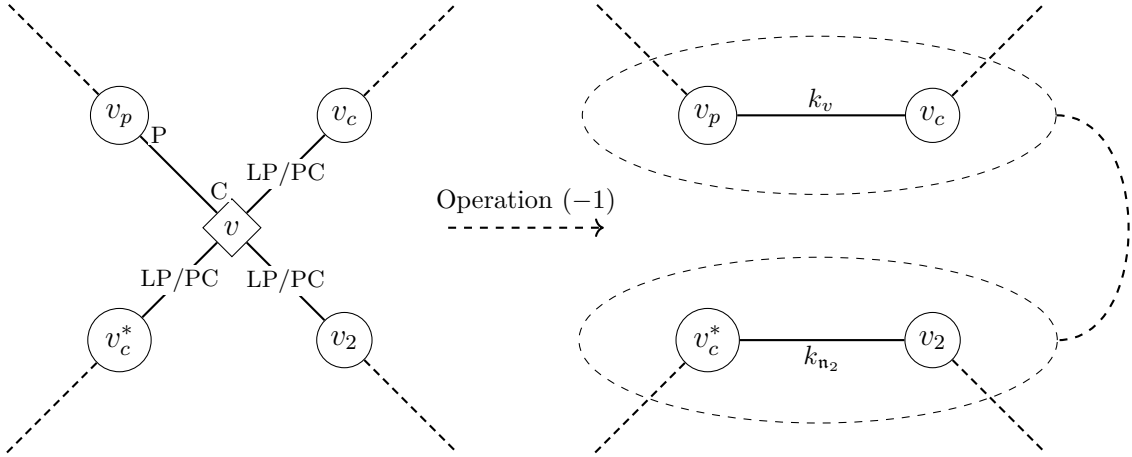
\begin{figure}[ht]
  \centering
  \begin{tikzpicture}[
      scale=1.15,
      open node/.style = {circle, draw=black, inner sep=3},
      deg atom/.style  = {draw, diamond, aspect=1.0, inner sep=2.7},
      bond/.style      = {thick},
      stub/.style      = {densely dashed, thick},
      em/.style        = {inner sep=1, outer sep=0, fill=white},
  ]

  % ---------------------------------------------------------------
  % LEFT : before Operation (-1)
  % ---------------------------------------------------------------
  \node[open node] (vpL) at (-1.3,  1.3) {$v_p$};
  \node[open node] (a2L) at ( 1.3,  1.3) {$v_c$};
  \node[open node] (a1L) at (-1.3, -1.3) {$v_c^*$};
  \node[open node] (a3L) at ( 1.3, -1.3) {$v_2$};
  \node[deg atom]  (vL)  at ( 0,    0  ) {$v$};

  % P/C bond
  \draw[bond] (vpL) -- (vL)
    node[em, pos=.20, anchor=east, xshift= 5.5pt, yshift= 6.5pt]{\scriptsize P}
    node[em, pos=.80, anchor=west, xshift= 2.2pt, yshift= 2.5pt]{\scriptsize C};

  % LP bonds
  \draw[bond] (vL) -- (a2L) node[em, midway]{\scriptsize LP/PC};
  \draw[bond] (vL) -- (a1L) node[em, midway]{\scriptsize LP/PC};
  \draw[bond] (vL) -- (a3L) node[em, midway]{\scriptsize LP/PC};

  % dashed stubs
  \foreach \src/\dst in {a2L/vL, a1L/vL, a3L/vL, vpL/vL}{
    \draw[stub] (\src) -- ++($(\src)-(\dst)$*0.4);
  }

  % ---------------------------------------------------------------
  % Operation arrow
  % ---------------------------------------------------------------
  \draw[thick, dashed, ->] (2.5,0) -- (4.3,0)
      node[em, midway, yshift=10pt]{\footnotesize Operation $(-1)$};

  % ---------------------------------------------------------------
  % RIGHT : after Operation (-1)
  % ---------------------------------------------------------------
  \begin{scope}[shift={(6.8,0)}]
    \node[open node] (vpR) at (-1.3,  1.3) {$v_p$};
    \node[open node] (a2R) at ( 1.3,  1.3) {$v_c$};
    \node[open node] (a1R) at (-1.3, -1.3) {$v_c^*$};
    \node[open node] (a3R) at ( 1.3, -1.3) {$v_2$};

    \draw[bond] (vpR) -- (a2R)
      node[em, midway, yshift= 6pt]{\footnotesize $k_{v}$};
    \draw[bond] (a1R) -- (a3R)
      node[em, midway, yshift=-7pt]{\footnotesize $k_{\mathfrak n_2}$};

    \coordinate (cR) at (0,0);   % centre of the square (replaces old v node)

    % stubs:   src -> src + 0.4*(src - cR)
    \foreach \src/\dst in {a2R/cR, a1R/cR, a3R/cR, vpR/cR}{
      \draw[stub] (\src) -- ++($(\src)-(\dst)$*0.4);
    }

    % dashed ellipses (upper and lower)
    \node[draw, ellipse, dashed, fit=(vpR)(a2R), inner sep=10pt] (topE) {};
    \node[draw, ellipse, dashed, fit=(a1R)(a3R), inner sep=10pt] (botE) {};

    % curved bond between ellipses
    \draw[bond, dashed] (topE.east) .. controls ($(topE.east)+(1.1,0)$) and
                                   ($(botE.east)+(1.1,0)$)
                         .. (botE.east);
  \end{scope}
  \end{tikzpicture}

  \caption{Operation $(-1)$ on a degenerate atom \(v\) (diamond)
           with parent \(v_p\) and neighbors
           \(v_c^*,v_2\) and $v_c$.  
           Left: original configuration with
           \(PC\) and \(LP\) bonds; dashed stubs indicate external
           connections.  
           Right: after removing the degenerate atom \(v\) and all the bonds connected,
           \(v_p\) joins \(v_c\) via bond with decoration \(k_v\) and
           \(v_c^*\) joins \(v_2\) via bond with decoration \(k_{\mathfrak n_2}\).
           The resulting sub‑molecules are highlighted by dashed
           ellipses and connected by some additional bonds represented by the curved dashed line, preserving connectivity.}
  \label{fig:op-minus-one-final}
\end{figure}

We would like to perform a pre-processing step to remove all the degenerate atoms (excluding the fully degenerate ones): 

\underline{Operation DEL} If there is any degenerate atom $v$ but not fully degenerate, corresponding to a degenerate node in $\mathcal{N}_D$, we remove the degenerate atom $v$, and also do the following (see Figure \ref{fig:op-minus-one-final} for detailed illustration) in the order:
\begin{enumerate}[label=(\alph*)]
    \item Remove all bonds connected to it.
    \item If both of its parent atom $v_p$ and child atom (or corresponding paired atom) $v_c$ (recall Definition \ref{def:enhanced-molecule}) exist in the molecule $\mathbb{M}$ and were previously connected to $v$ via bonds with decoration $k_v$, we connect them via a bond with decoration $k_v$.
    \item If Step (a) does not create a new connected component, we also connect its child atoms (or corresponding paired atoms) $v_c^*$ and $v_2$ via a bond with decoration $k_{\mathfrak{n}_2}$ (the same decoration as the bonds previously connecting them to $v$.) Otherwise, we do not add a new bond between $v_c^*$ and $v_2$.
\end{enumerate}
Start the following \underline{Operation $(-1)$} as follows:
\begin{enumerate}[label=(\roman*)]
    \item If $v$ is connected to another degenerate atom $v'$ via triple bonds, of which two must have decoration $k_{\mathfrak{n}_2}$ and the remaining one must have $k_{v}$, we remove the two degenerate atoms $v$ and $v'$ together using Operation DEL. Go to (i).
    \item Otherwise $v$ is connected to another degenerate atom $v'$ via double bonds both with decoration $k_{\mathfrak{n}_2}$, we remove the two degenerate atoms $v$ and $v'$ together using Operation DEL. Go to (i).
    \item Otherwise we remove the degenerate atom $v$ using Operation DEL. Go to (i).
\end{enumerate}
There is only one way to create a new component of: after we remove two (or more) degenerate atoms $v$ and $v'$ with the above Operation $(-1)$, their child atoms $v_c^*,v_2$ and ${v_c^*}',v_2'$ are connected in a new component. According to Step (c) of removing a degenerate atom, we will not connect ${v_c^*}'$ and $v_2'$, which become the two degree 3 atoms in the new connected component with all of the rest atoms degree 4. The degrees of all of the rest atoms in the molecule $\M$ left remain the same. Therefore, we can treat the resulting components as new molecules satisfying assumptions in section 7.1 of \cite{ODW}.
\begin{remark}
    We always gain from the above removing degeneracy Operation $(-1)$, as we can analyze the counting of decorations in the above operations:
\begin{itemize}
    \item In the Operation (i) and (ii), we reduce 4 edges and 2 vertices each time removing two degenerate atoms while preserving the number of connected components. Then we have \[\Delta \chi = \Delta E-\Delta V + \Delta F=-4+2+0 = -2,\] with counting $\mathfrak{C}=N$ for the decoration $k_{\mathfrak{n}_2}$.
    \item In the Operation (iii), we either reduce 2 edges and 1 vertex each time while preserving the number of connected components, or reduce 3 edges and 1 vertex and get a new connected component, as we only allow enhanced pairings for the couples. Consequently, after two Operation (iii) we have \[\Delta \chi = \Delta E-\Delta V + \Delta F=(-2 + 1) \times 2+0 =-5 + 2 +1 = -2,\] with counting $\mathfrak{C}=1$ if no new component and counting $\mathfrak{C}=N$ for the decoration $k_{\mathfrak{n}_2}$ if a new component split out.
\end{itemize}
In the above scenarios, we are permitted to lose a three-vector counting $\mathfrak{C}=N^2T^{-1}\log T$ where we actually lose a counting $\mathfrak{C}\leq N < N^2T^{-1}\log T$ for $T < N$.
\end{remark}
\subsection{Operation types}\label{sec-optype}
We categorize the operations of the algorithm in \cite{ODW} based on the corresponding values of $\mathfrak{C}$ and $\Delta \chi$ into the following types, while tracking the count of each type:

\begin{itemize}
    \item \textbf{Bridge Operations}. These operations have $\Delta \chi = 0$ and $\mathfrak{C} = 1$, for all atoms on one side of the bridge, and every bond except the bridge appears exactly twice with opposite signs. Let $m_0$ denote the total number of bridge operations.
    
    \item \textbf{Sole Atom Operations}. Here, $\Delta \chi = 0$, and since no bonds are removed, $\mathfrak{C} = 1$. The total count of such operations is represented by $m_1$.
    
    \item \textbf{Two-Vector Counting Operations}. For this type, $\Delta \chi = -1$ and $\mathfrak{C} = N$ as given by Proposition \ref{vector_counting-2}. Let $m_2$ represent the count of two-vector counting operations.
    
    \item \textbf{Three-Vector Counting Operations}. These operations are characterized by $\Delta \chi = -2$ and $\mathfrak{C} = N^2T^{-1}\log T$, according to Proposition \ref{vector_counting}. Let $m_3$ denote the total number of three-vector counting operations.

    \item \textbf{Removing Degeneracy Operations} consist of Operation $(-1)$. In this case $\Delta \chi = -2$ and $\mathfrak{C} \leq N$ removing every two degenerate atoms, which are gains compared with three-vector counting $\Delta \chi = -2$ and $\mathfrak{C} = N^2T^{-1}\log T$ for $T \leq N$. Let $m_{-1}$ denote the total number of degenerate atoms removed. Let $m_c$ denote the number of new connected components split out. 
\end{itemize}

Then we can similarly prove the following proposition as in \cite{ODW}:

\begin{proposition}\label{prop-op-bound}
For any molecule $\M(\Q) = \cup_{j=0}^{m_c}\M_j$ with each component $\M_j$ as assumed in Section \ref{subsec-ass}, we must have:
\begin{align}
    m_0 &\leq m_3 - 1,\\
    m_2 &\leq 3m_3 - 3.
\end{align}
\end{proposition}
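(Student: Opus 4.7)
The plan is to reduce the statement to the analogous proposition in \cite{ODW} by first handling the degenerate atoms via the preprocessing step, and then invoking the component-wise bounds from the standard algorithm. The key observation is that the proposition is formulated in terms of a decomposition $\M(\Q) = \cup_{j=0}^{m_c} \M_j$ where each component $\M_j$ already satisfies the structural assumptions of Section~\ref{subsec-ass}, so the preprocessing has been performed before the statement applies.

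I would begin by verifying that after exhaustive application of Operation~$(-1)$, each component $\M_j$ indeed meets the assumptions required by the algorithm of \cite[Section 7.2]{ODW}: it contains no degenerate atoms (except possibly fully degenerate ones, which are inert in the algorithm), and it has exactly two degree-3 atoms with every other atom of degree~4. The second property needs a small case check: whenever Operation~$(-1)$ splits off a new connected component, the final step (not adding the bond between $v_c^*$ and $v_2$) produces two fresh degree-3 atoms in the new component, while the previously existing degree-3 atoms remain in the untouched component. In every other case the replacement bonds preserve the degrees of all neighbors of the removed degenerate atom.

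With each component $\M_j$ in the standard form assumed by \cite{ODW}, I would apply the analogue of \cite[Proposition~7.7]{ODW} to each $\M_j$ separately, obtaining
\begin{equation*}
m_0^{(j)} \leq m_3^{(j)} - 1, \qquad m_2^{(j)} \leq 3 m_3^{(j)} - 3,
\end{equation*}
where the superscript indicates the count restricted to $\M_j$. This is immediate from \cite{ODW} since the bonds, atoms, bridges and chains in $\M_j$ are now structurally identical to those analyzed there, and the algorithm processes the components independently. Summing over $j = 0, \ldots, m_c$ and using $m_c+1 \geq 1$ yields
\begin{equation*}
m_0 = \sum_{j=0}^{m_c} m_0^{(j)} \leq m_3 - (m_c+1) \leq m_3 - 1,
\qquad m_2 = \sum_{j=0}^{m_c} m_2^{(j)} \leq 3 m_3 - 3(m_c+1) \leq 3 m_3 - 3.
\end{equation*}

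The main obstacle I anticipate is the structural verification of Step~1: specifically, confirming that across all the sub-cases of Operation~$(-1)$ (triple-bond pair, double-bond pair, single atom) the resulting molecule decomposes into components each carrying precisely two degree-3 atoms with the rest degree-4, so that \cite{ODW}'s inductive framework applies verbatim. A subtle point is that the ``cost'' of the degeneracy removal is at most $N$ per pair of removed atoms, which is absorbed by the accounting for three-vector countings since $N \leq N^2 T^{-1}\log T$ in the regime $T \leq N$; this justifies that no additional terms need to be added to the $m_0, m_2, m_3$ budget beyond what the component-wise \cite{ODW} bound provides. Once these structural points are secured, the proposition follows directly.
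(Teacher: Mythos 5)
Your proposal is correct and arrives at the same bounds by essentially the same counting, just packaged differently: you apply the per-component ODW bounds $m_0^{(j)} = m_3^{(j)} - 1$ and $m_2^{(j)} \leq 3m_3^{(j)} - 3$ to each $\M_j$ and sum, whereas the paper does the degree/edge/Euler-characteristic accounting globally across all $m_c+1$ components at once (deriving $\#\text{edges}=2n-2m_{-1}-(m_c+1)$ and $2m_3+m_2 = n-m_{-1}$, then solving to get $m_0 = m_3 - (m_c+1)$ and $m_2 \leq 3m_3 - 3(m_c+1)$). The two are equivalent because every algorithm operation (bridge, sole-atom, two-vector, three-vector) is local to a single component, so the global counts are exactly the component-wise sums; your structural check that Operation~$(-1)$ leaves each component with precisely two degree-3 atoms is the content of Section~\ref{subsec-ass}, and the observation that the degeneracy-removal cost $\mathfrak{C}\leq N \leq N^2T^{-1}\log T$ is absorbed (for $T<N$) is correctly placed outside the $m_0,m_2,m_3$ budget, matching the paper's treatment via the separate count $m_{-1}$.
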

\begin{proof}
Assume that $\M(\Q)$ has $n - m_{-1}$ atoms. Among these $n - m_{-1}$ atoms, $(n - m_{-1} - 2(m_c+1))$ of them have degree $4$, and the remaining $2(m_c+1)$ atoms have degree $3$.

\begin{enumerate}
    \item \textbf{Pre-processing and edge counts.}
    Since each degree-$4$ atom contributes $4$ edges and each degree-$3$ atom contributes $3$ edges, the total number of edges is
    \begin{align}
        \#\text{edges}
        =
        \frac{4(n - m_{-1} - 2(m_c+1)) + 3\cdot2(m_c+1)}{2}
        =
        2n - 2m_{-1} - (m_c+1)
    \end{align}
    \item \textbf{The edge and $\chi$ equations.}
    Recall the following “operation counts,” denoting them as $m_0,m_2,m_3,m_{-1}$:
    \begin{align}
        3m_3 + 2m_2 + m_0 
        = 
        \#\text{edges}
        =
        2n - 2m_{-1} - (m_c+1), \label{eq-5.9.1}
    \end{align} 
    and
    \begin{align}
        2m_3 + m_2 
        &= 
        \bigl(\#\text{edges}\bigr) 
        - \bigl(\#\text{atoms}\bigr) 
        + \bigl(\#\text{components}\bigr)\notag\\
        &=
        \left(2n - 2m_{-1} - (m_c+1)\right) - (n-m_{-1}) + (m_c+1)
        = n-m_{-1}. \label{eq-5.9.2}
    \end{align} 
    From the two equations \eqref{eq-5.9.1} and \eqref{eq-5.9.2}, we can solve for $m_0$ in terms of $m_3$ and $m_{-1}$:
    \begin{align}
        m_0
        =
        m_3 - (m_c+1),
    \end{align}
    which recovers a similar statement in \cite[Proposition 7.5]{ODW}.

    \item \textbf{Deriving the inequality for $m_2$.}
    With the same proof for \cite[Proposition 7.6]{ODW}, we have:
    \begin{align}
        m_2 
        \le m_3 + 2m_0 -(m_c+1)
        \leq 3m_3 - 3(m_c+1).
    \end{align}
\end{enumerate}
\end{proof}
\subsection{Bounds on couples} \label{sec-boundoncouple}
We record the following modified Proposition, proved in \cite[Proposition 7.6]{ODW}.
\begin{proposition}\label{prop-rigidity}
Consider a labeled molecule $\mathbb M = \mathbb M(\mathcal Q)$ of order $n\leq L^3$. Suppose we fix $k \in \Z_N\cap(0,1)$ and $\alpha_v \in \R$ for each atom $v$ of $\mathbb M$. Consider all $k$-decorations $(k_\ell)$ of $\M$ such that 
\begin{enumerate}[(i)]
\item The $k$-decoration $(k_\ell)$ is inherited from a $k$-decoration $\mathscr E$of $\mathcal Q$ that satisfies the SG and LG assumptions in \cite[Proposition 5.14]{ODW} as well as non-degeneracy conditions $\tilde \epsilon_{\mathscr E}.$
\item The decoration is restricted by $(\alpha_v)$ in the sense that $|\Omega_v - \alpha_v| \leq T^{-1}$, where $T < N$. 
\end{enumerate}
Then, the number $\mathfrak D$ of such $k$-decorations is bounded by 
\begin{equation}\label{eq-counting}
\mathfrak D \leq C^n N^{n}T^{-\frac{n}{5} - \frac{3}{5}}(\log T)^n.
\end{equation}
\end{proposition}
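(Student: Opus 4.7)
The plan is to follow the algorithmic strategy of \cite[Section~7.2]{ODW}, adapted to the enhanced molecules of Section~\ref{sec-mol}. The three ingredients are: the pre-processing Operation~$(-1)$ of Section~\ref{subsec-ass} that strips all non-fully-degenerate atoms; the splicing of small-gap irregular chains via Corollary~\ref{cor-M}, which removes the dangerous part of the SG assumption in (i); and the per-operation counting estimates of Propositions~\ref{vector_counting} and~\ref{vector_counting-2}. The core identity will come from Proposition~\ref{prop-op-bound}.

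First I would iterate Operation~$(-1)$ on $\M$, removing all non-fully-degenerate atoms; let $m_{-1}$ be the total number of such atoms removed, and $m_c$ the number of additional connected components created. As noted in the remark following Section~\ref{subsec-ass}, each pair of degenerate-atom removals has $\Delta\chi=-2$ at a counting cost $\mathfrak{C}\leq N$, which for $T<N$ is strictly smaller than a three-vector cost $N^{2}T^{-1}\log T$. I can therefore bundle each pair removal into the three-vector budget without loss, reducing to a post-processed molecule on which the two standing assumptions of Section~\ref{subsec-ass} hold and the algorithm of \cite{ODW} applies verbatim on each of the $m_c+1$ components.

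Next I would run the algorithm, classifying every operation as in Section~\ref{sec-optype}. The only nontrivial per-operation counts are the two-vector count ($\mathfrak{C}=N$, $\Delta\chi=-1$, justified by Proposition~\ref{vector_counting-2} because splicing has already removed the SG chains of assumption (i)) and the three-vector count ($\mathfrak{C}=N^{2}T^{-1}\log T$, $\Delta\chi=-2$, justified by Proposition~\ref{vector_counting}). Telescoping the per-operation costs and absorbing the combinatorial choice of operation sequence into a factor $C^{n}$,
\begin{equation*}
\mathfrak{D} \;\leq\; C^{\,n}\,N^{m_2}\,\bigl(N^{2}T^{-1}\log T\bigr)^{m_3^{\mathrm{eff}}},
\qquad m_3^{\mathrm{eff}} := m_3 + \tfrac{m_{-1}}{2}.
\end{equation*}

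Finally I would close the argument using Proposition~\ref{prop-op-bound}. Accounting for the $\chi$-change at every stage (including pre-processing) gives the balance $m_2 + 2 m_3^{\mathrm{eff}} = n$, while the geometric inequality $m_2\leq 3m_3-3$ upgrades to $m_2 \leq 3 m_3^{\mathrm{eff}}-3$. Eliminating $m_2$ yields $m_3^{\mathrm{eff}} \geq (n+3)/5$. Substituting and using $T^{-1}\log T\leq 1$ for $T$ large,
\begin{equation*}
\mathfrak{D} \;\leq\; C^{\,n}\,N^{\,n}\,\bigl(T^{-1}\log T\bigr)^{m_3^{\mathrm{eff}}}
\;\leq\; C^{\,n}\,N^{\,n}\,T^{-n/5-3/5}(\log T)^{n},
\end{equation*}
which is exactly \eqref{eq-counting}. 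The main obstacle, as I see it, is not the final arithmetic but the bookkeeping at the interface of splicing, pre-processing, and the algorithm: one must verify that Operation~$(-1)$ leaves every resulting component with precisely two degree-$3$ atoms and no remaining non-fully-degenerate atoms, and that the non-degeneracy flag $\tilde\epsilon_{\mathscr E}$ from (i) is consistent with the post-splicing decoration count. The case analysis of Section~\ref{subsec-ass} is designed to handle exactly this, ensuring that at most one new component is produced per pair removal, with the two uncoupled children serving as the degree-$3$ endpoints required by the algorithm.
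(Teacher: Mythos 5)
Your proof is correct and follows essentially the same route as the paper: the same pre-processing to strip degenerate atoms, the same per-operation cost table, the same Euler-characteristic identity $2m_3+m_2 = n-m_{-1}$, and the same geometric bound $m_2 \leq 3m_3-3$ from Proposition~\ref{prop-op-bound}. The only difference is cosmetic bookkeeping: you absorb each pair of degenerate-atom removals into the three-vector budget by loosening $N \leq N^2T^{-1}\log T$ and introducing $m_3^{\mathrm{eff}}=m_3+m_{-1}/2$, while the paper keeps the sharper $N^{m_{-1}/2}$ factor and discharges the surplus $T^{m_{-1}/5}$ at the end using $T<N$; both yield the identical final bound.
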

\begin{proof}
    We have the equation for $\chi$: $2m_3 + m_2 = n-m_{-1}$ and by Proposition \ref{prop-op-bound}, $5m_3 - 3 \geq n-m_{-1}$. Note that we lose at most a counting of $N$ each time we remove two (or more) degenerate atoms (whether they are connected by triple bonds, double bonds, or result in the formation of a new component). Then we have: 
    \begin{align*}
    \mathfrak D &\lesssim C^n (N^2T^{-1}\log T)^{m_3}N^{m_2}N^{\frac{m_{-1}}{2}} \\
    &= C^n N^{n-\frac{m_{-1}}{2}} (\log T)^{m_3} T^{-m_3} \\
    & \lesssim C^n N^{n-\frac{m_{-1}}{2}} (\log T)^{m_3}T^{-\frac{n}{5} - \frac{3}{5}+\frac{m_{-1}}{5}}\\
    & \lesssim C^n N^{n} (\log T)^{n}T^{-\frac{n}{5} - \frac{3}{5}},
    \end{align*}
for $T < N$.
\end{proof}
\begin{remark}
    The validity of three-vector and two-vector counting requires $T < N$. On the other hand, removing degenerate atoms is advantageous when $N^{-\frac{m_{-1}}{2}}T^{\frac{m_{-1}}{5}}\leq1$, which corresponds to $T\leq N^{\frac{5}{2}}$.
\end{remark}
\begin{lemma} \label{multiplicity}
For enhanced trees $\mathcal{T}_1$ and $\mathcal{T}_2$,
\begin{align}
    \mathbb{E}\left(\mathcal{B}_{\mathcal{T}_1}(\eta_{\mathcal{T}_1}(\varrho)){\mathcal{B}^*_{\mathcal{T}_2}(\eta_{\mathcal{T}_2}(\varrho))}\right) = \sum_{\substack{\mathscr P \textit{: enhanced pairings of}\\ \mathcal{T}_1,\mathcal{T}_2}} 1. \label{enhanced-pairing}
\end{align}
\end{lemma}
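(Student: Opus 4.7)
The plan is to proceed by induction on $n := |\mathcal{T}_1| + |\mathcal{T}_2|$. For the base case $n=0$, both trees are trivial, so $\mathcal{B}_{\mathcal{T}_j}(\eta) = \eta_{k_{\mathfrak{r}_j}}^{\zeta_{\mathfrak{r}_j}}$ with opposite signs on the two roots, and the complex Isserlis lemma gives $\mathbb{E}(\eta_{k_{\mathfrak{r}_1}}^{+}\eta_{k_{\mathfrak{r}_2}}^{-}) = \boldsymbol{1}_{k_{\mathfrak{r}_1}=k_{\mathfrak{r}_2}}$, matching the single (trivially enhanced) pairing $\{\mathfrak{r}_1,\mathfrak{r}_2\}$. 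With the implicit convention that $\sum_{\mathscr P} 1$ in \eqref{enhanced-pairing} carries a factor $\prod_{\{\mathfrak l,\mathfrak l'\}\in\mathscr P}\boldsymbol{1}_{k_{\mathfrak l}=k_{\mathfrak l'}}$ (inherited from Isserlis), both sides agree.

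For the inductive step, without loss of generality assume $\mathcal{T}_1$ is nontrivial with root $\mathfrak{r}$ and children subtrees $\mathcal{T}_{1,1},\mathcal{T}_{1,2},\mathcal{T}_{1,3}$. Unfolding the recursive definition of $\mathcal{B}_{\mathcal{T}_1}$ gives
\begin{equation*}
\mathbb{E}(\mathcal{B}_{\mathcal{T}_1}\mathcal{B}^*_{\mathcal{T}_2}) = \mathbb{E}\!\left(\textstyle\prod_{j=1}^3\mathcal{B}_{\mathcal{T}_{1,j}}\cdot\mathcal{B}^*_{\mathcal{T}_2}\right) - \mathbbm{1}_{\mathfrak{r}\in\mathcal{N}_D}\,\mathbb{E}(\mathcal{B}_{\mathcal{T}_{1,c^*}}\mathcal{B}_{\mathcal{T}_{1,2}})\,\mathbb{E}(\mathcal{B}_{\mathcal{T}_{1,c}}\mathcal{B}^*_{\mathcal{T}_2}).
\end{equation*}
For the first term, since the leaf sets of $\mathcal{T}_{1,1},\mathcal{T}_{1,2},\mathcal{T}_{1,3},\mathcal{T}_2$ are disjoint and the corresponding $\eta$-variables independent, I would fully expand each $\mathcal{B}$ into a signed sum of monomials in the $\eta$'s (with deterministic coefficients coming from nested $\mathbb{E}$-factors at degenerate sub-nodes), apply complex Isserlis to the resulting polynomial, and regroup. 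Combined with the inductive hypothesis applied to every pair of proper subtrees encountered along the way, one obtains that this expectation equals the number of leaf pairings $\mathscr{P}$ of $\mathcal{L}_1\cup\mathcal{L}_2$ that are \emph{enhanced at every degenerate node strictly below $\mathfrak r$ and every degenerate node of $\mathcal T_2$}.

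When $\mathfrak{r}\notin\mathcal{N}_D$, no extra condition is imposed at $\mathfrak{r}$, so this already matches $\sum_{\mathscr P:\,\text{enhanced}}1$ for $(\mathcal{T}_1,\mathcal{T}_2)$ and we are done. When $\mathfrak{r}\in\mathcal{N}_D$, the enhanced condition at $\mathfrak{r}$ additionally forbids the pairings $\mathscr P$ that pair all leaves of $\mathfrak{n}_{c^*}\cup\mathfrak{n}_2$ entirely among themselves. For any such ``bad'' $\mathscr P$, the independence of $\eta$-families on the disjoint blocks $\mathcal{L}(\mathfrak{n}_{c^*})\cup\mathcal{L}(\mathfrak{n}_2)$ and $\mathcal{L}(\mathfrak{n}_c)\cup\mathcal{L}_2$ lets Isserlis factor the Gaussian contribution as $\mathbb{E}(\mathcal{B}_{\mathcal{T}_{1,c^*}}\mathcal{B}_{\mathcal{T}_{1,2}})\cdot\mathbb{E}(\mathcal{B}_{\mathcal{T}_{1,c}}\mathcal{B}^*_{\mathcal{T}_2})$, which is exactly the subtracted term. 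Hence the subtraction cancels precisely the non-enhanced part and leaves $\sum_{\mathscr P:\,\text{enhanced for }(\mathcal T_1,\mathcal T_2)} 1$.

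The main obstacle is making the Wick-type reorganization of $\mathbb{E}(\prod_j\mathcal{B}_{\mathcal{T}_{1,j}}\cdot\mathcal{B}^*_{\mathcal{T}_2})$ rigorous when the $\mathcal{B}_{\mathcal{T}_{1,j}}$ themselves contain nested subtractions at their own degenerate descendants. The cleanest route is to run a secondary induction on the total number of degenerate nodes in $\mathcal{T}_1\cup\mathcal{T}_2$, or alternatively to expand $\mathcal{B}_{\mathcal{T}}$ once and for all as a signed sum indexed by antichains of degenerate nodes of $\mathcal{T}$ and track the bookkeeping of which leaves are ``replaced by expectations'' versus ``kept as $\eta$''; both approaches reduce the identity to the combinatorial statement that an arbitrary leaf pairing that respects the enhanced condition at all descendants is enhanced at $\mathfrak{r}$ if and only if it does not completely pair $\mathcal{L}(\mathfrak{n}_{c^*})$ with $\mathcal{L}(\mathfrak{n}_2)$, which is precisely Definition~\ref{def:enhanced-couple}.
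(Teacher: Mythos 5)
Your primary induction parameter is the scale $n = |\mathcal{T}_1| + |\mathcal{T}_2|$, and this is where the proposal stalls, at exactly the point you flag. After unfolding $\mathcal{B}_{\mathcal{T}_1}$ at its root, the first term is
$\mathbb{E}\bigl(\prod_{j=1}^3 \mathcal{B}_{\mathcal{T}_{1,j}}\cdot\mathcal{B}^*_{\mathcal{T}_2}\bigr)$,
a \emph{four}-factor expectation, whereas the inductive hypothesis as stated only covers two-tree correlations $\mathbb{E}(\mathcal{B}_{\mathcal{T}'}\mathcal{B}^*_{\mathcal{T}''})$. Asserting that ``the inductive hypothesis applied to every pair of proper subtrees encountered along the way'' yields the count of pairings enhanced below $\mathfrak{r}$ is not a deduction from the IH; it is the multi-tree version of the lemma, which you would have to state and prove as a strengthened IH. The ``expand every $\mathcal{B}$ into monomials and apply Isserlis'' step is likewise not a shortcut: the deterministic coefficients generated by the nested subtractions encode precisely the enhanced constraints, so without that strengthened IH you have not reduced the problem. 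The same gap reappears in your treatment of the subtracted term, where you invoke the factorization without having established it inductively.

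The second remedy you mention offhand --- ``run a secondary induction on the total number of degenerate nodes'' --- is in fact the paper's entire proof, and it is not secondary but primary. The paper inducts on $|\mathcal{N}_D|$ and never touches the root: pick a degenerate node $\mathfrak{n}_*$ in $\mathcal{T}_1$, exploit the linearity of the $\mathcal{B}$-recursion in the $\mathcal{B}$-factor of the subtree at $\mathfrak{n}_*$ to pull the scalar $\mathbb{E}(\mathcal{B}_{\mathcal{T}_{11}}\mathcal{B}^*_{\mathcal{T}_{12}})$ out, giving
\begin{equation*}
\mathbb{E}\bigl(\mathcal{B}_{\mathcal{T}_1}\mathcal{B}^*_{\mathcal{T}_2}\bigr)
= \mathbb{E}\bigl(\widetilde{\mathcal{B}}_{\mathcal{T}_1}\mathcal{B}^*_{\mathcal{T}_2}\bigr)
- \mathbb{E}\bigl(\mathcal{B}_{\mathcal{T}_{11}}\mathcal{B}^*_{\mathcal{T}_{12}}\bigr)\,
\mathbb{E}\bigl(\widetilde{\widetilde{\mathcal{B}}}_{\mathcal{T}_1}\mathcal{B}^*_{\mathcal{T}_2}\bigr),
\end{equation*}
where $\widetilde{\mathcal{B}}$ de-enhances $\mathfrak{n}_*$ and $\widetilde{\widetilde{\mathcal{B}}}$ drops the two paired subtrees. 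Each object on the right is again a \emph{two}-tree correlation with strictly fewer degenerate nodes, so the IH applies verbatim, and the difference is exactly the count of pairings satisfying the extra enhancement constraint at $\mathfrak{n}_*$. Rewrite the argument with $|\mathcal{N}_D|$ as the induction variable from the outset and the proof closes; the scale induction can only be salvaged by carrying the multi-tree identity as a strengthened hypothesis, which is strictly more work.
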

\begin{proof}
    We prove by induction. Suppose the formula is true for $|\mathcal N_D|=k-1$, then for $|\mathcal N_D|=k$, without loss of generality, we assume that there is one more degenerate node $\mathfrak{n}_*$ coming from $\mathcal{T}_1$. Assume that $\widetilde{\mathcal{B}}_{\mathcal{T}_1}$ represents the multiplicity (product of random phases) of the enhanced tree ${\mathcal{T}_1}$ with $|\mathcal N_{D_1}|=k_1-1$, that is $\mathfrak{n}_* \notin \mathcal N_{D_1}$, $\mathcal{T}_{1j}$'s are subtrees of degenerate node $\mathfrak{n}_*$ for $j=1, 2, 3$, and $\widetilde{\widetilde{\mathcal{B}}}_{\mathcal{T}_1}$ represents the multiplicity of ${\mathcal{T}_1}$ excluding $\mathcal{T}_{11}$ and $\mathcal{T}_{12}$. Here we assume that the root $\mathfrak{n}_{11}$ of $\mathcal{T}_{11}$ is $\mathfrak{n}_{1c^*}$. Then from Proposition \ref{prop-treeexp} we know that:
\begin{align}
    \mathbb{E}\left(\mathcal{B}_{\mathcal{T}_1}{\mathcal{B}^*_{\mathcal{T}_2}}\right) = \mathbb{E}\left(\widetilde{\mathcal{B}}_{\mathcal{T}_1}{\mathcal{B}^*_{\mathcal{T}_2}}\right) - \mathbb{E}\left(\mathcal{B}_{\mathcal{T}_{11}}{\mathcal{B}^*_{\mathcal{T}_{12}}}\right)\mathbb{E}\left(\widetilde{\widetilde{\mathcal{B}}}_{\mathcal{T}_1}{\mathcal{B}^*_{\mathcal{T}_2}}\right)
\end{align}
which is the number of enhanced pairing of $\mathcal{T}_1,\mathcal{T}_2$ (excluding any complete pairing from $\mathcal{T}_{11}$ and $\mathcal{T}_{12}$) and concludes the proof.
\end{proof}
\begin{proof}[Proof of Proposition \ref{bound-couple}]
Recall that
\begin{align*}
    \widetilde \Omega_k(t)&=\Omega_kA(t),\\
    \dot{A}(t) &= G(t, A(t)) = \frac{3\beta T}{2\kappa^2N}\sum_{|\mathcal{T}_1|,|\mathcal{T}_2|\le n}\sum_{l}\omega_l\mathbb{E}c_{l}^{\mathcal{T}_1}(t){c_{l}^{\mathcal{T}_2}}^*(t).
\end{align*}
We prove (\ref{E1}) and (\ref{E2}) using bootstrap arguments. We establish the hypotheses as:
\begin{align}
     \left|\sum_{|\mathcal{T}_{1}|+|\mathcal{T}_{2}|=n_{\Q}}\mathbb{E}\left[c_{k}^{\mathcal{T}_{1}}(t){c_{k}^{\mathcal{T}_{2}}}^*(t)\right]\right|&\leq 2C (\log N \log T)^{n_{\Q}} T^{-\frac{3}{5}}\left(\beta T^{4/5}\right)^{n_{\Q}},\label{H1}\\
     \left|\sum_{|\mathcal{T}_{1}|+|\mathcal{T}_{2}|=n_{\Q}}\mathbb{E}\left[\dot{c}_{k}^{\mathcal{T}_{1}}(t){c_{k}^{\mathcal{T}_{2}}}^*(t)\right]\right|&\leq 2C (\log N \log T)^{n_{\Q}-1}T^{\frac{2}{5}}\left(\beta T^{4/5}\right)^{n_{\Q}},\label{H2}
\end{align}
for the same constant $C$ as in (\ref{E1}, \ref{E2}).
    (\ref{H1}, \ref{H2}) give upper bound for $\dot{A}(t)$. From \eqref{eq-12} and \eqref{eq.def of A_k}, $|c_l^{\T}|$ and the function $G(t,A(t))$ are bounded for almost every $\varrho$, and smooth in its variables, which guarantees the global existence and uniqueness of $A(t)$ for almost all $\varrho$. $A(t) = C_0\beta Tt+{A}_{\geq}(t)$, where
    \begin{align}
        \dot{A}_{\geq}(t) = \frac{3\beta T}{2\kappa^2N}\sum_{1 \leq |\mathcal{T}_1|,|\mathcal{T}_2|\leq n}\sum_{l}\omega_l\mathbb{E}\left[c_{l}^{\mathcal{T}_1}(t){c_{l}^{\mathcal{T}_2}}^*(t)\right].
    \end{align}
    Then from (\ref{H1}, \ref{H2}), we have: 
    \begin{align}
        \dot{A}_{\geq}(t) & \lesssim \beta T^{1-\frac{3}{5}} \frac{\left(\beta T^{4/5}\right)^2}{1-\beta T^{4/5}} \lesssim \beta T,
        \\
        \ddot{A}_{\geq}(t) & \lesssim \beta T^{1+\frac{2}{5}} \frac{\left(\beta T^{4/5}\right)^2}{1-\beta T^{4/5}} \lesssim \beta T^{\frac{9}{5}}, \label{eq-Add}
    \end{align}
    for $\beta T^{4/5} \ll 1$. Therefore we can apply proposition \ref{integral_est}.
    Using Proposition \ref{vector_counting}, \ref{integral_est} and \cite[Proposition 2.6]{ODW}, we can show bounds for couples (\ref{E1}, \ref{E2}), which closes the bootstrap argument. Because the order of our couples is at most \(L^3\), the total number of couples of order \(n_{\Q}\) (independent of \(n_h\)) is on the order of
    $O(C^{L^3}\,(L^3)!).$ From Lemma \ref{multiplicity}, we can bound $\mathbb{E}\left(\mathcal{B}_{\mathcal{T}_1}(\eta_{\mathcal{T}_1}(\varrho)){\mathcal{B}^*_{\mathcal{T}_2}(\eta_{\mathcal{T}_2}(\varrho))}\right)\lesssim C^{L^3}\,(L^3)!$. 
    
    Similarly, given an enhanced couple $\Q$, there are $O(C^L)$ choices for the collection of SG negative-chain like objects $\mathscr C_1$ and $\mathcal M_{\mathscr C_1}$ defined in Definition \ref{def-M}. It suffices to fix an enhanced couple $\Q$ along with a choice $\mathscr C_1$ and $\mathcal M=\mathcal M_{\mathscr C_1}$ and bound the corresponding expression for  $\K_{\textgoth{Q}_{\mathcal M}}$ in (\ref{eq-splice-exp}). We could use the advantageous 2-vector counting $NT^{-\frac{1}{2}}$ for the LG irregular chains and uni-directional $S_2^{+}$, which are as good as 3-vector counting operations and only improve the bounds.
    
    As we have shown in the proof of Lemma \ref{lem-splice}, to bound the oscillatory integral after splicing in \eqref{eq-splice-exp}:
    \begin{align}
        \int _{\left[ 0,1\right]^{\mathcal N_0^{\mathrm{sp}}}}\int_{\mathcal{E}^{\mathrm{sp}}_{\pmb{\tau}}} \prod_{\mathfrak n_0 \in \mathcal N^{\mathrm{sp}}_0} P_{q_{\mathfrak n_0}}(\tau_{\mathfrak n_0}, t_{\mathfrak n_0}, k[\mathcal N^{\mathrm{sp}}])\left( \prod_{\mathfrak n \in \mathcal N^{\mathrm{sp}}}e^{\zeta_{\mathfrak{n}}i(\Omega_{\mathfrak{n}}Tt_{\mathfrak{n}}+\widetilde{\Omega}_{\mathfrak{n}}(t_{\mathfrak{n}}))} d t_{\mathfrak{n}}\right) d \pmb{\tau}, \label{eq-osc}
    \end{align}
    we need to apply the technique we used in Proposition \ref{integral_est} and let \[f_{\mathfrak{n}_0}(Tt_{\mathfrak n_0})=\left(\beta T^{\frac{1}{2}}\right)^{-q_{\mathfrak n_0}}P_{q_{\mathfrak n_0}}(\tau_{\mathfrak n_0}, t_{\mathfrak n_0}, k[\mathcal N^{\mathrm{sp}}]),\] which satisfies $|f_{\mathfrak{n}_0}(Tt_{\mathfrak n_0})|\lesssim1$ and $\left|\frac{d}{dt_{\mathfrak n_0}}f_{\mathfrak{n}_0}(Tt_{\mathfrak n_0})\right|\lesssim1$ for each $\mathfrak n_0 \in \mathcal N^{\mathrm{sp}}_0$. Then by Corollary \ref{cor-M}, we can derive:
    \begin{align}
    &\left|\sum_{|\mathcal{T}_{1}|+|\mathcal{T}_{2}|=n_{\Q}}\mathbb{E}\left[c_{k}^{\mathcal{T}_{1}}(t){c_{k}^{\mathcal{T}_{2}}}^*(t)\right]\right| \notag\\
    \lesssim& \;
      C^{n_{\Q}}\left(\beta T^{1/2} \right)^{n_0} \hspace{-.15cm}\sup_{\pmb \tau \in [0,1]^{\mathcal N_0^{\mathrm{sp}}}}\sum_{{\lambda[\mathcal N^{\mathrm{sp}}]}}\left( \frac{\beta T}{N}\right)^{n_{{\mathrm{sp}}}}\notag\\
      &\hspace{3cm}\times\sum_{\tilde{\mathscr E}_{\lambda[\mathcal N^{\mathrm{sp}}]}^{\mathrm{sp}}}\tilde \epsilon_{\tilde{\mathscr E}_{\lambda[\mathcal N^{\mathrm{sp}}]}^{\mathrm{sp}}}\left|\int_{\mathcal{E}^{\mathrm{sp}}_{\pmb{\tau}}}\prod_{\mathfrak{n}_0 \in \mathcal{N}^{\mathrm{sp}}_0}f_{\mathfrak{n}_0}(Tt_{\mathfrak n_0})\prod_{\mathfrak{n} \in \mathcal{N}^{\mathrm{sp}}}e^{\zeta_{\mathfrak{n}}i(\Omega_{\mathfrak{n}}Tt_{\mathfrak{n}}+\widetilde{\Omega}_{\mathfrak{n}}(t_{\mathfrak{n}}))} d t_{\mathfrak{n}}\right|\notag\\
      \lesssim& \; C^{n_{\Q}}\left(\beta T^{1/2} \right)^{n_0}\left(\sup_{\lambda[\mathcal N^{\mathrm{sp}}]}\sum_{\tilde{\mathscr E}_{\lambda[\mathcal N^{\mathrm{sp}}]}^{\mathrm{sp}}}\tilde \epsilon_{\tilde{\mathscr E}_{\lambda[\mathcal N^{\mathrm{sp}}]}^{\mathrm{sp}}}\left( \frac{\beta T}{N}\right)^{n_{\mathrm{sp}}}\right)\notag\\
      &\hspace{3cm}\times\left(\sum_{{\lambda[\mathcal N^{\mathrm{sp}}]}}\sup_{\pmb \tau \in [0,1]^{\mathcal N_0^{\mathrm{sp}}}}\left|\int_{\mathcal{E}^{\mathrm{sp}}_{\pmb{\tau}}}\prod_{\mathfrak{n}_0 \in \mathcal{N}^{\mathrm{sp}}_0}f_{\mathfrak{n}_0}(Tt_{\mathfrak n_0})\prod_{\mathfrak{n} \in \mathcal{N}^{\mathrm{sp}}}e^{\zeta_{\mathfrak{n}}i\lambda_{\mathfrak{n}}(Tt_{\mathfrak{n}}+A(t_{\mathfrak{n}}))} d t_{\mathfrak{n}}\right|\right),\label{eq-sp1}
    \end{align}
    where $\lambda[\mathcal N^{\mathrm{sp}}] \in \Z^{|\mathcal N^{\mathrm{sp}}|}$ and ${\tilde{\mathscr E}_{\lambda[\mathcal N^{\mathrm{sp}}]}^{\mathrm{sp}}}$ denotes decorations of the couple satisfying $|T\Omega_{\mathfrak n} - \lambda_{\mathfrak n}| \leq 1$ for each $\mathfrak n \in \mathcal N^{\mathrm{sp}}$, and $C_1>0$ is a constant. Here, $n_0 + n_{\mathrm{sp}} = n_{\Q}$, where $n_0$ is number of nodes spliced out. We apply Proposition \ref{integral_est} to bound:
    \begin{align}
        &\sum_{{\lambda[\mathcal N^{\mathrm{sp}}]}}\sup_{\pmb \tau \in [0,1]^{\mathcal N_0^{\mathrm{sp}}}}\left|\int_{\mathcal{E}^{\mathrm{sp}}_{\pmb{\tau}}}\prod_{\mathfrak{n}_0 \in \mathcal{N}^{\mathrm{sp}}_0}f_{\mathfrak{n}_0}(Tt_{\mathfrak n_0})\prod_{\mathfrak{n} \in \mathcal{N}^{\mathrm{sp}}}e^{\zeta_{\mathfrak{n}}i\lambda_{\mathfrak{n}}(Tt_{\mathfrak{n}}+A(t_{\mathfrak{n}}))} d t_{\mathfrak{n}}\right| \notag\\
        \lesssim&\; \sum_{\lambda[\mathcal N]} \sum_{d_{\mathfrak n} \in \{0,1\}}\frac{1}{\langle \lambda_{\mathfrak n} + T{q}_{\mathfrak n}\rangle} \lesssim C^{n_{\Q}} (\log N)^{n_{\Q}}.
    \end{align}
    Then by Proposition \ref{prop-rigidity}, we get:
    \begin{align}
        \eqref{eq-sp1}
    \lesssim& \; C^{n_{\Q}} \left(\beta T^{1/2} \right)^{n_0}\left( \sup_{\lambda[\mathcal N^{\mathrm{sp}}]} \sum_{\tilde{\mathscr E}_{\lambda[\mathcal N^{\mathrm{sp}}]}^{\mathrm{sp}}}\tilde \epsilon_{\tilde{\mathscr E}_{\lambda[\mathcal N^{\mathrm{sp}}]}^{\mathrm{sp}}} \left( \frac{\beta T}{N}\right)^{n_{\mathrm{sp}}}\right)  (\log N)^{n_{\Q}}\notag \\
    <& \; C_1 (\log N \log T)^{n_{\Q}} T^{-\frac{3}{5}}\left(\beta T^{4/5}\right)^{n_{\Q}},
    \end{align}
    Here we use $\beta T^{\frac{1}{2}} \ll \left(\frac{\beta T}{N}\right)NT^{-\frac{1}{5}}=\beta T^{\frac{4}{5}}$ and the size of the oscillatory integral \eqref{eq-osc} can be bounded by $(\log N)^{n_{\Q}}$. Similarly for $\mathbb{E}\left[\dot{c}_{k}^{\mathcal{T}_{1}}(t){c_{k}^{\mathcal{T}_{2}}}^*(t)\right]$, we may expand $\dot{c}_{k}^{\mathcal{T}_{1}}(t)$ and get:
    \begin{align}
    &\quad\left|\sum_{|\mathcal{T}_{1}|+|\mathcal{T}_{2}|=n_{\Q}}\mathbb{E}\left[\dot{c}_{k}^{\mathcal{T}_{1}}(t){c_{k}^{\mathcal{T}_{2}}}^*(t)\right]\right|\notag\\
    & \lesssim \frac{\beta T}{N}\max\Bigg(\sum_{\substack{k_3-k_4+k_5=k\\k_3,k_5 \neq k_4}}T_{k,3,4,5}\left|\mathbb{E}\left[c_{k_3}^{\mathcal{T}_3}(t){c_{k_4}^{\mathcal{T}_4}}^*(t)c_{k_5}^{\mathcal{T}_5}(t){c_{k}^{\mathcal{T}_{2}}}^*(t)\right]\right|, \notag\\
    &\qquad \sum_{k_3}T_{k,3,3,k}\Big(\left|\mathbb{E}\left[c_{k_3}^{\mathcal{T}_3}(t){c_{k_3}^{\mathcal{T}_4}}^*(t)c_{k}^{\mathcal{T}_5}(t){c_{k}^{\mathcal{T}_{2}}}^*(t)\right]\right|-\left|\mathbb{E}\left[c_{k_3}^{\mathcal{T}_3}(t){c_{k_3}^{\mathcal{T}_4}}^*(t)\right]\right|\cdot\left|\mathbb{E}\left[c_{k}^{\mathcal{T}_5}(t){c_{k}^{\mathcal{T}_{2}}}^*(t)\right]\right|\Big)\Bigg)\notag\\
    &\lesssim C^{n_{\Q}} \left(\beta T^{1/2} \right)^{n_0}\left( \sup_{\lambda[\mathcal N^{\mathrm{sp}} \setminus \mathfrak{r_1}]} \sum_{\tilde{\mathscr E}_{\lambda[\mathcal N^{\mathrm{sp}} \setminus \mathfrak{r_1}]}} \tilde \epsilon_{\tilde{\mathscr E}_{\lambda[\mathcal N^{\mathrm{sp}} \setminus \mathfrak{r_1}]}^{\mathrm{sp}}}\left( \frac{\beta T}{N}\right)^{n_{\mathrm{sp}}}\right) (\log N)^{n_{\Q}-1}\notag\\
    &< C_2 (\log N \log T)^{n_{\Q}-1}T^{\frac{2}{5}}\left(\beta T^{\frac{4}{5}}\right)^{n_{\Q}}. \label{eq-partialt}
    \end{align}
    where $C_2>0$ is a constant. Note that for the decorations $\{(k_3,k_4,k_5):k_3-k_4+k_5=k,k_3,k_5 \neq k_4\}$ we lose a counting $N^2$ instead of $N^2T^{-1}\log T$ since there is no control on the factor $\Omega$ within a $T^{-1}$ strip. We take $C=\max(C_1, C_2)$ to complete the proof.
\end{proof}

% section 6 - The Remainder
\section{The Remainder}
\label{section-remainder}
In this section, we prove Proposition \ref{prop-remainder}.
\subsection{Kernels of \texorpdfstring{$\Ell$}{L}} In order to introduce the kernels of \(\Ell\), we recall from \cite{WKE} the following extension of trees and couples:

\begin{definition}\label{def-flower}
A \emph{flower tree} is a tree \(\mathcal{T}\) in which a particular leaf \(\mathfrak{f}\) is distinguished and called the \emph{flower}. Choosing a different leaf \(\mathfrak{f}\) for the same tree \(\mathcal{T}\) yields a different flower tree. The unique path connecting the root \(\mathfrak{r}\) and the flower \(\mathfrak{f}\) is referred to as the \emph{stem}. A \emph{flower couple} is a pair of flower trees whose flowers are matched in such a way that they carry opposite signs.

The \emph{height} of a flower tree \(\mathcal{T}\) is defined as the number of branching nodes along its stem. Evidently, to construct a flower tree of height \(n_h\), one can start with a single node and successively attach two sub-trees \(n_h\) times. A flower tree is said to be \emph{admissible} if each of the attached sub-trees has scale at most \(L\).
\end{definition}
We adapt the following proposition from \cite[Proposition 11.2]{WKE} and \cite[Proposition 6.2]{ODW} to our $\beta$-FPUT model:
\begin{proposition} \label{prop-Lm}
Let $\mathscr L$ be defined as in (\ref{eq-linearop}). Note that $\mathscr L^{n_h}$ is an $\R$-linear operator for $n \geq 0$. Define its kernels $(\mathscr L^{n_h})_{k \ell}^\zeta(t,s)$ for $\zeta \in \{\pm\}$ by 
\begin{equation*}
(\mathscr L^{n_h} b)_k (t) = \sum_{\zeta \in \{\pm\}} \sum_{\ell} \int_{\R} (\mathscr L^{n_h})_{k \ell}^{\zeta} (t,s) b_{\ell}(s)^{\zeta}d s.
\end{equation*}
Then, for each $1 \leq n \leq L$ and $\zeta \in \{\pm\}$, we can decompose
\begin{equation}
(\mathscr L^{n_h})^\zeta_{k, \ell} = \sum_{n \leq m \leq L^3} (\mathscr L^{n_h})_{k, \ell}^{m, \zeta},
\end{equation}
such that for any $n_h \leq m \leq L^3$ and $k,\ell \in \Z_N\cap(0,1)$ and $t,s \in (0,1)$ with $t > s$, we have 
\begin{equation} \label{eq-Lnm}
\E |(\mathscr L^{n_h})_{k, \ell}^{m, \zeta}(t,s)|^2 \lesssim  (\beta T^{4/5})^m N^{30}.
\end{equation}
\end{proposition}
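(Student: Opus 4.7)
The plan is to extend the flower-tree/flower-couple expansion of \cite[Proposition~11.2]{WKE} and \cite[Proposition~6.2]{ODW} to the present setting with enhanced trees and a nonlinear frequency shift. A single application of $\mathscr L$, defined in \eqref{eq-linearop}, inserts a cubic interaction $\mathcal W(\cdot,\cdot,\cdot)$ (or a degenerate piece via $\mathscr L_D$) in which two slots are filled by the iterates $c^{\mathcal T_i}$ of scale $\le L$ and the remaining slot receives the operator's argument. Iterating $n_h$ times produces a \emph{flower tree} of height $n_h$ in the sense of Definition \ref{def-flower}: the stem is a path of $n_h$ branching nodes through which the argument propagates down to the flower leaf $\mathfrak f$ (which carries the external decoration $\ell$ at time $s$), and every stem node carries two \emph{side} subtrees of scale at most $L$ originating from the $c^{\mathcal T_i}$ factors.

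First I would write the kernel in flower-tree form
\[
(\mathscr L^{n_h})^\zeta_{k\ell}(t,s) \;=\; \sum_{\mathcal T_h} \mathcal J^{\zeta}_{\mathcal T_h, k, \ell}(t,s),
\]
where $\mathcal T_h$ ranges over admissible flower trees of height $n_h$ and $\mathcal J^{\zeta}_{\mathcal T_h, k, \ell}$ is the integrand analogous to \eqref{eq-KQ}, with time-ordering enforced along both the stem and the side subtrees and the usual factors $e^{\zeta_{\mathfrak n}i(\Omega_{\mathfrak n}Tt_{\mathfrak n}+\widetilde\Omega_{\mathfrak n}(t_{\mathfrak n}))}$ at every branching node. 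Setting $m := |\mathcal T_h|$, the admissibility constraint forces $n_h \le m \le n_h(2L+1) \le L^3$, and the decomposition $(\mathscr L^{n_h})^{m,\zeta}_{k\ell}$ is obtained by restricting this sum to trees of total scale $m$.

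Next I would compute $\mathbb E|(\mathscr L^{n_h})^{m,\zeta}_{k\ell}(t,s)|^2$ by multiplying two flower-tree expressions with opposite root signs whose flower leaves are identified (and hence carry opposite signs by the $\zeta_{\mathfrak f}$ convention). Expanding the expectation via the complex Isserlis' theorem, together with Lemma \ref{multiplicity} to account for the enhanced-pairing rule at degenerate nodes, yields a sum over \emph{flower couples}: pairs of flower trees whose flower leaves are matched and whose remaining side leaves are paired compatibly with the enhanced-couple constraint of Definition \ref{def:enhanced-couple}. The resulting expression has the same shape as $\mathcal K_{\mathcal Q}$ in Definition \ref{couple expression}, except that the two roots are not paired and an extra stem of $n_h$ time integrations is carried out. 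Each flower couple then produces a molecule by a minor variant of Definition \ref{couple-molecule}, in which the flower leaf contributes one additional bond carrying decoration $\ell$. After the degenerate-atom preprocessing of Section \ref{subsec-ass} and the splicing of SG irregular chains via Lemma \ref{lem-splice}, I would apply Proposition \ref{integral_est} to control the time integrals (absorbing the nonlinear frequency shift $A(s)$) and Proposition \ref{prop-rigidity} to count decorations, obtaining the saving factor $(\beta T^{4/5})^m$ from $m$ applications of three-vector counting together with the splicing gain $(\beta T^{1/2})^{n_0}$.

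The main obstacle is verifying that the molecule associated with a flower couple still satisfies the structural assumptions and the inequality $m_2 \le 3 m_3 + O(1)$ of Proposition \ref{prop-op-bound}. The external flower bond perturbs the degree-$3$/degree-$4$ endpoint bookkeeping by an $O(1)$ amount independent of $n_h$ and $m$, so the counting inequalities survive up to an absolute additive loss. This loss, together with the crude bound $C^m m!$ on the number of admissible flower couples of scale $m$, the factor $\mathbb E[\mathcal B_{\mathcal T^+}\mathcal B^*_{\mathcal T^-}] \lesssim C^{L^3}(L^3)!$ from Lemma \ref{multiplicity}, and the removal of summations in $(k,\ell,t,s)$ in favor of pointwise suprema, are all absorbed by the deliberately loose prefactor $N^{30}$; the net bound then reads $\mathbb E|(\mathscr L^{n_h})^{m,\zeta}_{k\ell}(t,s)|^2 \lesssim (\beta T^{4/5})^m N^{30}$, exactly mirroring the bootstrap estimate of Proposition \ref{bound-couple} but in the flower-couple geometry.
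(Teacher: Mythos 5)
Your proposal follows essentially the same route as the paper: expand the kernel as a sum over admissible flower trees of height $n_h$, decompose by total scale $m$, square and pair via Isserlis/Lemma~\ref{multiplicity} to obtain flower couples, then run the same preprocessing, splicing, integral estimate, and counting machinery as in Proposition~\ref{bound-couple}, absorbing all combinatorial and structural losses into the $N^{30}$ prefactor. The only technical points the paper spells out that you leave implicit are the Dirac delta factor $\boldsymbol{\delta}(t_{\mathfrak f^p}-s)$ enforcing the flower's parent time (and the $N^{10}$ discrepancy from the skipped phase integral), the case-by-case treatment of the two extra degenerate pieces in $(\mathscr L_D)_k$ from \eqref{eq-LD}, and the caveat that an irregular chain containing the flower must not be spliced away (costing at most an extra $N^{2}$); all of these are of the type you already argued can be absorbed into $N^{30}$, so the argument goes through.
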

\begin{proof}
    We follow the proof in \cite[Proposition 11.2]{WKE} and \cite[Proposition 6.2]{ODW}, for flower trees $\T_\mathfrak f$ of height $n_h$ and scale $m$ such that the $\zeta_\mathfrak r=+$ and $\zeta_\mathfrak f=\zeta$. Then for $t \geq s$, we similarly define as in \eqref{eq-13}:
    \begin{equation} \label{eq-Jtilde}
    \widetilde{c}^{\mathcal T_\mathfrak f}_{k,\ell}(t,s) = \left(\frac{\beta T}{N}\right)^m \zeta(\mathcal{T}_\mathfrak f) \sum_{\mathscr D} \epsilon_{\mathscr D} \boldsymbol{\delta}(t_{\mathfrak f^p} - s) \mathcal{A}_{\mathcal{T}_\mathfrak f}(t,\Omega[\mathcal{N}], \widetilde{\Omega}[\Nc])\prod_{\mathfrak f \neq \mathfrak l \in \mathcal L} \sqrt{n_{\text{in}}(k_{\mathfrak{l}})}\mathcal{B}_{\mathcal{T}_\mathfrak f}(\eta_{\mathcal{T}_\mathfrak f})(\varrho)) \boldsymbol{1}_{k_{\mathfrak f} = \ell},
    \end{equation}
     where $\mathscr D$ is a $k$-decoration of $\T_\mathfrak f$, and $\mathfrak f^p$ is the parent of $\mathfrak f$. Then for flower couples $\mathcal Q_\mathfrak f=(\mathcal T^+_\mathfrak f, \mathcal T^-_\mathfrak f)$, where $\mathcal T^{\pm}_\mathfrak f$ has sign $\pm \zeta$, height $n_h$ and scale $m$. Then for $t \geq s$, we can similarly define as in \eqref{eq-KQ}:
     \begin{align}\label{eq-KQtilde}
         (\widetilde{\mathcal K}_{\mathcal Q_\mathfrak f})(t,s,k, \ell)  &= \left( \frac{\beta T}{N}\right)^{2m} \zeta(\mathcal Q) \sum_{\mathscr E} \epsilon_{\mathscr E}\int_\mathcal{E} \prod_{\mathfrak n \in \mathcal N}e^{\zeta_{\mathfrak{n}}i(\Omega_{\mathfrak{n}}Tt_{\mathfrak{n}}+\widetilde{\Omega}_{\mathfrak{n}}(t_{\mathfrak{n}}))} d t_{\mathfrak{n}} \prod_{\mathfrak f} \boldsymbol{\delta}(t_{\mathfrak f^p} - s)  \notag\\
         &\qquad \qquad \qquad \qquad \qquad \times \prod^+_{\mathfrak f \neq \mathfrak l \in \mathcal L}n_{\mathrm{in}}(k_{\mathfrak l})\mathbb{E}\left(\mathcal{B}_{\mathcal{T}_1}(\eta_{\mathcal{T}_1}(\varrho)){\mathcal{B}^*_{\mathcal{T}_2}(\eta_{\mathcal{T}_2}(\varrho))}\right)\boldsymbol{1}_{k_{\mathfrak f} = \ell}
     \end{align}
     where $\mathscr E$ is a $k$-decoration of $\Q_\mathfrak f$. In \eqref{eq-KQtilde}, the Dirac delta factor \(\boldsymbol{\delta}(t_{\mathfrak f^p} - s)\) imposes \(t_{\mathfrak f^p} = s\), so we skip any phase integral with respect to \(t_{\mathfrak f^p}\) for each of the two flowers \(\mathfrak f\). Although this alters the counting, the resulting discrepancy can be bounded by at most \(N^{10}\). With $\widetilde{c}^{\mathcal T_\mathfrak f}_k(t)$ and $(\widetilde{\mathcal K}_{\mathcal Q_\mathfrak f})(t,s,k, \ell)$ we can express:
     \begin{align}
    (\mathscr L^{n_h})_{k, \ell}^{m, \zeta}(t,s) &= \sum_{\mathcal T_\mathfrak f} \widetilde{c}^{\mathcal T_\mathfrak f}_{k,\ell}(t,s) \\
    \E \left| (\mathscr L^{n_h})_{k, \ell}^{m, \zeta}(t,s)\right|^2 &= \sum_{\mathcal Q_\mathfrak f} \widetilde{\mathcal K}_{\mathcal Q_\mathfrak f}(t,s,k,\ell).
    \end{align}
     Suppose $\mathcal{Q}_\mathfrak f$ is an admissible flower couple and that $\widetilde{\mathcal{Q}_\mathfrak f}$ is congruent to $\mathcal{Q}_\mathfrak f$ in the sense of Definition~\ref{def:congruence}. Then $\widetilde{\mathcal{Q}_\mathfrak f}$ is also an admissible flower couple, provided its flower is chosen as the image of the flower of $\mathcal{Q}_\mathfrak f$. To apply the same proof as in Proposition \ref{bound-couple}, we also need to first splice out the irregular chains with small gaps as we did in Lemma \ref{lem-splice}. Note that in our $\beta$-FPUT model, in the remainder we have extra degenerate terms as in \eqref{eq-LD} compared to the NLS model. The first extra term $\left(\left|c^{\leq n}_{k_1}(s)\right|^2 - \E \left|c^{\leq n}_{k_1}(s) \right|^2\right)\mathcal{R}^{(n+1)}_{k}$ is similar to our enhanced structure which only allow enhanced pairing under the degenerate nodes and will not cause irregular chains stacking up. The last two terms $\left(c^{\leq n*}_{k_1}(s)\mathcal{R}^{(n+1)}_{k_1}+c^{\leq n}_{k_1}(s)\mathcal{R}^{(n+1)*}_{k_1}\right)c^{\leq n}_{k}(s)$ can also be handled well by the splicing process after summing up all admissible congruent flower couples. One more loss we could gain is that if the chains contain the flowers, we need to avoid splicing the flowers out which may cause a loss of $N^2$ with two extra double bond in the molecule. Then applying Proposition \ref{bound-couple}, we can recover the bound \eqref{eq-Lnm} by adding a loss of $N^{30}$.
\end{proof}
\begin{lemma}{(Gaussian Hypercontractivity)} \label{lem-hypercontractivity}
Let $\{\eta_k\}$ be i.i.d. Gaussians or random phase. Given $\zeta_j \in \{\pm\}$ and random variable $X$ of the form 
\begin{equation}
X = \sum_{k_1, \ldots, k_n} a_{k_1, \ldots, k_n} \prod_{j = 1}^n \eta_{k_j}^{\zeta_j}(\omega),
\end{equation}
where $a_{k_1, \ldots, k_n}$ are constants. Then, for $q \geq 2$,
\begin{equation}
\E \left|X\right|^q \leq (q-1)^{\frac{nq}{2}}\cdot \left( \E |X|^2\right)^{q/2}
\end{equation}
\end{lemma}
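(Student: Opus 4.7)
The plan is to invoke Nelson's hypercontractivity theorem and treat the two cases (standard complex Gaussian and uniform random phase) in parallel via the corresponding hypercontractive semigroup acting on polynomials in the base variables.

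For the standard complex Gaussian case, write each $\eta_k = 2^{-1/2}(g_k + i h_k)$ with $g_k, h_k$ i.i.d.\ real standard normals, so that $\eta_k^+$ and $\eta_k^-$ are linear combinations of $g_k$ and $h_k$. Then $X = \sum a_{k_1,\ldots,k_n} \prod_j \eta_{k_j}^{\zeta_j}$ is a polynomial of total degree $n$ in the real Gaussian family $\{g_k, h_k\}$ and hence lies in the truncated Wiener chaos $W_n = \bigoplus_{j=0}^n \mathcal H_j$, where $\mathcal H_j$ is the $j$-th homogeneous real Wiener chaos. Decompose $X = \sum_{j \le n} X_j$ with $X_j \in \mathcal H_j$.

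The key tool is Nelson's inequality: the Ornstein--Uhlenbeck semigroup $T_\rho$ on the underlying Gaussian space satisfies $\|T_\rho Y\|_{L^q} \le \|Y\|_{L^2}$ whenever $\rho^2(q-1) \le 1$, and acts on $\mathcal H_j$ by multiplication by $\rho^j$. Choose $\rho = (q-1)^{-1/2} \le 1$ and set $Y = \sum_{j \le n} \rho^{-j} X_j \in W_n$, so that $T_\rho Y = X$. By orthogonality of the Wiener chaoses and the bound $\rho^{-2j} \le \rho^{-2n}$ for $j \le n$, one gets
\begin{equation*}
\|Y\|_{L^2}^2 \;=\; \sum_{j \le n} \rho^{-2j}\|X_j\|_{L^2}^2 \;\le\; \rho^{-2n} \sum_{j \le n} \|X_j\|_{L^2}^2 \;=\; \rho^{-2n}\|X\|_{L^2}^2.
\end{equation*}
Combining with Nelson's inequality gives $\|X\|_{L^q} = \|T_\rho Y\|_{L^q} \le \|Y\|_{L^2} \le (q-1)^{n/2}\|X\|_{L^2}$, and raising to the $q$-th power recovers the stated bound.

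For the random phase case, the analogous conclusion follows from Weissler's hypercontractive inequality on the circle: the semigroup $P_\rho$ defined on $L^2(\mathbb T^{\mathbb Z_N \cap (0,1)})$ by $P_\rho e^{im\theta_k} = \rho^{|m|} e^{im\theta_k}$ (extended multiplicatively across factors) maps $L^2 \to L^q$ with norm $1$ as soon as $\rho^2(q-1) \le 1$. Since $X$ is a trigonometric polynomial of total degree at most $n$ across the $\theta_k$'s, the Fourier decomposition on the torus replaces the Wiener chaos decomposition and the exact same inversion argument produces the identical constant $(q-1)^{n/2}$. The only genuinely nontrivial ingredient is Weissler's sharp constant for the circle; alternatively one can reduce to the Gaussian case by writing $\eta_k = g_k/|g_k|$ for an independent pair of real Gaussians and conditioning on $|g_k|$, thereby bypassing the need to cite a separate circle-hypercontractivity theorem. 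The main (and essentially the only) obstacle is thus locating a hypercontractivity bound that applies uniformly to both distributions; once Nelson/Weissler is in hand the rest is a single-line algebraic manipulation.
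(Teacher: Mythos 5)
Your proof is correct, and since the paper delegates this lemma entirely to a citation (\cite[Lemma 2.6]{Hypercontractivity}) without giving an argument of its own, the relevant comparison is with the standard proof such a reference would contain --- which is exactly what you have written. The Nelson--Ornstein--Uhlenbeck argument for the Gaussian case (chaos-decompose $X = \sum_{j\le n}X_j$, pre-multiply by $\rho^{-j}$ to form $Y$ with $T_\rho Y = X$, use $\rho = (q-1)^{-1/2}\le 1$ together with orthogonality of the chaoses to get $\|Y\|_{L^2} \le \rho^{-n}\|X\|_{L^2}$, then apply Nelson) is the canonical one, and Weissler's circle hypercontractivity handles the random-phase case in exactly the same way with the Fourier-mode decomposition playing the role of the chaos decomposition. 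Two small points worth making explicit: the bound $\|T_\rho Y\|_{L^q}\le\|Y\|_{L^2}$ for \emph{complex} $Y$ follows from positivity of the Mehler (resp.\ Poisson) kernel via $|T_\rho Y|\le T_\rho|Y|$, and one needs hypercontractivity to tensorize across the independent coordinates $\{g_k,h_k\}$ (resp.\ $\{\theta_k\}$), both of which you use implicitly but should state.

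The one part that should be cut is the closing aside claiming that the random-phase case can be reduced to the Gaussian case ``by writing $\eta_k = g_k/|g_k|$ and conditioning on $|g_k|$.'' That reduction does not go through: conditionally on $|g_k|$, the variable $g_k/|g_k|$ is still uniform on the circle and independent of $|g_k|$, so conditioning simply reproduces the random-phase moment you were trying to bound; moreover $X$ is not a polynomial in the $g_k$'s (it involves the non-polynomial map $g\mapsto g/|g|$), so neither the Gaussian chaos decomposition nor the $T_\rho$-inversion step applies to it. A genuine circle hypercontractivity input --- Weissler's theorem, or equivalently the sharp logarithmic Sobolev inequality on $\mathbb{T}$ --- is unavoidable here and is not ``bypassed.'' Since your main argument does cite Weissler, this is a cosmetic defect, but the aside as written is misleading.
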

\begin{proof}
See, for instance, \cite[Lemma 2.6]{Hypercontractivity}. 
\end{proof}
\subsection{Bound on the operator $\Ell$}
\begin{proof}[Proof of Proposition \ref{prop-remainder}]
After applying Cauchy-Schwarz inequality:
\begin{align*}
||\mathscr L^{n_h}(a)||_Z^2 &= \sup_{0 \leq t \leq 1} N^{-1} \sum_{k \in \Z_N\cap(0,1)} |(\mathscr L^{n_h} a)_k(t)|^2 \\
&= \sup_{0 \leq t \leq 1} N^{-1} \sum_{k \in \Z_N\cap(0,1)} \left| \sum_{\zeta \in \{\pm\}} \sum_{\ell \in \Z_N\cap(0,1)} \int\limits_{0 \leq s \leq t} \sum_{n \leq m \leq L^3} (\mathscr L^{n_h})_{k, \ell}^{m, \zeta}a_{\ell}^\zeta (s) ds  \right|^2 \\
&\lesssim \sup_{0 \leq s \leq t \leq 1} \sup_{\zeta} \sup_m N^{-1}L^3 \sum_{k \in \Z_N\cap(0,1)} \left| \sum_{\ell \in \Z_N\cap(0,1)} (\mathscr L^{n_h})_{k, \ell}^{m, \zeta}(t,s) a_\ell^\zeta(s) \right|^2 \\
& \lesssim ||a||_Z^2 L^3\sup_{0 \leq s \leq t \leq 1} \sup_\zeta \sup_m \sum_{k \in \Z_N\cap(0,1)}\sum_{\ell \in \Z_N\cap(0,1)}  \left| (\mathscr L^{n_h})_{k, \ell}^{m, \zeta}(t,s) \right|^2 \\ 
& \lesssim ||a||_Z^2 N^2L^3\sup_{0 \leq s \leq t \leq 1} \sup_\zeta \sup_m  \sup_{k, \ell} |(\mathscr L^{n_h})_{k, \ell}^{m, \zeta}(t,s)|^2.
\end{align*} 
Let $L(t,s) = \sup_{k, \ell} \sup_{0 \leq s \leq t \leq 1} |(\mathscr L^{n_h})_{k, \ell}^{m, \zeta}(t,s)|$, for fixed $m$ and $\zeta$. As taking $\partial_t$ derivative just corresponds to omitting the $t_\mathfrak{r}$ integration and producing something like (\ref{eq-partialt}), then by Proposition \ref{prop-Lm} and Lemma \ref{lem-hypercontractivity}:
\begin{align}\label{eq-LR-bound}
\E ||L(t,s)||_{L_{t,s}^p((0,1)^2) L^p_{k,\ell}(\Z_N^2\cap (0,1)^2)}^p& \lesssim p^{mp}(\beta T^{4/5})^{mp/2} N^{30p},\\
\E ||\partial_{(t,s)}L(t,s)||^p_{L_{t,s}^p((0,1)^2) L^p_{k,\ell}(\Z_N^2\cap (0,1)^2)} & \lesssim p^{mp}(\beta T^{4/5})^{mp/2} N^{31p}.
\end{align}
By using the Gagliardo-Nirenberg inequality for $(t,s)\in(0,1)^2$, and bounding the $L_{k,\ell}^\infty$ norm by the $L_{k,\ell}^p$ norm for $k,\ell \in \Z_N\cap(0,1)$ with an extra loss $N^{2/p}$, we conclude that
\begin{align}
    \E ||L(t,s)||_{L_{t,s}^\infty((0,1)^2) L^\infty_{k,\ell}(\Z_N^2\cap (0,1)^2)}^p& \lesssim p^{mp}(\beta T^{4/5})^{mp/2} N^{31p+2}.
\end{align}
Thus with probability $\geq 1-N^{-p/2}$, we have
\begin{align}
    L(t,s) \lesssim p^{m}(\beta T^{4/5})^{m/2} N^{33},
\end{align}
which implies 
\begin{equation} \label{remains}
\sup_{k, \ell} \sup_{0 \leq s \leq t \leq 1} |(\mathscr L^{n_h})_{k, \ell}^{m, \zeta}(t,s)| \lesssim (\beta T^{4/5})^{m/2} N^{35}.
\end{equation}
with probability $\geq 1-N^{-A}$ by taking $p \geq 2A$.
\end{proof}

% section 7 - Proof of the Main Theorem
\section{Proof of the Main Theorem}
\label{section-main}
\begin{proposition} \label{prop-b}
With probability $\geq 1 - N^{-A}$, the mapping defined by the right hand side of (\ref{eq-op}) is a contraction mapping from the set $\{\mathcal{R}: ||\mathcal{R}||_Z \leq N^{-500}\}$ to itself. 
\end{proposition}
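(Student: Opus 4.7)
The plan is to exhibit
$$\Phi(\mathcal R) := (1 - \mathscr L)^{-1}\bigl(\mathscr R + \mathscr Q(\mathcal R,\mathcal R) + \mathscr C(\mathcal R,\mathcal R,\mathcal R)\bigr)$$
as a contraction of the ball $B := \{\mathcal R : \|\mathcal R\|_Z \leq N^{-500}\}$ into itself. The strategy is to combine a Neumann-type inversion of $1 - \mathscr L$ (driven by Proposition~\ref{prop-remainder}) with high-probability upper bounds on the sizes of $\mathscr R$, $\mathscr Q$, $\mathscr C$ in the $Z$-norm topology, and then to close the contraction via the factor $N^{-500}$ which is much smaller than the inverse of any polynomial loss coming from the expansion.

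First I would invert $1 - \mathscr L$. Pick $n_h \le L$ so large that $(\beta T^{4/5})^{n_h/2}N^{50} \le 1/2$, which is possible since $\beta T^{4/5} < 1$ in our scaling regime $T < \beta^{-5/4}$. Proposition~\ref{prop-remainder} then gives, with probability $\ge 1 - N^{-A}$, that $\|\mathscr L^{n_h}\|_{Z\to Z} \le 1/2$, so $(1-\mathscr L^{n_h})^{-1}$ exists as a convergent Neumann series with norm at most $2$. Combining with the identity
$$(1-\mathscr L)^{-1} = (1-\mathscr L^{n_h})^{-1}(1 + \mathscr L + \cdots + \mathscr L^{n_h-1})$$
and the same proposition applied to each $\mathscr L^{j}$, $1 \le j < n_h$, gives on the same event
$$\|(1-\mathscr L)^{-1}\|_{Z\to Z} \lesssim n_h N^{50}.$$

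Second I would bound $\mathscr R$ in $Z$. Writing $\|\mathscr R\|_Z^{2p} = \sup_{0\le t\le 1} N^{-p}\bigl(\sum_k |\mathscr R_k(t)|^2\bigr)^p$ and expanding each $\mathscr R_k$ as a sum of $\mathcal W(c^{\mathcal T_1},c^{\mathcal T_2},c^{\mathcal T_3})_k$ (plus the degenerate pieces $\mathscr R_D^{\mathcal T_1,\mathcal T_2,\mathcal T_3}$) with $|\mathcal T_1|+|\mathcal T_2|+|\mathcal T_3|\ge n$, one takes expectations to obtain sums over enhanced couples/flower-type couples of scale $\ge n$, organized exactly as in Proposition~\ref{prop-Lm}. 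The same splicing/molecule machinery of Section~\ref{section-couples} with the degeneracy-removal pre-processing in Section~\ref{subsec-ass} applies, yielding a bound of the shape $(\beta T^{4/5})^{np/2}N^{Cp}$ for each moment. Gaussian hypercontractivity (Lemma~\ref{lem-hypercontractivity}) converts this into an $L^\infty_k$-in-probability bound
$$\|\mathscr R\|_Z \lesssim (\beta T^{4/5})^{n/2} N^{100}$$
on an event of probability $\ge 1 - N^{-A}$. Choosing $n = L$ large enough that $(\beta T^{4/5})^{n/2} N^{200} \le N^{-1000}$ makes $\|(1-\mathscr L)^{-1}\mathscr R\|_Z \le \tfrac12 N^{-500}$.

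Third I would bound the multilinear pieces. Both $\mathscr Q$ and $\mathscr C$ have the form of the operator $\mathcal W$ (plus degenerate corrections) composed with random $c^{\mathcal T}$ arguments of scale $\le n$ and deterministic $\mathcal R$ arguments. Treating these exactly like $\mathscr L$ — but with one or two tree inputs replaced by a generic $Z$-bounded sequence — the same flower-couple expansion together with Proposition~\ref{prop-Lm} and hypercontractivity produces, on an event of probability $\ge 1 - N^{-A}$,
$$\|\mathscr Q\|_{Z\times Z\to Z} + \|\mathscr C\|_{Z\times Z\times Z\to Z} \lesssim N^{100}.$$
Hence for $\mathcal R\in B$,
$$\|(1-\mathscr L)^{-1}(\mathscr Q(\mathcal R,\mathcal R)+\mathscr C(\mathcal R,\mathcal R,\mathcal R))\|_Z \lesssim n_h N^{150}\bigl(N^{-1000}+N^{-1500}\bigr) \ll N^{-500},$$
so $\Phi$ sends $B$ to $B$. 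The contraction estimate follows from the bilinearity/trilinearity of $\mathscr Q,\mathscr C$:
$$\|\Phi(\mathcal R_1)-\Phi(\mathcal R_2)\|_Z \lesssim n_h N^{150}(\|\mathcal R_1\|_Z+\|\mathcal R_2\|_Z+\|\mathcal R_1\|_Z^2+\|\mathcal R_2\|_Z^2)\|\mathcal R_1-\mathcal R_2\|_Z \le \tfrac12\|\mathcal R_1-\mathcal R_2\|_Z.$$

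The principal obstacle is the bound on $\mathscr R$: one must pair the $2p$ copies of tree triples together with their conjugates and organize the resulting partitions into enhanced flower couples, where the constraint $|\mathcal T_1|+|\mathcal T_2|+|\mathcal T_3|\ge n$ has to be propagated through each of the $2p$ blocks. The degenerate contribution $\mathscr R_D$ must also be handled carefully, since its two degenerate sums generate atoms that must be processed through the pre-processing step of Section~\ref{subsec-ass} before one can apply Proposition~\ref{prop-op-bound}. Once this combinatorial bookkeeping is done, every other step — the Neumann inversion, the trivial multilinear size comparisons, and the final contraction — is routine and relies only on the already-established Propositions~\ref{bound-couple} and~\ref{prop-remainder}.
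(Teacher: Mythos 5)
Your overall plan matches the paper's proof: exclude the exceptional event from Proposition~\ref{prop-remainder}, invert $1-\mathscr L$ on the complementary event via the Neumann series for $(1-\mathscr L^{n_h})^{-1}$, show $\|\mathscr R\|_Z$ is tiny by combining the couple bound of Proposition~\ref{bound-couple} applied to \eqref{eq-9} with the hypercontractivity/moment argument from the proof of Proposition~\ref{prop-remainder}, absorb the quadratic and cubic pieces by the smallness of $\|\mathcal R\|_Z$, and close via Banach fixed point. The choice $n=10^4/\epsilon$ in the paper versus your ``choose $n=L$ large enough'' is the same idea in both cases, and the constants $N^{50}, N^{100}$ you quote are just bookkeeping.

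The one place you overcomplicate is the control of $\mathscr Q$ and $\mathscr C$. You propose running the full flower-couple expansion plus hypercontractivity to bound $\|\mathscr Q\|_{Z\times Z\to Z}$ and $\|\mathscr C\|_{Z\times Z\times Z\to Z}$ ``exactly like $\mathscr L$,'' but this is not the right tool: $\mathscr C = \mathcal W(\mathcal R,\mathcal R,\mathcal R)+(\mathscr C_D)$ has \emph{no} random tree input at all, so there is no probabilistic expansion to perform; and $\mathscr Q$ has only one random input $c^{\mathcal T_1}$, which on the good event is already controlled in $Z$-norm by \eqref{eq-Jest-restate}. The paper instead uses the trivial deterministic Cauchy--Schwarz bound
\[
\|\mathcal W(u,v,w)\|_Z,\ \|\mathscr Q_D(u,v,w)\|_Z,\ \|\mathscr C_D(u,v,w)\|_Z \;\lesssim\; N^3\,\|u\|_Z\|v\|_Z\|w\|_Z,
\]
then plugs in $\|\mathcal R\|_Z\le N^{-500}$ for at least two of $u,v,w$ and \eqref{eq-Jest-restate} for the remaining one. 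This is both simpler and the conceptually correct route: the reason $\mathscr L$ needs the heavy flower-couple machinery is precisely that it is linear in $\mathcal R$ and so has no extra $N^{-500}$ gain to play with, while $\mathscr Q$ and $\mathscr C$ do. Your argument would yield the right size in the end, but invoking Proposition~\ref{prop-Lm} and hypercontractivity for $\mathscr C$ is a category error, and you should replace that step with the crude $N^3$ Cauchy--Schwarz bound.
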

\begin{proof}
By Proposition~\ref{prop-remainder}, there is an exceptional event of probability at most \(N^{-A}\) on which our required estimates may fail. We exclude that event, so on the complementary event of probability at least \(1 - N^{-A}\) we have, in particular,
\begin{align}
  \|\mathscr{R}(t)\|_{Z}\;&\lesssim\;(\beta T^{4/5})^{n/2}\,N^5 \label{eq-Rest-restate}\\
  \left\|\sum_{|\mathcal{T}|= m}c_k^{\mathcal{T}}(t)\right\|_{Z}\;&\lesssim\;(\beta T^{4/5})^{m/2}\,N^5. \label{eq-Jest-restate}
\end{align}
We obtain the above estimates by applying Proposition~\ref{bound-couple} to \eqref{eq-9} and arguments as in the proof of Proposition~\ref{prop-remainder} to remove expectations. Both \(\mathscr{R}\) and \(\sum_{|\mathcal{T}|= m}c_k^{\mathcal{T}}(t)\) are represented by enhanced trees preserved under congruence. Next, assume \(\|\mathcal{R}\|_Z \leq N^{-500}\). Observe from Section \ref{eq-multilinear} that:
\[
  \|\mathscr{R}\|_Z \;+\;
  \|\mathscr{Q}(\mathcal{R},\mathcal{R})\|_Z \;+\;
  \|\mathscr{C}(\mathcal{R},\mathcal{R},\mathcal{R})\|_Z 
  \;\lesssim\; N^{-600}.
\]
Here, the estimates for \(\|\mathscr{Q}(\mathcal{R},\mathcal{R})\|_Z\) and \(\|\mathscr{C}(\mathcal{R},\mathcal{R},\mathcal{R})\|_Z\) follow from Cauchy-Schwarz that
\begin{align*}
    \|\mathcal{W}(u,v,w)\|_Z, \|\mathscr{Q}_D (u,v,w)\|_Z, \|\mathscr{C}_D (u,v,w)\|_Z
    \;\lesssim\; N^{3}\,\|u\|_{Z}\,\|v\|_{Z}\,\|w\|_{Z},
\end{align*}
together with the bound \(\|\mathcal{R}\|_{Z}\le N^{-500}\) for at least two of the factors \(u,v,w\), and the estimate \eqref{eq-Jest-restate} for the remaining factor. The bound \(\|\mathscr{R}\|_Z \lesssim N^{-600}\) follows from \eqref{eq-Rest-restate}, the scaling law $\beta=N^{-\gamma}$, $T=N^{-\epsilon}\min(N,N^{\frac{5}{4}\gamma})$, and choosing \(n = 10^4/\epsilon\) large enough.

We also note that \((1 - \mathscr{L})^{-1}\) maps \(Z\) to itself, since
\[
  (1 - \mathscr{L})^{-1} 
  \;=\; (1 - \mathscr L^{n_h})^{-1}\,\bigl(1 + \mathscr{L} + \cdots + \mathscr{L}^{n_h-1}\bigr),
\]
and \(\|\mathscr L^{n_h}\|_{Z \to Z} < 1\) for sufficiently large \(n_h\). Therefore we can invert \((1 - \mathscr L^{n_h})\) by a Neumann series, and it follows that
\[
  \|(1 - \mathscr{L})^{-1}\|_{Z \to Z} \;\lesssim\; N^{30}.
\]

Putting these facts together shows that the map given by the right-hand side of \eqref{eq-op} is a self-map on the set \(\{\mathcal{R}: \|\mathcal{R}\|_Z \leq N^{-500}\}\), with the contraction property guaranteed by the smallness of the terms above. Consequently, by the Banach Fixed Point Theorem, there is a unique solution \(\mathcal{R}\) in that ball, and this occurs with probability at least \(1 - N^{-A}\).
\end{proof}
\begin{proof}[Proof of Theorem \ref{main}]
As we can derive from the initial data $\left\|b_k\right\|_{\ell^2} \sim O(1)$, and in this setting, the solution is globally well-posed for the nonlinear smooth ODE system \eqref{FPU}.  Let $E$ denote the complement of the union of all exceptional sets arising from Propositions \ref{prop-remainder} and \ref{prop-b}, so that 
\[
\mathbb{P}(E) \;\ge\; 1 - N^{-A}.
\]
Recall that $n\left(t,k\right)=\E |b_k(t)|^2 = \E|c_k(s)|^2$ where $s = \frac{t}{T}$.
We decompose
\begin{equation}
    \E\left(|c_k(s)|^2\right)
    \;=\;
    \E\left(|c_k(s)|^2\boldsymbol{1}_{E}\right)
    \;+\;
    \E\left(|c_k(s)|^2\boldsymbol{1}_{E^c}\right).
\end{equation}
Since $\E\left(|c_k(s)|^2\boldsymbol{1}_{E^c}\right)\lesssim N^{-A + 20}$,
it suffices to consider $\E\left(|c_k(s)|^2\boldsymbol{1}_{E}\right)$. Then we can get:
\begin{align*}
\E\left(|c_k(s)|^2\boldsymbol{1}_{E}\right) = &\E \left( \left|c^{\leq n}_k(s)\right|^2 \boldsymbol{1}_{E}\right)+ 2 \mathrm{Re} \E \left( c^{\leq n}_k(s) \mathcal{R}^{(n+1)*}_{k}(s)\boldsymbol{1}_{E}\right) + \E\left(\left|\mathcal{R}^{(n+1)}_{k}(s)\right|^2 \boldsymbol{1}_{E}\right).
\end{align*}
By Proposition~\ref{prop-b} and \eqref{eq-Jest-restate}, any term involving $\mathcal{R}$ can be controlled at the order $N^{-500}$. Consequently, our main interest is in $\left|c^{\leq n}_k(s)\right|^2=\sum_{|\T_1|,|\T_2|\leq n}c^{\T_1}_k(s)c^{\T_2*}_k(s)$ the double sum over $|\T_1|$ and $|\T_2|$. Using Cauchy-Schwarz, we observe that
\begin{equation}
\left| \E \left(\left|c^{\leq n}_k(s)\right|^2\boldsymbol{1}_{E^c}\right) \right|\leq \left(\E \left|c^{\leq n}_k(s)\right|^4\right)^{1/2}\left( \mathbb P(E^c)^{1/2}\right) \lesssim N^{-A/2 + 10}
\end{equation}
where we have used \eqref{eq-Jest-restate} and Lemma~\ref{lem-hypercontractivity}. Thus, in the expectation $\E \left(\left|c^{\leq n}_k(s)\right|^2 \boldsymbol{1}_{E}\right)$, the indicator $\mathbf{1}_E$ can effectively be replaced by $1$. In that case, Proposition~\ref{bound-couple} shows that $\E \left|c^{\leq n}_k(s)\right|^2$
is $o_{\ell_k^\infty}\!\bigl(\tfrac{t}{T_{\mathrm{kin}}}\bigr)$ for $T_{\mathrm{kin}}=\frac{1}{4 \pi \beta^2}$ whenever $|\T_1|+|\T_2| \ge 3$. Finally, we handle the main contributing cases. If $|\T_1|=|\T_2|=0$, the only contribution to $\E |b_k(t)|^2$ is $n_{\mathrm{in}}(k)$. In the case $|\T_1|+|\T_2|=1$, we have
\begin{equation*}
2\mathrm{Re} \left(\sum_{|\T_0|=0,|\T_1|=1}\E \left( c^{\T_0}_k(s) c^{\T_1*}_k(s)\right)\right) = 0,
\end{equation*}
since no allowed couples under enhance pairing occur there. When $|\T_1|+|\T_2| = 2$, we need to calculate the terms as shown in Figure \ref{fig-iterates-compute} and its variants with degenerate nodes: \[\mathcal S_k := \sum_{|\T_1|=1}\E \left|c^{\T_1}_k(s)\right|^2 + 2\mathrm{Re} \left(\sum_{|\T_0|=0,|\T_2|=2}\E \left(  c^{\T_2}_k(s)c^{\T_0*}_k(s)\right)\right)\] as in the proof of \cite[Theorem 1.3]{2019}. In the following calculations, we omit the fully degenerate cases which can be bounded by $O\left( \frac{Ts}{T_{\mathrm{kin}}}N^{-\delta}\right)$ trivially.
\begin{figure}
\begin{subfigure}[t]{1\linewidth}
\centering
\begin{tikzpicture}[level distance=.8cm,
  level 1/.style={sibling distance=.8cm},
  level 2/.style={sibling distance=.8cm}]
\tikzstyle{hollow node}=[circle,draw,inner sep=1.6]
\tikzstyle{solid node}=[circle,draw,inner sep=1.6,fill=black]
\tikzset{
red node/.style = {circle,draw=black,fill=red,inner sep=1.6},
blue node/.style= {circle,draw = black, fill= blue,inner sep=1.6}, 
purple node/.style= {circle,draw = black, fill= purple,inner sep=1.6}, 
orange node/.style= {circle,draw = black, fill= orange,inner sep=1.6},
yellow node/.style= {circle,draw = black, fill= yellow,inner sep=1.6},
green node/.style = {circle,draw=black,fill=green,inner sep=1.6}}
\node[solid node, label = right:{$k$}] at (-5.5,.5){}
    child{node[blue node, label=below:{$k_1$}]{}}
    child{node[red node, label=below: $k_2$]{}}
    child{node[green node, label=below: $k_3$]{}}
;
\node[solid node, label = right: {$k$}] at (-3,.5){}
    child{node[blue node, label=below: $k_1$]{}}
    child{node[red node, label=below: $k_2$]{}}
    child{node[green node, label=below: $k_3$]{}}
;
\node[solid node, label = right: $k$ ] at (1,.5){}
    child{node[solid node, label = left: $k_1$ ]{}
        child{node[blue node, label=below: $k_2$]{}}
        child{node[red node, label=below: $k_3$]{}}
        child{node[green node, label=below: $k$]{}}
    }
    child{node[blue node, label=below: $k_2$]{}}
    child{node[red node, label=below: $k_3$]{}}
;
\node[green node, label=right: $k$] at (3.5,.5){};

\node at (-5.5, -2.7)[solid node, label = right: $k$]{}
    child{node[blue node, label=below: $k_1$]{}}
    child{node[solid node, label = right: $k_2$ ]{}
        child{node[blue node, label=below: $k_1$]{}}
        child{node[red node, label=below: $k$]{}}
        child{node[green node, label=below: $k_3$]{}}
    }
    child{node[green node, label=below: $k_3$]{}}
;
\node[red node, label=right: $k$] at (-3,-2.7){};

\node[green node, label=right: $k$] at (3.5,-2.7){};
\node[solid node, label = right: {$k$}] at (1, -2.7){}
    child{node[blue node, label=below: $k_1$]{}}
    child{node[red node, label=below: $k_2$]{}}
    child{node[solid node, label = right: {$k_3$}]{}
        child{node[red node, label=below: $k_2$]{}}
        child{node[blue node, label=below: $k_1$]{}}
        child{node[green node, label=below: $k$]{}}
    }
;
\end{tikzpicture}
\end{subfigure}
\caption{All types of couples $\Q$ without degenerate nodes of order $n = 2$.}
\label{fig-iterates-compute}
\end{figure}
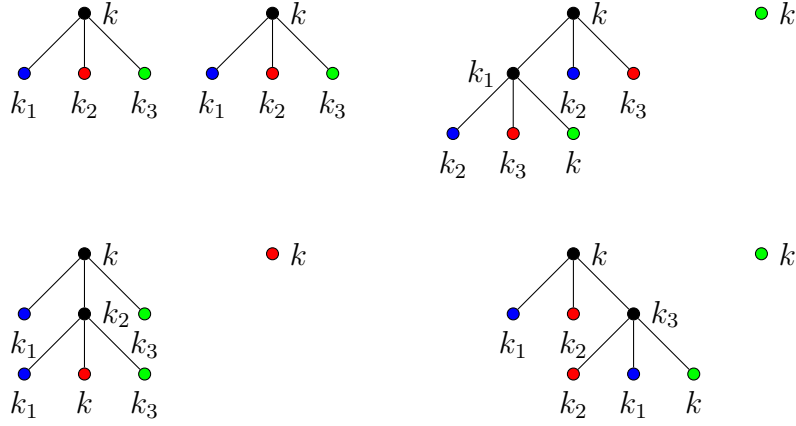

Since we have a nonlinear frequency shift in the phase, we need to calculate the following for $\Omega_k\neq 0$ using the fact that $A(0)=0$:
\begin{align}
&\int_0^se^{i\Omega_k(Ts_1+A(s_1))}ds_1=\int_0^s\frac{d}{ds_1}\left(\int_{0}^{s_1}e^{i\Omega_kTs_2}ds_2\right)e^{i\Omega_kA(s_1)}ds_1 \notag\\
=&\left[\left(\int_{0}^{s_1}e^{i\Omega_kTs_2}ds_2\right)e^{i\Omega_kA(s_1)}\right]_0^s-\int_0^s\frac{e^{i\Omega_kTs_1}}{i\Omega_kT}i\Omega_k\dot{A}(s_1)e^{i\Omega_kA(s_1)}ds_1 \notag\\
=&\frac{e^{i\Omega_k(Ts+A(s))}-1}{i\Omega_kT}-\int_0^s\frac{\dot{A}(s_1)}{T}e^{i\Omega_k(Ts_1+A(s_1))}ds_1. \label{eq-int1}
\end{align}
Note that the second term can be bounded by $O\left(\frac{N^{-\gamma/2}}{\langle\Omega_kT\rangle}\right)$ using $\dot A(s) \lesssim \beta T$, $\ddot A(s) \lesssim (\beta T)^3$ (using the tight bound from \eqref{eq-Add}) and Proposition \ref{integral_est}. Then using the fact that 
\begin{align*}
    \left|\frac{e^{i\Omega_kTs}-1}{i\Omega_kTs}\right|^2&= \frac{1}{|\Omega_kTs/2|^2}\cdot\frac{e^{i\Omega_kTs/2}-e^{-i\Omega_kTs/2}}{2i}\cdot\frac{e^{-i\Omega_kTs/2}-e^{i\Omega_kTs/2}}{-2i}\notag\\
&=\left|\frac{\sin(\Omega_kTs/2)}{\Omega_kTs/2}\right|^2=2\mathrm{Re}\left[\frac{1-e^{i\Omega_kTs}}{(\Omega_kTs)^2}\right],
\end{align*}
after summing all the couples with two scale 1 enhanced trees as shown in Figure \ref{fig-iterates-compute} \& \ref{fig-iterates-compute2} we can get:
\begin{align}
\sum_{|\T_1|=1}\E \left|c^{\T_1}_k(s)\right|^2 &= \left( \frac{\beta Ts}{N}\right)^2 \Bigg[2 \sum_{\Omega(\vec k) \neq 0}^{\times} |T_{k,1,2,3}|^2\phi_{k_1} \phi_{k_2} \phi_{k_3}\left|\frac{\sin(\Omega(\vec{k})(Ts+A(s))/2)}{\Omega(\vec{k})Ts/2}\right|^2 \notag\\
&+2\sum_{\Omega(\vec k) = 0}^{\times}|T_{k,1,2,3}|^2\phi_{k_1} \phi_{k_2} \phi_{k_3} +4\sum_{k_1 \in \Z_N\cap(0,1)}|T_{k,1,1,k}|^2{\phi_{k_1}^2}{\phi_{k}}+ O\left( \frac{Ts}{T_{\mathrm{kin}}}N^{-\delta}\right)\Bigg],\label{eq-first-iterate1}
\end{align}
where $\phi_{k_i} := n_{\mathrm{in}}\left(k_i\right)$, $\vec k =(k,k_1, k_2, k_3)$, and ${\times}=\{(k_1, k_2, k_3)\in (\Z_N\cap(0,1))^3:k_1-k_2+k_3=k;k_1,k_3 \neq k_2\}$. Note that the sum involved with the lower order integral in \eqref{eq-int1} can be bounded as:
\begin{align*}
    \left( \frac{\beta T}{N}\right)^2 \sum_{\Omega(\vec k) \neq 0}^{\times} |T_{k,1,2,3}|^2O\left(\frac{N^{-\gamma/2}}{\langle\Omega_kT\rangle^2}\right) \lesssim \left( \frac{\beta T}{N}\right)^2 N^{-\gamma/10} N^2T^{-1}\lesssim \frac{Ts}{T_{\mathrm{kin}}}N^{-\delta}.
\end{align*}
Similarly we need to calculate for $\Omega_k,\Omega_{k'}\neq 0$:
\begin{align*}
& \int_0^se^{i\Omega_k(Ts_1+A(s_1))}\int_0^{s_1} e^{i\Omega_{k'}(Ts_2+A(s_2))}ds_2ds_1 \notag\\ 
=& \int_0^se^{i\Omega_k(Ts_1+A(s_1))}\left[\frac{e^{i\Omega_{k'}(Ts_1+A(s_1))}-1}{i\Omega_{k'}T}-\int_0^{s_1}\frac{\dot{A}(s_2)}{T}e^{i\Omega_{k'}(Ts_2+A(s_2))}ds_2\right] ds_1 \notag\\
=& \frac{1}{i\Omega_{k'}T} \left(\int_0^se^{i(\Omega_k+\Omega_{k'})(Ts_1+A(s_1))}ds_1-\int_0^se^{i\Omega_k(Ts_1+A(s_1))}ds_1\right) \\
&\qquad -\int_0^se^{i\Omega_k(Ts_1+A(s_1))}\int_0^{s_1}\frac{\dot{A}(s_2)}{T}e^{i\Omega_{k'}(Ts_2+A(s_2))}ds_2ds_1\\
=& \frac{1}{i\Omega_{k'}T} \int_0^se^{i(\Omega_k+\Omega_{k'})(Ts_1+A(s_1))}ds_1-\frac{e^{i\Omega_k(Ts+A(s))}-1}{(i\Omega_{k}T)(i\Omega_{k'}T)}+O\left(\frac{N^{-\gamma/2}}{\langle\Omega_kT\rangle\langle\Omega_{k'}T\rangle}\right) \\
=& \frac{-s}{i\Omega_{k}T}+\frac{1-e^{i\Omega_k(Ts+A(s))}}{(\Omega_{k}T)^2}+O\left(\frac{N^{-\gamma/2}}{\langle\Omega_kT\rangle^2}\right),
\end{align*}
since we must have $\Omega_k+\Omega_{k'}=0$ in all cases from Figure \ref{fig-iterates-compute}. 
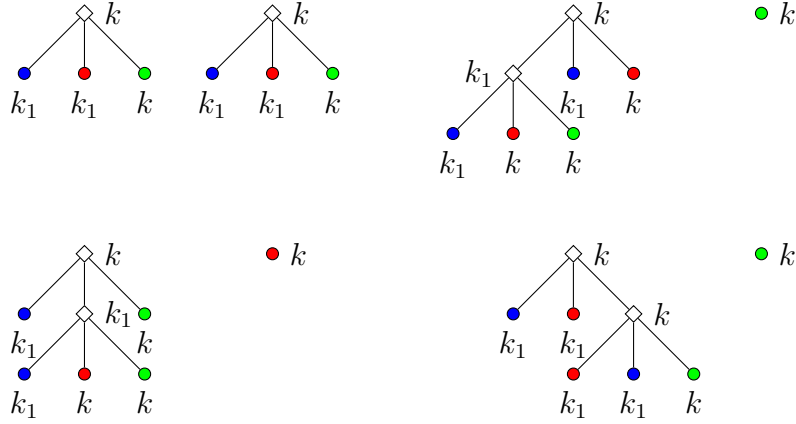
\begin{figure}
\begin{subfigure}[t]{1\linewidth}
\centering
\begin{tikzpicture}[level distance=.8cm,
  level 1/.style={sibling distance=.8cm},
  level 2/.style={sibling distance=.8cm}]
\tikzstyle{hollow node}=[circle,draw,inner sep=1.6]
\tikzstyle{solid node}=[circle,draw,inner sep=1.6,fill=black]
\tikzstyle{deckdiamond}=[shape=diamond, draw=black, fill=white, minimum size=6pt, inner sep=1.6pt]
\tikzset{
red node/.style = {circle,draw=black,fill=red,inner sep=1.6},
blue node/.style= {circle,draw = black, fill= blue,inner sep=1.6}, 
purple node/.style= {circle,draw = black, fill= purple,inner sep=1.6}, 
orange node/.style= {circle,draw = black, fill= orange,inner sep=1.6},
yellow node/.style= {circle,draw = black, fill= yellow,inner sep=1.6},
green node/.style = {circle,draw=black,fill=green,inner sep=1.6}}
\node[deckdiamond, label = right:{$k$}] at (-5.5,.5){}
    child{node[blue node, label=below:{$k_1$}]{}}
    child{node[red node, label=below: $k_1$]{}}
    child{node[green node, label=below: $k$]{}}
;
\node[deckdiamond, label = right: {$k$}] at (-3,.5){}
    child{node[blue node, label=below: $k_1$]{}}
    child{node[red node, label=below: $k_1$]{}}
    child{node[green node, label=below: $k$]{}}
;
\node[deckdiamond, label = right: $k$ ] at (1,.5){}
    child{node[deckdiamond, label = left: $k_1$ ]{}
        child{node[blue node, label=below: $k_1$]{}}
        child{node[red node, label=below: $k$]{}}
        child{node[green node, label=below: $k$]{}}
    }
    child{node[blue node, label=below: $k_1$]{}}
    child{node[red node, label=below: $k$]{}}
;
\node[green node, label=right: $k$] at (3.5,.5){};

\node at (-5.5, -2.7)[deckdiamond, label = right: $k$]{}
    child{node[blue node, label=below: $k_1$]{}}
    child{node[deckdiamond, label = right: $k_1$ ]{}
        child{node[blue node, label=below: $k_1$]{}}
        child{node[red node, label=below: $k$]{}}
        child{node[green node, label=below: $k$]{}}
    }
    child{node[green node, label=below: $k$]{}}
;
\node[red node, label=right: $k$] at (-3,-2.7){};

\node[green node, label=right: $k$] at (3.5,-2.7){};
\node[deckdiamond, label = right: {$k$}] at (1, -2.7){}
    child{node[blue node, label=below: $k_1$]{}}
    child{node[red node, label=below: $k_1$]{}}
    child{node[deckdiamond, label = right: {$k$}]{}
        child{node[red node, label=below: $k_1$]{}}
        child{node[blue node, label=below: $k_1$]{}}
        child{node[green node, label=below: $k$]{}}
    }
;
\end{tikzpicture}
\end{subfigure}
\caption{All types of enhanced couples $\Q$ with degenerate nodes of order $n = 2$. Note that there must be two degenerate nodes in the enhanced couple to achieve admissible pairings.}
\label{fig-iterates-compute2}
\end{figure}
Summing all the couples with one scale 2 enhanced tree and one root node, we can get:
\begin{align}
    &2\mathrm{Re} \left(\sum_{|\T_0|=0,|\T_2|=2}\E \left(  c^{\T_2}_k(s)c^{\T_0*}_k(s)\right)\right) \notag\\
    =& \left( \frac{\beta T}{N}\right)^2 \Bigg[2s^2\sum_{\Omega(\vec k) = 0}^{\times}|T_{k,1,2,3}|^2\left(-\phi_{k} \phi_{k_2} \phi_{k_3}+\phi_{k} \phi_{k_1} \phi_{k_3}-\phi_{k} \phi_{k_1} \phi_{k_2}\right)\notag\\
&+4 \sum_{\Omega(\vec k) \neq 0}^{\times} |T_{k,1,2,3}|^2 \mathrm{Re}\left(\frac{-s}{i\Omega_{k}T}+\frac{1-e^{i\Omega_k(Ts+A(s))}}{(\Omega_{k}T)^2} \right)\left(-\phi_{k} \phi_{k_2} \phi_{k_3}+\phi_{k} \phi_{k_1} \phi_{k_3}-\phi_{k} \phi_{k_1} \phi_{k_2}\right)\notag\\ 
&+4s^2\sum_{k_1 \in \Z_N\cap(0,1)}|T_{k,1,1,k}|^2\left(-{\phi_{k_1}}{\phi_{k}^2}+{\phi_{k_1}}{\phi_{k}^2}-{\phi_{k_1}^2}{\phi_{k}}\right)+ O\left( \frac{Ts}{T_{\mathrm{kin}}}N^{-\delta}\right)\Bigg],\label{eq-first-iterate2}
\end{align}
Then summing up \eqref{eq-first-iterate1} \& \eqref{eq-first-iterate2}, we can derive:
\begin{align*}\label{eq-K2}
\mathcal S_k &= \frac{\beta^2 T^2 s^2}{N^2} \Bigg\{2\sum_{ \Omega(\vec k) \neq 0}^{\times} |T_{k,1,2,3}|^2\phi_k \phi_{k_1} \phi_{k_2} \phi_{k_3} \left[\frac{1}{\phi_k} - \frac{1}{\phi_{k_1}} + \frac{1}{\phi_{k_2}} - \frac{1}{\phi_{k_3}} \right]  \notag\\
&\qquad \qquad \times\left|\frac{\sin(\Omega(\vec{k})(Ts+A(s))/2)}{\Omega(\vec{k})(Ts+A(s))/2}\right|^2 \cdot \left|\frac{Ts+A(s)}{Ts}\right|^2 \notag\\
&\qquad \qquad+ 2\sum_{ \Omega(\vec k) = 0}^{\times} |T_{k,1,2,3}|^2\phi_k \phi_{k_1} \phi_{k_2} \phi_{k_3} \left[\frac{1}{\phi_k} - \frac{1}{\phi_{k_1}} + \frac{1}{\phi_{k_2}} - \frac{1}{\phi_{k_3}} \right]\Bigg\}+O\left( \frac{Ts}{T_{\mathrm{kin}}}N^{-\delta}\right)\notag\\
&=\frac{2\beta^2 T^2 s^2}{N^2} \sum_{ \Omega(\vec k) \neq 0}^{\times} |T_{k,1,2,3}|^2\phi_k \phi_{k_1} \phi_{k_2} \phi_{k_3} \left[\frac{1}{\phi_k} - \frac{1}{\phi_{k_1}} + \frac{1}{\phi_{k_2}} - \frac{1}{\phi_{k_3}} \right] \notag\\
&\qquad \qquad \qquad \qquad \qquad \times \left|\frac{\sin(\Omega(\vec{k})(Ts+A(s))/2)}{\Omega(\vec{k})(Ts+A(s))/2}\right|^2 \cdot \left|\frac{Ts+A(s)}{Ts}\right|^2+O\left( \frac{Ts}{T_{\mathrm{kin}}}N^{-\delta}\right). 
\end{align*}
 For the exact resonant term, we use the bound for 3-vector countings when $\Omega = 0$ and $\{k_1, k_3\} \neq \{k, k_2\}$, taking $m =0$ and replacing $T$ by $TN^{\delta}$ in the proof of Proposition \ref{vector_counting}. In addition we set $\delta < \frac{\epsilon}{10}$ and every step of integration by parts would gain by a factor $N^{\frac{\epsilon}{2}}$ as $T\lesssim N^{1-\epsilon}$. Applying integration by parts sufficiently many times will lead to the result $\lesssim N^{2-\delta}T^{-1}$. Using Corollary \ref{cor-iterates}, we have that 
\begin{align*}
\mathcal S_k &= 2\beta^2 t \cdot \frac{t+A(t/T)}{t} \cdot (t+A(t/T))\int_{k_i \in \mathbb{T}} \phi_k\phi_{k_1} \phi_{k_2} \phi_{k_3} \left[\frac{1}{\phi_k} - \frac{1}{\phi_{k_1}} + \frac{1}{\phi_{k_2}} - \frac{1}{\phi_{k_3}} \right] \notag\\
&\qquad \qquad \qquad \qquad \qquad \qquad \times \left| \frac{\sin(\Omega(\vec{k})(t+A(t/T))/2)}{\Omega(\vec{k})(t+A(t/T))/2}\right|^2dk_1dk_2dk_3+O\left( \frac{t}{T_{\mathrm{kin}}}N^{-\delta}\right).  
\end{align*}
Using the fact that $A(t/T)/t \sim O(N^{-\gamma})$, and for smooth functions $f$,
\begin{align}
\left|t\int\left| \frac{\sin(t x/2)}{t x/2}\right|^2 f(x) d x - 2\pi f(0)\right| \lesssim C(f) t^{-\frac{1}{2}},
\end{align}
where $C(f)$ depends on the $L^\infty$ norms of $f$ and $f'$, we are able to get:
\begin{align}
     \mathcal S_k &=  \frac{t}{T_{\mathrm{kin}}} \K(n_\mathrm{in})(k)+o_{\ell_k^{\infty}}\left( \frac{t}{T_{\mathrm{kin}}}\right),
\end{align}
which concludes the proof.
\end{proof}

\section*{References}
\printbibliography[heading = none]

@article{ErdosYau,
    AUTHOR = {Erd\"os, L\'aszl\'o{} and Yau, Horng-Tzer},
     TITLE = {Linear {B}oltzmann equation as the weak coupling limit of a
              random {S}chr\"odinger equation},
   JOURNAL = {Comm. Pure Appl. Math.},
  FJOURNAL = {Communications on Pure and Applied Mathematics},
    VOLUME = {53},
      YEAR = {2000},
    NUMBER = {6},
     PAGES = {667--735},
      ISSN = {0010-3640,1097-0312},
   MRCLASS = {82C10 (60K37 81S99 82B44 82C40)},
  MRNUMBER = {1744001},
MRREVIEWER = {Aernout\ C. D. van Enter},
       DOI = {10.1002/(SICI)1097-0312(200006)53:6<667::AID-CPA1>3.0.CO;2-5}
}

@article{ErdosSalmhoferYau,
    AUTHOR = {Erd\"os, L\'aszl\'o{} and Salmhofer, Manfred and Yau,
              Horng-Tzer},
     TITLE = {Quantum diffusion of the random {S}chr\"odinger evolution in
              the scaling limit},
   JOURNAL = {Acta Math.},
  FJOURNAL = {Acta Mathematica},
    VOLUME = {200},
      YEAR = {2008},
    NUMBER = {2},
     PAGES = {211--277},
      ISSN = {0001-5962,1871-2509},
   MRCLASS = {82C10 (35J10 35R60 47B80 47N55 60J65 82C44)},
  MRNUMBER = {2413135},
MRREVIEWER = {J.\ A.\ van Casteren},
       DOI = {10.1007/s11511-008-0027-2}
}

@article{BGHSCR,
    AUTHOR = {Buckmaster, T. and Germain, Pierre and Hani, Zaher and
              Shatah, Jalal},
     TITLE = {Analysis of ({CR}) in higher dimension},
   JOURNAL = {Int. Math. Res. Not. IMRN},
  FJOURNAL = {International Mathematics Research Notices. IMRN},
      YEAR = {2019},
    NUMBER = {4},
     PAGES = {1265--1280},
      ISSN = {1073-7928,1687-0247},
   MRCLASS = {35Q55 (35B30 35R09)},
  MRNUMBER = {3915301},
MRREVIEWER = {Peter\ E.\ Zhidkov},
       DOI = {10.1093/imrn/rnx156}
}

@article{BGHSonset,
    AUTHOR = {Buckmaster, T. and Germain, P. and Hani, Z. and Shatah, J.},
     TITLE = {Onset of the wave turbulence description of the longtime
              behavior of the nonlinear {S}chr\"odinger equation},
   JOURNAL = {Invent. Math.},
  FJOURNAL = {Inventiones Mathematicae},
    VOLUME = {225},
      YEAR = {2021},
    NUMBER = {3},
     PAGES = {787--855},
      ISSN = {0020-9910,1432-1297},
   MRCLASS = {35Q55},
  MRNUMBER = {4296350},
       DOI = {10.1007/s00222-021-01039-z}
}

@article{BGHSdyn,
    AUTHOR = {Buckmaster, T. and Germain, P. and Hani, Z. and Shatah, J.},
     TITLE = {Effective dynamics of the nonlinear {S}chr\"odinger equation
              on large domains},
   JOURNAL = {Comm. Pure Appl. Math.},
  FJOURNAL = {Communications on Pure and Applied Mathematics},
    VOLUME = {71},
      YEAR = {2018},
    NUMBER = {7},
     PAGES = {1407--1460},
      ISSN = {0010-3640,1097-0312},
   MRCLASS = {35Q55},
  MRNUMBER = {3812076},
MRREVIEWER = {Gaetano\ Siciliano},
       DOI = {10.1002/cpa.21749}
}

@article{2019,
    AUTHOR = {Deng, Yu and Hani, Zaher},
     TITLE = {On the derivation of the wave kinetic equation for {NLS}},
   JOURNAL = {Forum Math. Pi},
  FJOURNAL = {Forum of Mathematics. Pi},
    VOLUME = {9},
      YEAR = {2021},
     PAGES = {Paper No. e6, 37},
      ISSN = {2050-5086},
   MRCLASS = {35Q55 (82C03)},
  MRNUMBER = {4291361},
       DOI = {10.1017/fmp.2021.6}
}

@article{WKE,
    AUTHOR = {Deng, Yu and Hani, Zaher},
     TITLE = {Full derivation of the wave kinetic equation},
   JOURNAL = {Invent. Math.},
  FJOURNAL = {Inventiones Mathematicae},
    VOLUME = {233},
      YEAR = {2023},
    NUMBER = {2},
     PAGES = {543--724},
      ISSN = {0020-9910,1432-1297},
   MRCLASS = {35Q55},
  MRNUMBER = {4607721},
       DOI = {10.1007/s00222-023-01189-2}
}

@misc{WKE2023,
      title={Derivation of the wave kinetic equation: full range of scaling laws}, 
      author={Yu Deng and Zaher Hani},
      year={2023},
      eprint={2301.07063},
      archivePrefix={arXiv},
      primaryClass={math.AP},
}

@misc{WKElong,
      title={Long time justification of wave turbulence theory}, 
      author={Yu Deng and Zaher Hani},
      year={2024},
      eprint={2311.10082},
      archivePrefix={arXiv},
      primaryClass={math.AP},
}

@article {2DNLS,
    AUTHOR = {Faou, Erwan and Germain, Pierre and Hani, Zaher},
     TITLE = {The weakly nonlinear large-box limit of the 2{D} cubic
              nonlinear {S}chr\"odinger equation},
   JOURNAL = {J. Amer. Math. Soc.},
  FJOURNAL = {Journal of the American Mathematical Society},
    VOLUME = {29},
      YEAR = {2016},
    NUMBER = {4},
     PAGES = {915--982},
      ISSN = {0894-0347,1088-6834},
   MRCLASS = {35Q55 (37K05 74H40)},
  MRNUMBER = {3522607},
MRREVIEWER = {Oksana\ Yevgenivna\ Hentosh},
       DOI = {10.1090/jams/845},
}

@article {CG1,
    AUTHOR = {Collot, Charles and Germain, Pierre},
     TITLE = {On the derivation of the homogeneous kinetic wave equation},
   JOURNAL = {Comm. Pure Appl. Math.},
  FJOURNAL = {Communications on Pure and Applied Mathematics},
    VOLUME = {78},
      YEAR = {2025},
    NUMBER = {4},
     PAGES = {856--909},
      ISSN = {0010-3640,1097-0312},
   MRCLASS = {35Q55 (35B34 35C20 70K70 81Q30 81T18)},
  MRNUMBER = {4863202},
       DOI = {10.1002/cpa.22232},
}

@misc{CG2,
      title={Derivation of the homogeneous kinetic wave equation: longer time scales}, 
      author={Charles Collot and Pierre Germain},
      year={2020},
      eprint={2007.03508},
      archivePrefix={arXiv},
      primaryClass={math.AP},
}

@article{EJS,
  title = {Studies of the nonlinear problems I},
  DOI = {10.2172/4376203},
  journal = {Los Alamos preprint LA-1940},
  institution = {Office of Scientific and Technical Information (OSTI)},
  author = {Fermi,  E. and Pasta,  P. and Ulam,  S. and Tsingou,  M.},
  year = {1955},

}

@article {WaterWaves62,
    AUTHOR = {Hasselmann, K.},
     TITLE = {On the non-linear energy transfer in a gravity-wave spectrum.
              {I}. {G}eneral theory},
   JOURNAL = {J. Fluid Mech.},
  FJOURNAL = {Journal of Fluid Mechanics},
    VOLUME = {12},
      YEAR = {1962},
     PAGES = {481--500},
      ISSN = {0022-1120,1469-7645},
   MRCLASS = {76.42},
  MRNUMBER = {136205},
MRREVIEWER = {O.\ M.\ Phillips},
       DOI = {10.1017/S0022112062000373},
}

@article {WaterWaves63,
    AUTHOR = {Hasselmann, K.},
     TITLE = {On the non-linear energy transfer in a gravity wave spectrum.
              {II}. {C}onservation theorems; wave-particle analogy;
              irreversibility},
   JOURNAL = {J. Fluid Mech.},
  FJOURNAL = {Journal of Fluid Mechanics},
    VOLUME = {15},
      YEAR = {1963},
     PAGES = {273--281},
      ISSN = {0022-1120,1469-7645},
   MRCLASS = {86.35},
  MRNUMBER = {151308},
MRREVIEWER = {O.\ M.\ Phillips},
       DOI = {10.1017/S0022112063000239}
}

@book{TextbookWT,
  title={Wave Turbulence},
  author={Nazarenko, S.},
  isbn={9783642159435},
  series={Lecture Notes in Physics},
  year={2011},
  publisher={Springer Berlin Heidelberg},
  doi={10.1007/978-3-642-15942-8}
}

@incollection {Spohn08,
    AUTHOR = {Spohn, Herbert},
     TITLE = {On the {B}oltzmann equation for weakly nonlinear wave
              equations},
 BOOKTITLE = {Boltzmann's legacy},
    SERIES = {ESI Lect. Math. Phys.},
     PAGES = {145--159},
 PUBLISHER = {Eur. Math. Soc., Z\"urich},
      YEAR = {2008},
      ISBN = {978-3-03719-057-9},
   MRCLASS = {82C40 (35F20 35L70 35Q20)},
  MRNUMBEfR = {2509761},
MRREVIEWER = {Alexander\ Orlov},
       DOI = {10.4171/057-1/10},
}

@article {Hypercontractivity,
    AUTHOR = {Oh, Tadahiro and Thomann, Laurent},
     TITLE = {A pedestrian approach to the invariant {G}ibbs measures for
              the 2-{$d$} defocusing nonlinear {S}chr\"odinger equations},
   JOURNAL = {Stoch. Partial Differ. Equ. Anal. Comput.},
  FJOURNAL = {Stochastic Partial Differential Equations. Analysis and
              Computations},
    VOLUME = {6},
      YEAR = {2018},
    NUMBER = {3},
     PAGES = {397--445},
      ISSN = {2194-0401,2194-041X},
   MRCLASS = {35Q55 (35R01 58J65)},
  MRNUMBER = {3844655},
MRREVIEWER = {Gaetano\ Siciliano},
       DOI = {10.1007/s40072-018-0112-2},
}

@misc{Boltzmannlong,
      title={Long time derivation of the {B}oltzmann equation from hard sphere dynamics}, 
      author={Yu Deng and Zaher Hani and Xiao Ma},
      year={2024},
      eprint={2408.07818},
      archivePrefix={arXiv},
      primaryClass={math.AP},
}

@misc{ODW,
  title={One-dimensional wave kinetic theory},
  author={Katja D. Vassilev},
  year={2024},
  eprint={2408.13693},
  archivePrefix={arXiv},
  primaryClass={math.AP},
}

@article{FPU,
  title={Route to thermalization in the $\alpha$-Fermi--Pasta--Ulam system},
  author={Onorato, Miguel and Vozella, Lara and Proment, Davide and Lvov, Yuri V.},
  journal={Proceedings of the National Academy of Sciences},
  volume={112},
  number={14},
  pages={4208--4213},
  year={2015},
  publisher={National Academy of Sciences},
  doi={10.1073/pnas.1404397112},
}

@article{bustamante2019exact,
  title={Exact discrete resonances in the Fermi-Pasta-Ulam--Tsingou system},
  author={Bustamante, Miguel D and Hutchinson, Kevin and Lvov, Yuri V and Onorato, Miguel},
  journal={Communications in Nonlinear Science and Numerical Simulation},
  volume={73},
  pages={437--471},
  year={2019},
  publisher={Elsevier},
  doi={10.1016/j.cnsns.2019.03.004}
}

@article{de2022anomalous,
  title={Anomalous conduction in one-dimensional particle lattices: Wave-turbulence approach},
  author={De Vita, Francesco and Dematteis, Giovanni and Mazzilli, Raffaele and Proment, Davide and Lvov, Yuri V and Onorato, Miguel},
  journal={Physical Review E},
  volume={106},
  number={3},
  pages={034110},
  year={2022},
  publisher={APS},
  doi={10.1103/PhysRevE.106.034110}
}

@misc{deng2025wave,
    title={On the wave turbulence theory of 2D gravity waves, II: propagation of randomness},
    author={Deng, Yu and Ionescu, Alexandru and Pusateri, Fabio},
    year={2025},
    eprint={2504.14304},
    archivePrefix={arXiv},
    primaryClass={math.AP},
}

@unpublished{SW,
  title = {Six-wave systems in one-dimensional wave turbulence},
            month={9},
            year = {2010},
            school = {University of Warwick},
            note={Ph.D. thesis},
            author = {Laurie, Jason Paul},
            url = {http://webcat.warwick.ac.uk/record=b2484130~S15}
}

@article{AET,
  author={Jani Lukkarinen and Herbert Spohn},
  title={Anomalous energy transport in the FPU‐$\beta$ chain},
  year={2018},
  journal={Communications on Pure and Applied Mathematics: A Journal Issued by the Courant Institute of Mathematical Sciences},
  volume={61},
  number={12},
  pages={1753-1786},
  publisher={Wiley Subscription Services, Inc., A Wiley Company},
  doi={10.1002/cpa.20243}
}

@article{ganapa2023quasiperiodicity,
  title={Quasiperiodicity in the $\alpha$-Fermi--Pasta--Ulam--Tsingou problem revisited: An approach using ideas from wave turbulence},
  author={Ganapa, Santhosh},
  journal={Chaos: An Interdisciplinary Journal of Nonlinear Science},
  volume={33},
  number={9},
  year={2023},
  publisher={AIP Publishing},
  doi={10.1063/5.0154157}
}

@article{chibbaro20184,
  title={4-wave dynamics in kinetic wave turbulence},
  author={Chibbaro, Sergio and Dematteis, Giovanni and Rondoni, Lamberto},
  journal={Physica D: Nonlinear Phenomena},
  volume={362},
  pages={24--59},
  year={2018},
  publisher={Elsevier},
  doi={10.1016/j.physd.2017.09.001}
}

@incollection{schneider2000counter,
  title={Counter-propagating waves on fluid surfaces and the continuum limit of the Fermi-Pasta-Ulam model},
  author={Schneider, Guido and Wayne, C. Eugene},
  booktitle={Equadiff 99: (In 2 Volumes)},
  pages={390--404},
  year={2000},
  publisher={World Scientific},
  doi={10.1142/9789812792617_0075}
}

@article{Fermi1,
  title={Dimostrazione che in generale un sistema meccanico normale {\`E} QUASI—ergodico},
  author={Fermi, Enrico},
  journal={Il Nuovo Cimento (1911-1923)},
  volume={25},
  pages={267--269},
  year={1923},
  publisher={Springer},
  doi={10.1007/BF02959600}
}

@book{Fermi2,
  title={Collected Papers:(Note E Memorie)},
  author={Enrico Fermi},
  volume={2},
  year={1962},
  publisher={University of Chicago Press},
}

@article{zabusky1965interaction,
  title={Interaction of "solitons" in a collisionless plasma and the recurrence of initial states},
  author={Zabusky, Norman J. and Kruskal, Martin D.},
  journal={Physical Review Letters},
  volume={15},
  number={6},
  pages={240},
  year={1965},
  publisher={APS},
  doi={10.1103/PhysRevLett.15.240}
}

@article{rink2006proof,
  title={Proof of Nishida's conjecture on anharmonic lattices},
  author={Rink, Bob},
  journal={Communications in Mathematical Physics},
  volume={261},
  number={3},
  pages={613--627},
  year={2006},
  publisher={Springer},
  doi={10.1007/s00220-005-1451-1}
}

@article{zaslavskiui1972stochastic,
  title={Stochastic instability of non-linear oscillations},
  author={Zaslavski{ĭ}, George M. and Chirikov, Boris V.},
  journal={Soviet Physics Uspekhi},
  volume={14},
  number={5},
  pages={549},
  year={1972},
  publisher={IOP Publishing},
  doi={10.1070/PU1972v014n05ABEH004669}
}

@article{henrici2008results,
  title={Results on normal forms for FPU chains},
  author={Henrici, Andreas and Kappeler, Thomas},
  journal={Communications in Mathematical Physics},
  volume={278},
  number={1},
  pages={145--177},
  year={2008},
  publisher={Springer},
  doi={10.1007/s00220-007-0387-z}
}

@article{ponno2011two,
  title={The two-stage dynamics in the Fermi-Pasta-Ulam problem: From regular to diffusive behavior},
  author={Ponno, A. and Christodoulidi, H. and Skokos, Ch. and Flach, S.},
  journal={Chaos: An Interdisciplinary Journal of Nonlinear Science},
  volume={21},
  number={4},
  pages={043127},
  year={2011},
  publisher={American Institute of Physics},
  doi={10.1063/1.3658620}
}

@article{israiljev1965statistical,
  title={Statistical Properties of a Nonlinear String},
  author={Felix M. Izrailev and Boris V. Chirikov},
  year={1966},
  journal={Doklady Akademii Nauk},
  volume={166},
  number={1},
  pages={57--59},
  year={1966},
  organization={Russian Academy of Sciences}
}

@article{ponno2005fermi,
  title={The Fermi-Pasta-Ulam problem in the thermodynamic limit: Scaling laws of the energy cascade},
  author={Ponno, Antonio},
  journal={Chaotic Dynamics and Transport in Classical and Quantum Systems},
  volume={182},
  pages={431--440},
  year={2005},
  publisher={P. Collet et al. Kluwer Academic, Dordrecht},
  doi={10.1007/1-4020-2947-0_20}
}

@book{gallavotti2007fermi,
  title={The Fermi-Pasta-Ulam problem: a status report},
  author={Gallavotti, Giovanni},
  volume={728},
  series={Lecture Notes in Physics},
  year={2007},
  publisher={Springer},
  doi={10.1007/978-3-540-72995-2}
}

@article{bocchieri1970anharmonic,
  title = {Anharmonic Chain with Lennard-Jones Interaction},
  author = {Bocchieri, P. and Scotti, A. and Bearzi, B. and Loinger, A.},
  journal = {Phys. Rev. A},
  volume = {2},
  issue = {5},
  pages = {2013--2019},
  numpages = {0},
  year = {1970},
  publisher = {American Physical Society},
  doi={10.1103/PhysRevA.2.2013}
}

@article{berchialla2004localization,
  title={Localization of energy in FPU chains},
  author={Berchialla, Luisa and Galgani, Luigi and Giorgilli, Antonio},
  journal={Discrete \& Continuous Dynamical Systems},
  volume={11},
  number={4},
  pages={855},
  year={2004},
  publisher={American Institute of Mathematical Sciences},
  doi={10.3934/dcds.2004.11.855}
}

@article{carati1999specific,
  title={On the specific heat of Fermi--Pasta--Ulam systems and their glassy behavior},
  author={Carati, Andrea and Galgani, Luigi},
  journal={Journal of Statistical Physics},
  volume={94},
  number={5},
  pages={859--869},
  year={1999},
  publisher={Springer},
  doi={10.1023/A:1004531032623}
}

@article{carati2004definition,
  title={On the definition of temperature in FPU systems},
  author={Carati, A. and Cipriani, P. and Galgani, L.},
  journal={Journal of Statistical Physics},
  volume={115},
  number={3},
  pages={1101--1112},
  year={2004},
  publisher={Springer},
  doi={10.1023/B:JOSS.0000022378.52789.b6}
}

@article{bambusi2006metastability,
  title={On metastability in FPU},
  author={Bambusi, Dario and Ponno, Antonio},
  journal={Communications in Mathematical Physics},
  volume={264},
  number={2},
  pages={539--561},
  year={2006},
  publisher={Springer},
  doi={10.1007/s00220-005-1488-1}
}

@article{carati2007averaging,
  title={An averaging theorem for Hamiltonian dynamical systems in the thermodynamic limit},
  author={Carati, A},
  journal={Journal of Statistical Physics},
  volume={128},
  number={4},
  pages={1057--1077},
  year={2007},
  publisher={Springer},
  doi={10.1007/s10955-007-9332-y}
}

@Inbook{bambusi2015some,
  title={Some analytic results on the FPU paradox},
  author={Bambusi, D. and Carati, A. and Maiocchi, A. and Maspero, A.},
    bookTitle="Hamiltonian Partial Differential Equations and Applications",
    year="2015",
    publisher="Springer New York",
    address="New York, NY",
    pages="235--254",
  doi={10.1007/978-1-4939-2950-4_8}
}

@article{peierls1929,
  title={Zur kinetischen Theorie der Wärmeleitung in Kristallen},
  author={Peierls, R.},
  journal={Annalen der Physik},
  volume={395},
  number={8},
  pages={1055-1101},
  year={1929},
  doi={10.1002/andp.19293950803}
}

@article{vedenov1967theory,
  title={Theory of a weakly turbulent plasma},
  author={Vedenov, A.A.},
  journal={Reviews of Plasma Physics},
  pages={229--276},
  year={1967},
  publisher={Springer},
  doi={10.1007/978-1-4615-7799-7_3}
}

@article{pezzi2025multi,
  title={Multi-wave resonances in the diatomic $\alpha$-FPUT system},
  author={Pezzi, A. and Deng, G. and Lvov, Y. and Lorenzo, M. and Onorato, M.},
  journal={Chaos, Solitons \& Fractals},
  volume={192},
  pages={116005},
  year={2025},
  publisher={Elsevier},
  doi={10.1016/j.chaos.2025.116005}
}

@book{zakharov2012kolmogorov,
  title={Kolmogorov Spectra of Turbulence I: Wave Turbulence},
  author={Zakharov, Vladimir E. and L'vov, Victor S. and Falkovich, Gregory},
  year={2012},
  publisher={Springer Science \& Business Media},
  doi={10.1007/978-3-642-50052-7}
}

@article{zakharov1965weak,
  title={Weak turbulence in media with a decay spectrum},
  author={Zakharov, Vladimir E.},
  journal={Journal of Applied Mechanics and Technical Physics},
  volume={6},
  number={4},
  pages={22--24},
  year={1965},
  publisher={Springer},
  doi={10.1007/BF01565814}
}

@article{kramer2002application,
  title={Application of weak turbulence theory to FPU model},
  author={Kramer, Peter R and Biello, Joseph A and Lvov, Yury},
    journal = {Conference Publications},
    volume = {2003},
    number = {Special},
    pages = {482-491},
    year = {2003},
    issn = {0133-0189},
    doi = {10.3934/proc.2003.2003.482},
}

@misc{biello2002stages,
  title={Stages of Energy Transfer in the FPU Model}, 
  author={Joseph A. Biello and Peter R. Kramer and Yury Lvov},
  year={2002},
  eprint={nlin/0210008},
  archivePrefix={arXiv},
  primaryClass={nlin.CD},
}

@article{pistone2018universal,
  title={Universal route to thermalization in weakly-nonlinear one-dimensional chains},
  author={Pistone, Lorenzo and Chibbaro, Sergio and Bustamante, Miguel and L'vov, Yuri and Onorato, Miguel},
journal = {Mathematics in Engineering},
volume = {1},
number = {4},
pages = {672-698},
year = {2019},
issn = {2640-3501},
doi = {10.3934/mine.2019.4.672},
}

@article{lvov2018double,
  title={Double scaling in the relaxation time in the $\beta$-Fermi-Pasta-Ulam-Tsingou model},
  author={Lvov, Yuri V and Onorato, Miguel},
  journal={Physical Review Letters},
  volume={120},
  number={14},
  pages={144301},
  year={2018},
  publisher={APS},
  doi={10.1103/PhysRevLett.120.144301}
}

@article{dematteis2020coexistence,
  title={Coexistence of Ballistic and Fourier Regimes in the $\beta$ Fermi-Pasta-Ulam-Tsingou Lattice},
  author={Dematteis, Giovanni and Rondoni, Lamberto and Proment, Davide and De Vita, Francesco and Onorato, Miguel},
  journal={Physical Review Letters},
  volume={125},
  number={2},
  pages={024101},
  year={2020},
  publisher={APS},
  doi={10.1103/PhysRevLett.125.024101}
}
%\bibliographystyle{abbrv}
%\bibliography{ref}

\end{document}